\numberwithin{equation}{section}
\numberwithin{figure}{section}
\theoremstyle{plain}
\newtheorem{thm}{\protect\theoremname}[section]
\theoremstyle{plain}
\newtheorem{prop}[thm]{\protect\propositionname}
\theoremstyle{plain}
\newtheorem{cor}[thm]{\protect\corollaryname}
\theoremstyle{plain}
\newtheorem{lem}[thm]{\protect\lemmaname}
\theoremstyle{definition}
\newtheorem{defn}[thm]{\protect\definitionname}
\theoremstyle{remark}
\newtheorem{rem}[thm]{\protect\remarkname}
\def\vep{\varepsilon}
\def\vfi{\varphi}
\def\be{\begin{equation}}
\def\ee{\end{equation}}
\def\intr{\int_{\R}}
\def\cP{{\mathcal P}}
\def\cE{{\mathcal E}}
\def\cE{{\mathcal E}}
\def\cEw{{\mathcal E}_{w,\mu}}
\providecommand{\corollaryname}{Corollary}
\providecommand{\definitionname}{Definition}
\providecommand{\lemmaname}{Lemma}
\providecommand{\propositionname}{Proposition}
\providecommand{\remarkname}{Remark}
\providecommand{\theoremname}{Theorem}
\global\long\def\R{\mathbb{R}}%
\global\long\def\al{\alpha}%
\global\long\def\et{\eta}%
\global\long\def\ta{\tau}%
\global\long\def\util{\tilde{u}}%
\global\long\def\mtil{\tilde{m}}%
\title{Self-similar intermediate asymptotics for first-order mean field games}
\author{Sebastian Munoz}
\begin{document}
\maketitle
\begin{abstract}
    We study the intermediate asymptotic behavior of solutions to the first-order mean field games system with a local coupling, when the initial density is a compactly supported function on the real line, and the coupling is of power type. Addressing a question that was left open in \cite{CMP}, we prove that the solutions converge to the self-similar profile. We proceed by analyzing a continuous rescaling of the solution, and identifying an appropriate Lyapunov functional. We identify a critical value for the parameter of the coupling, which determines the qualitative behavior of the functional, and the well-posedness of the infinite horizon system. Accordingly, we also establish, in the subcritical and critical cases, a second convergence result which characterizes the behavior of the full solution as the time horizon approaches infinity. We also prove the corresponding results for the mean field planning problem. A large part of our analysis and methodology apply just as well to arbitrary dimensions. As such, this work is a major step towards settling these questions in the higher-dimensional setting.
\end{abstract}
 \noindent \textbf{Keywords:} self-similar solutions; continuous rescaling; Lyapunov functional; intermediate asymptotics; degenerate elliptic equations; intrinsic scaling; displacement convexity; Lagrangian coordinates; free boundary; mean field games; Hamilton-Jacobi equations; continuity equation.\\
 \noindent \textbf{MSC: } 35R35, 35Q89, 35B65, 35J70.
\tableofcontents{}
\section{Introduction}
The goal of this paper is to determine the asymptotic behavior of the solutions to the first-order mean field games system (MFG), expressed as:
\begin{equation} \tag{MFG}
\begin{cases}
-u_{t}+\frac{1}{2}u_{x}^{2}=m^{\theta} & (x,t)\in\R\times(0,T),\\
m_{t}-(mu_{x})_{x}=0 & (x,t)\in\R\times(0,T).
\\
m(x,0)=m_{0}(x) & x\in\R,
\end{cases}\label{mfg}
\end{equation}
where $\theta>0$, and $m_0: \R \to [0,\infty)$ is a continuous, compactly supported function.

 From a modeling perspective, the solution $(u,m)$ can be interpreted as the Nash equilibrium of a differential game with a large number of agents. The function $m:\R \times [0,T] \to [0,\infty) $ may be interpreted as the density of players at time $t$ and position $x$, whereas $u(x,t)$ is the optimal cost for a generic player at time $t$, at the position $x$, that is, 
\be \label{urepform}
u(x,t)=\inf_{\beta \in H^1((t,T)), \,\beta(t)=x} \quad\int_t^T \left( \frac{1}{2} |\Dot \beta(s)|^2+m(\beta(s),s)^{\theta}\right) ds + u(\beta(T),T).
\ee
In addition to prescribing the initial density of agents, the system \eqref{mfg} will be supplemented by a terminal condition, which may either be
\be \label{tc} \tag{TC} u(x,T)=c_T m^{\theta}(x,T), \ee
for some $c_T>0$, or
\be \label{pp} \tag{P} m(x,T)=m_T(x), \ee
where $m_T: \R \to [0,\infty)$ satisfies the compatibility condition
\be \int_{\R}m_T = \int_{\R} m_0.\ee
The problem with a prescribed terminal cost for the agents such as \eqref{tc} is typical in the theory of MFG. On the other hand, the problem with a prescribed terminal density \eqref{pp} is known as the planning problem, and is related to the theory of optimal transport (see \cite[Sec. 1]{CMP}).

The system \eqref{mfg} has been previously studied in a more general setting, and in arbitrary dimensions, when the right hand side of the Hamilton--Jacobi equation (HJ) in \eqref{mfg} is allowed to be an arbitrary strictly increasing function $f(m)$. This study was started by P.-L. Lions in his lectures at Coll\`ege de France \cite{L-college}, where he observed that the monotonicity of $f$ creates a regularization effect in the region $\{m>0\}$ which allows one to reformulate the problem as a single elliptic equation in $u$, in the space-time variables. This point of view was developed and extended in \cite{MimikosMunoz, Munoz, Porretta}. In arbitrary dimensions, the solution is known to be smooth under the blow-up assumption \begin{equation} \label{eq:blowup} \lim_{m\rightarrow 0^+}f(m)=-\infty, \end{equation} 
provided that the marginals $m_0, m_T$ are (strictly) positive, say for positive measures on a compact domain (e.g. on the flat torus) or for Gaussian-like measures on the whole space. It was also established in \cite{MimikosMunoz} that, for the one-dimensional case, assumption \eqref{eq:blowup} may be removed, thus requiring only the positivity of the marginals. 

The case of compactly supported marginals on the whole space is more challenging, largely due to the loss of ellipticity at the interface where $m$ vanishes, and was recently studied for the first time by P. Cardaliaguet, A. Porretta, and the author in \cite{CMP}.

Indeed, it was shown in \cite{CMP} that the power-type nonlinearity $f(m)=m^{\theta}$ gives rise to finite speed of propagation, leading to the formation of a free boundary. The regularity of the solution and the geometry of its free boundary were then thoroughly examined. Furthermore, the authors exhibited a family of self-similar solutions $(\mathcal{U}_a,\mathcal{M}_a)$, where $a>0$, and the density $\mathcal{M}_a$ is given, for $(x,t)\in \R \times (0,\infty)$, by
\be \label{Mself} \mathcal{M}_a(x,t)=t^{-\al}M_a(x/t^{\al}) , \,\,\,M_a(\eta)=\left(R_a- \frac{\al(1-\al)}{2}\eta^2\right)_+^{1/\theta},\,\,\,\,\, \al=\frac{2}{2+\theta}, \ee
with $R_a>0$ being the unique constant such that
$\int_{\R} \left(R_a- \frac{\al(1-\al)}{2}\eta^2\right)_+^{1/\theta}d\eta = a. $
The function $\mathcal{M}_a$ evolves from a Dirac mass into a compactly supported measure, and is analogous to the well-known source-type solutions for the porous medium equation \cite{vazquez2007porous}. The accompanying value function $\mathcal{U}_a$ is given, in the support of $\mathcal{M}_a$, by the formula
\be \label{Uself} \mathcal{U}_a(x,t) = t^{2\al-1}U_a(x/t^{\al})+z(t),  \quad U_a(\et)=-\frac{\al}{2}\et^2 , \quad z'(t)=-R_at^{2\al-2}. \ee

 There is ample experimental, computational, and analytical evidence showing that self-similar solutions determine the asymptotic behavior of a  large number of evolution problems. The purpose of this work is to determine whether this is the case for the solutions to \eqref{mfg}, a question that was left open in \cite{CMP}. Our main result answers this question in the affirmative, for arbitrary $\theta>0$. We refer to Section \ref{sec: assumptions} for the assumptions \eqref{m0 assum}, \eqref{m0 C1,1}, \eqref{mT assum}, \eqref{mT bump}, and \eqref{kap size}.
\begin{thm} \label{thm: fin hor} Assume that $m_0$ satisfies \eqref{m0 assum} and \eqref{m0 C1,1}, and that \eqref{kap size} holds. In case of problem \eqref{mfg}--\eqref{pp}, assume also that $m_T$ satisfies \eqref{mT assum} and $\eqref{mT bump}$.  Let $(u^T,m^T)$ be the solution to \eqref{mfg}--\eqref{tc} or \eqref{mfg}--\eqref{pp}, and let $\int_{\R}m_0=a>0$. Then, as $T \to \infty$ and $t\to \infty$, $m^T(\cdot,t) \to \mathcal{M}_a(\cdot,t)$ in the $L^1$ sense, that is,
\be \label{conv result L1} \lim_{t \to \infty} \limsup_{T\to \infty} \| m^T(\cdot,t)- \mathcal{M}_a(\cdot,t) \|_{L^{1}(\R)} =0,
\ee 
and in the $L^{\infty}$ sense in the proper scale, that is,
\be \label{conv result Linfty} \lim_{t \to \infty} \limsup_{T\to \infty} t^{\al}\| m^T(\cdot,t)- \mathcal{M}_a(\cdot,t) \|_{L^{\infty}(\R)} =0.
\ee 
More generally, for every $p\in [1,\infty]$, one has $L^p$ convergence of $m^T$ and $Du^T$ in the proper scale, that is,
\be \label{conv result 1} \lim_{t \to \infty} \limsup_{T\to \infty} t^{\al(1-\frac1p)}\| m^T(\cdot,t)- \mathcal{M}_a(\cdot,t) \|_{L^{p}(\R)} =0,
\ee
\be \label{conv result 2} \lim_{t \to \infty} \limsup_{T\to \infty} t^{2-\al(1+\frac1p)}\left\| m^T(\cdot,t)\left|u^T_x(\cdot,t)- (\mathcal{U}_a)_x(\cdot,t)\right|^2 \right\|_{L^{p}(\R)} =0, \ee
\be \label{conv result 3} \lim_{t \to \infty} \limsup_{T\to \infty} t^{2-\al(1+\frac1p)}\left\| m^T(\cdot,t)\left|u^T_t(\cdot,t)- (\mathcal{U}_a)_t(\cdot,t)\right| \right\|_{L^{p}(\R)} =0. \ee
\end{thm}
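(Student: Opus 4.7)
The plan is to introduce a continuous rescaling that turns the self-similar solution $(\mathcal{U}_a, \mathcal{M}_a)$ into a stationary state of a new system, and then to identify a Lyapunov functional $\cEw$ that dissipates along the rescaled flow and is uniquely minimized at this stationary state. Setting $\tau = \log t$, $\eta = x t^{-\al}$, and
$$\tilde m(\eta,\tau) = t^{\al} m^T(x,t), \qquad \tilde u(\eta,\tau) = t^{1-2\al}\bigl(u^T(x,t)-z(t)\bigr),$$
the profiles $M_a$ and $U_a$ become time-independent, and $(\tilde u, \tilde m)$ satisfies a modified MFG system on $\eta\in\R$, $\tau\in(-\infty,\log T)$, with added confining drift terms proportional to $\al\,\eta\,\partial_\eta(\cdot)$ and scaling-induced zero-order terms whose sign depends on $\theta$ through $\al=2/(2+\theta)$. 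This sign is what I expect to produce the critical exponent mentioned in the abstract.

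The core of the argument is to exhibit a functional of the form
$$\cEw[\tilde m] \;=\; \int_{\R}\Bigl(\tfrac{1}{\theta+1}\,\tilde m^{\theta+1} \;+\; \tfrac{\al(1-\al)}{2}\,\eta^2\,\tilde m\Bigr)\,d\eta \;+\; \text{(correction terms dictated by the scaling)}$$
such that, on one hand, $M_a$ is the unique minimizer of $\cEw$ under the mass constraint $\int \tilde m = a$ (via strict displacement convexity of the porous-medium entropy together with the convex confinement potential $\tfrac12 \eta^2$), and on the other hand, along the rescaled flow,
$$\frac{d}{d\tau}\,\cEw[\tilde m(\cdot,\tau)] \;\le\; -\int_{\R}\tilde m\,\bigl|\tilde u_\eta - (U_a)_\eta\bigr|^2\,d\eta,$$
modulo a boundary contribution at $\tau=\log T$ whose effect is controlled as $T\to\infty$. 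Since the right-hand side is exactly the quantity appearing in \eqref{conv result 2} after undoing the scaling, integrating in $\tau$ yields uniform dissipation bounds. Combining these with the strict convexity lower bound $\cEw[\tilde m]-\cEw[M_a]\ge c\,d_W^2(\tilde m, M_a)$ gives $\tilde m(\cdot,\tau)\to M_a$ in $L^1$ along subsequences, i.e.\ \eqref{conv result L1}, and also \eqref{conv result 2}. The remaining estimates \eqref{conv result Linfty}, \eqref{conv result 1}, and \eqref{conv result 3} follow by interpolation between $L^1$ and $L^\infty$ bounds (the latter coming from the intrinsic-scaling and free-boundary analysis of \cite{CMP}) together with the HJ equation, which reads, in rescaled coordinates, as an algebraic relation between $\tilde u_\tau$, $\tilde u_\eta$, and $\tilde m^\theta$.

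The main obstacle I anticipate is justifying both the dissipation identity and the displacement-convexity estimate across the free boundary $\{\tilde m=0\}$, where $\tilde m$ is only $C^{1,1}$-type regular and the ellipticity of the single elliptic reformulation of \eqref{mfg} degenerates. I would handle this by approximation: replacing the marginals with strictly positive perturbations, for which the smooth theory of \cite{MimikosMunoz, Porretta} applies and all manipulations are classical, proving the dissipation inequality with constants independent of the regularization, and then passing to the limit using Lagrangian coordinates and the stability of the free boundary. The double limit $T\to\infty$ followed by $t\to\infty$ also requires care: in the subcritical and critical regimes, the infinite-horizon problem is well-posed, so one first passes $T\to\infty$ to obtain a global-in-time rescaled solution and then sends $\tau\to\infty$; in the supercritical regime, where the infinite-horizon system is ill-posed, one must instead show directly that, for every $\delta>0$, the rescaled time intervals on which $\cEw[\tilde m(\cdot,\tau)]-\cEw[M_a]\le\delta$ invade $(-\infty,\log T)$ as $T\to\infty$, and combine this with continuous dependence on initial data to close the argument.
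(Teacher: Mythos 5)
Your overall framework (continuous rescaling, a functional on the rescaled system, splitting the analysis at a critical exponent) matches the paper's strategy, but the central analytic claim is not justified and, as stated, is false. You propose that the entropy-plus-confinement functional $\int\bigl(\tfrac{1}{\theta+1}\tilde m^{\theta+1}+\tfrac{\al(1-\al)}{2}\eta^2\tilde m\bigr)d\eta$ dissipates along the rescaled flow with rate at most $-\int\tilde m|\tilde u_\eta-(U_a)_\eta|^2$. That inequality is the signature of a Wasserstein gradient flow, but the MFG system is forward--backward: the drift $-u_x$ is not the gradient of the variational derivative of your entropy at the current time, so no such one-sided dissipation identity holds. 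What the paper actually proves (Lemma \ref{lem: E basic}) is an exact identity for a different functional, $\cEw=\int\bigl(\tfrac12\mu|w_\eta|^2-(F(\mu)-F(M_a)-(R_a-\tfrac{\al(1-\al)}{2}\eta^2)(\mu-M_a))\bigr)d\eta$, which couples the kinetic term with a \emph{positive} sign to the entropy terms with a \emph{negative} sign, and whose derivative is $\tfrac{\theta-2}{\theta+2}\int\mu w_\eta^2$. This derivative has no favorable sign uniformly in $\theta$: for $\theta>2$ the functional increases, and at $\theta=2$ it is constant, so the convergence argument there cannot rest on dissipation at all. Instead, the paper uses monotonicity plus boundedness plus uniform $C^{1,1/2}$ bounds to get $\cEw'\to0$ (hence $\int\mu w_\eta^2\to0$ when $\theta\neq2$) and a compactness/rigidity argument to identify $M_a$; the critical case $\theta=2$ requires a separate Lasry--Lions-type quantity $f(\tau)=\int w(\mu-M_a)$ (Lemma \ref{lem:lasrylionstheta=2}). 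Your proposal, by asserting uniform dissipation, erases precisely the mechanism that produces the critical exponent you expect to see.

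A second, structural gap concerns the double limit in $T$ and $t$. Saying the boundary contribution at $\tau=\log T$ "is controlled as $T\to\infty$" hides most of the work: the paper first proves sharp-rate, uniform-in-$T$ estimates for the rescaled family (optimal decay of $m^T$, of the free boundary and its derivatives, Lipschitz bounds for $v^T$, energy bounds, and a uniform H\"older modulus for $\mu^T$, Section \ref{sec: estimates}), extracts subsequential infinite-horizon limits, applies the asymptotic analysis to those limits, and then must transfer the conclusion back to the finite-horizon family. This transfer is where the trichotomy bites: for $\theta>2$ it uses global uniqueness of the decaying infinite-horizon solution, for $\theta=2$ partial uniqueness via a scaling-exact Lasry--Lions argument, and for $\theta<2$ (where no uniqueness is available) the uniform exponential rate of Proposition \ref{prop: exponential theta<2}, which makes the convergence uniform across all subsequential limits. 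Your sketch for the supercritical range ("invading intervals where $\cEw\le\delta$" plus continuous dependence) presupposes both the dissipation inequality criticized above and a continuous-dependence statement for the ill-posed infinite-horizon system that you do not have; without a quantitative rate uniform over limit solutions, the $\limsup_{T\to\infty}$ inside the double limit cannot be closed. Finally, deducing $L^1$ convergence from an energy gap via $d_W^2$ is insufficient on its own; the paper's $L^1$ and $L^\infty$ statements rely on the uniform H\"older continuity and compact support bounds for $\mu^T$, which must be proved with the correct time scaling.
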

In relation to the main text, this result appears as Theorem \ref{thm: main} (for the terminal cost problem \eqref{tc}) and Theorem \ref{thm: main plan} (for the planning problem \eqref{pp}).
\begin{rem} We now explain what is meant in  Theorem \ref{thm: fin hor} by the {\it proper scale}, a term that we borrow from \cite{vazquez2007porous}. Note that, from \eqref{Mself}, $\mathcal{M}_a$ decays with algebraic rate $t^{-\al}$ as $t\to \infty$, and its support $S_a(t)$ propagates with speed $t^{\al}$. It can be similarly seen, from \eqref{Uself}, and from the speed of propagation of $S_a(t)$, that the space and time derivatives of $\mathcal{U}_a$ decay, respectively, with algebraic rates $t^{-(1-\al)}$ and $t^{-2(1-\al)}$ inside $S_a(t)$. These rates determine the only possible (and non-trivial) scaling of the convergence results \eqref{conv result 1}--\eqref{conv result 3}. Similar observations can be made about higher derivatives of these functions, and their moduli of continuity. Part of our work in this paper will be to show that these exact algebraic rates of decay are a general phenomenon exhibited by arbitrary solutions to $\eqref{mfg}$. An estimate in this direction was already obtained in our previous work \cite[Thm. 1.4]{CMP}, which will be our starting point.
\end{rem}
\begin{rem} Both $m^T$ and $\mathcal{M}_a$ tend to $0$ uniformly as $t\to \infty$. However, due to conservation of mass, $a=\int_{\R}m_0$ is the unique positive number satisfying \eqref{conv result L1}. In particular, Theorem \ref{thm: fin hor} says that the intermediate asymptotic behavior of \eqref{mfg} is fully determined by the initial mass $a$ (c.f. \cite[Thm. 18.1]{vazquez2007porous}).  
\end{rem}
A follow-up question to Theorem \ref{thm: fin hor}, which is natural in MFG (see \cite{CaPo2019,CiPo}), is whether the limit as $T \to \infty$ of the entire solution $u^T$ can be characterized as a specific solution $(u,m)$ of the infinite horizon system:
\begin{equation}
\begin{cases} \tag{MFGI}
-u_{t}+\frac{1}{2}u_{x}^{2}=m^{\theta} & (x,t)\in\R\times(0,\infty),\\
m_{t}-(mu_{x})_{x}=0 & (x,t)\in\R\times(0,\infty),\\
m(x,0)=m_0(x).
\end{cases}\label{mfginfhor}
\end{equation}
In the simpler case of solutions in a compact domain with strictly positive density, where there is no free boundary and the solutions are smooth, this analysis was carried out in \cite[Thm. 1.3]{MimikosMunoz}. In the present setting, however, this convergence question turns out to be a more delicate issue that is closely related to the well-posedness of \eqref{mfginfhor}. To illustrate this point, we note that, in view of \eqref{Uself}, the function $\mathcal{U}_a$ is unbounded in $S_a(t)$ as $t \to \infty$ precisely when $2\al-1\geq 0$, that is, when $\theta \leq2$. As we will see, this reflects the general behavior of the solutions to \eqref{mfginfhor}. Namely, the range $\theta \in (2,\infty)$ is the maximal interval in which global well-posedness of \eqref{mfginfhor} holds and the long time behavior of the solution $(u^T,m^T)$ can be characterized as the unique solution $(u,m)$  to \eqref{mfginfhor} that vanishes as $t \to \infty$. Furthermore, the intermediate asymptotic behavior of $(u,m)$ is also given by the self-similar solution. That is the content of the following result.
\begin{thm} \label{thm: INTRO theta >2} Assume that $m_0$ satisfies \eqref{m0 assum} and \eqref{m0 C1,1}, and that \eqref{kap size} holds. Let $a=\int_{\R}m_0>0$, and assume that $\theta > 2$. The following holds:
\begin{enumerate} 
\item There exists a unique solution $(u,m)\in W^{1,\infty}(\R \times (0,\infty))\times C(\R \times [0,\infty))$ to \eqref{mfginfhor} satisfying 
\be \label{INTRO u(infty)=0 th>2} \lim_{t \to \infty} \|u(\cdot,t)\|_{L^{\infty}(\R)}=0. \ee
Furthermore, for every $p\in [1,\infty]$, 
\be \label{INTRO conv result 1 theta>2} \lim_{t \to \infty}  t^{\al(1-\frac1p)}\left\| m(\cdot,t)- \mathcal{M}_a(\cdot,t) \right\|_{L^{p}(\R)} =0,
\ee
\be \label{INTRO conv result 2 theta>2} \lim_{t \to \infty}  t^{2-\al(1+\frac1p)}\left\| m(\cdot,t)\left|u_x(\cdot,t)- (\mathcal{U}_a)_x(\cdot,t)\right|^2 \right\|_{L^{p}(\R)} =0,  \ee
\be \label{INTRO conv result 3 theta>2} \lim_{t \to \infty}  t^{2-\al(1+\frac1p)}\left\| m(\cdot,t)\left|u_t(\cdot,t)- (\mathcal{U}_a)_t(\cdot,t)\right| \right\|_{L^{p}(\R)} =0. \ee
\item Let $(u^T,m^T)$ be the unique solution to \eqref{mfg}--\eqref{tc}. Then $(u,m)$ is characterized uniquely as
\be \label{INTRO limit char th>2} (u,m)=\lim_{T \to \infty} (u^T,m^T) \,\,\,\,\text{ in } \,\,\, C^1_{\emph{loc}}(\R \times (0,\infty))\times C_{\emph{loc}}(\R \times \left[0,\infty\right)).\ee
\item Assume further that \eqref{mT assum} and \eqref{mT bump} hold. Let $(u^T,m^T)$ be the solution to \eqref{mfg}--\eqref{pp} that satisfies $\intr u(\cdot,T/2)=0.$ Then, up to an adequate choice of $u^T$ on its region of non-uniqueness $\{m^T=0\}$, \eqref{INTRO limit char th>2} holds.
\end{enumerate}
\end{thm}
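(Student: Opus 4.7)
The plan is to construct the infinite horizon solution as a limit of the finite horizon solutions supplied by Theorem \ref{thm: fin hor}, and to derive all three parts in sequence. The decisive feature of $\theta>2$ is the equivalence $\theta > 2 \Leftrightarrow \al\theta > 1$, so the sharp decay $\|m^T(\cdot,s)\|_{L^\infty} \lesssim s^{-\al}$ from \cite[Thm. 1.4]{CMP} makes the tail $\int_t^\infty \|m^T(\cdot,s)\|_{L^\infty}^\theta\, ds \lesssim t^{1-\al\theta}$ both finite and decaying to zero as $t \to \infty$. This single integrability fact is what drives both the existence of the limit $u$ and the property $\|u(\cdot,t)\|_{L^\infty}\to 0$.

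For existence, fix $c_T > 0$ and let $(u^T,m^T)$ solve \eqref{mfg}--\eqref{tc}. Testing \eqref{urepform} against the constant trajectory $\beta \equiv x$, together with nonnegativity of the running and terminal costs, yields the uniform sandwich
\begin{equation*}
0 \leq u^T(x,t) \leq \int_t^T \|m^T(\cdot,s)\|_{L^\infty}^\theta\, ds + c_T \|m^T(\cdot,T)\|_{L^\infty}^\theta \lesssim t^{1-\al\theta} \quad \text{on } \R \times (0,T).
\end{equation*}
Combined with local Lipschitz estimates for $u^T$ obtained from the HJ equation and the regularity of $m^T$ from \cite{CMP}, Arzel\`a--Ascoli produces a subsequence $(u^{T_n}, m^{T_n}) \to (u,m)$ in $C^1_{\text{loc}}(\R\times(0,\infty)) \times C_{\text{loc}}(\R\times[0,\infty))$, with $(u,m)$ solving \eqref{mfginfhor} and satisfying $\|u(\cdot,t)\|_{L^\infty} \lesssim t^{1-\al\theta}\to 0$.

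For uniqueness, let $(u^i,m^i)_{i=1,2}$ be two such solutions. The standard Lasry--Lions duality computation, using both PDEs, produces
\begin{equation*}
\frac{d}{dt}\int_\R (u^1-u^2)(m^1-m^2)\,dx = -\int_\R \frac{1}{2}(m^1+m^2)(u^1_x-u^2_x)^2\,dx - \int_\R ((m^1)^\theta-(m^2)^\theta)(m^1-m^2)\,dx \leq 0.
\end{equation*}
Both endpoint terms vanish: at $t=0$ because $m^1(0)=m^2(0)=m_0$, and as $t\to\infty$ since $\|u^i(\cdot,t)\|_{L^\infty}\to 0$ while $\|m^i(\cdot,t)\|_{L^1}=a$ is conserved. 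Hence the nonpositive integrand is identically zero, forcing $m^1=m^2$ by strict monotonicity of $m\mapsto m^\theta$, then $u^1_x=u^2_x$ on $\{m>0\}$, and finally $u^1=u^2$ from the decay at infinity. This upgrades the existence step to the full convergence \eqref{INTRO limit char th>2} of part 2, and then the self-similar asymptotics of part 1 follow from Theorem \ref{thm: fin hor} by the triangle inequality: for fixed $t$, $\|m(\cdot,t)-\mathcal{M}_a(\cdot,t)\|_{L^p} \leq \|m(\cdot,t)-m^T(\cdot,t)\|_{L^p} + \|m^T(\cdot,t)-\mathcal{M}_a(\cdot,t)\|_{L^p}$, where the first term vanishes as $T\to\infty$ by the local uniform convergence and the uniform compact support of $m^T(\cdot,t)$ from \cite{CMP}, and the second has a controlled $\lim_t\limsup_T$ by Theorem \ref{thm: fin hor}. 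The convergences involving $u_x$ and $u_t$ are handled analogously using the $C^1_{\text{loc}}$ convergence of $u^T$.

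Part 3 (the planning problem) proceeds along the same lines: the decay $\|m^T(\cdot,t)\|_{L^\infty} \lesssim t^{-\al}$ again controls $\|u^T(\cdot,t)\|_{L^\infty}$ on bounded time intervals, and the normalization $\int_\R u^T(\cdot,T/2)=0$ pins down the additive ambiguity so that the limit coincides with the $(u,m)$ of part 1 (with $u^T$ extended adequately on $\{m^T=0\}$, where the HJ equation does not determine $u^T$ uniquely). The principal obstacle is justifying the Lasry--Lions duality rigorously on the unbounded domain $\R\times(0,\infty)$: the compact support $\operatorname{supp} m^i(\cdot,t) \subset [-Ct^\al, Ct^\al]$ handles the spatial integrals, and the quantitative $t^{1-\al\theta}$ decay of $u^i$ handles the temporal boundary at infinity, but carrying out the integration by parts across the free boundary $\partial\{m^i>0\}$ requires careful use of the regularity results of \cite{CMP}.
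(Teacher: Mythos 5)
Your existence and part~2 arguments do track the paper's: the sharp decay $\|m^T(\cdot,s)\|_{L^{\infty}}=O(s^{-\al})$, the resulting $O(t^{2\al-1})$ bound on $u^T$ (your representation-formula sandwich with $\beta\equiv x$ is equivalent to the paper's comparison-principle step), compactness via the uniform estimates of Section \ref{sec: estimates}, and Lasry--Lions duality with the boundary term at $t=\infty$ killed by the decay of $u$ and conservation of mass. However, there are two genuine gaps. First, your proof of the asymptotics \eqref{INTRO conv result 1 theta>2}--\eqref{INTRO conv result 3 theta>2} is circular within this paper: you deduce them from Theorem \ref{thm: fin hor} by a triangle inequality, but for $\theta>2$ Theorem \ref{thm: fin hor} is itself proved (Corollary \ref{cor:main th>2}) as a consequence of the present theorem, and the paper has no independent proof of it in this range. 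The non-circular route is to verify that any subsequential limit $(u,m)$ has a continuous rescaling satisfying the hypotheses of Proposition \ref{prop: convergence result} (uniform bounds, compact support, interior regularity of $Dv$, the $H^1$ energy bound), so that the Lyapunov-functional convergence applies directly to the infinite-horizon solution; this is exactly Proposition \ref{prop: existence subseq}, and your proposal never invokes that machinery, so part~1 is left unproven.

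Second, the final step of your uniqueness argument, ``finally $u^1=u^2$ from the decay at infinity,'' does not follow from what precedes it: Lasry--Lions gives $m^1=m^2$ and $u^1_x=u^2_x$ only on $\{m>0\}$, which together with the decay identifies $u^1$ with $u^2$ on the support, whereas the theorem asserts uniqueness of $u$ on all of $\R\times(0,\infty)$, including $\{m=0\}$, where the two functions merely solve $-u_t+\frac12u_x^2=0$. Closing this requires an additional argument: the paper lets $t_1\to\infty$ in the dynamic programming principle \eqref{dpp sacag}, using $m^{\theta}=O(t^{2\al-2})$ and \eqref{u(infty)=0 th>2}, to represent $u$ by an infinite-horizon variational formula depending only on $m$ (alternatively, once $m^1=m^2$ is known, a comparison argument on $\R\times(0,T_{\vep})$ with the terminal discrepancy sent to zero would do); you state neither. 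Relatedly, your sketch of part~3 omits the two points where the planning problem genuinely differs: the early-time monotonicity of the free boundary (Lemma \ref{lem: plan early monotone}) and the construction by characteristics of a well-behaved representative of $u^T$ on $\{m^T=0\}$ (Proposition \ref{prop: plan estimates}), which is what replaces the trajectory characterization available for \eqref{tc} and makes the gradient bound outside the support, and hence the $L^{\infty}$ normalization argument, work.
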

In the main text, this is the combination of Theorem \ref{thm: fin theta>2} (for the terminal cost problem) and Theorem \ref{thm: fin theta>2 plan} (for the planning problem).
\begin{rem} In view of Theorem \ref{thm: INTRO theta >2} and the preceding comments, we henceforth refer to the range $\theta \in (2,\infty)$ as the subcritical range, to $\theta \in (0,2)$ as the supercritical range, and to $\theta=2$ as the critical case. 
\end{rem}
\begin{rem} We explain here the non-uniqueness statement regarding the planning problem. It is a general fact (see \cite[Thm. 4.3]{CMP}, or Theorem \ref{thm: prelim wellposedness} below) that, for both \eqref{mfg}--\eqref{tc} and \eqref{mfg}--\eqref{pp}, the density function $m=m^T$ is unique. However, while the value function $u=u^T$ is unique for \eqref{mfg}--\eqref{tc}, in the case of \eqref{mfg}--\eqref{pp} it is only unique up to a constant, and only in the region $\{m>0\}$. Hence, in the statement of Theorem \ref{thm: INTRO theta >2}, we specify its integral at $t=T/2$ to uniquely determine $u^T$ in $\{m>0\}$. Furthermore, to avoid pathological behavior in the region of non-uniqueness, we also specify that the global convergence only holds up to an adequate choice of the solution $u^T$ in that region. Naturally, the convergence inside $\{m>0\}$ holds independently of the choice of $u^T$.    
\end{rem}
Additionally, for the critical case $\theta=2$, we are still able to establish a partial well-posedness result for \eqref{mfginfhor} which is sufficient to characterize the limit of $(Du^T,m^T)$ (see Theorems \ref{thm: fin theta=2} and \ref{thm: th=2 plan} below). These theorems for the subcritical and critical values of $\theta$ are not only of interest on their own, but they are also instrumental in proving our main convergence result, Theorem \ref{thm: fin hor}.

In fact, while the statement of Theorem \ref{thm: fin hor} holds for arbitrary $\theta>0$, the proofs diverge substantially across the different ranges of the parameter. In short, while the proofs for $\theta \in [2,\infty)$ rely primarily on well-posedness (or partial well-posedness) of \eqref{mfginfhor}, the proof for the supercritical range $\theta \in (0,2)$ relies on a precise estimate of the convergence rate.

We prove that, when $\theta \in (0,2)$, all sufficiently regular solutions $(u,m)$ of \eqref{mfginfhor} converge, as $t \to \infty$, to the self-similar solution, at a quantifiable algebraic rate that is uniform across all such solutions (see Proposition \ref{prop: exponential theta<2}). This quantitative result, which is a feature of the supercritical range, allows us to bypass the need for a well-posedness theorem and complete the proof of Theorem \ref{thm: fin hor} for this range.

We further elaborate on our proof strategy. We use  the method of continuous rescaling to analyze a zoomed version of $(u,m)$, in which the scaling factors are indexed by a continuous parameter $\ta$, representing the `new time'. While this approach is new in the context of MFG, it is a classical idea that been applied successfully to study the asymptotics of many evolution problems (see, for instance, \cite{GaVaBook,Hu}). To determine the correct zooming factors, we observe that for any $\lambda >0$, the pair of functions $(x,t) \mapsto \lambda^{\al} m(\lambda^{\al}x ,t \lambda )$ and $(x,t) \mapsto \lambda^{1-2\al}u(\lambda^{\al}x,\lambda t)$ still satisfy \eqref{mfg}, with the total mass $a=\intr m(\cdot,t)$ preserved. Crucially, in the case of the self-similar solution $\mathcal{M}_a$, the function itself remains invariant under this rescaling. Accordingly, we define the continuous rescaling $(v,\mu)$, of \eqref{mfginfhor} by
\be \label{INTRO v mu defi}
\mu(\et,\ta)=t^{\al}m(x,t), \, v(\et,\ta)=t^{1-2\al}u(x,t), \quad (\et,\ta)\in \R \times \left(-\infty,\log(T) \right],
\ee
where
\be \label{INTRO tau eta defi} t=e^{\ta},\,\, x= t^{\al} \et.\ee
Under this transformation, the pair $((\mathcal{U}_a)_x(x,t),\mathcal{M}_a(x,t))$ becomes the stationary profile $(U_a'(\eta),M_a(\eta))$. Accordingly, \eqref{INTRO conv result 1 theta>2}--\eqref{INTRO conv result 2 theta>2} are reduced to showing that, as $\ta \to \infty$, $(v_{\eta},\mu)$ converges to $(U_a'(\eta),M_a(\eta))$. To show this, we analyze the system of PDE satisfied by $(v,\mu)$, together with an appropriately chosen Lyapunov functional. We will see that, in particular, the qualitative behavior of this functional is determined by whether $\theta$ is in the subcritical, critical, or supercritical range.

We present the formal computations behind this analysis in Section \ref{sec: convergence IH}, where we study the asymptotic behavior of the solutions to \eqref{mfginfhor}, showing that they always satisfy \eqref{INTRO conv result 1 theta>2}--\eqref{INTRO conv result 3 theta>2}, under the assumption that the continuous rescaling is sufficiently well-behaved. A major technical difficulty, however, is obtaining adequate estimates for the continuous rescaling $(v,\mu)$. This amounts to obtaining bounds for $(u,m)$ whose algebraic rate of growth as $t \to \infty$ is the sharp one, in the sense that it is attained by the self-similar solution. We carry this out in Section \ref{sec: estimates}, initially narrowing down our focus to the terminal cost problem \eqref{mfg}--\eqref{tc}, to streamline the exposition. To establish these estimates, we use a variety of techniques, including optimal control, Lagrangian coordinates, maximum principle arguments, displacement convexity, and the regularity method of intrinsic scaling. In Section \ref{sec: main}, we connect the asymptotic findings for \eqref{mfginfhor} with the finite horizon problem, and take advantage of the sharp growth estimates to prove the main results of the paper for \eqref{mfg}--\eqref{tc}. Finally, Section \ref{sec:plan} outlines how to adapt these arguments for the planning problem \eqref{mfg}--\eqref{pp}. 

\subsection*{The case of dimensions $d>1$}
We now discuss the extent to which our arguments may be used to study the asymptotic behavior of \eqref{mfg} in dimensions larger than $1$. This is a delicate matter. On one hand, the strongest regularity results of \cite{CMP}, of which this paper is a sequel, are currently quite far from being extended to $d>1$, with major obstacles such as the lack of a continuity result for $m$, and of a gradient estimate for $u$ that does not require $m_0$ to be positive. We use these regularity results here in full force, in order to obtain the uniform convergence result \eqref{conv result Linfty}. Therefore, the case $p=\infty$ of Theorem \ref{thm: fin hor} currently appears to be out of reach for higher dimensions.

However, the range $p \in [1,\infty)$ is a different matter. Indeed, a major portion of the analysis of the infinite horizon system and the continuous rescaling, developed in Section \ref{sec: convergence IH}, involves energy computations which remain valid in any dimension. While there are still significant regularity issues, we believe that this paper paves the way to prove an analogue of Theorem \ref{thm: fin hor} in arbitrary dimensions, and this will be explored in upcoming work.

\section{Assumptions and preliminary results} \label{sec: assumptions}
We will now list our assumptions, which correspond to those of \cite{CMP}, as well as the main results from \cite{CMP} that will be needed later about the solutions to \eqref{mfg} and their free boundary. In what follows, $\theta$, $\overline{\al}$, and $C_0$ are given positive constants, with $\overline{\al} \in (0,1)$.

We assume that the function $m_0:\R \to [0,\infty)$ satisfies, for some given interval $(a_0,b_0),$
\be \label{m0 assum} \{m_0>0\}=(a_0,b_0),\,\,\,  m_0\in C(\R) \cap C^{1,\overline{\al}}((a_0,b_0)), \ee
and
\be \label{m0 C1,1} \quad -C_0 \leq (m_0^{\theta})_{xx} \leq -\frac{1}{C_0} \text{ in } (a_0,b_0).
\ee
As was explained in \cite{CMP}, \eqref{m0 C1,1} can be understood as a second-order non-degeneracy assumption which, in particular, ensures that the free boundary starts moving at $t=0$ (note also that \eqref{m0 C1,1} is satisfied at each time $t>0$ by the self similar solution $\mathcal{M}_a(\cdot,t)$). Along these lines, the following non-degeneracy property is not an assumption, but a useful consequence of \eqref{m0 assum} and \eqref{m0 C1,1}, which holds up to increasing the value of $C_0$:
\be \label{m0 bump} \frac{1}{C_{0}}\text{dist}(x,\{a_{0},b_{0}\})\leq m_{0}(x)^{\theta}\leq C_{0}\text{dist}(x,\{a_{0},b_{0}\}),\quad x\in [a_0,b_0].
\ee 
For the case of \eqref{mfg}--\eqref{pp}, we assume that $m_T: \R \to [0,\infty)$ satisfies, in accordance with \eqref{m0 assum} and \eqref{m0 bump}, for some given interval $(a_1,b_1)$,
\be 
\int_{\R} \label{mT assum} m_T = \int_{\R} m_0, \,\,\,\, \{m_{T}>0\}=(a_{1},b_{1}), \,\,\,\, m_T\in C(\R) \cap C^{1,\overline{\al}}(a_1,b_1), \ee
and
\be \label{mT bump} \frac{1}{C_{1}}\text{dist}(x,\{a_{1},b_{1}\})\leq m_{T}(x)^{\theta}\leq C_{1}\text{dist}(x,\{a_{1},b_{1}\}),\quad x\in [a_1,b_1]. \ee 
In order to state the results in a way that is consistent with the proper scaling of \eqref{mfg}, we will also write
\be \label{kappaT defi} c_T=\kappa_T T, \ee
where $\kappa_T>0$, and assume that
\be \label{kap size} \frac{1}{C_0}\leq \kappa_T \leq C_0, \quad T \in [0,\infty]. \ee

Next, since \eqref{mfg} does not have, in general, classical solutions, we define a notion of generalized solution, and cite a well-posedness result (\cite[Thm. 4.3, Thm. 1.1]{CMP}).
\begin{defn} \label{def: sol} We say that $(u,m)\in W^{1,\infty}(\R \times (0,T)) \times C_c(\R \times [0,T])$ is a solution to \eqref{mfg} if $u$ satisfies the first equation in the viscosity sense, and $m$ satisfies the second equation in the distributional sense. Furthermore, we say that $(u,m)$ is a solution to \eqref{mfg}--\eqref{tc} (respectively, \eqref{mfg}--\eqref{pp}) if \eqref{tc} also holds (respectively, if \eqref{pp} also holds), in the pointwise sense.
\end{defn}
\begin{thm}\label{thm: prelim wellposedness}Assume that $m_0$ satisfies \eqref{m0 assum} and \eqref{m0 C1,1}. Then the following holds:
\begin{enumerate}
    \item There exists a unique solution $(u,m)$ to \eqref{mfg}--\eqref{tc}.
    \item If $m_T$ satisfies \eqref{mT assum} and $\eqref{mT bump}$, then there exists a solution $(u,m)$ to \eqref{mfg}--\eqref{pp}. The function $m$ is unique, and $u$ is unique up to an additive constant on the set $\{m>0\}$. 
\end{enumerate}
    
\end{thm}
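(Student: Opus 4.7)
The plan is to obtain the well-posedness statement by combining, on the one hand, a regularization scheme that exploits the smooth theory of \cite{MimikosMunoz,Munoz} for strictly positive marginals, and, on the other, the classical Lasry--Lions monotonicity argument for uniqueness. The convex variational structure of MFG with a monotone local coupling underlies both halves.

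For existence, I would approximate $m_0$ (and, in the planning case, $m_T$) by strictly positive smooth densities $m_0^{\vep},m_T^{\vep}$ of the same total mass $a$, converging in $L^1$. The theory of \cite{MimikosMunoz} produces smooth solutions $(\ue,\me)$ to the regularized systems. Testing the HJ equation against $\me$ and the continuity equation against $\ue$ yields the standard MFG energy identity, which, combined with the value function representation \eqref{urepform}, gives $\vep$-uniform $L^{\infty}$ bounds on $\ue$, uniform control on $\intr \me(\ue_x)^2$, and uniform $L^{\theta+1}$ bounds on $\me$. These estimates, together with the finite-speed-of-propagation bound of \cite[Thm.~1.4]{CMP} which provides tightness of $\me$, suffice to extract a limit $(u,m)$ with $\ue \to u$ locally uniformly (Arzelà--Ascoli) and $\me \to m$ in $L^1$. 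One then verifies that $u$ is a viscosity solution of the HJ equation and $m$ is a distributional solution of the continuity equation, so that $(u,m)$ is a solution in the sense of Definition \ref{def: sol} meeting the prescribed terminal condition in either case.

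For uniqueness, given two solutions $(u_1,m_1),(u_2,m_2)$, the Lasry--Lions computation---formally subtracting the two HJ equations, testing against $m_1-m_2$, subtracting the two continuity equations, testing against $u_1-u_2$, and integrating by parts---yields
\[
\Bigl[\intr (u_1-u_2)(m_1-m_2)\,dx\Bigr]_0^T = -\int_0^T\!\!\intr \left( \tfrac{m_1+m_2}{2}\,|(u_1-u_2)_x|^2 + (m_1^{\theta}-m_2^{\theta})(m_1-m_2) \right) dx\,dt.
\]
The boundary term at $t=0$ vanishes in both problems. For \eqref{tc} the boundary term at $T$ equals $c_T\intr(m_1^{\theta}-m_2^{\theta})(m_1-m_2)\,dx \ge 0$, while for \eqref{pp} it vanishes since $m_1(T)=m_2(T)=m_T$. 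In either case, the left-hand side is nonnegative and the right side is nonpositive, so both quantities must vanish. Strict monotonicity of $s\mapsto s^{\theta}$ forces $m_1=m_2$; the quadratic term then forces $(u_1)_x=(u_2)_x$ on $\{m>0\}$, so $u_1-u_2$ is constant on that set. For \eqref{tc}, the terminal relation pins down this constant; for \eqref{pp}, it does not, accounting for the partial uniqueness statement.

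The main technical obstacle is to rigorously justify the Lasry--Lions identity and the intermediate integrations by parts, since the solutions only lie in $W^{1,\infty}\times C_c$ and no interior regularity is guaranteed across the free boundary $\partial\{m>0\}$. I would bypass this by first establishing the identity at the regularized level---where $(\ue,\me)$ is smooth and $\me$ is strictly positive, so every manipulation is legitimate---and then passing to the limit using lower semicontinuity of the quadratic form $(m,v)\mapsto \int m|v|^2$ against weak convergence of $(\me,\me \ue_x)$, together with strong convergence of $\me^{\theta}$ on compact subsets of $\{m>0\}$ guaranteed by the interior estimates of \cite{CMP}. The compactness required for the existence half rests on the same regularization plus tightness package.
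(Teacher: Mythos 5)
This statement is not proved in the paper at all: Theorem \ref{thm: prelim wellposedness} is quoted verbatim from \cite[Thm.~4.3, Thm.~1.1]{CMP}, where the construction goes through smooth approximations of the Neumann problem \eqref{mfgneum} on a large but fixed interval $[-R,R]$ (this is visible in the paper's proof of Lemma \ref{lem:displ}), combined with the full regularity and free-boundary machinery of that work. Your proposal is a reasonable sketch in the same spirit (regularize, get uniform estimates, pass to the limit, Lasry--Lions for uniqueness), but as written it has two concrete gaps.

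First, the existence half glosses over exactly the hard part. The a priori bounds you list ($L^\infty$ bound on $\ue$, $\intr \me(\ue_x)^2$, $L^{\theta+1}$ bound on $\me$) do not give equicontinuity, so Arzel\`a--Ascoli does not apply; what is needed is an $\vep$-uniform Lipschitz bound on $\ue$ that does not rely on positivity of $m_0$, together with uniform continuity estimates for $\me$ ensuring the limit $m$ is continuous and compactly supported, as Definition \ref{def: sol} requires. These are major results of \cite{CMP}, not consequences of the energy identity. Moreover, your tightness claim via \cite[Thm.~1.4]{CMP} is inapplicable: that theorem concerns solutions with compactly supported data, whereas your approximations have strictly positive, Gaussian-like marginals and therefore no compact support; you would need separate uniform decay or barrier estimates for $\me$. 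Second, the uniqueness half cannot be salvaged by ``establishing the identity at the regularized level and passing to the limit'': uniqueness requires the Lasry--Lions identity for two \emph{arbitrary} solutions $(u_1,m_1),(u_2,m_2)$ of the limit problem, and there is no reason these are limits of your approximating family (assuming so is circular). The identity must be justified directly for weak solutions, e.g.\ by mollifying the given solutions or by duality/optimal-control arguments as in \cite{Carda1,OPS} and \cite[Thm.~5.2]{CMP}. Finally, for \eqref{mfg}--\eqref{tc} your argument only yields uniqueness of $u$ on $\{m>0\}$ (``constant pinned by the terminal condition''), whereas part 1 asserts global uniqueness; once $m_1=m_2$ this should instead be concluded from the comparison principle for the terminal-value HJ problem with fixed right-hand side $m^{\theta}$, a step your sketch omits.
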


The next result concerns the regularity of the solution and the analysis and characterization of its free boundary (\cite[Thm. 1.1, Thm. 1.5]{CMP}).
\begin{thm}\label{thm.intro1} Assume that $m_0$ satisfies \eqref{m0 assum} and \eqref{m0 C1,1}, and that \eqref{kap size} holds.
In case of problem \eqref{mfg}--\eqref{pp}, assume also that $m_T$ satisfies \eqref{mT assum} and $\eqref{mT bump}$.
Let $(u,m)$ be a solution to \eqref{mfg}--\eqref{tc}, or to \eqref{mfg}--\eqref{pp}. Then the following holds:
\begin{enumerate}
\item $(u, m)\in C_{\emph{loc}}^{2,\overline{\al}}((\mathbb{\mathbb{R}\times}[0,T])\cap\{m>0\})\times C_{\emph{loc}}^{1,\overline{\al}}((\mathbb{\mathbb{R}\times}[0,T])\cap\{m>0\})$, and there exists $\beta\in (0,1)$ such that, for every $\delta>0$, $(u,m)\in C^{1,\beta}(\R \times [\delta,T-\delta]) \times C^{0,\beta}(\R \times [\delta,T-\delta])$.
\item There exist two functions $\gamma_{L},\gamma_{R} \in W^{2,\infty}(0,T)$
such that $\gamma_L<\gamma_R$, and, for every $t\in (0,T),$
\begin{equation} \label{freebd ch}
\{m(\cdot,t)>0\}=(\gamma_{L}(t),\gamma_{R}(t)).
\end{equation}
The support of $m$ is convex, with the curves $\gamma_L$ and $\gamma_R$ satisfying $\Ddot\gamma_L>0,\Ddot\gamma_R<0$. Furthermore, if $u$ solves \eqref{mfg}--\eqref{tc}, then the support is expanding in time, with $\dot\gamma_L<0,\dot\gamma_R>0.$
\end{enumerate}
\end{thm}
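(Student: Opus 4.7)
The plan is to reduce the coupled MFG system to a single quasilinear equation for $u$ in the space-time variables, use standard elliptic theory wherever $m$ is bounded away from zero, handle the degeneracy near the interface by intrinsic scaling, and analyze the free boundary via a Lagrangian description of the continuity equation. Following the Lions reformulation recalled in the introduction, on $\{m>0\}$ one inverts the HJ equation to $m=(-u_t+\frac{1}{2}u_x^2)^{1/\theta}$ and substitutes into the continuity equation, obtaining a fully nonlinear equation in $u$ whose symbol, a direct algebraic check shows, is uniformly elliptic in $(x,t)$ as long as $m\geq c>0$.

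For part (1), the interior regularity in $\{m>0\}$ follows by a standard bootstrap: $u$ is Lipschitz by assumption, so the coefficients of the reformulated equation are measurable and locally uniformly elliptic on compact subsets of $\{m>0\}$; Krylov--Safonov then yields $Du\in C^{\alpha}_{\text{loc}}$, and Schauder theory upgrades this to $u\in C^{2,\overline{\al}}_{\text{loc}}$, after which the HJ equation provides the stated regularity of $m$. For the global $C^{1,\beta}\times C^{0,\beta}$ bound on strips $\R\times[\delta,T-\delta]$, which must reach across the free boundary, I would invoke the intrinsic-scaling machinery of DiBenedetto--Vespri type: choose space-time cylinders whose aspect ratio is rescaled by a power of $\sup m^{\theta}$, so that at each scale the rescaled equation is non-degenerate, and run a De Giorgi--Moser oscillation-decay lemma in these intrinsic cylinders to obtain a uniform Hölder modulus for $m$ and $Du$ up to and across the interface.

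For part (2), I would first argue that $\{m(\cdot,t)>0\}$ is an interval for every $t$ by viewing it as the pushforward of $(a_0,b_0)$ along the Lagrangian flow $\dot X=u_x(X,\cdot)$, which is well-defined and continuous inside $\{m>0\}$ thanks to part (1); finite speed of propagation, already known from \cite{CMP}, gives continuity of the endpoints $\gamma_L,\gamma_R$. To promote this to $W^{2,\infty}$ regularity, I would write $\dot\gamma_R(t)=u_x(\gamma_R(t),t)$ (justified since $u_x$ extends Hölder-continuously to the interface by part (1)) and differentiate once more using the HJ equation, expressing $\ddot\gamma_R$ as a one-sided boundary trace of $(m^\theta)_{xx}$. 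The sign conditions $\ddot\gamma_L>0,\,\ddot\gamma_R<0$ then reduce to propagating the strict concavity of $m_0^\theta$ encoded in hypothesis \eqref{m0 C1,1} forward in time, which I would attempt via a displacement-convexity argument along the Lagrangian flow; in the terminal-cost case the same estimate, evaluated at $t=0^+$, gives the strict monotonicity $\dot\gamma_L<0,\,\dot\gamma_R>0$.

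The main obstacle, in my view, is the global $C^{1,\beta}$ estimate of part (1) across the free boundary: intrinsic scaling must be tailored to the particular mixed structure of the Lions-reformulated equation, simultaneously handling the nonlinear dependence of the coefficients on $Du$ and the degeneracy carried by $m$. A close second is the preservation-of-concavity step in part (2), since every geometric assertion about $\gamma_L,\gamma_R$ depends on it and it does not appear to follow from the PDE alone without a Lagrangian or optimal-transport viewpoint.
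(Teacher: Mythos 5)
Before anything else, note that Theorem \ref{thm.intro1} is not proved in this paper at all: it is imported verbatim from \cite[Thm. 1.1, Thm. 1.5]{CMP}, so there is no internal proof to compare against. Judged on its own, your outline (Lions elliptic reformulation of the system in $\{m>0\}$, interior bootstrap, intrinsic scaling for continuity across the interface, Lagrangian flow for the free boundary) does reflect the methodology of \cite{CMP}, but two of your steps have genuine gaps. A smaller remark first: in part (1), the De Giorgi iteration in intrinsic cylinders gives a H\"older modulus for $m$ (equivalently for $p=m^{\theta}$ in Lagrangian variables), but it does not, by itself, produce the $C^{1,\beta}$ regularity of $u$ across the free boundary; that is obtained separately, treating the HJ equation as having a H\"older right-hand side and using one-sided (semiconcavity-type) estimates in the spirit of \cite{CaSo}, and your sketch supplies no mechanism for this step.

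The more serious gap is in part (2). With the paper's convention $\dot\gamma(x,\cdot)=-u_x(\gamma(x,\cdot),\cdot)$, differentiating along the flow and using the differentiated HJ equation gives $\ddot\gamma_R(t)=(m^{\theta})_x(\gamma_R(t),t)$: a one-sided trace of the \emph{first} derivative of $m^{\theta}$, not of $(m^{\theta})_{xx}$ as you write. Via \eqref{mass cons} this trace equals $(m_0^{\theta})_x(b_0)\,\gamma_x(b_0,t)^{-(1+\theta)}$, so the strict concavity of $\gamma_R$ rests on the boundary non-degeneracy $(m_0^{\theta})_x(b_0)<0$ (a consequence of \eqref{m0 C1,1} through \eqref{m0 bump}) together with two-sided control of $\gamma_x$ as in \eqref{gamma x bd}; ``propagating the concavity of $m_0^{\theta}$ by displacement convexity'' does not deliver this, since displacement convexity only yields convexity in time of integral quantities such as $\int m^p$, not pointwise information at the interface. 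Worse, your derivation of the monotonicity $\dot\gamma_L<0$, $\dot\gamma_R>0$ ``evaluated at $t=0^+$'' is logically backwards: once $\ddot\gamma_R<0$ is known, $\dot\gamma_R$ is decreasing, so positivity on $(0,T)$ must come from a sign at the \emph{terminal} time. It is the coupling \eqref{tc} that forces $u_x(\gamma_R(T),T)=c_T(m^{\theta})_x(\gamma_R(T),T)\le 0$, hence $\dot\gamma_R(T)\ge 0$, and then $\dot\gamma_R(t)>0$ for $t<T$ by strict concavity. A concave $\gamma_R$ with favorable behavior near $t=0$ can perfectly well increase and then decrease; this is exactly what happens for the planning problem, where the support is not monotone, so an argument anchored at $t=0^+$ cannot distinguish the two cases and proves nothing in the terminal-cost case.
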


A fundamental tool in the analysis of \eqref{mfg} is the Lagrangian flow $\gamma:(a_0,b_0) \times [0,T] \to \R $. This is simply the flow of optimal trajectories associated to the HJ equation, and can be defined, for each $x\in (a_0,b_0),$ as the solution to the differential equation
\be \label{flow defi}
\begin{cases}
\dot \gamma ( x,\cdot)=-u_{x}(\gamma( x,\cdot),\cdot),\\
\gamma( x,0)=x. 
\end{cases}
\ee
The next result we cite here contains the main properties of the Lagrangian flow, its relation to the free boundary curves, and, crucially, its exact rate of growth  (\cite[Thm. 1.1, Thm. 1.4]{CMP}). To specify the rate of growth, we define
\be \label{scrd defi}
\mathscr{d}(t) = 
\begin{cases} 
t & \text{if } u \text{ solves \eqref{mfg}--\eqref{tc}}, \\
\text{dist}(t, \{0, T\}) & \text{if } u \text{ solves \eqref{mfg}--\eqref{pp}.}
\end{cases}
\ee

\begin{thm}\label{thm.intro2} Under the assumptions of Theorem \ref{thm.intro1}, let $(u,m)$ be a solution to \eqref{mfg}--\eqref{tc}, or to \eqref{mfg}--\eqref{pp}, and let $\gamma_L, \gamma_R$ be its free boundary curves. The Lagrangian flow $\gamma$ is well defined
on $(a_{0},b_{0})\times[0,T]$, and
\be \label{gamma regu}
\gamma\in W^{1,\infty}((a_{0},b_{0})\times(0,T))\cap C_{\emph{\text{loc}}}^{2,\overline{\al}}((a_{0},b_{0})\times[0,T]),\,\,\,\gamma_{L}=\gamma(a_{0},\cdot),\,\gamma_{R}=\gamma(b_{0},\cdot).
\ee 
Furthermore, we have, for $(x,t)\in (a_0,b_0)\times (0,T)$,
\be \label{mass cons} \gamma_x(x,t)=\frac{m_0(x)}{m(\gamma(x,t),t)},\ee
and there exists $C>0$ such that
\be \label{gamma bd} |\gamma(x,t)|\leq C(1+\mathscr{d}(t)^{\al}), \ee
    \be \label{gamma x bd}  \frac{1}{C} (1+\mathscr{d}(t)^{\al})\leq \gamma_x(x,t) \leq  C(1+\mathscr{d}(t)^{\al}),\ee
where $C=C(C_0,|a_0|,|b_0|)$ in the case of problem \eqref{mfg}--\eqref{tc}, and $C=C(C_0,C_1,|a_0|,|b_0|,|a_1|,|b_1|)$ in the case of problem \eqref{mfg}--\eqref{pp}.
\end{thm}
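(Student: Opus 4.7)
The plan is to handle the three main assertions of Theorem \ref{thm.intro2} in turn: well-posedness and regularity of the flow $\gamma$; the mass-conservation identity together with the identification of the free boundary curves; and the sharp growth estimates.

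First I would construct $\gamma$ locally. By Theorem \ref{thm.intro1}, $u\in C^{2,\overline{\al}}_{\mathrm{loc}}$ on the open set $\{m>0\}$, so $u_x$ is locally Lipschitz in $x$ there. For each $x\in(a_0,b_0)$ the Cauchy problem \eqref{flow defi} then has a unique maximal $C^{2,\overline{\al}}$ solution, and its spatial derivative $\gamma_x$ satisfies the linear variational equation $\dot{(\gamma_x)}=-u_{xx}(\gamma,\cdot)\,\gamma_x$, so it is strictly positive along the trajectory. The global Lipschitz bound on $u$ keeps $\gamma$ itself from blowing up in finite time, while continuity of $m$ on $\R\times[0,T]$ together with the push-forward identity below prevents the trajectory from reaching the free boundary before $t=T$; this extends $\gamma$ to all of $(a_0,b_0)\times[0,T]$ with the regularity in \eqref{gamma regu}. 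The identity \eqref{mass cons} is then a classical characteristic computation: setting $\rho(x,t)=m(\gamma(x,t),t)\gamma_x(x,t)$, rewriting the continuity equation as $m_t=u_x m_x+u_{xx}m$, and using \eqref{flow defi} and $\dot{(\gamma_x)}=-u_{xx}(\gamma,\cdot)\gamma_x$, one gets $\rho_t\equiv 0$, whence $\rho\equiv m_0$. Strict positivity of $\gamma_x$ makes $\gamma(\cdot,t)$ a homeomorphism onto its image, which by \eqref{mass cons} sits inside $(\gamma_L(t),\gamma_R(t))$; since $\gamma$ preserves total mass, the image must exhaust the full positivity interval, and continuous extension to the endpoints yields $\gamma_L(t)=\gamma(a_0,t)$ and $\gamma_R(t)=\gamma(b_0,t)$.

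The heart of the argument is the sharp estimate \eqref{gamma x bd}. By \eqref{mass cons}, this is equivalent to pinching $m(\gamma(x,t),t)$ between two multiples of $m_0(x)(1+\mathscr{d}(t)^\al)^{-1}$. The critical quantitative input is a matching upper bound on $\|m(\cdot,t)\|_{L^\infty}$, which I would obtain via an Aronson--Benilan-type one-sided bound: differentiating the Hamilton--Jacobi equation twice in $x$ yields a Riccati-type equation for $u_{xx}$ along characteristics which, coupled with the identity $\partial_t\log m=u_{xx}$ along the flow (a direct consequence of the continuity equation), produces the decay $\|m(\cdot,t)\|_{L^\infty(\R)}\le C(1+\mathscr{d}(t)^\al)^{-1}$. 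The matching lower bound on $m$ along the flow would come from combining the non-degeneracy \eqref{m0 bump} with a comparison with the self-similar profile $\mathcal{M}_a$, used as an interior barrier. Once \eqref{gamma x bd} is in hand, \eqref{gamma bd} follows by integrating $\gamma_x$ in $x$ over $(a_0,b_0)$ to bound the width $\gamma_R(t)-\gamma_L(t)$, and by comparing the free boundary curves with those of $\mathcal{M}_a$, which propagate at the exact rate $\mathscr{d}(t)^\al$, to control their absolute location.

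The principal obstacle is the sharp decay rate on $\|m\|_{L^\infty}$. The Riccati/Aronson--Benilan argument is delicate because the MFG system is coupled and the maximum principle for $u_{xx}$ has to be applied with care near the free boundary, where $m^\theta$ degenerates. For the planning problem there is the additional difficulty that $\mathscr{d}(t)$ in \eqref{scrd defi} is the two-sided distance to $\{0,T\}$, so a symmetric argument has to be run near both endpoints; handling these boundary effects with explicit barriers built from $(\mathcal{U}_a,\mathcal{M}_a)$ is the most technical part of the proof.
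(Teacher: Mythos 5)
You should first be aware that the paper does not prove Theorem \ref{thm.intro2} at all: it is quoted as a preliminary result from \cite[Thm. 1.1, Thm. 1.4]{CMP}, so there is no internal proof to compare with, and your argument has to stand on its own. As written it does not, and the breakdown is in the central quantitative step. By \eqref{mass cons}, the two-sided bound \eqref{gamma x bd} is equivalent to the pointwise pinching $\tfrac{1}{C}\, m_0(x)(1+\mathscr{d}(t)^{\al})^{-1}\leq m(\gamma(x,t),t)\leq C\, m_0(x)(1+\mathscr{d}(t)^{\al})^{-1}$ \emph{uniformly} in $x\in(a_0,b_0)$, i.e.\ control of $m$ along the flow relative to $m_0(x)$ up to the free boundary, where $m_0(x)\to0$ at the linear-in-$m_0^\theta$ rate \eqref{m0 bump}. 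Your declared ``critical quantitative input'', a decay bound on $\|m(\cdot,t)\|_{L^{\infty}(\R)}$, cannot deliver the upper pinch for $x$ near $a_0,b_0$: there it is far weaker than $C m_0(x)(1+\mathscr{d}^{\al})^{-1}$, so the lower bound in \eqref{gamma x bd} does not follow. Note also that in the present paper the implication runs the other way: the $L^{\infty}$ decay \eqref{smoothing effect} is \emph{deduced from} \eqref{gamma x bd} in Proposition \ref{prop: smoothing effect}, so your reduction inverts the actual logical order without supplying the missing pointwise information.

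The matching lower pinch is also unjustified: you propose ``comparison with the self-similar profile $\mathcal{M}_a$ used as an interior barrier'', but \eqref{mfg} is a forward--backward system; $m$ solves a continuity equation whose velocity $-u_x$ is determined nonlocally in time by the terminal-value HJ equation, and no comparison principle for $m$ (nor any scalar parabolic equation for which $\mathcal{M}_a$ would be a barrier) is available. Likewise, your Riccati/Aronson--B\'enilan step needs sign and size information on $(m^{\theta})_{xx}$, which is itself a nontrivial propagation-of-concavity statement tied to \eqref{m0 C1,1} and not something you may assume. The proof in \cite{CMP} instead works in Lagrangian coordinates, using identities such as $\ddot\gamma(x,t)=(m^{\theta})_x(\gamma(x,t),t)$, propagation of the two-sided bound \eqref{m0 C1,1}, and maximum-principle/displacement-convexity arguments for $\gamma_x$, with a separate two-sided-in-time treatment for the planning problem; none of these ingredients appears in your sketch. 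Two smaller points: your continuation argument for defining $\gamma$ up to the free boundary is circular as stated, since $m(\gamma(x,t),t)\gamma_x(x,t)=m_0(x)>0$ alone does not prevent the trajectory from reaching $\{m=0\}$ unless one already has an upper bound on $\gamma_x$; and the location bound \eqref{gamma bd} is again made to rest on the same unavailable comparison with $\mathcal{M}_a$.
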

Finally, we state a version of the so-called displacement convexity formula, which is a versatile tool for obtaining energy estimates. In the context of MFG, displacement convexity was first studied in \cite{LaSa,GoSe}, but it is a classical topic in the theory of optimal transport (see \cite{McCann}).
\begin{lem} \label{lem:displ} Under the assumptions of Theorem \ref{thm.intro1}, let $(u,m)$ be a solution to \eqref{mfg}--\eqref{tc} or \eqref{mfg}--\eqref{pp}, and let $p\in (0,1)\cup(1,\infty)$. 
Then, for every $\zeta \in C^{\infty}_c(0,T)$, with $\zeta \geq 0$, we have
\be \label{displ subsol} \int_{0}^T\intr (m^p u_{xx}^{2}+4\theta(\theta+p)^{-2} (m^{\frac{\theta+p}{2}})_{x}^{2})\zeta(t) dxdt \leq \frac{1}{p(p-1)}\int_{0}^T \intr m^p \Ddot \zeta(t)dxdt.  \ee
In particular, the map $t \mapsto (p(p-1))^{-1}\int_{\R} m^p(x,t)dx$ is convex.

\end{lem}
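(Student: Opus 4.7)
The plan is to recognize the assertion as an instance of McCann's displacement convexity, and to prove it by computing two time derivatives of $F(t) := \int_{\R} m^p(x,t)\,dx$, using both PDEs in \eqref{mfg}. First I treat the smooth, strictly positive setting formally to arrive at the sharp identity, then justify it rigorously by approximation and weak lower semicontinuity.

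For the formal step, the continuity equation $m_t = (mu_x)_x$ gives $(m^p)_t = (m^p u_x)_x + (p-1)\, m^p u_{xx}$, so after integrating in space (boundary terms at infinity vanish since $m(\cdot,t)$ is compactly supported) one obtains $F'(t) = (p-1)\int_{\R} m^p u_{xx}\,dx$. Differentiating once more, substituting $u_{xxt} = (u_x u_{xx})_x - (m^{\theta})_{xx}$ from the Hamilton--Jacobi equation, and carrying out a sequence of integrations by parts (all boundary contributions vanishing in the smooth positive case), yields the identity
\[
F''(t) \;=\; p(p-1)\int_{\R} m^p u_{xx}^2\,dx \;+\; \frac{4 p(p-1)\theta}{(p+\theta)^2}\int_{\R}\bigl((m^{(p+\theta)/2})_x\bigr)^2\,dx.
\]
Multiplying by $\zeta\geq 0$ with $\zeta\in C^\infty_c(0,T)$ and integrating by parts twice in $t$ gives \eqref{displ subsol} with equality in the smooth positive case; dividing through by $p(p-1)$ makes the right-hand side nonnegative, hence exhibits the convexity of $t \mapsto F(t)/(p(p-1))$ directly.

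To justify this in the actual non-smooth setting, where $m$ vanishes continuously at the free boundary and the regularity of $u$ degenerates there, I would approximate the initial density by strictly positive, smooth profiles $m_0^\epsilon$ so that the associated solutions $(u^\epsilon, m^\epsilon)$ are classical with $m^\epsilon>0$, appealing to the regularity theory of \cite{MimikosMunoz} after a suitable compactification or Gaussian-decay setup. For these approximations the identity above holds with equality, and hence \eqref{displ subsol} is an equality for $(u^\epsilon,m^\epsilon)$. Passing $\epsilon\to 0$, the right-hand side of \eqref{displ subsol} converges by dominated convergence (with $\ddot\zeta$ bounded and $m^\epsilon \to m$ locally uniformly), while the left-hand side, being the integral against $\zeta\geq 0$ of a convex nonnegative quadratic form in $(u^\epsilon_{xx},\,((m^\epsilon)^{(p+\theta)/2})_x)$, only drops in the limit by weak lower semicontinuity. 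This produces the stated inequality and, because it holds for every $\zeta\geq 0$, yields the convexity claim in the sense of distributions, which for a locally integrable function is genuine convexity.

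The hard part will be securing enough uniform regularity and compactness for the approximations to execute the limit step cleanly, since controlling $(m^\epsilon)^p u^\epsilon_{xx}$ and $((m^\epsilon)^{(p+\theta)/2})_x$ uniformly up to the free boundary as $\epsilon\to 0$ is delicate. Should the approximation route prove too costly, an alternative is a direct cutoff argument: restrict the calculation to $\{m>\delta\}$ using the $C^{2,\overline\alpha}\times C^{1,\overline\alpha}$ interior regularity granted by Theorem \ref{thm.intro1}, and exploit the Lagrangian description in Theorem \ref{thm.intro2} together with the facts that $m\to 0$ and $u_x$ stays bounded at the free boundary to show that the $\delta$-boundary terms vanish in the limit, recovering \eqref{displ subsol} as the one-sided inequality that survives the (possibly signed) boundary contribution.
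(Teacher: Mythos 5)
Your overall skeleton is the same as the paper's: derive the exact displacement convexity identity for smooth, positive solutions (your formal computation of $F'$ and $F''$ is correct and reproduces \cite[Prop.~3.6]{CMP}), then obtain \eqref{displ subsol} for the actual solution by approximating with smooth positive solutions, integrating against $\zeta$, and passing to the limit by lower semicontinuity. The difference, and the genuine gap, lies in the approximation step, which is the entire content of the proof. You propose to mollify $m_0$ into strictly positive Gaussian-like (or compactified) data and invoke the regularity theory of \cite{MimikosMunoz}; but this changes the problem, and you provide no stability statement guaranteeing that the resulting solutions $(u^\epsilon,m^\epsilon)$ converge to the given solution $(u,m)$ of \eqref{mfg}--\eqref{tc} or \eqref{mfg}--\eqref{pp} as $\epsilon\to 0$ (uniform convergence of $m^\epsilon$, and, crucially, $C^2_{\emph{loc}}(\{m>0\})$ convergence of $u^\epsilon$). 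Without the latter you cannot identify the weak limits of $u^\epsilon_{xx}$ and $((m^\epsilon)^{(\theta+p)/2})_x$ with the corresponding quantities for $(u,m)$, so the ``weak lower semicontinuity'' step cannot be executed; and without the former even the dominated convergence on the right-hand side is unjustified. You yourself flag this as ``the hard part,'' but flagging it does not close it. The paper avoids this by using a different, already-established approximation: the smooth solutions $(u_n,m_n)$ of the Neumann problem \eqref{mfgneum} on a fixed large box, for which \cite[Thm.~4.3]{CMP} supplies exactly the uniform convergence $m_n\to m$, $u_n\to u$ and the $C^2_{\emph{loc}}(\{m>0\})$ convergence needed to get the pointwise liminf inequality \eqref{liminf muxxocme} and conclude by Fatou.

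Your fallback cutoff argument on $\{m>\delta\}$ is also not a proof as stated: the boundary terms produced by the integrations by parts involve $u_{xx}$ and $m_x$ on the level sets $\{m=\delta\}$ near the free boundary, and neither the boundedness of $u_x$ nor $m\to 0$ there controls these quantities; quantifying them as $\delta\to 0$ is precisely the degenerate-boundary difficulty the approximation scheme is designed to bypass. So either route you sketch still needs the key compactness/stability input, and the cleanest repair is to replace your mollification by the Neumann-box approximation of \cite[Thm.~4.3]{CMP}, after which your limit argument (with Fatou in place of abstract weak lower semicontinuity) goes through.
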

\begin{proof}
For smooth solutions, the fact that \eqref{displ subsol} holds with equality is precisely the  displacement convexity formula (\cite[Prop. 3.6]{CMP}). Therefore, the task at hand is to explain how to handle the limited regularity. Consider, for $R>0$, the MFG system with Neumann boundary conditions
\begin{equation}
\begin{cases}
-u_{t}+\frac{1}{2}u_{x}^{2}=m^{\theta} & (x,t)\in[-R,R] \times(0,T),\\
m_{t}-(mu_{x})_{x}=0 & (x,t)\in [-R,R] \times(0,T), \\
u_x(-R,t)=u_x(R,t)=0, & t\in [0,T].
\end{cases}\label{mfgneum}
\end{equation}
    It was shown in \cite[Thm. 4.3]{CMP} that one may uniformly approximate $(u,m)$ by a sequence of smooth solutions $(u_n,m_n)$ to \eqref{mfgneum}, for a fixed value of $R$, as long as $R$ is sufficiently large for the rectangle $[-R,R]\times [0,T]$ to contain the support of $m$. Furthermore, $u_n \to u$ in $C^2_{\text{loc}}(\{m>0\})$, so that, in particular,
    \be \label{liminf muxxocme} m^p u_{xx}^2 \leq \liminf_{n \to \infty}  m_n^{p} ((u_n)_{xx})^2. \ee     
    The displacement convexity formula, which is valid in this smooth setting,  reads:
    \be \label{disp-dksdaok}  \frac{d^{2}\vfi_n}{dt^{2}} = \intr (m_n^p (u_n)_{xx}^{2}+4\theta(\theta+p)^{-2} (m_n^{\frac{\theta+p}{2}})_{x}^{2}))dx, \quad \vfi_n(t)= \frac{1}{p(p-1)}\int_{-R}^{R}m_n^p(x,t)dx.\ee 
 In view of \eqref{liminf muxxocme}, the result follows from Fatou's lemma by integrating \eqref{disp-dksdaok} against $\zeta$ and letting $n \to \infty$.
\end{proof}

\section{Intermediate asymptotics of the infinite horizon system} \label{sec: convergence IH}

In this section, we will study the asymptotic behavior of compactly supported solutions to the infinite horizon system:
\begin{equation}
\begin{cases}
-u_{t}+\frac{1}{2}u_{x}^{2}=m^{\theta} & (x,t)\in\R\times (t_0,\infty),\\
m_{t}-(mu_{x})_{x}=0 & (x,t)\in\R\times (t_0,\infty),
\end{cases}\label{mfgi}
\end{equation}
where $t_0>0$. In order to present some of the main ideas in a simple setting, we will postpone the discussion of well-posedness and regularity of solutions to this system. Instead, we will prove a convergence result under the assumption that a solution $(u,m)$ exists, and that its continuous rescaling $(v,\mu)$ has sufficient regularity, uniformly in time. Specifically, our main goal in this section is to prove the following statement.
\begin{prop} \label{prop: convergence result}  Let $(u,m)\in C^1(\R \times (t_0,\infty)) \times C(\R \times [t_0,\infty))$ satisfy the first equation of \eqref{mfgi} in the classical sense, and the second equation in the distributional sense, with $m \geq 0$. Assume that there exist functions $\gamma_L, \gamma_R \in W^{1,\infty}_{\emph{loc}}((t_0,\infty))$ such that \eqref{freebd ch} holds for every $t\in (t_0,\infty)$, and let $a=\int_{\R} m(\cdot,t_0)dx>0$.  Let $(v,\mu)$ be the continuous rescaling \eqref{INTRO v mu defi} of $(u,m)$, and let $\tau_0=\log(t_0)$. Assume further that, for some constant $\rho>0$, \be \label{mu cont assumption} \mu \in \emph{BUC}(\R \times (\tau_0,\infty)), \quad \sup_{\tau_1 \in (\tau_0+\rho,\infty)}\|\mu^{\theta}\|_{H^1(\R \times (\tau_1-\rho,\tau_1+\rho)))}<\infty,\ee
and that there exists a constant $R\geq R_a$ such that 
\be \label{Dv bd assumption}  \emph{supp}(\mu) \subset [-R,R] \times \left[\tau_0, \infty\right), \quad Dv\in  C^1(\{\mu>0\})\cap L^{\infty}((-R,R)\times (\tau_0,\infty) ).\ee 
Then, for every $p\in [1,\infty]$,
\be \label{IHH conv result 1} \lim_{t \to \infty}  t^{\al(1-\frac1p)}\left\| m(\cdot,t)- \mathcal{M}_a(\cdot,t) \right\|_{L^{p}(\R)} =0,
\ee
\be \label{IHH conv result 2} \lim_{t \to \infty}  t^{2-\al(1+\frac1p)}\left\| m(\cdot,t)\left|u_x(\cdot,t)- (\mathcal{U}_a)_x(\cdot,t)\right|^2 \right\|_{L^{p}(\R)} =0,  \ee
\be \label{IHH conv result 3} \lim_{t \to \infty}  t^{2-\al(1+\frac1p)}\left\| m(\cdot,t)\left|u_t(\cdot,t)- (\mathcal{U}_a)_t(\cdot,t)\right|\right\|_{L^{p}(\R)} =0. \ee
\end{prop}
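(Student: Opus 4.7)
The idea is to work directly with the continuous rescaling $(v,\mu)$ of $(u,m)$ and to identify a Lyapunov functional whose dissipation forces $(v,\mu)$ onto the self-similar stationary profile.

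First I would write down the PDEs satisfied by $(v,\mu)$ in the coordinates of \eqref{INTRO v mu defi}--\eqref{INTRO tau eta defi}. A chain-rule computation (using that $2\alpha-2=-\alpha\theta$) turns \eqref{mfgi} into
\begin{equation*}
\mu_\tau = ((\alpha\eta + v_\eta)\mu)_\eta, \qquad -v_\tau + \alpha\eta v_\eta + (1-2\alpha)v + \tfrac12 v_\eta^2 = \mu^\theta.
\end{equation*}
Setting $w := v + \tfrac{\alpha}{2}\eta^2$ and introducing the potential $V(\eta) := \tfrac{\alpha(1-\alpha)}{2}\eta^2$ recasts these as
\begin{equation*}
\mu_\tau = (\mu w_\eta)_\eta, \qquad -w_\tau + (1-2\alpha) w + \tfrac12 w_\eta^2 = \mu^\theta + V,
\end{equation*}
and, by comparison with \eqref{Mself}--\eqref{Uself}, places $(\mathcal{U}_a,\mathcal{M}_a)$ at the stationary point $\mu = M_a$, $w_\eta\equiv 0$ (with $\mu^\theta + V \equiv R_a$ on $\{\mu>0\}$).

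Next I would introduce the Lyapunov functional
\begin{equation*}
\mathcal{F}(\tau) := \int_\R \left[ \tfrac{1}{\theta+1}\mu^{\theta+1} + V\mu - \tfrac12 \mu w_\eta^2 \right] d\eta
\end{equation*}
and compute $\mathcal{F}'(\tau)$. Differentiating the first two terms, invoking the continuity equation, integrating by parts (boundary terms vanish because $\mathrm{supp}(\mu)\subset[-R,R]$ and $w_\eta$ is bounded on it by \eqref{mu cont assumption}--\eqref{Dv bd assumption}), and substituting for $(\mu^\theta + V)_\eta$ via the $\eta$-derivative of the rescaled HJ equation yields $\int\mu w_\eta w_{\eta\tau}\,d\eta - (1-2\alpha)\int\mu w_\eta^2\,d\eta - \int \mu w_\eta^2 w_{\eta\eta}\,d\eta$. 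A parallel computation for $\tfrac{d}{d\tau}\int\tfrac12\mu w_\eta^2\,d\eta$ directly from the continuity equation yields $\int\mu w_\eta w_{\eta\tau}\,d\eta - \int\mu w_\eta^2 w_{\eta\eta}\,d\eta$. Subtracting produces the key identity
\begin{equation*}
\mathcal{F}'(\tau) = -(1-2\alpha)\int_\R \mu w_\eta^2\,d\eta = -\frac{\theta-2}{\theta+2}\int_\R \mu w_\eta^2\,d\eta,
\end{equation*}
which is precisely the promised trichotomy: $\mathcal{F}$ decreases when $\theta>2$, is conserved when $\theta=2$, and increases when $\theta<2$. Assumptions \eqref{mu cont assumption}--\eqref{Dv bd assumption} make $\mathcal{F}$ uniformly bounded in $\tau$, so in the sub- and supercritical ranges the monotonicity forces $\int_{\tau_0}^{\infty}\int_\R\mu w_\eta^2\,d\eta\,d\tau<\infty$, hence some sequence $\tau_n\to\infty$ produces $\int\mu w_\eta^2(\cdot,\tau_n)\,d\eta\to 0$.

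An $\omega$-limit analysis finishes the argument. By \eqref{mu cont assumption}--\eqref{Dv bd assumption} and the continuity equation, $\mu$ is uniformly bounded and equicontinuous on $[-R,R]\times[\tau_1,\infty)$, so along a subsequence $\mu(\cdot,\tau_n)\to \mu_\infty$ uniformly, and $w_\eta\to 0$ in $L^2(\mu\,d\eta)$. Passing to the limit in the rescaled HJ equation on $\{\mu_\infty>0\}$ and imposing the mass constraint $\int_\R\mu_\infty = a$ together with the explicit description of compactly supported non-negative solutions of $\mu^\theta + V \equiv \mathrm{const.}$ identifies $\mu_\infty = M_a$. Uniqueness of this limit upgrades to convergence of the full orbit, and the HJ equation then yields convergence of $w_\tau$ on the support. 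Reversing the scaling, the prefactors $t^{\alpha(1-1/p)}$ and $t^{2-\alpha(1+1/p)}$ in \eqref{IHH conv result 1}--\eqref{IHH conv result 3} are exactly those that absorb the Jacobian $dx=t^\alpha d\eta$ together with the scalings $m=t^{-\alpha}\mu$, $u_x=t^{\alpha-1}v_\eta$, $u_t=t^{2\alpha-2}((2\alpha-1)v-\alpha\eta v_\eta+v_\tau)$, so each of \eqref{IHH conv result 1}--\eqref{IHH conv result 3} reduces to the corresponding $L^p_\eta$ convergence just obtained. The main obstacles I anticipate are: (i) the critical case $\theta=2$, where $\mathcal{F}$ is conserved and a replacement dissipation is required, to be furnished by the displacement convexity identity (Lemma \ref{lem:displ}) applied in rescaled coordinates to generate a second-order monotone quantity; (ii) rigorously justifying the integrations by parts at the free boundary $\partial\{\mu>0\}$ where ellipticity degenerates; and (iii) strengthening the subsequential convergence to convergence along the full parameter $\tau\to\infty$, which rests on uniqueness of the stationary profile at prescribed mass~$a$.
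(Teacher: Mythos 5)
Your setup coincides with the paper's: the same rescaling, the same substitution $w=v+\frac{\al}{2}\eta^2$, and your functional $\mathcal{F}$ is (up to sign and an additive constant, using that $\mu$ and $M_a$ have equal mass) exactly the paper's $\cEw$ from \eqref{Lyap}, with the same dissipation identity \eqref{Lyap derivative} and the same $\theta\lessgtr 2$ trichotomy. The gap is in how you pass from this identity to convergence of the whole orbit. You only extract one sequence $\tau_n$ along which $\int\mu w_\eta^2(\cdot,\tau_n)\,d\eta\to0$, identify the limit along that sequence, and then assert that ``uniqueness of this limit upgrades to convergence of the full orbit.'' That inference does not stand on its own: since $\cEw$ is the difference of two nonnegative quantities (the kinetic term and a convexity/relative-entropy term), convergence of the Lyapunov functional to its stationary value does not control the state, and knowing the limit along one particular sequence says nothing about arbitrary time sequences. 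The paper closes exactly this point by proving that the dissipation tends to zero in the \emph{full} limit $\tau\to\infty$: the $H^1$ space-time bound on $\mu^\theta$ in \eqref{mu cont assumption} (which your argument never uses) yields the $H^2$ bound \eqref{Lyap energy bd} on $\cEw$, hence uniform $C^{1,1/2}$ regularity of $\tau\mapsto\cEw(\tau)$, and a bounded monotone function with uniformly continuous derivative has $\cEw'(\tau)\to0$; the $\omega$-limit analysis is then run along \emph{arbitrary} time translates, using the interior $C^1$ compactness of $w_\eta$ on $\{\mu>0\}$ from the elliptic estimate of Lemma \ref{lem:regu Dw in mu>0} (another ingredient your sketch omits). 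Alternatively, your integrability $\int_{\tau_0}^\infty\int\mu w_\eta^2<\infty$ could be salvaged by a LaSalle-type argument: for arbitrary $s_n\to\infty$ the tail gives $\int_{s_n}^{s_n+1}\int\mu w_\eta^2\to0$, and one passes to limits of the translated solutions on a time window rather than a single slice. Either way, a further argument is required where you currently have an assertion.

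The second genuine gap is the critical case $\theta=2$, which the proposition covers. There $\mathcal{F}$ is conserved, the dissipation integral carries no information, and your proposed substitute (displacement convexity in rescaled coordinates producing a ``second-order monotone quantity'') is not worked out and is not the mechanism used. The paper instead introduces the Lasry--Lions cross quantity $f(\tau)=\int_{\R}w(\mu-M_a)\,d\eta$ (Lemma \ref{lem:lasrylionstheta=2}), which is bounded because the equal masses let one subtract the mean of $w$, and non-increasing, with $-f'$ controlling simultaneously $\int\frac{\mu+M_a}{2}w_\eta^2$ and the coercive term in \eqref{quant pos dcoaia}; uniform H\"older continuity of $f'$ (again via the energy bounds in \eqref{mu cont assumption}) then gives $f'\to0$ and hence the identification of the limit. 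Without such a replacement, the case $\theta=2$ of the statement remains unproven in your plan. Your remaining anticipated obstacle, the integrations by parts at the degenerate free boundary, is handled in the paper by cutoffs adapted to the curves $\beta_L,\beta_R$ together with the continuity of $\mu$ vanishing there, and is indeed necessary bookkeeping rather than a conceptual change.
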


The significance of this result will become clear in Section \ref{sec: estimates}, where we show that its regularity assumptions are indeed satisfied by the continuous rescaling $(v^T,\mu^T)$ of the solutions $(u^T,m^T)$ to \eqref{mfg}--\eqref{tc} or \eqref{mfg}--\eqref{pp}, uniformly in $T$. As a result, any subsequential limit as $T \to \infty$ of $(u^T,m^T)$ must be a solution $(u,m)$ to \eqref{mfgi} which satisfies the hypotheses, and hence the conclusion, of Proposition \ref{prop: convergence result} (see Proposition \ref{prop: existence subseq}).
\subsection{Basic properties of the continuous rescaling}
For clarity, we restate the precise definition of the continuous rescaling.
\begin{defn} \label{def: cont resc inf} Given a solution $(u,m)$ to \eqref{mfgi}, we define the continuous rescaling $(v,\mu)$ as follows. For $(\et,\ta)\in \R \times \left[\tau_0, \infty\right):=\R \times\left[ \log(t_0),\infty\right),$ we set
\be \label{tau eta defi IH} t=e^{\ta},\,\, x=t^{\al} \et,\ee
and we define
\be \label{v mu defi IH}
\mu(\et,\ta)=t^{\al}m(x,t), \, v(\et,\ta)=t^{1-2\al}u(x,t). 
\ee
\end{defn}
We begin by reducing the proof of Proposition \ref{prop: convergence result} to the equivalent problem of showing that $(v_\eta,\mu)$ converges to the stationary profile $(U'_a,M_a)$.
\begin{lem} \label{lem:reduc to cont resc} Let $(u,m)$ and $(v,\mu)$ be as in Proposition \ref{prop: convergence result}. Then \eqref{IHH conv result 1} and \eqref{IHH conv result 2} are equivalent, respectively, to
\be \label{conv v mu} \lim_{\tau \to \infty}  \left\| \mu (\cdot,\tau)- M_a(\cdot) \right\|_{L^{p}(\R)} =0,\,\,\, \text{ and } \,\,\, \lim_{\tau \to \infty}  \left\| \mu (\cdot,\tau)\left|v_\eta - U'_a(\cdot)\right|^2 \right\|_{L^{p}(\R)} =0.
\ee
\end{lem}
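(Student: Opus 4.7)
The proof is essentially a change of variables computation: all three scaling factors $t^{\alpha(1-1/p)}$ and $t^{2-\alpha(1+1/p)}$ appearing in \eqref{IHH conv result 1} and \eqref{IHH conv result 2} are precisely the factors that make the rescaled norms invariant under the transformation \eqref{tau eta defi IH}--\eqref{v mu defi IH}. So the plan is to unwind the definitions carefully and verify each identity.

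First I would handle the density. From \eqref{v mu defi IH}, $m(x,t)=t^{-\alpha}\mu(\eta,\tau)$, and from \eqref{Mself}, $\mathcal{M}_a(x,t)=t^{-\alpha}M_a(\eta)$. Changing variables via $x=t^\alpha \eta$, $dx=t^\alpha\, d\eta$, for $p\in[1,\infty)$ one obtains
\begin{equation*}
\|m(\cdot,t)-\mathcal{M}_a(\cdot,t)\|_{L^p(\R)}^p
=\int_\R t^{-\alpha p}|\mu(\eta,\tau)-M_a(\eta)|^p\, t^\alpha\, d\eta
= t^{-\alpha(p-1)}\|\mu(\cdot,\tau)-M_a\|_{L^p(\R)}^p,
\end{equation*}
which rearranges to $t^{\alpha(1-1/p)}\|m-\mathcal{M}_a\|_{L^p}=\|\mu-M_a\|_{L^p}$. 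The case $p=\infty$ follows from the same relation by taking the essential supremum directly, since $m-\mathcal{M}_a=t^{-\alpha}(\mu-M_a)$ after identifying points under the change of variables. This gives the equivalence of \eqref{IHH conv result 1} with the first statement in \eqref{conv v mu}, using $\tau=\log t\to\infty \iff t\to\infty$.

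Next I would treat the gradient term. From $u(x,t)=t^{2\alpha-1}v(\eta,\tau)$ and $\mathcal{U}_a(x,t)=t^{2\alpha-1}U_a(\eta)+z(t)$, differentiating in $x$ (noting that $\partial_x \eta = t^{-\alpha}$) yields
\begin{equation*}
u_x(x,t) = t^{\alpha-1}v_\eta(\eta,\tau), \qquad (\mathcal{U}_a)_x(x,t) = t^{\alpha-1}U'_a(\eta).
\end{equation*}
Therefore
\begin{equation*}
m(x,t)|u_x-(\mathcal{U}_a)_x|^2 = t^{-\alpha}\mu(\eta,\tau)\cdot t^{2(\alpha-1)}|v_\eta-U'_a|^2 = t^{\alpha-2}\,\mu\,|v_\eta-U'_a|^2.
\end{equation*}
The same substitution $dx=t^\alpha d\eta$ gives, for $p\in[1,\infty)$,
\begin{equation*}
\bigl\|m|u_x-(\mathcal{U}_a)_x|^2\bigr\|_{L^p(\R)}^p
= t^{(\alpha-2)p+\alpha}\,\bigl\|\mu|v_\eta-U'_a|^2\bigr\|_{L^p(\R)}^p,
\end{equation*}
so $t^{2-\alpha(1+1/p)}\|m|u_x-(\mathcal{U}_a)_x|^2\|_{L^p}=\|\mu|v_\eta-U'_a|^2\|_{L^p}$, and again the $p=\infty$ case follows from the pointwise identity. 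This proves the equivalence of \eqref{IHH conv result 2} with the second statement in \eqref{conv v mu}.

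I do not anticipate any real obstacle here; the only thing to be a bit careful about is the bookkeeping of the exponents of $t$, and confirming that the assumption \eqref{Dv bd assumption} guarantees $v_\eta$ is well-defined a.e. on $\{\mu>0\}$ so that the rescaled quantity $\mu|v_\eta-U'_a|^2$ makes sense in $L^p$. (The limit $\tau\to\infty$ corresponds to $t\to\infty$, and conversely, so the limit statements are equivalent.) The convergence \eqref{IHH conv result 3} is not addressed by this lemma and will require a separate argument using the HJ equation to re-express $u_t$, which is why the statement only concerns \eqref{IHH conv result 1}--\eqref{IHH conv result 2}.
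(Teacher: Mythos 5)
Your proof is correct and follows essentially the same route as the paper: the paper's own argument is exactly this change of variables $x=t^{\alpha}\eta$, $\tau=\log t$, showing the scaling factors $t^{\alpha(1-1/p)}$ and $t^{2-\alpha(1+1/p)}$ turn the finite-horizon norms into the rescaled norms, with the second equivalence "obtained in the same way." Your exponent bookkeeping for both identities checks out, so there is nothing to add.
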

\begin{proof} The key point is that the continuous rescaling of the self-similar solution $\mathcal{M}_a$ is precisely the stationary profile $M_a$. Indeed, changing variables according to \eqref{tau eta defi IH}, we have
\begin{multline}  
t^{\al(p-1)}\left\| m(\cdot,t)- \mathcal{M}_a(\cdot,t) \right\|_{L^{p}(\R)}^p =t^{\al(p-1)}\intr |m(x,t)-t^{-\al}M_a(x/t^{-\al})|^pdx\\ = \intr |t^{\al}m(t^{\al}\eta ,t)-M_a(\eta)|^pd\eta= \intr |\mu(\eta,\tau)-M_a(\eta)|^pd\eta =  \left\| \mu (\cdot,\tau)- M_a(\cdot) \right\|_{L^{p}(\R)}^p,
\end{multline}
which proves the first equivalence of \eqref{conv v mu}. The other equivalence is obtained in the same way. 
\end{proof}

The next step will be to obtain the system of equations satisfied by the continuous rescaling $(v,\mu)$. In fact, recalling \eqref{Uself}, we want to show that, in $\{\mu>0\}$, $v_{\eta} \to U'_{a} \equiv -\al \eta$. As such, it will be convenient to define
\be \label{w defi} w(\eta,\tau)= v(\eta,\tau)+ \frac{\al}{2} \eta^2,\ee
and to rewrite the problem as a system in $(w,\mu)$.
\begin{lem}  Let $(u,m)$ and $(v,\mu)$ be as in Proposition \ref{prop: convergence result}, and define $w$ by \eqref{w defi}. Then the pair $(w,\mu)$ satisfies
    \be  \label{w mu sys}\begin{cases}-w_{\ta}+\frac12w_{\et}^2 =\mu^{\theta}+\frac{\al(1-\al)}{2}\et^2+(2\al-1)w & (\eta,\tau)\in \R \times (\tau_0, \infty), \\
    \mu_{\ta}-(\mu w_{\et})_{\et}=0 & (\et, \ta) \in \R \times (\ta_0, \infty),
    \end{cases}\ee
    where the first equation is taken in the classical sense, and the second equation is taken in the sense of distributions. Furthermore, in the set $\{ \mu >0 \}$,  the second equation is also satisfied in the classical sense.
\end{lem}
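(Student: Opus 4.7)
The proof is a direct change-of-variables computation. I would first record the basic chain-rule identities under $t = e^{\tau}$, $x = t^{\alpha}\eta$:
\begin{align*}
v_\eta &= t^{1-\alpha}u_x, \qquad v_\tau = (1-2\alpha)v + \alpha\eta\, v_\eta + t^{2-2\alpha}u_t, \\
\mu_\eta &= t^{2\alpha}m_x, \qquad \mu_\tau = \alpha\mu + \alpha\eta\,\mu_\eta + t^{1+\alpha}m_t.
\end{align*}
The underlying arithmetic driver of the whole computation is the identity $\alpha\theta = 2(1-\alpha)$, which follows from $\alpha = 2/(2+\theta)$; this is precisely what makes $t^{2-2\alpha}m^\theta$ collapse to $\mu^\theta$, and is what renders the self-similar profile a stationary solution of \eqref{w mu sys}.

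For the Hamilton--Jacobi equation, I would multiply the identity $-u_t + \tfrac12 u_x^2 = m^\theta$ by $t^{2-2\alpha}$ and substitute the chain-rule formulas, together with $t^{2-2\alpha}u_x^2 = v_\eta^2$, to obtain
\begin{equation*}
-v_\tau + (1-2\alpha)v + \alpha\eta\, v_\eta + \tfrac12 v_\eta^2 = \mu^\theta.
\end{equation*}
The definition $w = v + \tfrac{\alpha}{2}\eta^2$ is then tailored to eliminate the drift: substituting $v_\eta = w_\eta - \alpha\eta$ and $v_\tau = w_\tau$, the $\alpha\eta w_\eta$ term produced by the quadratic kinetic piece cancels the $\alpha\eta v_\eta$ drift term, while the residual $\eta^2$ coefficients combine algebraically to $-\tfrac{\alpha(1-\alpha)}{2}$, yielding the stated equation. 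This is entirely classical, since $u \in C^1$ satisfies HJ classically by hypothesis.

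For the continuity equation in $\{\mu > 0\}$, the $C^2 \times C^1$ regularity of $(u,m)$ on $\{m > 0\}$ provided by Theorem \ref{thm.intro1} permits pointwise computation. The factorization $m u_x = t^{-1}\mu v_\eta$, together with $\partial_x = t^{-\alpha}\partial_\eta$ at fixed $t$, yields $(mu_x)_x = t^{-1-\alpha}(\mu v_\eta)_\eta$; inserting this into the chain-rule expression for $\mu_\tau$ gives $\mu_\tau = \alpha\mu + \alpha\eta\,\mu_\eta + (\mu v_\eta)_\eta$, and the substitution $v_\eta = w_\eta - \alpha\eta$ produces the exact cancellation $(\mu\,\alpha\eta)_\eta = \alpha\mu + \alpha\eta\,\mu_\eta$, leaving $\mu_\tau = (\mu w_\eta)_\eta$.

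For the distributional form on all of $\R \times (\tau_0,\infty)$, I would pull back a test function $\varphi \in C_c^\infty(\R \times (\tau_0,\infty))$ to $\Phi(x,t) := \varphi(x t^{-\alpha}, \log t) \in C_c^\infty(\R \times (t_0,\infty))$ and apply the distributional identity $\iint (m u_x \Phi_x - m \Phi_t)\, dx\, dt = 0$. Writing $\Phi_x = t^{-\alpha}\varphi_\eta$ and $\Phi_t = -\alpha x t^{-\alpha-1}\varphi_\eta + t^{-1}\varphi_\tau$, and changing variables with Jacobian $t^{1+\alpha}$, the contribution of $m u_x \Phi_x$ combines with the $-\alpha x t^{-\alpha-1}m \varphi_\eta$ piece of $-m\Phi_t$ to give $t^{-1-\alpha}\mu(v_\eta + \alpha\eta)\varphi_\eta = t^{-1-\alpha}\mu w_\eta \varphi_\eta$, and the full integrand reduces exactly to the distributional pairing for $\mu_\tau - (\mu w_\eta)_\eta = 0$. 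Under assumption \eqref{Dv bd assumption}, the product $\mu w_\eta$ is globally well-defined (identically zero on $\{\mu = 0\}$, classical on $\{\mu > 0\}$), so no delicate regularization is required. The only real obstacle throughout is algebraic bookkeeping, in particular tracking the cancellation responsible for the $-\tfrac{\alpha(1-\alpha)}{2}$ coefficient; there is no substantive analytical difficulty beyond the change-of-variables formula.
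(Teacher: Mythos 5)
Your computation is correct and follows essentially the same route as the paper: the same chain-rule identities for the rescaled variables, the same algebraic cancellation producing the $\frac{\al(1-\al)}{2}\et^2$ term, and the same pull-back of test functions (the paper pushes a test function $\zeta$ in $(\et,\ta)$ forward to $\psi(x,t)=\zeta(x/t^{\al},\log t)$, which is the mirror image of your pull-back) together with the Jacobian $dxdt=t^{1+\al}d\et d\ta$ to transfer the distributional continuity equation.

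The one point to repair is your justification of the classical validity in $\{\mu>0\}$. You invoke the $C^2\times C^1$ regularity of $(u,m)$ on $\{m>0\}$ from Theorem \ref{thm.intro1}, but that theorem concerns solutions of the finite-horizon problems \eqref{mfg}--\eqref{tc} or \eqref{mfg}--\eqref{pp} under the data assumptions of Section \ref{sec: assumptions}; here $(u,m)$ is only an abstract solution of the infinite-horizon system subject to the hypotheses of Proposition \ref{prop: convergence result}, so that regularity is not available by citation. What is available is \eqref{Dv bd assumption}, which gives $Dv\in C^1(\{\mu>0\})$, i.e.\ the needed regularity of $u$; the $C^1$ regularity of $\mu$ (equivalently of $m$) on $\{\mu>0\}$ must then be \emph{derived} by solving for $\mu^{\theta}$ in the rescaled HJ equation \eqref{v hj eq}, and since $\theta>0$ and $\mu>0$ there, $\mu\in C^1(\{\mu>0\})$. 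With that in hand, the cleanest conclusion is the paper's: the distributional identity $\mu_{\ta}-(\mu w_{\et})_{\et}=0$, all of whose terms are continuous on $\{\mu>0\}$, holds classically there. Your direct pointwise computation on $\{m>0\}$ implicitly uses the original continuity equation classically on that set, which again requires exactly this regularity-plus-distributional argument rather than Theorem \ref{thm.intro1}. This is a one-line fix, but as written the step rests on an inapplicable citation.
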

\begin{proof} We write $u(x,t)=t^{2\al-1}v(\eta,\tau),$ and, using the fact that $u\in C^1(\R \times (\tau_0,\infty))$ we compute
    
\be u_t=t^{2\al-2}((2\al-1)v+v_\ta-\al v_{\et}),\ee
\be \label{ux comp owdkxkp}u_x=t^{\al-1}v_\et.\ee
Substituting these findings in the first equation of \eqref{mfgi}, we obtain
\be \label{v hj eq} -v_{\tau}+\frac12 v_{\et}^2+\al \et v_{\et}=\mu^{\theta}+(2\al-1)v.\ee
The first equation of \eqref{w mu sys} then follows by substituting \eqref{w defi} in \eqref{v hj eq}.

We now show that the second equation of \eqref{w mu sys} holds in the distributional sense. Let $\zeta \in C^{\infty}_c(\R \times (\tau_0,\infty))$, and define $\psi\in C^{\infty}_c(\R \times (t_0,\infty))$ by
$\psi(x,t)=\zeta(x/t^{\al},\log(t))$.
Note that
\be \label{psi der tau} \psi_t(x,t)=(-\al x t^{-\al-1})\zeta_{\eta}+\zeta_{\tau}t^{-1},\ee
\be \label{psi der eta} \psi_x(x,t)=\zeta_{\eta}t^{-\al}.\ee
Using \eqref{ux comp owdkxkp} and \eqref{w defi}, we also have
\be \label{318saican} w_{\eta}(\eta,\tau)=t^{1-\al}u_x(x,t)+ \alpha \eta,\ee
and, by \eqref{tau eta defi IH},
\be \label{319dpodke} d\eta d\tau = t^{-1-\al}dxdt.\ee
Hence, we obtain from \eqref{psi der tau}, \eqref{psi der eta}, \eqref{318saican} and \eqref{319dpodke} that
\begin{multline} \int_{\tau_0}^{\infty} \int_{\R} (\mu \zeta_{\tau} - \mu w_{\eta}\zeta_{\eta}) d\eta d\tau \\ =\int_{t_0}^{\infty} \int_{\R} t^{\al}m(x,t)\big(\zeta_{\tau}(x/t^{\al},\log(\tau)) - (t^{1-\al}u_x(x,t)+\alpha x/t^{\al})\zeta_{\eta}(x/t^{\al},\log(t)\big)t^{-1-\al}dxdt \\
=\int_{t_0}^{\infty} \intr (m\psi_t - m u_x \psi_x) dxdt.
\end{multline}
The right hand side vanishes, since $m$ satisfies the second equation of \eqref{mfgi} in the distributional sense. Finally, we observe from \eqref{v hj eq} that, since $u \in C^2(\{\mu>0\}),$ we have $\mu \in C^1(\{\mu>0\})$, and so the second equation of \eqref{w mu sys} must also be satisfied classically in $\{\mu>0\}$.
\end{proof}
We now take advantage of \eqref{w mu sys} to get an interior regularity estimate for $w$ in $\{\mu >0 \}$, by computing the elliptic equation satisfied by $w$.
\begin{lem}\label{lem:regu Dw in mu>0}  Let $(u,m)$ and $(v,\mu)$ be as in Proposition \ref{prop: convergence result}, and define $w$ by \eqref{w defi}. Let $\delta>0$, and assume that $(\eta_1 ,\tau_1) \in \R\times (\tau_0,\infty)$ is such that
\be \mu(\eta, \tau) \geq \delta, \quad (\et,\ta)\in (\eta_1-4\delta,\eta_1+4\delta)\times (\ta_1-4\delta,\ta_1+4\delta):= \mathcal{R}. \ee
Then there exists $C=C(R,\|\mu\|_{L^{\infty}(\R\times (\tau_0,\infty))},\|Dv\|_{L^{\infty}(\{\mu >0\})},\theta,\theta^{-1},\delta^{-1})$
such that 
\be \label{Dw C^1 bd} \|Dw\|_{C^{1,1/2}([\eta_1-\delta,\eta_1+\delta]\times \left[\tau_1-\delta,\tau_1+\delta]\right)} \leq C.\ee
\end{lem}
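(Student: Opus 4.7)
The strategy is to decouple the system \eqref{w mu sys} into a single quasilinear elliptic equation for $w$ in $\{\mu>0\}$, and then to apply standard interior regularity, leveraging the strict lower bound $\mu\geq\delta$ on $\mathcal{R}$.

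First, since the HJ equation in \eqref{w mu sys} holds classically in $\{\mu>0\}$, one can solve algebraically for $\mu^{\theta}$, writing
\begin{equation}
\mu^{\theta}=F[w]:=-w_{\ta}+\tfrac{1}{2}w_{\et}^{2}-\tfrac{\al(1-\al)}{2}\et^{2}-(2\al-1)w,
\end{equation}
so that $\mu=F[w]^{1/\theta}$ and $F[w]\geq\delta^{\theta}$ on $\mathcal{R}$. Substituting this into the continuity equation $\mu_{\ta}=(\mu w_{\et})_{\et}$ and clearing the common factor $\theta^{-1}F[w]^{1/\theta-1}$, a direct computation of $F_{\ta}$ and $F_{\et}$ turns \eqref{w mu sys} into the scalar PDE
\begin{equation} \label{elliptic w planned}
w_{\ta\ta}-2w_{\et}w_{\et\ta}+(w_{\et}^{2}+\theta\mu^{\theta})w_{\et\et}=-(2\al-1)w_{\ta}+\al(1-\al)\et\,w_{\et}+(2\al-1)w_{\et}^{2}
\end{equation}
in $\{\mu>0\}$. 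The principal part has matrix entries $A_{11}=1$, $A_{12}=A_{21}=-w_{\et}$, $A_{22}=w_{\et}^{2}+\theta\mu^{\theta}$, with determinant $\theta\mu^{\theta}$; hence on $\mathcal{R}$ the equation is uniformly elliptic, with ellipticity constants and coefficient bounds controlled by $\theta$, $\delta$, $\|\mu\|_{L^{\infty}}$, $\|Dv\|_{L^{\infty}(\{\mu>0\})}$ and $R$ (the latter entering only through $|w_{\et}|\leq\|v_{\et}\|_{L^{\infty}}+\al R$).

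Second, I would run a standard bootstrap. By Theorem \ref{thm.intro1} transferred to the rescaled coordinates, $w$ is already $C^{2,\overline{\al}}$ in $\{\mu>0\}$, so \eqref{elliptic w planned} holds classically and the task is purely quantitative. Treating the leading matrix as an $L^{\infty}$ uniformly elliptic coefficient, the Krylov--Safonov interior H\"older estimate produces a $C^{0,\gamma_{0}}$ modulus of continuity for $Dw$ on a slightly smaller cylinder than $\mathcal{R}$, with constant depending only on the listed quantities. The algebraic relation $\mu=F[w]^{1/\theta}$, smooth on the range $[\delta^{\theta},\|\mu\|_{L^{\infty}}^{\theta}]$, then upgrades $\mu$ to the same H\"older class. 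Differentiating \eqref{elliptic w planned} in $\et$, the function $p:=w_{\et}$ solves a linear uniformly elliptic equation with H\"older-continuous coefficients, and interior Schauder estimates yield $p\in C^{1,\gamma_{1}}$ on a further shrunk cylinder. One more iteration pinned at final exponent $1/2$, together with a covering/rescaling argument to pass from the intermediate cylinders back to $[\et_{1}-\delta,\et_{1}+\delta]\times[\ta_{1}-\delta,\ta_{1}+\delta]$, yields \eqref{Dw C^1 bd} with the stated dependence of the constant.

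The main obstacle is the coupling between $\mu$ and $w$: the leading coefficient $\theta\mu^{\theta}$ in \eqref{elliptic w planned} depends on $\mu$, whose regularity is in turn tied to $(w,Dw,w_{\ta})$ through $\mu=F[w]^{1/\theta}$. The strictness $\mu\geq\delta$ on $\mathcal{R}$ is what makes the bootstrap clean, since it keeps the map $F\mapsto F^{1/\theta}$ uniformly smooth and isolates the argument from the free-boundary degeneracy that complicates the global regularity theory in \cite{CMP}. Tracking constants carefully through the rescaling to a cylinder of unit size is what produces the $\delta^{-1}$ dependence listed in the statement.
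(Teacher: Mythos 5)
Your reduction coincides with the paper's: solving the HJ equation for $\mu^{\theta}$ and substituting into the continuity equation gives exactly the quasilinear equation \eqref{w elliptic eq} used there, uniformly elliptic on $\mathcal{R}$ because $\mu\geq\delta$, and your endgame (Schauder upgrade, then pinning the exponent $1/2$) also matches. The gap is in the first step of your bootstrap: Krylov--Safonov, applied to a non-divergence equation whose leading coefficients are treated as merely bounded and measurable, yields an interior H\"older estimate for the solution $w$ itself, not for $Dw$. A quantitative $C^{1,\gamma_0}$ estimate is not available in that generality: gradient H\"older estimates with frozen coefficients require a controlled modulus of continuity of those coefficients (Cordes--Nirenberg/Caffarelli type), and the modulus of continuity of $\mu$ is not among the quantities the constant in \eqref{Dw C^1 bd} is allowed to depend on (assumption \eqref{mu cont assumption} gives no quantitative modulus). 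Without this starting point, the relation $\mu=F[w]^{1/\theta}$ does not render the coefficients H\"older, and your Schauder step has nothing to feed on; differentiating the equation instead brings in uncontrolled $D^{2}w$ terms.

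The paper bridges precisely this step by invoking the interior $C^{1,\beta}$ estimate for quasilinear elliptic equations (Gilbarg--Trudinger, Thm.~13.6), i.e.\ the gradient H\"older bound comes from the quasilinear structure, with $\mu^{\theta}$ re-expressed through the HJ equation as a function of $(\eta,w,Dw)$, not from measurable-coefficient theory. That estimate depends on $\sup|w|$, which your hypotheses do not control; the paper removes this dependence by noting that both the derived elliptic equation and the conclusion \eqref{Dw C^1 bd} are invariant under vertical translations $w\mapsto w+c$, so one may normalize $\|w\|_{W^{1,\infty}(\{\mu>0\})}$ in terms of $R$ and $\|Dv\|_{L^{\infty}(\{\mu>0\})}$ alone. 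Your proposal omits both points. If you replace the Krylov--Safonov step by the quasilinear gradient estimate (or, since the problem is two-dimensional, by a Morrey--Nirenberg type $C^{1,\gamma}$ estimate for measurable uniformly elliptic coefficients, which you did not invoke) and add the translation normalization, the argument becomes complete and essentially identical to the paper's; the subsequent Schauder step is cleanest when applied directly to the equation for $w$ with the coefficients $A(w_{\et},w_{\ta})$ regarded as fixed H\"older functions, rather than by differentiating the equation.
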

\begin{proof} Noting that \eqref{w mu sys} is a first order MFG system (albeit with $w$ dependence in the HJ equation), and that the equations are satisfied classically when $\mu>0,$ we follow the standard computation of the elliptic equation of $w$ (see \cite{L-college,Munoz}). Namely, solving for $\mu$ in the first equation of \eqref{w mu sys}, and substituting in the second equation, one obtains the quasilinear, degenerate elliptic equation
\be\label{w elliptic eq}  -\text{tr}(A(w_{\eta},w_{\tau})D^2w)+\al(1-\al)\eta w_{\eta}-(1-2\al)w_{\eta}^2 +(1-2\al)w_{\tau}=0, \ee
where
\be A(p,s) = \begin{pmatrix}
w_{\eta}^2+\theta \mu^{\theta} & -w_{\eta} \\
-w_{\eta}. & 1
\end{pmatrix}\ee
This matrix is positive, with
\be \text{tr}(A)=1+w_{\eta}^2+\theta \mu^{\theta} , \quad \det(A)=\theta \mu^{\theta}. \ee
Thus, $\lambda I \leq A \leq \Lambda I$ in $\mathcal{R}$, where $\lambda$, $\Lambda$ depend only on $\|\mu\|_{L^{\infty}(\R\times (\tau_0,\infty))},\delta^{-1}, \theta, \theta^{-1}, \|Dw\|_{L^{\infty}(\{\mu >0\})}$. Note also that, in view of \eqref{w defi} and \eqref{Dv bd assumption}, $\|Dw\|_{L^{\infty}(\{\mu >0\})}$ is bounded in terms of   $\|Dv\|_{L^{\infty}(\{\mu >0\})}$ and $R$.
Additionally, any vertical translation of $w$ satisfies \eqref{w elliptic eq}. Thus, since the conclusion \eqref{Dw C^1 bd} is also invariant under vertical translation, we may assume that
\be \|w \|_{W^{1,\infty}(\{\mu>0\})} \leq K_1=K_1(R,\|Dv\|_{L^{\infty}(\{\mu>0\})}). \ee
Writing, $S_{\delta}=[\eta_1-\delta,\eta_1+\delta]\times [\tau_1-\delta,\tau_1+\delta]$, the interior $C^{1,\beta}$ estimates for quasilinear elliptic equations (\cite[Thm. 13.6]{GilbargTrudinger}) imply that there exist $\beta\in (0,1)$ and $K_2>0$, depending only on  $K_1$, $\delta^{-1}$, $R$, and $\Lambda/\lambda$ such that
\be \label{schaud asccf} \|w\|_{C^{1,\beta}(S_{2\delta})}\leq K_2.\ee
Next, the interior Schauder estimates \cite[Thm. 6.2]{GilbargTrudinger} imply that there exists $K_3 >0$, depending only on $K_1, K_2, R, \Lambda, \lambda^{-1}$, and $\beta$ such that
\be \label{schaud asccf2} \| w \|_{C^{2,\beta}(S_{\delta})} \leq K_3.\ee
Thus, \eqref{schaud asccf} holds with $\beta=1/2$ and constant $K_3$, which, a posteriori, means \eqref{schaud asccf2} must also hold with $\beta=1/2$ and some constant $K_4$.
\begin{rem} \label{rem: Dv regu finite hor} The estimate of Lemma 3.5 is entirely local, so it only requires $(w,\mu)$ to solve \eqref{w mu sys} on $(\eta_1-4\delta,\eta_1+4\delta) \times (\tau_1-4\delta, \tau_1+4\delta).$ In particular, the same conclusion holds if $(u,m)$ is merely a solution to the finite time horizon problem \eqref{mfg}.     
\end{rem}
\end{proof}
\subsection{Analysis of a Lyapunov functional}
We define \be F(s)=\frac{s^{\theta+1}}{\theta+1}, \quad s\in [0,\infty),\ee and consider, given a solution $(w,\mu)$ to \eqref{w mu sys}, the functional
\be \label{Lyap} \cEw(\tau)=\intr  \left( \frac12 \mu |w_{\et}|^2 -\left(F(\mu)-F(M_a)-\left(R_a-\frac{\al(1-\al)}{2} \eta^2\right)(\mu-M_a) \right)\right)d\eta, \quad \tau \in \left[\tau_0,\infty\right). \ee
We note that, by construction, $\cEw \equiv 0$ when $(v_{\et},\mu)$ is the self-similar profile $(U'_a,M_a)$. 

In order to understand the asymptotics of the continuous rescaling $(w,\mu)$, we will first analyze $\cEw$. As it turns out, the qualitative behavior of $\cEw$ is dependent on the parameter $\theta$, with $\theta=2$ being the critical value (see \eqref{Lyap derivative}, and Propositions \ref{prop: E behavior} and \ref{prop: exponential theta<2} below). 
\begin{lem}\label{lem: E basic} Under the assumptions of Proposition \ref{prop: convergence result}, let $(w,\mu)$ be given according to Definition \ref{def: cont resc inf} and \eqref{w defi}, and let $\cEw:\left[\tau_0,\infty\right) \to \R$ be defined by \eqref{Lyap}. Let $K>0$ be a constant such that, for every $\tau_1 \in (\tau_0+\rho,\infty)$, 
\be \label{energy bds kqaxos} \|\mu^{\theta}\|_{H^1(\R \times (\tau_1-\rho,\tau_1+\rho))} \leq K. \ee
Then $\cEw \in C^{1,1/2}(\left[\tau_0,\infty)\right)$, with
\be \label{Lyap derivative}\frac{d}{d\tau}\cEw(\tau)= \frac{\theta-2}{\theta+2} \int_{\R}\mu w_{\eta}^2 d\et, \ee
and
\be \label{Lyap energy bd} \| \cEw \|_{H^2((\tau_1-\rho,\tau_1+\rho))} \leq C,\ee
where $C=C(K,R,\|\mu\|_{L^{\infty}(\R \times (\tau_0,\infty))}, \|Dw\|_{L^{\infty}(\{\mu>0\})})$.
    
\end{lem}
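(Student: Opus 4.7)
My plan is to first establish the derivative formula \eqref{Lyap derivative} by a formal computation (justified by an approximation), and then obtain the regularity claims by a second formal differentiation.

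For the derivative, split $\cEw = I_1 - I_2$, where $I_1(\ta) = \tfrac12 \intr \mu w_\et^2\, d\et$ and $I_2(\ta) = \intr(F(\mu) - F(M_a) - G(\mu - M_a))\, d\et$ with $G(\et) = R_a - \tfrac{\al(1-\al)}{2}\et^2$. Differentiating formally and applying the continuity equation $\mu_\ta = (\mu w_\et)_\et$, together with an integration by parts in $\et$ (no boundary contribution since $\text{supp}(\mu)\subset[-R,R]$), gives
\begin{equation*}
\frac{dI_1}{d\ta} = \intr \mu_\ta \big(\tfrac12 w_\et^2 - w_\ta\big)\, d\et, \qquad \frac{dI_2}{d\ta} = \intr (\mu^\theta - G)\mu_\ta\, d\et.
\end{equation*}
But the first equation of \eqref{w mu sys} says exactly that $\tfrac12 w_\et^2 - w_\ta - \mu^\theta + G = (2\al-1)w + R_a$, so the integrands collapse and one is left with $\frac{d\cEw}{d\ta} = (2\al-1)\intr \mu_\ta w\, d\et + R_a \intr \mu_\ta\, d\et$. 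Conservation of mass annihilates the $R_a$ term, and one further integration by parts together with $2\al-1 = (2-\theta)/(2+\theta)$ converts the remaining term into the right-hand side of \eqref{Lyap derivative}.

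The delicate step is justifying these manipulations where $\mu = 0$, since $w_\ta$ and $w_{\et\ta}$ need not be classically defined there. I would handle this by the same approximation device used in Lemma \ref{lem:displ}: for some $R'>R$, approximate $(u,m)$ by the smooth Neumann solutions $(u_n,m_n)$ on $[-R',R']\times[t_0,T_n]$ from \cite{CMP}, for which every step above is classical, and pass to the limit in the identity. Inside $\{\mu>\de\}$ the passage is straightforward from the $C^2_{\emph{loc}}(\{m>0\})$ convergence of $u_n$, while on $\{0<\mu\le \de\}$ the integrands are dominated uniformly in $n$ by constants depending only on $R$, $\|\mu\|_{L^\infty}$ and $\|Dw\|_{L^\infty(\{\mu>0\})}$, so their contribution tends to zero as $\de \to 0^+$ thanks to the BUC property of $\mu$ from \eqref{mu cont assumption}.

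For the regularity claim, I would differentiate $J(\ta) := \intr \mu w_\et^2\, d\et$ once more. Applying the continuity equation to $\intr \mu_\ta w_\et^2$ and the $\et$-derivative of the HJ equation to rewrite $w_{\et\ta}$ inside $\{\mu>0\}$, the terms containing $w_{\et\et}$ cancel and one obtains
\begin{equation*}
J'(\ta) = -2\intr \mu w_\et (\mu^\theta)_\et\, d\et - 2\al(1-\al)\intr \et\, \mu w_\et\, d\et - 2(2\al-1)\intr \mu w_\et^2\, d\et.
\end{equation*}
The last two terms are pointwise bounded in terms of $R$, $\|\mu\|_{L^\infty}$ and $\|Dw\|_{L^\infty(\{\mu>0\})}$. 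For the first, Cauchy--Schwarz in $\et$ together with $\text{supp}(\mu)\subset[-R,R]$ yields a pointwise bound by $C(R)\|\mu\|_{L^\infty}\|Dw\|_{L^\infty(\{\mu>0\})}\|(\mu^\theta)_\et(\cdot,\ta)\|_{L^2_\et}$, whose square is integrable in $\ta$ on $(\ta_1-\rho,\ta_1+\rho)$ by \eqref{energy bds kqaxos}. Hence $J\in H^1$ on each such interval with precisely the quantitative bound required for \eqref{Lyap energy bd}, and the one-dimensional embedding $H^1\hookrightarrow C^{0,1/2}$ upgrades $\frac{d\cEw}{d\ta}$ to $C^{0,1/2}_{\emph{loc}}$, completing the claimed $C^{1,1/2}$ regularity of $\cEw$.
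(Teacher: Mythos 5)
Your formal computations are correct and are essentially the paper's: differentiating $I_1$ and $I_2$ and collapsing via the first equation of \eqref{w mu sys} is the same algebra as the paper's device of multiplying the two equations by $\mu_\ta$ and $w_\ta$ and adding, and your expression for $J'(\ta)$ agrees with the paper's computation combining \eqref{lyap pf 3asdf}, \eqref{lyap pf 6asdf}, \eqref{lyap pf 7asdf} (your bookkeeping, with the factor $1-2\al$ multiplying only $\int\mu w_\et^2$, is the correct one; the displayed \eqref{lyap pf 8asdf} distributes it over all three terms, which is immaterial for the conclusion). The bound on $J'$ via Cauchy--Schwarz, \eqref{energy bds kqaxos}, and the embedding $H^{2}\hookrightarrow C^{1,1/2}$ then gives \eqref{Lyap energy bd} exactly as in the paper.

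The genuine gap is in how you justify the integrations by parts across the free boundary. Lemma \ref{lem: E basic} is stated under the hypotheses of Proposition \ref{prop: convergence result} alone: $(u,m)$ is an arbitrary solution of the infinite-horizon system on $\R\times(t_0,\infty)$ with the stated regularity, Lipschitz free boundary curves, \eqref{mu cont assumption} and \eqref{Dv bd assumption}. Such a solution is not assumed to come from \eqref{mfg}--\eqref{tc} or \eqref{mfg}--\eqref{pp} with data satisfying \eqref{m0 assum}--\eqref{m0 C1,1}, so the approximation by smooth Neumann solutions from \cite[Thm.\ 4.3]{CMP}, which is what powers Lemma \ref{lem:displ}, is simply not available here: it is a statement about the finite-horizon boundary-value problem, not about a given solution of \eqref{mfgi}. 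Moreover, even where such an approximation exists, the limit passage as you describe it does not close: the quantities that must be controlled near $\{\mu=0\}$ involve $\mu_\ta$, $w_\ta$ and second derivatives of the approximations, which are not dominated uniformly by constants depending only on $R$, $\|\mu\|_{L^\infty}$ and $\|Dw\|_{L^{\infty}(\{\mu>0\})}$; and in any case a uniform constant bound on $\{0<\mu\le\de\}$ would not make that contribution vanish as $\de\to0^+$, since the measure of this set does not shrink. The paper's proof avoids all of this by working directly with the given solution: it tests the identity against cutoffs $\vfi$ compactly supported in $\{\mu>0\}$ built from the free boundary curves $\beta_L,\beta_R$, moves the offending derivatives onto $\vfi$, and uses that $\mu$ is continuous and vanishes at the interface, so that the error terms, which are of the form $\vep^{-1}\int_{\text{dist}(\et,\{\beta_L(\ta),\beta_R(\ta)\})\le\vep}\mu$, are $o(1)$ as $\vep\to0$. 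You need an argument of this kind (some mechanism exploiting the vanishing of $\mu$ at the free boundary, together with the interior regularity of $w$ from Lemma \ref{lem:regu Dw in mu>0}) to legitimize both the first-derivative identity \eqref{Lyap derivative} and the formal computations behind \eqref{Lyap energy bd}.
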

\begin{proof}  Let $\tau_0<b<c<\infty$. For each $\vep>0,$ let $h \in C^{\infty}_c(\R)$ satisfy $0\leq h \leq 1$, $h \equiv 1$ in $[2\vep,1-2\vep],$ $h \equiv 0$ outside of $[\vep,1-\vep]$, and $|h'|\leq K_1/\vep$. Let $\gamma_L < \gamma_R$ be the (Lipschitz) free boundary curves of $m$, so that
\[\beta_L(\tau)=t^{-\al}\gamma_L(e^{\tau}), \quad \beta_R(\tau)=t^{-\al}\gamma_R(e^{\tau}), \quad \tau \in \left[\tau_0,\infty\right) \]
are the (Lipschitz) free boundary curves for $\mu$, that is,
\[\{\mu(\cdot,\tau)>0\}=(\beta_L(\tau),\beta_R(\tau)), \quad \tau \in \left[\tau_0,\infty\right).\]
Set
\[\vfi(\eta,\tau)=  h\left((x-\beta_L(\tau))/(\beta_R(\tau)-\beta_L(\tau))\right), \quad (\eta,\tau)\in \R \times [b,c].\]
Note that $\vfi$ is compactly supported  in $\{\mu>0\}$, and Lipschitz, with 
\[|D\vfi|\leq K_{b,c}/\vep,\]
for some constant $K_{b,c}>0$ depending on $b,c, \beta_L, \beta_R,$ and $K_1$.

We recall that \eqref{w mu sys} holds classically in $\{\mu>0\}$. Multiplying the first and second equations in \eqref{w mu sys} by $\mu_{\ta}$ and $v_{\ta}$, respectively, and adding the two resulting equations, we obtain
\[\mu_{\ta}\frac{1}{2}w_{\et}^2-(\mu w_{\et})_{\et}w_{\ta}= \mu^{\theta}\mu_{\ta} + \frac{\al(1-\al)}{2}\et^2 \mu_{\ta} +(2\al-1)w\mu_{\ta}.\]
Integrating against $\vfi,$ using the fact that $w\in C^2(\{\mu>0\})$, and integrating by parts, we obtain
\be \label{lyap pf 1asdf} \int_{b}^{c} \intr  \frac{\partial }{\partial \ta}\left( \frac12 \mu w_{\et}^2 
-\left(\frac{\mu^{\theta+1}}{\theta+1}+\frac{\al(1-\al)}{2} \et^2 \mu\right)\right) \vfi d\eta d\tau = \int_{b}^{c} \intr (2\al-1)w \mu_{\ta}\vfi d\et d\ta +E_1, \ee
where
\[E_1=\int_{b}^{c} \intr -\mu w_{\et}w_{\ta}\vfi_{\et} d\et d\ta.\]
On the other hand,
\be \label{lyap pf 2asdf}  \int_{b}^{c} \intr w\mu_{\ta}\vfi d\et d\ta  =  \int_{b}^{c} \intr w(\mu w_{\et})_{\et}\vfi d\et d\ta=
\int_{b}^{c} \intr -\mu w_{\et}^2\vfi d\et d\ta +E_2,\ee
where 
\[|E_2|=\left|\int_{b}^{c} \intr -w\mu w_{\et}\vfi_{\et} d\et d\ta\right|\leq C_{b,c,w} \int_{b}^{c}\vep^{-1}\int_{\text{dist}(\et,\{\beta_L(\ta),\beta_R(\ta)\})\leq \vep} \mu=o(1) \]
as $\vep \to 0,$ by the continuity of $\mu$, which vanishes at $\eta= \beta_L(\ta)$ and $\eta=\beta_R(\ta)$. Similarly,
\[  E_1=\int_{a}^b\intr -\mu w_{\et}w_{\ta}\vfi_{\et} = o(1),\]
and
\[ \int_{b}^{c} \intr  \left( \frac12 \mu w_{\et}^2 - \left(\frac{\mu^{\theta+1}}{\theta+1}+\frac{\al(1-\al)}{2} \et^2 \mu\right)\right) \vfi_{\tau} d\eta d\tau =  o(1). \]
Thus, \eqref{lyap pf 1asdf} and \eqref{lyap pf 2asdf} yield
\be  \intr  \left( \frac12 \mu w_{\et}^2 - \left(\frac{\mu^{\theta+1}}{\theta+1}+\frac{\al(1-\al)}{2} \et^2 \mu\right)\right)  \vfi d\eta \Bigg|_{b}^c =(1-2\al) \int_{b}^{c} \intr \mu w_{\eta}^2 \vfi d\et d\ta +o(1). \ee
Noting that $\vfi \to \chi_{\{\mu>0\}}$ pointwise as $\vep \to 0$, we conclude by dominated convergence that
\be \label{lyap pf 3asdf}\frac{d}{d\tau}  \intr  \left( \frac12 \mu w_{\et}^2 - \left(F(\mu)+\frac{\al(1-\al)}{2} \et^2 \mu\right)\right) d\eta  = (1-2\al) \intr \mu w_{\eta}^2 d\et.  \ee 
 On the other hand, we clearly have
\be \label{lyap pf 4asdf}\frac{d}{d\tau}  \intr \left( F(M_a(\et)) + \frac{\al(1-\al)}{2} \eta^2 M_a(\et)\right) d\et =0,\ee
and, since $\mu$ and $M_a$ have the same mass, 
\be \label{lyap pf 5asdf} \frac{d}{d\tau} \intr R_a (\mu-M_a) d\et= \frac{d}{d\tau} 0 =0. \ee
Adding \eqref{lyap pf 3asdf}, \eqref{lyap pf 4asdf}, and \eqref{lyap pf 5asdf}, we obtain \eqref{Lyap derivative}.  A posteriori, it follows from the the continuity of $\mu$ and the interior $C^1$ regularity of $w$ that the right hand side of \eqref{Lyap derivative} is continuous, which means that $\cEw\in C^{1}(\left[ \tau_0,\infty \right))$.
Through similar test function arguments, and using \eqref{energy bds kqaxos}, the following formal computations also hold:
\be \small \label{lyap pf 6asdf} \frac{d}{d\tau} \intr F(\mu)d\et =\intr F'(\mu) \mu_{\tau}d\et  = \intr \mu^{\theta} (\mu w_{\eta})_{\et}d\et =\intr - \theta \mu ^{\theta}\mu_{\et}w_{\et}d\et =-\intr \theta (F(\mu))_{\et}w_{\et}d\et \in L^2_{\text{loc}}((\tau_0,\infty)) , \ee
and 
\be \small \label{lyap pf 7asdf} \frac{d}{d\tau} \intr \frac12 \eta^2 \mu d\et  = \intr \frac12 \eta^2  (\mu w_{\et})_{\et}d\et = \intr -\et w_{\et}\mu d\et \in  L^{\infty}((\tau_0,\infty)).   \ee
Thus, \eqref{lyap pf 3asdf}, \eqref{lyap pf 6asdf}, and \eqref{lyap pf 7asdf} yield
\be \label{lyap pf 8asdf}\frac{d}{d\tau} \intr \frac12 \mu w_{\et}^2 = (1-2\al) \int_{\R}\big(\mu w_{\et}^2 - \theta (F(\mu))_{\et}w_{\et}- \al(1-\al) \et w_{\et} \mu \big)d\et. \ee
 The first and third terms on the right hand side of \eqref{lyap pf 8asdf} are bounded, and the second term is in $L^2_{\text{loc}}((\tau_0,\infty))$ by \eqref{energy bds kqaxos}. Thus, \eqref{Lyap derivative} combined with \eqref{lyap pf 8asdf} yields the $H^2 \hookrightarrow C^{1,1/2}$ regularity of $\cEw$, and the quantitative claim \eqref{Lyap energy bd} follows from \eqref{energy bds kqaxos}.
\end{proof}
We emphasize that the form of \eqref{Lyap derivative} illustrates the criticality of the value $\theta=2$, which will require a separate treatment. That is the purpose of the following lemma.
\begin{lem}  \label{lem:lasrylionstheta=2}  Under the assumptions of Proposition \ref{prop: convergence result}, let $(w,\mu)$ be given according to Definition \ref{def: cont resc inf} and \eqref{w defi}. Assume that $\theta=2$, and let
\be \label{f bd} f(\ta)= \intr w(\et,\ta)\big(\mu(\et,\ta)-M_a(\et)\big) d\et, \quad \ta \in \left[\ta_0,\infty\right). \ee
Then $f\in L^{\infty}((\ta_0,\infty))$, and
\be \label{f deriv} f'(\ta)= - \intr \left( \frac{\mu+M}{2}|w_\et|^2 + \left(\mu^2-\left(R_a - \frac{\al(1-\al)}{2}\et^2\right)(\mu-M_a) \right) \right )d\et .\ee
\end{lem}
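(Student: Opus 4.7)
The plan is to derive \eqref{f deriv} by direct differentiation under the integral sign, making use of two structural features specific to the critical case $\theta=2$: first, $2\al-1=0$, which eliminates the explicit $w$ dependence from the HJ equation in \eqref{w mu sys}; second, the mass conservation identity $\intr(\mu-M_a)d\et=0$, which ensures that both $f(\ta)$ and the right hand side of \eqref{f deriv} are unaffected by additive renormalization of $w$. This second feature is crucial: the self-similar profile in the critical regime corresponds to $w\equiv -R_a\ta+\text{const}$, so one cannot hope to bound $w$ itself uniformly in $\ta$.

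For the uniform bound on $f$, I would combine the mass conservation identity (which follows from the continuity equation and the compact support of $\mu$, together with $\intr M_a=a$) with the Lipschitz regularity $\|w_\et\|_{L^\infty((-R,R)\times(\ta_0,\infty))}<\infty$, provided by \eqref{Dv bd assumption} via $w_\et=v_\et+\al\et$. Writing $f(\ta)=\intr (w(\et,\ta)-w(0,\ta))(\mu-M_a)d\et$, the Lipschitz bound yields $|w(\et,\ta)-w(0,\ta)|\leq 2R\|w_\et\|_\infty$ uniformly on $[-R,R]$, which contains the supports of both $\mu(\cdot,\ta)$ and $M_a$; this produces $\|f\|_{L^\infty}\leq 4aR\|w_\et\|_\infty$.

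For the formula itself, the formal computation substitutes $w_\ta=\tfrac12 w_\et^2-\mu^2-\tfrac{\al(1-\al)}{2}\et^2$ (from \eqref{w mu sys} with $\theta=2$) into the identity $f'(\ta)=\intr w_\ta(\mu-M_a)d\et+\intr w\,\mu_\ta\,d\et$, and uses the continuity equation with integration by parts to obtain $\intr w\,\mu_\ta\,d\et=-\intr \mu w_\et^2\,d\et$ (the boundary terms at $\pm\infty$ vanish by compact support of $\mu$). Collecting the $w_\et^2$ contributions gives the coefficient $\tfrac{1}{2}(\mu-M_a)-\mu=-\tfrac{1}{2}(\mu+M_a)$, and then adding the null quantity $R_a\intr(\mu-M_a)d\et=0$ to the remainder reassembles the quantity in the integrand of \eqref{f deriv}.

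The main technical obstacle is the rigorous justification of this integration by parts at the free boundary $\partial\{\mu>0\}$, where \eqref{w mu sys} is only satisfied in a distributional sense. I would handle this exactly as in the proof of Lemma \ref{lem: E basic}: multiply the HJ equation and the continuity equation by a smooth cutoff $\vfi_\vep$ supported strictly inside $\{\mu>0\}$ (adapted to the Lipschitz free boundary curves $\beta_L,\beta_R$), integrate over a time window $(b,c)$, carry out all integrations by parts, and pass to the limit $\vep\to 0$. The error terms introduced by $(\vfi_\vep)_\et$ are of the form $\vep^{-1}\int \mu\cdot(\text{bounded})$ taken over an $\vep$-neighborhood of the free boundary, and vanish by the uniform continuity of $\mu$ (which is zero on $\partial\{\mu>0\}$). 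Because $w$ is globally Lipschitz on $[-R,R]\times(\ta_0,\infty)$, no additional regularization is needed to handle its appearance as a test function against $\mu_\ta$; the only new feature compared with Lemma \ref{lem: E basic} is the linear (as opposed to quadratic) dependence of $f$ on $\mu-M_a$, which is precisely what forces us to invoke mass conservation to close the computation.
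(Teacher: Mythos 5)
Your proposal is correct and follows essentially the same route as the paper: the uniform bound on $f$ comes from subtracting a constant from $w$ (exploiting that $\mu$ and $M_a$ have equal mass) and using the $L^{\infty}$ bound on $w_{\et}$ from \eqref{Dv bd assumption}, and the identity \eqref{f deriv} is the Lasry--Lions duality computation, made possible precisely because $2\al-1=0$ kills the $w$-term in the HJ equation, with the rigorous justification via cutoffs inside $\{\mu>0\}$ as in Lemma \ref{lem: E basic}. No gaps to report.
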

\begin{proof} The boundedness of $f$ follows because, since $\mu$ and $M_a$ have the same mass $a$, \eqref{Dv bd assumption} implies
\be |f(\ta)|=\left|\intr \left(w(\et,\ta)- (2R)^{-1}\int_{-R}^R w(y,\ta)dy\right)(\mu-M_a)d\et\right|\leq 2a\|w_{\et}\|_{L^{\infty}(-R,R)}.\ee
The point now is that, since the $2\al-1$ factor in the equation of $w$ \eqref{w mu sys} vanishes when $\theta=2$, the standard Lasry-Lions computation becomes especially useful. Indeed, we rewrite the first equation of \eqref{w mu sys} as
\be -w_{\ta} +\frac{1}{2}w_{\et}^2= \mu^{2} - \left(R_a - \frac{\al(1-\al)}{2}\et^2\right)+ R_a,\ee
so that, using once more that $\mu$ and $M_a$ have equal mass, we have, for $\tau_0<b<c<\infty,$
\be \label{LL theta=2 asdmfs} \int_{b}^{c}\int_{\R}\left(-w_{\ta} +\frac{1}{2}w_{\et}^2\right)(\mu-M_a ) d\et= \int_{b}^{c} \intr \left( \mu^2- \left(R_a - \frac{\al(1-\al)}{2}\et^2\right)\right)(\mu-M_a) d\et. \ee 
Keeping in mind that $\partial_{\ta}M_a\equiv 0$, equation \eqref{f deriv} now follows by testing the second equation of \eqref{w mu sys} against $w \chi_{\{\ta \in [b,c]\}}$, and subtracting it with \eqref{LL theta=2 asdmfs}. 
\end{proof}
We can now show the main convergence result of this section.
\begin{proof}[Proof of Proposition \ref{prop: convergence result}]
First, consider the function $\vfi(\tau)=\cE_{w_{\et},\mu}(\ta)$, which, is monotone and bounded over the interval $(\ta_0,\infty)$, due to Lemma \ref{lem: E basic}. Consequently, it approaches a finite limit as $\ta \to \infty$. Moreover, the uniform $C^{1,1/2}$ regularity estimate \eqref{Lyap energy bd} leads us to conclude that $\vfi'(\ta) \to 0$. That is,
 \be \label{lim E'=0} \lim_{\ta \to \infty}\frac{\theta-2}{\theta+2} \int_{\R} \mu w_{\et}^2d\et = 0. \  \ee 
 Let $s_n\in (\tau_0,\infty)$ be an arbitrary sequence of time delays such that $s_n \to \infty$, and define the solutions \[v_n(\eta,\tau)=v(\eta,\ta+s_n), \quad \mu_n(\eta,\tau)=\mu(\eta,\ta+s_n),\quad (\et,\ta) \in  \R \times \left[\ta_0-s_n,\infty \right).\]
 We also define the corresponding $w_n$ according to \eqref{w defi}. 
Then, in view of \eqref{mu cont assumption}, up to extracting a subsequence, the Arzel\`a-Ascoli theorem implies that $\mu_n$ converges locally uniformly in $\R \times \R$ to a function $\overline{\mu} \in \text{BUC}(\R \times \R)$. Note that $(v_n,\mu_n)$ satisfies both equations in \eqref{w mu sys}, so we may apply Lemma \ref{lem:regu Dw in mu>0}. Therefore, by \eqref{Dv bd assumption} and \eqref{Dw C^1 bd}, up to a subsequence, $Dw_n=(\partial_{\et}w_n,\partial_{\ta}w_n)$ also converges locally uniformly in $C^1(\{\overline{\mu}>0\})$, to a pair of $C^1(\{\overline{\mu}>0\})$ functions $(q,r)$, which are bounded in $L^{\infty}(\{\overline{\mu}>0\})$. In particular, extending the bounded function $q$ to be $0$ outside of $\{ \overline{\mu}>0 \}$, we have
\be  \label{mu w^2 conv wkoadoad}\mu_n |w_n|^2 \to \overline{\mu} |q|^2\,\,\,\, \text{ locally uniformly in }\,\R \times \R. \ee 
Assume first that $\theta \neq 2,$ so that $2\al-1 \neq 0$. Then \eqref{mu cont assumption}, \eqref{Dv bd assumption}, and the first equation in \eqref{w mu sys} imply that $w$ is uniformly bounded in $\{\mu>0\}$. Hence, $w_n$ converges locally uniformly in $\{ \overline{\mu}>0 \}$ to some bounded continuous function $\overline{w}: \{ \overline{\mu}>0 \} \to R$, which must then be $C^2$ and satisfy $D\overline{w}=(q,r)$. Solving for $\mu_n$ in the first equation of \eqref{w mu sys} also shows that $\overline{\mu}$ is $C^1$ in $\{\overline{\mu}>0\}$, and the pair $(\overline{w},\overline{\mu})$ is a classical solution to \eqref{w mu sys} in $\{\overline{\mu}>0\}$. 

 Furthermore, \eqref{Dv bd assumption}, \eqref{lim E'=0}, and \eqref{mu w^2 conv wkoadoad} imply that $\overline{\mu}\,\overline{w_{\et}}^2\equiv 0$, that is, $\overline{w}_{\et}=0$ in $\{\overline{\mu}>0\}$. Fixing $\tau_1 \in \R$, we infer that the function $w$ and its derivatives are independent of $\eta$ on every connected component of $\{\overline{\mu}(\cdot,\ta_1)>0\}$. It then follows from the first equation in \eqref{w mu sys} that the function $\overline{\mu}^{\theta}+ \et^2 \frac{\al(1-\al)}{2}$ is independent of $\eta$ on each connected component of $\{\overline{\mu}(\cdot,\ta_1)>0\}$.
 
 We claim that the set $\{\overline{\mu}(\cdot,\ta_1)>0\}$ is an interval, independent of $\ta_1$. Indeed, if $\eta_1\neq 0$ is any number such that $\overline{\mu}(\eta_1,\ta_1)>0$, then there exists a constant $R_{\eta_1,\ta_1}>0$ such that
\be \label{conv pf asxsao}\overline{\mu}^{\theta}(\et,\ta_1)=R_{\eta_1,\ta_1}- \et^2 \frac{\al(1-\al)}{2}>0 \ee
for every $\eta$ in the connected component of $\{\overline{\mu}(\cdot,\ta_1)>0\}$ containing $\eta_1$. However, by continuity of $\overline{\mu}$, \eqref{conv pf asxsao} must then hold for $\et\in [-|\eta_1|,|\eta_1|]$. We conclude that \eqref{conv pf asxsao} must hold for all $\et \in \{\overline{\mu}(\cdot,\ta_1)>0\}$ for a constant $R_{\ta_1}=R_{\eta_1,\ta_1}$, independent of $\eta_1$. Since $\overline{\mu}$ has mass $a$, the only possible choice is $R_{\ta_1}=R_a$. Thus, since $\ta_1$ was arbitrary, it follows that, for all $(\eta,\ta)\in \R \times \R$, \[ \overline{\mu}(\eta,\ta)=\left( R_a- \frac{\al(1-\al)}{2}\et^2\right)_+^{\frac{1}{\theta}}=M_a(\eta).\] 

In summary, since the sequence $s_n$ was arbitrary, we have shown that, as $\ta \to \infty,$ $\mu(\cdot,\ta) \to M_a(\cdot)$ and $\mu w_{\et}^2(\cdot, \ta)\to 0$ uniformly, in $\R$. In view of \eqref{Dv bd assumption}, we infer that
\eqref{conv v mu} holds for all $p\in [1,\infty]$. By Lemma \ref{lem:reduc to cont resc}, this shows that \eqref{IHH conv result 1} and \eqref{IHH conv result 2} hold. As for \eqref{IHH conv result 3}, it readily follows from the fact that $(u,m)$ and $(\mathcal{U}_a,\mathcal{M}_a)$ both solve the first equation of \eqref{mfgi}.

For the case $\theta=2$, \eqref{lim E'=0} now provides no information, so we must proceed differently. Letting $f:\left[\ta_0,\infty\right) \to \R$ be as in Lemma \ref{lem:lasrylionstheta=2}, we have that $f$ is bounded, and
\begin{multline} \label{log lasry vmu}  f'(\ta) = - \intr \left( \frac{\mu+M_a}{2}|w_\et|^2 + \left(\mu^2-\left(R_a - \frac{\al(1-\al)}{2}\et^2\right) \right)(\mu-M_a)\right) d\et\\
\leq - \intr \left( \frac12 \mu|w_\et|^2 + \left(\mu^2-\left(R_a - \frac{\al(1-\al)}{2}\et^2\right) \right)(\mu-M)\right) d\et. \end{multline}
We note that the right hand side is always non-positive, so that $f$ is non-increasing. Indeed, recalling that $M_a^2=M_a^{\theta}=\left(R_a-\frac{\al(1-\al)}{2}\et^2\right)_+$, we have
\be \label{quant pos dcoaia} \left(\mu^2-\left(R_a - \frac{\al(1-\al)}{2}\et^2\right) \right)(\mu-M_a)=\begin{cases}\left(\mu^2-M_a^2\right)(\mu-M) & M_a>0\\
\left(\mu^2 - \left(R_a - \frac{\al(1-\al)}{2}\et^2\right) \right)\mu & M_a=0
\end{cases} \geq 0.
\ee
Furthermore, the right hand side of \eqref{log lasry vmu} is also uniformly $\frac12$--H\"older continuous. Indeed, recalling \eqref{lyap pf 8asdf} and \eqref{mu cont assumption}, we have uniform $H^1 \hookrightarrow C^{\frac12}$ bounds on the first term. Similarly, the second term is readily seen to be bounded, by \eqref{lyap pf 7asdf} and \eqref{mu cont assumption}. 

We infer then that $\lim_{\ta \to \infty}f(\ta)$ exists, and that $\lim_{\tau \to \infty} f'(\ta)=0$. That is, the right hand side of \eqref{log lasry vmu} must tend to $0$ as $\ta \to \infty$. Thus, it directly follows from \eqref{quant pos dcoaia} that $q\equiv 0$ and $\overline{\mu}=M_a$, which concludes the proof.

\end{proof}
Next, we explore the extent to which $\cEw$ is a Lyapunov functional for the solutions to \eqref{w mu sys}, and the extent to which this depends on the value of $\theta$.
\begin{prop} \label{prop: E behavior} Under the assumptions of Proposition \ref{prop: convergence result}, let $(w,\mu)$ be given according to Definition \ref{def: cont resc inf} and \eqref{w defi}, and let $\cEw:\left[\tau_0,\infty\right) \to \R$ be defined by \eqref{Lyap}. Then the following holds:
\begin{enumerate}
    \item If $\theta \leq 2$ (resp. $\theta \geq 2$), then $\cEw(\ta) \geq 0$ (resp. $\cEw(\ta) \leq 0$) for all $\ta \in \left[ \ta_0, \infty \right)$. 
    \item If $\theta \neq 2$, then $\cEw(\ta_1)=0$ for some $\ta_1$ if and only if $(w_{\et}(\cdot,\ta),\mu(\cdot,\ta))=(0,M_a)$ for all $\ta \geq \ta_1$. 
   
\end{enumerate}    
\end{prop}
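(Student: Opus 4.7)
My plan is to combine the already-established monotonicity formula \eqref{Lyap derivative}, the global convergence from Proposition \ref{prop: convergence result}, and a pointwise non-negativity property of the ``relative entropy'' density appearing in $\cEw$. Set
\[ G(\eta,\tau) := F(\mu) - F(M_a) - \Bigl(R_a - \tfrac{\al(1-\al)}{2}\eta^2\Bigr)(\mu - M_a), \]
so that $\cEw(\tau) = \intr \bigl(\tfrac12 \mu w_\eta^2 - G\bigr)d\eta$. The first step is to verify that $G \geq 0$ pointwise, with equality if and only if $\mu = M_a$. On $\{M_a > 0\}$ the weight coincides with $F'(M_a) = M_a^{\theta}$, so $G$ is the standard Bregman divergence of the strictly convex $F(s) = s^{\theta+1}/(\theta+1)$; on $\{M_a = 0\}$ the weight is non-positive and $G = F(\mu) + \bigl|R_a - \tfrac{\al(1-\al)}{2}\eta^2\bigr|\mu$, which is non-negative and vanishes only at $\mu = 0$.

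The second step is to show that $\cEw(\tau) \to 0$ as $\tau \to \infty$. Using \eqref{Dv bd assumption} to localize everything to $[-R,R]$, the conclusions of Proposition \ref{prop: convergence result} give $\mu(\cdot,\tau) \to M_a$ uniformly and $\mu w_\eta^2 \to 0$ uniformly; together with the continuity of $F$ and the boundedness of the weight on $[-R,R]$, this yields $\intr \tfrac12 \mu w_\eta^2\,d\eta \to 0$ and $\intr G\,d\eta \to 0$. Part (1) is then immediate from \eqref{Lyap derivative}, which shows that $\cEw$ is non-decreasing if $\theta > 2$, constant if $\theta = 2$, and non-increasing if $\theta < 2$: combined with the limit $0$ at infinity, this forces $\cEw \leq 0$ for $\theta \geq 2$ and $\cEw \geq 0$ for $\theta \leq 2$.

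For Part (2), assume $\theta \neq 2$ and $\cEw(\tau_1) = 0$. Monotonicity together with Part (1) forces $\cEw(\tau) = 0$ for every $\tau \geq \tau_1$. Then \eqref{Lyap derivative} gives $\intr \mu w_\eta^2\,d\eta = 0$ for a.e.\ $\tau \geq \tau_1$, and feeding this back into $\cEw(\tau) = 0$ yields $\intr G\,d\eta = 0$. The pointwise characterization of $G$ from the first step forces $\mu(\cdot,\tau) = M_a(\cdot)$ a.e.; the uniform continuity of $\mu$ guaranteed by \eqref{mu cont assumption} promotes this to pointwise equality on $\R$, for every $\tau \geq \tau_1$. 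Combining $\mu w_\eta^2 = 0$ with $\mu = M_a$ gives $w_\eta = 0$ a.e.\ on $\{M_a > 0\}$, and the interior $C^{1,1/2}$ regularity of $w$ on $\{\mu > 0\}$ from Lemma \ref{lem:regu Dw in mu>0} upgrades this to $w_\eta \equiv 0$ on $\{\mu > 0\}$ for every such $\tau$. The converse implication is a direct substitution into \eqref{Lyap}.

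The only non-routine point is the pointwise analysis of $G$ in the free region $\{M_a = 0\}$, where the weight $R_a - \tfrac{\al(1-\al)}{2}\eta^2$ no longer equals $F'(M_a)$; once this is handled, the rigidity step follows the standard template ``monotone Lyapunov functional that vanishes at one time is forced to sit at its limiting value forever,'' with strict convexity of $F$ delivering the uniqueness of the equilibrium.
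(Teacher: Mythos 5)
Your proposal is correct, and its skeleton coincides with the paper's proof: both establish $\lim_{\tau\to\infty}\cEw(\tau)=0$ from Proposition \ref{prop: convergence result} together with Lemma \ref{lem:reduc to cont resc}, then read off Part 1 from the sign of the derivative in \eqref{Lyap derivative}, and for Part 2 use monotonicity plus the sign from Part 1 to force $\cEw\equiv 0$ and hence $\intr \mu w_\eta^2\,d\eta=0$ on $[\tau_1,\infty)$. The one genuine divergence is in how you then conclude $\mu=M_a$: the paper deduces $w_\eta\equiv 0$ on $\{\mu>0\}$ and re-runs the Hamilton--Jacobi rigidity argument from the proof of Proposition \ref{prop: convergence result} (constancy of $\mu^\theta+\tfrac{\al(1-\al)}{2}\eta^2$ on connected components of the support, then the mass normalization), whereas you exploit the pointwise non-negativity of the entropy density $G$, with equality iff $\mu=M_a$ (strict convexity of $F$ on $\{M_a>0\}$, non-positive weight on $\{M_a=0\}$), so that $\intr G=0$ immediately forces $\mu(\cdot,\tau)=M_a$. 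That pointwise analysis of $G$ is exactly the computation the paper carries out later, inside the proof of Proposition \ref{prop: exponential theta<2}, so your route introduces nothing that is not already available; its advantage is that it bypasses the connectedness-of-support argument and gets the identification of $\mu$ directly from convexity, while the paper's version simply reuses rigidity machinery it has already set up. Both arguments are complete; your a.e.-in-$\tau$ caveat is even unnecessary, since $\cEw\in C^1$ by Lemma \ref{lem: E basic}, so the derivative vanishes for every $\tau\geq\tau_1$.
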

\begin{proof}  For concreteness, we assume first that $\theta \leq 2$. By Proposition \ref{prop: convergence result} and Lemma \ref{lem:reduc to cont resc}, we have $(\mu w_{\et}^2,\mu) \to (0,M_a)$, and so 
\be \lim_{\tau\to\infty} \cEw (\tau)= \intr   \left( \frac12 0 -\left(F(M_a)-F(M_a)-\left(R_a-\frac{\al(1-\al)}{2} \eta^2\right)(M_a-M_a) \right)\right)d\eta =0.  \ee    
Since $\theta \leq 2,$ \eqref{Lyap derivative} implies that $\cEw$ is non-increasing. This proves that $\cEw \geq 0$, and, if $\cEw(\ta_1)=0$ for some $\tau_1$, then $\cEw(\ta)=0$ for $\ta \geq \ta_1$. But in the latter case, if $\theta=2$, \eqref{Lyap derivative} yields that $w_{\eta}(\cdot,\tau)\equiv 0$ in $\{\mu(\cdot,\ta)>0\}$ for $\tau \geq \ta_1$. As in the proof of Proposition \ref{prop: convergence result}, this readily implies that $\mu(\cdot,\tau)=M_a(\cdot,\tau)$ for $\ta \geq \ta_1$. The proof of parts 1 and 2 for $\theta > 2 $ is directly analogous.
\end{proof}
 Finally, we obtain exponential rate of convergence for the range $\theta \in (0,2)$, which will be important in handling the convergence of the finite horizon problem for these values of $\theta$. 
 \begin{prop}\label{prop: exponential theta<2}Under the assumptions of Proposition \ref{prop: convergence result}, let $(w,\mu)$ be given according to Definition \ref{def: cont resc inf} and \eqref{w defi}, let $\cEw:\left[\tau_0,\infty\right) \to \R$ be defined by \eqref{Lyap}, and assume that $\theta<2$. Let $K>0$ be a constant such that \eqref{energy bds kqaxos} holds. Then $\cEw(\ta)$ converges exponentially to $0$. Indeed, setting $k=\frac{2-\theta}{2+\theta}=2\alpha-1$,
    \be \label{exp conv E} 0\leq \cEw (\ta) \leq e^{-2k\ta}\cEw(\tau_0), \quad \ta \in \left[\ta_0,\infty \right).\ee
Moreover, $\mu w_{\et}^2 \to 0$ and $\mu \to M_a$ exponentially in the following sense:
\be \label{exp conv w} \intr \mu w_{\et}^2 d\et \leq Ce^{-k\ta}, \quad \ta\in (\ta_0+\rho,\infty),\ee
and
\be \label{exp conv mu} \intr F(\mu)-F(M_a) - \left(R_a - \frac{\al(1-\al)}{2} \et^2\right)(\mu-M_a) d\et \leq  Ce^{-k\ta}, \quad \ta\in (\ta_0+\rho,\infty),\ee 
where $C=C(K,R,\|\mu\|_{L^{\infty}(\R \times (\ta_0,\infty))},\|Dw\|_{L^{\infty}(\{\mu>0\})},\rho, \rho^{-1}).$
 \end{prop}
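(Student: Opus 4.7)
The plan is to work with the decomposition $\cEw(\ta) = \tfrac12 \phi(\ta) - \tilde{\mathcal{L}}(\ta)$, where $\phi(\ta) := \intr \mu w_\eta^2\,d\eta$ and
$$\tilde{\mathcal{L}}(\ta) := \intr \left( F(\mu) - F(M_a) - \Big(R_a - \tfrac{\al(1-\al)}{2}\et^2\Big)(\mu - M_a) \right)d\eta.$$
The key structural observation is $\tilde{\mathcal{L}}(\ta) \ge 0$: on $\{M_a > 0\}$ the identity $F'(M_a) = M_a^\theta = R_a - \tfrac{\al(1-\al)}{2}\et^2$ makes the integrand the Bregman divergence of the convex function $F(s) = s^{\theta+1}/(\theta+1)$, while on $\{M_a = 0\}$ the inequality $R_a - \tfrac{\al(1-\al)}{2}\et^2 \le 0$ makes the integrand equal to $F(\mu) - (R_a - \tfrac{\al(1-\al)}{2}\et^2)\mu \ge 0$. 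Consequently $\cEw(\ta) \le \tfrac12 \phi(\ta)$.

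With $k = (2-\theta)/(2+\theta) > 0$, combining this pinch with the derivative identity \eqref{Lyap derivative}, namely $\cEw'(\ta) = -k\phi(\ta)$, produces the autonomous differential inequality $\cEw'(\ta) \le -2k\,\cEw(\ta)$. A direct Gr\"onwall step (multiply by $e^{2k\ta}$ and use monotonicity) then yields \eqref{exp conv E}.

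For the pointwise bound \eqref{exp conv w}, the plan is to integrate $\cEw' = -k\phi$ from $\ta$ to $\infty$, using that $\cEw(\ta) \to 0$ (a consequence of Proposition \ref{prop: convergence result}, whose hypotheses are in force):
$$ k \int_\ta^\infty \phi(s)\,ds = \cEw(\ta) \le C e^{-2k\ta}.$$
Thus the nonnegative function $\phi$ has exponentially decaying tail integral. The regularity estimate \eqref{Lyap energy bd} gives $\cEw \in H^2 \hookrightarrow C^{1,1/2}$ uniformly on windows of length $2\rho$, so $\phi = -\cEw'/k$ is uniformly H\"older on such windows. A window-optimization argument --- restrict the tail integral to a sub-window around a point of near-maximum of $\phi$ and balance its length against the H\"older oscillation --- then converts the integral decay into the pointwise rate stated in \eqref{exp conv w}, with the $\rho^{\pm 1}$ dependence entering through the window size. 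Finally, \eqref{exp conv mu} is immediate from the rewriting $\tilde{\mathcal{L}}(\ta) = \tfrac12 \phi(\ta) - \cEw(\ta)$ combined with the nonnegativity and exponential decay of both terms.

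The main technical step is this last interpolation: a naive $\tfrac12$-H\"older window balance only produces a sub-optimal exponent, so matching the rate $k$ in \eqref{exp conv w} requires using the full strength of \eqref{Lyap energy bd} --- or, equivalently, the explicit formula \eqref{lyap pf 8asdf} for $\phi'$ --- to sharpen the oscillation estimate. The other ingredients (the sign of $\tilde{\mathcal{L}}$, the Gr\"onwall step, and the tail identity) are structural and closely parallel the "entropy--entropy-production" arguments familiar from the porous medium equation.
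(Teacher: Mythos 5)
Your decomposition $\cEw=\tfrac12\phi-\tilde{\mathcal L}$ with $\phi(\ta)=\intr \mu w_{\et}^2\,d\et$, the Bregman-divergence argument for $\tilde{\mathcal L}\ge 0$, and the resulting differential inequality $\cEw'=-k\phi\le -2k\,\cEw$ leading to \eqref{exp conv E} coincide exactly with the paper's proof, and deducing \eqref{exp conv mu} from $\tilde{\mathcal L}=\tfrac12\phi-\cEw$ is also the paper's closing step. The divergence is in how you pass from the decay of $\cEw$ to the pointwise bound \eqref{exp conv w}, and this is where your proposal has a genuine gap: you concede that the naive $\tfrac12$-H\"older window balance only gives a sub-optimal exponent, and you then assert that "the full strength of \eqref{Lyap energy bd} --- or, equivalently, the explicit formula \eqref{lyap pf 8asdf}" sharpens it, without carrying out the estimate. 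These two inputs are not equivalent for this purpose. The uniform $H^2$ bound \eqref{Lyap energy bd} only controls $\|\phi'\|_{L^2}$ on windows, and that information, combined with the tail bound $k\int_\ta^\infty\phi\,ds=\cEw(\ta)\le Ce^{-2k\ta}$, cannot yield a rate better than $e^{-2k\ta/3}$: a spike of height $A$ and width $\sim A^2$ is compatible with a uniform $L^2$ bound on $\phi'$ while contributing only $\sim A^3$ to the tail integral, so $A\sim e^{-2k\ta/3}$ cannot be excluded. Hence the central quantitative step of the proposition --- the exponent $k$ rather than $2k/3$ --- is exactly the step your plan leaves unproved.

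What closes it is the structural inequality hidden in \eqref{lyap pf 8asdf} together with \eqref{energy bds kqaxos}: it gives $|\cEw''(\ta)|\le C\,h(\ta)\sqrt{|\cEw'(\ta)|}$ with $h$ uniformly in $L^2$ on windows of length $2\rho$, i.e.\ it is $\sqrt{\phi}$, not $\phi$, that is uniformly $C^{1/2}$ on windows. With this, your window-optimization argument applied to $g=\sqrt\phi$ (tail integral of $g^2$ of size $e^{-2k\ta}$, drop of $g$ controlled by $H r^{1/2}$) does give $g\lesssim e^{-k\ta/2}$, hence \eqref{exp conv w}; the paper instead interpolates, writing $\|\cEw'\|_{L^\infty}\le K\|\cEw''\|_{L^2}^{2/3}\|\cEw\|_{L^\infty}^{1/3}\le C\|\cEw'\|_{L^\infty}^{1/3}e^{-2k\ta/3}$ and absorbing the factor $\|\cEw'\|_{L^\infty}^{1/3}$, which is the same bootstrap in a different guise. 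So your route can be made to work and is a reasonable alternative to the paper's interpolation, but as written the decisive sharpening is missing, and the claim that \eqref{Lyap energy bd} alone suffices is incorrect.
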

 
\begin{proof}
    Recalling the definition of $M_a$ (see \eqref{Mself}), we observe that the quantity 
\be \label{thequantity}F(\mu)-F(M_a)-\left(R_a-\frac{\al(1-\al)}{2} \eta^2\right)(\mu-M_a)\ee
is always non-negative. Indeed, if $M_a>0$, then it equals
\be F(\mu)-F(M_a)-F'(M_a)(\mu-M_a),\ee
which is non-negative because $F$ is convex. On the other hand, if $M_a=0$, then by definition of $M_a$ we must have $R- \frac{\al(1-\al)}{2}\eta^2 \leq 0$, and \eqref{thequantity} equals
\be F(\mu)-\left(R-\frac{\al(1-\al)}{2}\eta^2\right)\mu \geq 0.\ee
Therefore, in view of \eqref{Lyap derivative} and Proposition \ref{prop: E behavior}, setting $\vfi(\ta)=\cEw(\ta)$, we have
\be \vfi'(\ta)=\frac{2(\theta-2)}{\theta+2}\int_{\R}\frac{1}{2}\mu w_{\eta}^2=-2k\int_{\R}\frac{1}{2}\mu w_{\eta}^2\leq -2k \vfi(\ta). \ee
Consequently,
\be 0 \leq \vfi(\ta) \leq e^{-2k\ta}\vfi(\tau_0), \quad \ta \in (\ta_0,\infty),\ee
which shows \eqref{exp conv E}. Now, in view of \eqref{lyap pf 8asdf}, we have, for some constant $C>0$ that may increase at each step,
\be |\vfi ''(\ta)| \leq C(|\vfi'(\ta)|+ \|(\mu^{\theta})_{\et}(\ta)\|_{L^2(\R)}\sqrt{|\vfi'(\ta)|}+ \|\et^2 \mu(\ta) \|_{L^2(\R)}\sqrt{|\vfi'(\ta)|} )\leq C h(\ta) \sqrt{|\vfi'(\ta)|},\ee
where $\|h\|_{L^2((\ta-\rho,\ta+\rho))}\leq C.$ In view of \eqref{Lyap energy bd}, we may then estimate $\vfi'$ using interpolation. Indeed, for $\ta \geq \ta_0+\rho,$ and some constant $K_1=K_1(\rho^{-1}),$ we have
\begin{align} \|\vfi'\|_{L^{\infty}((\ta-\rho,\ta+\rho))} \leq & K_1 \|\vfi''\|_{L^2((\ta-\rho,\ta+\rho))}^{2/3}\|\vfi\|_{L^{\infty}((\ta-\rho,\ta+\rho))}^{1/3}\\
\leq &\left( K_1 \|h\|_{L^2((\ta-\rho,\ta+\rho))}^{2/3}\|\vfi'\|_{L^{\infty}(\ta-\rho,\ta+\rho)}^{1/3}\vfi(\ta_0)^{1/3}e^{2k\rho/3}\right)e^{-2k\ta}\\
\leq & C \|\vfi'\|_{L^{\infty}(\ta-\rho,\ta+\rho)}^{1/3} e^{-2k\ta/3}, \end{align}
which proves \eqref{exp conv w}. Finally, \eqref{exp conv mu} follows from \eqref{Lyap}, \eqref{exp conv E}, and \eqref{exp conv w}.
\end{proof}
\begin{rem}Note that, due to \eqref{tau eta defi IH}, the exponential rates of Proposition \ref{prop: exponential theta<2} correspond to algebraic rates in the original $(x,t)$ variables.    
\end{rem}
\section{Estimates for the continuous rescaling}\label{sec: estimates}
Starting with this section, we will focus on proving the main results of the paper exclusively for the terminal cost problem \eqref{mfg}--\eqref{tc}. At the end, in Section \ref{sec:plan}, we will explain how to adapt the arguments to the planning problem \eqref{mfg}--\eqref{pp}. To fix ideas, we restate the definition of the continuous rescaling, for the finite horizon problem \eqref{mfg}.
\begin{defn} Given a solution $(u,m)$ to \eqref{mfg}, we define the continuous rescaling $(v,\mu)$ as follows. For $(\eta,\tau)\in \R \times \left(-\infty, \log(T)\right],$ we set
\be \label{tau eta defi} t=e^{\ta},\,\, x=t^{\al} \et,\ee
and we define
\be \label{v mu defi}
\mu(\et,\ta)=t^{\al}m(x,t), \, v(\et,\ta)=t^{1-2\al}u(x,t). 
\ee
\end{defn}

The main goal of this section will be to prove all the necessary a priori estimates for the pair $(v,\mu)$, namely those that appear in the hypotheses of Proposition \ref{prop: convergence result}
(see Propositions \ref{prop: mu bd}, \ref{prop: v Dv bd}, \ref{prop: v mu energy}, and \ref{prop: mu holder cont} below). This will amount to obtaining estimates for $(u,m)$ whose algebraic rate of growth as $t \to \infty$ is the optimal one.

\begin{rem}In actual fact, some of the estimates obtained here will be stronger than what is required in Proposition \ref{prop: convergence result}. For instance, in contrast with \eqref{mu cont assumption}, $\mu$ will be shown to be uniformly H\"older continuous (see Proposition \ref{prop: mu holder cont}), and the energy estimates will be stronger as well (see Proposition \ref{prop: v mu energy}). Of course, it is also already known from Theorem \ref{thm.intro1} that the actual regularity of the free boundary curves is also higher than what is required by Proposition \ref{prop: convergence result}.
    
\end{rem}
\subsection{Exact rates for the density and the free boundary curves}

The results of this subsection will be the simplest to derive, because they are quick consequences of Theorem \ref{thm.intro2}. We recall once more, for the reader's convenience, that $\alpha\in (0,1)$ is defined by \be \label{alpha defi}\alpha=\frac{2}{2+\theta}.\ee
It will also be frequently useful to rewrite this as,
\be \label{alpha theta} \alpha \theta = 2-2\al.\ee
We begin by noting the fact that $m(\cdot,t)$ decays like $t^{-\al}$, which is the same rate as that of $\mathcal{M}_a(x,t)=t^{-\al}M_a(xt^{-\al})$. 
\begin{prop}[Optimal $L^{\infty}$ rate for the density] \label{prop: smoothing effect}
    Under the assumptions of Theorem \ref{thm.intro1}, let $(u,m)$ be the solution to \eqref{mfg}--\eqref{tc}. There exists a constant $K>0$, $K=K(C_0,|a_0|,|b_0|)$, such that
    \begin{equation} \label{smoothing effect}
        \|m(\cdot,t)\|_{L^{\infty}(\R)}\leq \frac{K \|m_0 \|_{\infty}}{1+t^{\al}}, \quad t\in (0,T).
    \end{equation}    
\end{prop}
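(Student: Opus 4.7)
The plan is to derive the bound as a direct consequence of Theorem \ref{thm.intro2}, since Theorem \ref{thm.intro2} already encodes the sharp algebraic growth rate $t^{\al}$ of the Lagrangian flow. The starting point is the mass conservation identity \eqref{mass cons}, which says
\[
m(\gamma(x,t),t) \;=\; \frac{m_0(x)}{\gamma_x(x,t)}, \quad (x,t)\in (a_0,b_0)\times(0,T),
\]
together with the lower bound in \eqref{gamma x bd}: $\gamma_x(x,t)\geq \tfrac{1}{C}(1+\mathscr{d}(t)^{\al})$. Since we are treating the terminal cost problem \eqref{mfg}--\eqref{tc}, the function $\mathscr{d}$ in \eqref{scrd defi} is simply $\mathscr{d}(t)=t$, so these two inputs combine to give
\[
m(\gamma(x,t),t) \;\leq\; \frac{C\,m_0(x)}{1+t^{\al}} \;\leq\; \frac{C\,\|m_0\|_{\infty}}{1+t^{\al}}
\]
for every $(x,t)\in (a_0,b_0)\times(0,T)$.

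To convert this into an $L^{\infty}$ bound over all of $\R$, I would invoke the second part of Theorem \ref{thm.intro2} together with \eqref{freebd ch}: the map $\gamma(\cdot,t)$ is continuous on $[a_0,b_0]$ with $\gamma(a_0,t)=\gamma_L(t)$ and $\gamma(b_0,t)=\gamma_R(t)$, and $\{m(\cdot,t)>0\}=(\gamma_L(t),\gamma_R(t))$. By the intermediate value theorem, $\gamma(\cdot,t)$ is surjective onto $[\gamma_L(t),\gamma_R(t)]$, so every point $y$ in the support of $m(\cdot,t)$ is of the form $\gamma(x,t)$ for some $x\in(a_0,b_0)$. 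Outside the support, $m(\cdot,t)$ vanishes identically, so the bound is trivial there.

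The constant $C$ appearing in \eqref{gamma x bd} depends on $C_0$, $|a_0|$, $|b_0|$, so the resulting $K$ has exactly the stated dependence. I expect no real obstacle here; essentially all of the analytical work has been absorbed into Theorem \ref{thm.intro2} (which in particular already contains the sharp rate), and the present proposition is really a packaging statement that translates the Lagrangian estimate back into an Eulerian bound on $m$.
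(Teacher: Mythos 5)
Your proposal is correct and follows exactly the paper's own argument: combining the mass conservation identity \eqref{mass cons} with the lower bound in \eqref{gamma x bd} (with $\mathscr{d}(t)=t$ for the terminal cost problem) to get $m(\gamma(x,t),t)\leq C m_0(x)/(1+t^{\al})$, which the paper states ``readily implies'' the result. Your extra step of checking, via the intermediate value theorem and \eqref{freebd ch}, that $\gamma(\cdot,t)$ covers the support of $m(\cdot,t)$ simply makes explicit what the paper leaves implicit.
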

\begin{proof} By rewriting $\gamma_x$ in terms of $m$ and $m_0$ using \eqref{mass cons}, the first inequality in \eqref{gamma x bd} becomes
\[m(\gamma(x,t),t) \leq C \frac{m_0(x)}{1+t^{\al}},\]
which readily implies \eqref{smoothing effect}.

\end{proof}
\begin{rem}The statement of Proposition \ref{prop: smoothing effect} can be strengthened into a so-called $L^{1}$--$L^{\infty}$ smoothing effect, to yield
\be \|m(\cdot,t)\|_{L^{\infty}(\R)} \leq K\|m_0\|^{\al}_{L^1(\mathbb{R})}t^{-\al},\ee
where $K$ depends exclusively on $\theta$ and $\theta^{-1}$. The proof is completely independent from Theorem \ref{thm.intro2}; it relies instead on the displacement convexity formula \cite[Prop. 3.6]{CMP}, in a similar fashion to  \cite[Prop. 5.2]{Porretta}. This estimate is new, and could potentially be used to treat more general data, but we omit the proof because \eqref{smoothing effect} is sufficient for our present purposes.     
\end{rem}
We now explain why the free boundary curves, up to their second derivative, have the same rate of growth as the corresponding free boundary curves of $\mathcal{M}_a$, which have the form $x=\pm c_a t^{\al}$, where $c_a>0$. 

\begin{prop}[Optimal rates for the free boundary curves] \label{prop: free bd rates} Under the assumptions of Theorem \ref{thm.intro1}, let $(u,m)$ be the solution to \eqref{mfg}--\eqref{tc}. Assume also that $T\geq 2$. There exists a constant \[K=K(C_0,|a_0|,|b_0|)\]  such that the free boundary curves $\gamma_L, \gamma_R$ satisfy, for $t\in [0,T],$

\be |\gamma_L(t)| + |\gamma_R(t)| \leq K(1+t^{\al}), \label{gamL}
\ee 
\be \label{gamL'} |\dot \gamma_L(t)|+ |\dot \gamma_R(t)| \leq \frac{K}{1+t^{1-\al}}, 
\ee
    \begin{equation} \label{gamL''}
\frac{1}{K(1+t^{2-\al})}\leq \Ddot\gamma_{L}(t)\leq \frac{K}{(1+t^{2-\al})},\,\text{ and }\,-\frac{K}{(1+t^{2-\al})}\leq \Ddot\gamma_{R}(t)\leq-\frac{1}{K(1+t^{2-\al})}.
\end{equation}
\end{prop}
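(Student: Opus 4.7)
My plan starts from the fact that $\gamma_L(t)=\gamma(a_0,t)$ and $\gamma_R(t)=\gamma(b_0,t)$, so that \eqref{gamL} is immediate from \eqref{gamma bd} in Theorem \ref{thm.intro2}, using $\mathscr{d}(t) = t$ in the terminal cost case. The substantive work is \eqref{gamL''}, after which \eqref{gamL'} will follow by integration.

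The key identity for \eqref{gamL''} is
\be \label{planddotidentity} \Ddot\gamma(x,t) = (m^\theta)_x(\gamma(x,t),t), \quad (x,t)\in (a_0,b_0)\times (0,T), \ee
which I obtain by differentiating the characteristic ODE $\dot\gamma=-u_x(\gamma,t)$ once more in $t$ and using $u_{xt}=u_x u_{xx}-(m^\theta)_x$ from the $x$-derivative of the HJ equation (this calculation is classical in $\{m>0\}$ by Theorem \ref{thm.intro1}). Combining \eqref{planddotidentity} with the mass conservation formula \eqref{mass cons} in the form $m^\theta(\gamma(x,t),t) = m_0^\theta(x)/\gamma_x(x,t)^\theta$, and Taylor-expanding to first order as $x\to b_0^-$ (using $m_0(b_0)=0$), I read off the left-limit of $(m^\theta)_x$ at $\gamma_R(t)$ and arrive at
\be \label{plandotgammaR} \Ddot\gamma_R(t) = \frac{(m_0^\theta)'(b_0^-)}{\gamma_x(b_0,t)^{\theta+1}}. \ee
The bump estimate \eqref{m0 bump} (via \eqref{m0 C1,1}) forces the two-sided bound $-C_0\leq (m_0^\theta)'(b_0^-) \leq -1/C_0$, and \eqref{gamma x bd} controls $\gamma_x(b_0,t)$ from above and below by a multiple of $1+t^\al$. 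The exponent identity $\al(\theta+1)=2-\al$ (equivalent to \eqref{alpha theta}) then matches $(1+t^\al)^{\theta+1}$ with $1+t^{2-\al}$, giving \eqref{gamL''} for $\gamma_R$; the computation for $\gamma_L$ at $x=a_0$ is symmetric.

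For \eqref{gamL'}, I use that $\dot\gamma_R>0$ and $\Ddot\gamma_R<0$ by Theorem \ref{thm.intro1}, so $\dot\gamma_R$ is positive and strictly decreasing. Concavity and \eqref{gamL} give, for $T\geq 2$, the endpoint control $\dot\gamma_R(T)\leq 2\gamma_R(T)/T \leq K/(1+T^{1-\al})$. Hence, for any $t\in [0,T]$,
\be \dot\gamma_R(t) = \dot\gamma_R(T) + \int_t^T |\Ddot\gamma_R(s)|\,ds \leq \frac{K}{1+T^{1-\al}} + \int_t^\infty \frac{K\,ds}{1+s^{2-\al}}. \ee
Because $\al<1$, hence $2-\al>1$, the integral on the right is integrable at infinity and bounded by $K'/(1+t^{1-\al})$, which yields \eqref{gamL'} for $\dot\gamma_R$ uniformly in $T\ge 2$; the estimate for $\dot\gamma_L$ is analogous. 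The main technical delicacy I anticipate is justifying the limit $x\to b_0^-$ in the derivation of \eqref{plandotgammaR}, since \eqref{planddotidentity} is only classical in the interior $\{m>0\}$ and the continuity of $\gamma_x$ up to the boundary $x=b_0$ is not stated explicitly in Theorem \ref{thm.intro2}. I expect to handle this by combining the local $C^{2,\bar\al}$ regularity of $\gamma$ on $(a_0,b_0)\times[0,T]$ with the global $W^{2,\infty}$ regularity of $\gamma_R$, passing to the limit on compact subsets approaching $b_0$.
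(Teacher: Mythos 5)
Your skeleton matches the paper's: \eqref{gamL} is read off from \eqref{gamma bd} with $x\in\{a_0,b_0\}$, the heart of the matter is \eqref{gamL''} via a second-derivative formula for the flow combined with \eqref{m0 bump}, \eqref{gamma x bd} and the exponent identity $\al(\theta+1)=2-\al$, and \eqref{gamL'} is then deduced from \eqref{gamL} and \eqref{gamL''} (your integration-of-$\Ddot\gamma_R$ argument, using the sign information from Theorem \ref{thm.intro1}, is a legitimate variant of the paper's interpolation; note only that the paper's interpolation deliberately avoids monotonicity, which is what lets it transfer to the planning problem later).

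The genuine gap is exactly the step you flag: the passage to the boundary in your identity $\Ddot\gamma_R(t)=(m_0^\theta)'(b_0^-)\,\gamma_x(b_0,t)^{-(\theta+1)}$. Differentiating \eqref{mass cons} in $x$ gives, in the interior,
\[
\Ddot\gamma(x,t)=(m^{\theta})_x(\gamma(x,t),t)=\frac{(m_0^{\theta})'(x)}{\gamma_x(x,t)^{\theta+1}}-\theta\,\frac{m_0^{\theta}(x)\,\gamma_{xx}(x,t)}{\gamma_x(x,t)^{\theta+2}},
\]
so your formula requires (i) the existence of a trace $\gamma_x(b_0^-,t)$, (ii) the vanishing of the error term $m_0^{\theta}\gamma_{xx}$ as $x\to b_0^-$, and (iii) the convergence $\Ddot\gamma(x,t)\to\Ddot\gamma_R(t)$. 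None of these follows from the results quoted in this paper: \eqref{gamma regu} gives $C^{2,\overline{\al}}$ regularity only locally in the open interval $(a_0,b_0)$ (hence no uniform control of $\gamma_{xx}$, nor of $(m^{\theta})_x$, up to the free boundary), and $\gamma_R\in W^{2,\infty}(0,T)$ concerns the $t$-regularity of the boundary curve alone and does not identify $\Ddot\gamma_R$ with the limit of interior second derivatives. Your proposed fix ("local $C^{2,\overline{\al}}$ plus $W^{2,\infty}$ of $\gamma_R$, passing to the limit on compact subsets") therefore does not close the argument. The paper circumvents all of this by quoting the explicit estimate of \cite[Thm.\ 4.14]{CMP},
\[
0<(m_0^{\theta})_x(a_0)\,\|\gamma_x(\cdot,t)\|_{\infty}^{-(1+\theta)}\leq\Ddot\gamma_L(t)\leq(m_0^{\theta})_x(a_0)\,\|\gamma_x(\cdot,t)^{-1}\|_{\infty}^{1+\theta},
\]
which is formulated with interior sup norms of $\gamma_x$, so no boundary trace of $\gamma_x$, $\gamma_{xx}$ or $(m^{\theta})_x$ is ever needed; combined with \eqref{m0 bump} and \eqref{gamma x bd} this gives \eqref{gamL''} exactly as in your exponent bookkeeping. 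So your quantitative conclusion and the rest of the proof are sound, but the key two-sided bound on $\Ddot\gamma_L,\Ddot\gamma_R$ must either be cited from \cite{CMP} or be established by a boundary-regularity argument that your proposal does not supply.
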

\begin{proof}
Since the statement is symmetrical, we only show the estimates for the left curve $\gamma_L$. The estimate \eqref{gamL} follows by recalling \eqref{scrd defi} and \eqref{gamma regu}, and setting $x=a_0$ in \eqref{gamma bd}. Now, from \cite[Thm 4.14]{CMP}, one has the explicit second derivative estimate
\[ 0< f(m_0)_x (a_0) \|\gamma_x(\cdot,t)\|_{\infty}^{-(1+\theta)}\leq \Ddot \gamma_L(t) \leq f(m_0)_x (a_0) \| \gamma_x(\cdot,t)^{-1}\|_{\infty}^{1+\theta}.  \]
It then follows from \eqref{m0 bump} and \eqref{gamma x bd} that \[ \frac{1}{C}\frac{1}{(1+t^{\al})^{1+\theta}} \leq \Ddot \gamma_L(t) \leq C \frac{1}{(1+t^{\al})^{1+\theta}}.\]
Thus, recalling \eqref{alpha theta}, up to increasing the constant $C$, we obtain \eqref{gamL''}. 

Finally, \eqref{gamL'} follows by basic interpolation. For instance, if $t \leq  (T-1)/2,$ one may write $\gamma_L(2t+1)-\gamma_L(t)=(t+1)\dot \gamma_L(t+h),$ for some $h\in (0,t+1)$, so that
\[ |\dot \gamma_L(t)|\leq |\dot\gamma_L(t)-\dot \gamma_L(t+h)|+|\dot \gamma_L(t+h)|\leq (t+1) \|\Ddot \gamma_L\|_{L^{\infty}(t,2t+1)}+ \frac{2}{t+1} \|\gamma_L\|_{L^{\infty}(t,2t+1)}, \]
and \eqref{gamL'} follows from \eqref{gamL} and \eqref{gamL''}. For $t>(T-1)/2\geq 1/2$, we instead write $\gamma_L(t)-\gamma_L(t/2)=(t/2)\dot \gamma_L(t-h)$, for some $h\in(0,t/2)$, and then proceed in the same way.
\end{proof}
We are now in a position to show the first set of estimates for the continuous rescaling, namely the bounds for $\mu$ and its support.
\begin{prop} [Uniform bound on the continuous rescaling $\mu$] \label{prop: mu bd} Under the assumptions of Theorem \ref{thm.intro1}, let $(u,m)$ be the solution to \eqref{mfg}--\eqref{tc}, and let $\mu$ be the continuous rescaling \eqref{v mu defi}. There exists a constant $K=K(C_0,|a_0|,|b_0|)$ such that
\be \label{mu linfty}\| \mu \|_{L^{\infty}\left(\R \times (-\infty, \log(T))\right)}\leq K \|m_0\|_{L^{\infty}(\R)}, \ee
Furthermore, for every $M>0$, there exists $K_1=K_1(C_0,|a_0|,|b_0|,M)$ such that, for every $\tau \in [-M,\log(T)]$,
\be \label{mu suppt bd}\emph{supp}(\mu(\cdot,\tau)) \subset [-K_1,K_1].\ee 
\end{prop}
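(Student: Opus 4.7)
The plan is to derive both conclusions directly from the estimates in Propositions \ref{prop: smoothing effect} and \ref{prop: free bd rates} by unpacking the definition of the continuous rescaling. In other words, the proposition is essentially a reformulation of previously established sharp estimates in the rescaled variables.

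First, for \eqref{mu linfty}, using $t = e^\tau$ and $x = t^\alpha \eta$ in \eqref{v mu defi}, one has the elementary identity
\[
\|\mu(\cdot,\tau)\|_{L^\infty(\R)} = t^\alpha \|m(\cdot,t)\|_{L^\infty(\R)}.
\]
Combining this with the smoothing effect \eqref{smoothing effect} gives
\[
\|\mu(\cdot,\tau)\|_{L^\infty(\R)} \leq \frac{K\, t^\alpha \|m_0\|_{L^\infty(\R)}}{1+t^\alpha} \leq K \|m_0\|_{L^\infty(\R)},
\]
which holds uniformly for $\tau \in (-\infty,\log T]$, as required.

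For \eqref{mu suppt bd}, I would observe that the spatial change of variables $x = t^\alpha \eta$ carries $\{m(\cdot,t)>0\} = (\gamma_L(t),\gamma_R(t))$ to $\{\mu(\cdot,\tau)>0\} = (t^{-\alpha}\gamma_L(t),\, t^{-\alpha}\gamma_R(t))$, so that
\[
\mathrm{supp}(\mu(\cdot,\tau)) \subset [t^{-\alpha}\gamma_L(t),\, t^{-\alpha}\gamma_R(t)].
\]
Applying \eqref{gamL} from Proposition \ref{prop: free bd rates} yields $t^{-\alpha}|\gamma_L(t)| + t^{-\alpha}|\gamma_R(t)| \leq K(t^{-\alpha}+1)$. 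For $\tau \in [-M,\log T]$ one has $t = e^\tau \geq e^{-M}$, hence $t^{-\alpha} \leq e^{M\alpha}$, and the right-hand side is bounded by a constant $K_1 = K_1(C_0,|a_0|,|b_0|,M)$, giving \eqref{mu suppt bd}.

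I do not expect any substantial obstacle here, as all of the real work has been done in the preceding two propositions. The only minor technical point is that Proposition \ref{prop: free bd rates} is stated under the assumption $T \geq 2$; for smaller $T$, however, $t^\alpha$ is trivially bounded on $[0,T]$ and the support of $m(\cdot,t)$ remains in a set controlled by $(a_0,b_0)$ and the finite propagation speed, so \eqref{mu suppt bd} is immediate. The conceptual point worth emphasizing is that the previous estimates, based on the Lagrangian flow, were already sharp enough to match exactly the scaling invariance exploited by the self-similar profile.
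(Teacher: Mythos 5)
Your proof is correct and is essentially identical to the paper's own argument: the $L^\infty$ bound follows by rewriting \eqref{smoothing effect} in the rescaled variables, and the support bound follows from \eqref{freebd ch}, \eqref{gamL}, and the lower bound $t=e^\tau\geq e^{-M}$. Your remark on the case of small $T$ is a harmless addition not present in the paper, which uses \eqref{gamL} directly.
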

\begin{proof}
For arbitrary $\tau\in (-\infty,\log(T)),$ we write $t=e^{\tau}$. We infer from \eqref{v mu defi} and \eqref{smoothing effect} that
\[ \|\mu(\cdot,\tau)\|_{L^{\infty}(\R)} =t^{\al}\|m(\cdot,t)\|_{L^{\infty}(\R)} \leq \frac{t^{\al} K \|m_0 \|_{L^{\infty}(\R)} }{(1+t^{\al})}\leq K\|m_0 \|_{L^{\infty}(\R)}. \]
On the other hand, \eqref{freebd ch}, \eqref{tau eta defi}, and \eqref{v mu defi} yield
\[ \{ \mu(\cdot,\tau) >0\}=(\gamma_L(t)/t^{\al},\gamma_R(t)/t^{\al}).
\]
Thus, if $\tau \in [-M,\log(T)]$, we have $t =e^{\tau} \geq e^{-M}:=\delta >0$, and \eqref{gamL} implies, for some constant $K_1$ depending on $\delta$, that
\[\text{supp}(\mu(\cdot,\tau)) =[\gamma_L(t)/t^{\al},\gamma_R(t)/t^{\al}] \subset [-C(1+t^{\al})/t^{\al},C(1+t^{\al})/t^{\al}]  \subset [-K_1,K_1].\]

\end{proof}
\subsection{Exact rates for the value function and its gradient} \label{subsec: grad bds}
We focus now on obtaining the optimal rates for the value function $u$ and its first derivatives. It will be useful to remark that, since $u\in W^{1,\infty}(\R \times (0,T))\cap C^1(\R \times (0,T))$ is a classical solution to the HJ equation in \eqref{mfg}, it satisfies the dynamic programming principle. That is, for $(x,t)\in \R \times (0,T)$ and $0<h<T-t,$
\be \label{dpp}
u(x,t)=\inf_{\beta \in H^1((t,t+h)), \,\beta(t)=x} \quad\int_t^{t+h} \left( \frac{1}{2} |\Dot \beta(s)|^2+m(\beta(s),s)^{\theta} \right) ds + u(\beta(t+h),t+h).
\ee
We define, for $s \leq t$, 
\[ S(m,s,t)=\text{supp}(m) \cap (\R \times [s,t]),\] and begin by estimating the oscillation of $u$ inside the support of $m$.
\begin{prop}[Optimal rate for the oscillation of $u$] \label{prop:osc u} Under the assumptions of Theorem \ref{thm.intro1}, let $(u,m)$ be the solution to \eqref{mfg}--\eqref{tc}.  For every $\delta>0$, there exists a constant $K=K(C_0,|a_0|,|b_0|,\delta^{-1})$ such that, for every $t_0\in [\delta,T/2] $,
   \be \label{oscu local bound} \underset{S\left(m,\frac{t_0}{2},2t_0\right)}{\emph{osc}} (u)\leq  
  Kt_0^{2\al-1}.
   \ee
\end{prop}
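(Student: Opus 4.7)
The plan is to use the Lagrangian flow to shift the extremal points of $u$ on $S(m, t_0/2, 2t_0)$ to the well-separated times $t_0/2$ and $2t_0$, and then bound the resulting difference using \eqref{dpp} with a linear trajectory.

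The essential tool is the following monotonicity property of $u$ along the Lagrangian flow: for each $y \in [a_0, b_0]$, the function $t \mapsto u(\gamma(y,t), t)$ is non-increasing on $[0, T]$. Combining $\dot\gamma(y, \cdot) = -u_x \circ \gamma$ from \eqref{flow defi} with the HJ equation, the chain rule gives
\[\frac{d}{dt} u(\gamma(y,t), t) \;=\; u_x \dot\gamma + u_t \;=\; -\tfrac{1}{2} u_x^2 - m^\theta \;\leq\; 0.\]
In the interior $\{m>0\}$, this computation is classical thanks to Theorem \ref{thm.intro1}(1); on the boundary trajectories $\gamma(a_0, \cdot) = \gamma_L$ and $\gamma(b_0, \cdot) = \gamma_R$ (where $m^\theta \equiv 0$), the global $C^{1,\beta}$ regularity of $u$ from Theorem \ref{thm.intro1}(1) is enough to justify the chain rule, and the HJ equation extends to the free boundary by continuity.

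Compactness of $S(m, t_0/2, 2t_0)$ and continuity of $u$ then allow me to pick $(x_+, s_+), (x_-, s_-)$ in this set realizing $\sup u$ and $\inf u$. Setting $y_\pm \in [a_0, b_0]$ as the unique preimages satisfying $\gamma(y_\pm, s_\pm) = x_\pm$ (well-defined since $\gamma(\cdot, t)$ is an increasing bijection from $[a_0, b_0]$ onto $[\gamma_L(t), \gamma_R(t)]$), the monotonicity above yields
\[u(x_+, s_+) \leq u(\gamma(y_+, t_0/2), t_0/2), \qquad u(x_-, s_-) \geq u(\gamma(y_-, 2t_0), 2t_0).\]
Applying \eqref{dpp} from $(\gamma(y_+, t_0/2), t_0/2)$ to $(\gamma(y_-, 2t_0), 2t_0)$ along the linear interpolation $\beta$ between these two points, one obtains
\[u(\gamma(y_+, t_0/2), t_0/2) - u(\gamma(y_-, 2t_0), 2t_0) \;\leq\; \frac{|\gamma(y_-, 2t_0) - \gamma(y_+, t_0/2)|^2}{3 t_0} + \int_{t_0/2}^{2t_0} m^\theta(\beta(r), r)\, dr.\]

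It then remains to control both terms using the sharp rates already established. The spatial separation is bounded by $K t_0^\al$ by \eqref{gamma bd} (recalling $\mathscr{d}(t) = t$ for the terminal cost problem), so the kinetic term is $O(t_0^{2\al - 1})$. Proposition \ref{prop: smoothing effect} combined with the pointwise bound on $\|m_0\|_\infty$ implicit in \eqref{m0 bump} yields $\|m(\cdot, r)\|_\infty^\theta \leq K r^{-\al \theta} = K r^{2\al - 2}$ for $r \geq \delta/2$, whose integral over $[t_0/2, 2t_0]$ is likewise $O(t_0^{2\al - 1})$ in each of the cases $\theta > 2$, $\theta = 2$, and $\theta < 2$ (the logarithmic integrand at $\theta=2$ yielding a harmless $\log 4$ factor). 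The main obstacle I anticipate is the chain-rule justification for the boundary trajectories $y \in \{a_0, b_0\}$, where $u$ is only $C^{1,\beta}$; this is handled by the full strength of the regularity in Theorem \ref{thm.intro1}(1) combined with the pointwise extension of the HJ equation to the free boundary.
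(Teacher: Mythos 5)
Your proof is correct and takes essentially the same route as the paper: monotonicity of $u$ along the Lagrangian flow (the identical chain-rule computation $\frac{d}{dt}u(\gamma(y,t),t)=-\tfrac12 u_x^2-m^\theta\le 0$) to push the extremal values to the time slices $t=t_0/2$ and $t=2t_0$, followed by the dynamic programming principle with a straight-line competitor, estimated via \eqref{gamma bd} and Proposition \ref{prop: smoothing effect} together with $\al\theta=2-2\al$. The boundary-trajectory issue you flag is not a genuine obstruction: the paper's own proof implicitly shares it, and it can in any case be bypassed by taking the extremal points in $\{m>0\}$ and concluding by continuity of $u$.
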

\begin{proof} In view of Theorem \ref{thm.intro1}, we know that $u\in C^1(\R \times (0,T))$, and
\be \label{supdefxzav}S:=S\left(m,\frac{t_0}{2},2t_0\right)=\left\{(x,t) : \frac{t_0}{2}\leq t \leq2t_0,\gamma_L(t) \leq  x \leq \gamma_R(t)\right\}.\ee
We claim that 
\be \label{eq: osc u asagsaf}\max_{S} u = \max_{S\cap \{t=t_0/2\}}u,\,\, \text{ and } \,\,\, \min_{S} u = \min_{S\cap \{t=2t_0\}}u. \ee
 To see this, we switch to Lagrangian coordinates, setting $\overline{u}(x,t)=u(\gamma(x,t),t)$. In these coordinates, the set $S$ becomes the rectangle $[a_0,b_0] \times [t_0/2,2t_0].$ Therefore, we infer from \eqref{flow defi}, \eqref{gamma regu}, and \eqref{mfg} that
 \[ \overline{u}_t(x,t)=u_t(\gamma(x,t),t)+\gamma_t(x,t) u_x(\gamma(x,t),t) = u_t-u_x^2=-m^{\theta}-\frac{1}{2}u_x^2\leq 0,\]
 which implies \eqref{eq: osc u asagsaf}. Now, given $(x,t_0/2)\in S\cap\{t=t_0/2\}$ and $(y,2t_0)\in S\cap\{t=2t_0\} $, we let $\beta:[t_0/2,2t_0] \to \mathbb{R}$ be the linear function joining $(x,t_0/2)$ and $(y,2t_0)$. As a result of \eqref{gamL} and \eqref{supdefxzav}, and recalling that $t_0\geq \delta>0,$ we have, for some constant $C>0$ depending on $\delta$,
 \[ |\dot\beta|= \frac{|y-x|}{t_0}\leq  C t_0^{\al-1}. \]
From Proposition \ref{prop: smoothing effect}, recalling \eqref{alpha theta}, and using $\beta$ as a competitor in \eqref{dpp}, we infer that, up to increasing the value of $C$,
 \[u(x,t_0/2) \leq \int_{\frac{t_0}{2}}^{2t_0} \left(\frac12|\dot\beta(s)|^2 + m(\beta(s),s)^{\theta}\right)ds + u(y,2t_0) \leq Ct_0^{2\al-1} + u(y,2t_0).\]
 Thus, \eqref{oscu local bound} follows from \eqref{eq: osc u asagsaf}. 
\end{proof}
We turn our attention to obtaining the optimal rate for the gradient. We will first reduce the problem to estimating $u_x$ on the region $\{m>0\}$, where $u$ is known to be smooth and its oscillation has now been optimally quantified. This reduction is done through optimal control arguments that exploit the convex shape of the support. 

To make these geometric arguments more transparent, we first characterize the optimal trajectories of \eqref{urepform}. Of course, in the region where the density is positive, these trajectories are uniquely given by the (smooth) Lagrangian flow $\gamma$, according to Theorem \ref{thm.intro2}. The following proposition describes the remaining trajectories (up to interchanging $\gamma_L$ and $\gamma_R$).
\begin{prop}[Identification of the optimal trajectories in the vanishing set] \label{prop trajec} Under the assumptions of Theorem \ref{thm.intro1}, let $(u,m)$ be the solution to \eqref{mfg}--\eqref{tc}, and let $\gamma_L$ be its left free boundary curve. Let
\[ l(s)=\gamma_L(T)+\dot\gamma_L(T)(s-T), \quad s\in [0,T]\]
be the supporting line for the decreasing, convex function $\gamma_L$ at $t=T$. Assume that $(x,t)\in \R \times (0,T),$ with $x\leq \gamma_L(t)$, and let $\beta:[t,T] \to \R$ be an optimal trajectory for \eqref{urepform}. Then $\beta \in C^1([t,T])$ is uniquely determined as follows:
\begin{enumerate}
    \item If $x \leq \gamma_L(T)$, then $\beta$ is the straight line $\dot \beta \equiv  0$.
    \item If $\gamma_L(T) < x\leq l(t)$, then $\beta$ is the straight line joining the points $(x,t)$ and $(\gamma_L(T),T)$.
    \item If $l(t) < x \leq \gamma_L(t)$, then there exists $t^*\in (t,T)$ such that
   \[ \beta(s)= \begin{cases}x -u_x(x,t)(s-t) & \,\, s\in [t,t^*], \\
    \gamma_L(s) & \,\, s\in [t^*,T].        
    \end{cases} 
    \]
\end{enumerate}
\end{prop}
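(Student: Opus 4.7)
The plan is to combine optimal control theory with the convex structure of the support established in Theorem \ref{thm.intro1}. Since the Lagrangian $\frac{1}{2}|\dot{\beta}|^{2}+m^{\theta}(\beta,\cdot)$ is strictly convex in $\dot{\beta}$, standard direct methods give existence of optimal trajectories, and $C^{1}$ regularity follows from the Euler--Lagrange equation, which forces $\ddot{\beta}=0$ wherever $m\equiv 0$ along $\beta$. Whenever $\beta$ enters the open region $\{m>0\}$, Theorem \ref{thm.intro2} identifies $\beta$ with a piece of the Lagrangian flow $\gamma(y,\cdot)$ for some $y\in(a_{0},b_{0})$. This dichotomy (linear pieces outside the support, characteristic pieces inside), combined with the monotonicity of $\gamma_{L}$ (also from Theorem \ref{thm.intro1}), will pin down the trajectory to one of the three forms claimed.

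For Case 1, the constant trajectory $\beta\equiv x$ is admissible because $\gamma_{L}$ is decreasing and $x\leq\gamma_{L}(T)\leq\gamma_{L}(s)$ for $s\in[t,T]$, so it has zero running and zero terminal cost; any other trajectory contributes strictly positive kinetic energy, so $\beta\equiv x$ is the unique optimum. For Case 2, the straight line $\ell_{1}$ from $(x,t)$ to $(\gamma_{L}(T),T)$ has slope $\frac{\gamma_{L}(T)-x}{T-t}\geq\dot{\gamma}_{L}(T)$ whenever $x\leq l(t)$, and $\ell_{1}(T)=l(T)=\gamma_{L}(T)$, so linearity gives $\ell_{1}\leq l$, while convexity of $\gamma_{L}$ gives $l\leq \gamma_{L}$. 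Thus $\ell_{1}$ stays in $\{m=0\}$ and yields total cost $\frac{(x-\gamma_{L}(T))^{2}}{2(T-t)}$. Optimality among trajectories terminating at any $y\leq\gamma_{L}(T)$ is immediate from minimizing $\frac{(x-y)^{2}}{2(T-t)}$ over such $y$; ruling out trajectories that end inside the support requires a separate argument based on the positive terminal cost there, together with a lemma quantifying the additional running cost accrued upon entering $\{m>0\}$.

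For Case 3, since $x>l(t)$ the straight line to $(\gamma_{L}(T),T)$ now crosses $\gamma_{L}$, so any admissible trajectory avoiding the interior of the support must join $\gamma_{L}$ at some intermediate time $t^{*}\in(t,T)$. The natural candidate is linear on $[t,t^{*}]$ from $(x,t)$ to $(\gamma_{L}(t^{*}),t^{*})$ and coincides with $\gamma_{L}$ on $[t^{*},T]$. Writing the cost as a function of $t^{*}$,
\be J(t^{*})=\frac{(x-\gamma_{L}(t^{*}))^{2}}{2(t^{*}-t)}+\int_{t^{*}}^{T}\tfrac{1}{2}\dot{\gamma}_{L}(s)^{2}\,ds, \ee
the first-order condition yields
\be \dot{\gamma}_{L}(t^{*})=\frac{\gamma_{L}(t^{*})-x}{t^{*}-t}, \ee
which is precisely the $C^{1}$ matching condition at $t^{*}$. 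Using $\ddot{\gamma}_{L}>0$ from Theorem \ref{thm.intro1}, $J$ is strictly convex in $t^{*}$, so the critical point $t^{*}\in(t,T)$ is unique; this also yields $-u_{x}(x,t)=\frac{\gamma_{L}(t^{*})-x}{t^{*}-t}$ and the claimed form of $\beta$.

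The main obstacle will be rigorously excluding trajectories that enter the open support $\{m>0\}$ and then exit, or that oscillate near $\gamma_{L}$. The decisive tools should be: (i) strict convexity of the Lagrangian in $\dot{\beta}$, which forces $\beta\in C^{1}$ and linearity on maximal subintervals of $\{m\circ\beta=0\}$; (ii) the non-crossing property of the Lagrangian flow (Theorem \ref{thm.intro2}), which means that once $\beta$ follows $\gamma(y,\cdot)$ inside the support it cannot revert to a linear piece in $\{m=0\}$ without incurring strictly higher cost; and (iii) convexity of $\gamma_{L}$ together with $\dot{\gamma}_{L}<0$, which geometrically confines optimal trajectories to the three shapes listed.
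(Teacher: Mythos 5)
Your geometric picture agrees with the paper's, but the decisive step is left unproved and Case 3 contains an actual error. The crux, which you yourself call ``the main obstacle,'' is to show that an optimal trajectory starting at $x\le\gamma_L(t)$ never enters $\{m>0\}$ and, once it touches $\gamma_L$, follows it until $T$. The paper settles this structurally rather than by cost comparison within a candidate family: $\beta$ solves $\dot\beta=-u_x(\beta,\cdot)$; in the open region $\{m=0\}$ the equation $-u_t+\tfrac12u_x^2=0$ makes $u$ locally $C^{1,1}$, so characteristics there are locally unique straight lines; by Theorem \ref{thm.intro2} any optimal trajectory through a point of $\{m>0\}$ is a piece of the Lagrangian flow and never touches the free boundary, whence $\beta\le\gamma_L$ on all of $[t,T]$; and if $\beta$ touched $\gamma_L$ at $t^*<T$ and left it again, the $C^1$ matching plus strict convexity of $\gamma_L$ would force a straight continuation with slope $\dot\gamma_L(t^{**})$ whose kinetic cost strictly exceeds that of following $\gamma_L$ (which has zero running and terminal cost), contradicting optimality. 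Your tools (i)--(iii) gesture at this but supply no argument; in particular, your Case 2 still needs to exclude trajectories ending inside the support, which you defer to an unstated ``lemma quantifying the additional running cost,'' whereas the paper obtains $\beta(T)=\gamma_L(T)$ directly from $\beta\le\gamma_L$ together with local uniqueness of characteristics (otherwise $\beta$ would cross the constant optimal trajectory through $(\gamma_L(T),\cdot)$ at an interior time in $\{m=0\}$).

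Second, your Case 3 justification fails: with $J(t^*)=\frac{(x-\gamma_L(t^*))^2}{2(t^*-t)}+\int_{t^*}^T\tfrac12\dot\gamma_L(s)^2\,ds$ one computes $J'(t^*)=-\tfrac12\bigl(\frac{x-\gamma_L(t^*)}{t^*-t}+\dot\gamma_L(t^*)\bigr)^2\le 0$, so $J$ is non-increasing, not strictly convex, and stationarity does not select an interior minimum; the tangency time is singled out by the feasibility constraint that the straight piece stay below $\gamma_L$ (past tangency the segment crosses into the support and the formula for $J$ no longer represents the true cost), not by convexity of $J$. Moreover, minimizing within this one-parameter family cannot deliver the uniqueness asserted in the statement unless you already know the minimizer has the line-then-boundary form, which is exactly the unresolved point above. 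The paper sidesteps all of this: in its Case 3 it only needs to show $\beta$ is not a straight line, which follows because $\beta(T)=\gamma_L(T)$ while the segment joining $(x,t)$ to $(\gamma_L(T),T)$ has slope strictly less than $\dot\gamma_L(T)$ and hence lies above $\gamma_L$ near $T$, contradicting $\beta\le\gamma_L$; the displayed formula with $t^*$ then comes from the already-established structure $\dot\beta(t)=-u_x(x,t)$ on the straight piece and $\beta=\gamma_L$ afterwards.
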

\begin{proof}  We will tacitly use the fact that, from Theorem \ref{thm.intro1}, the function $\gamma_L$ is strictly convex and strictly decreasing on $(0,T)$. The $C^{1}([t,T))$ regularity of $\beta$ follows from the fact that $u\in C^1(\R \times (0,T))$, and from the optimality condition for \eqref{dpp}, namely that $\beta$ solves the differential equation
\be \label{optimc} \begin{cases} \dot \beta(s)=-u_x(\beta(s),s) & \,\,s \in (t,T),\\
 \beta(t)=x.   
\end{cases} 
\ee
The $C^1$ regularity up to $t=T$ will follow later from the explicit form of $\beta$. Assume that $(x_1,t_1)\in \R \times (0,T)$ is such that $x_1<\gamma_L(t_1)$. Then, since $u\in C^1(\R \times (0,T)),$ is a classical solution to the HJ equation \[-u_t+\frac12 u_x^2=0\] near $(x_1,t_1)$, we must have $u \in C^{1,1}_{\text{loc}}$ at such points, with bounds that may degenerate near the interface (see \cite[Thm 15.1]{L-HJ book}). As a result, any characteristic curve passing through $(x_1,t_1)$ is locally a straight line. In particular, \eqref{optimc} has a unique solution at such points, and the minimizer $\beta(\cdot)$ is a uniquely determined line segment until (and if) it hits the interface $\gamma_L$. More precisely, $\beta$ is determined on the interval $(t,t^*)$ as the linear function with slope
\[ \dot\beta(s)=-u_x(x,t), \quad s\in [t,t^*),\]
where 
\[ t^*=\sup \{q \in (t,T) : m(\beta(s),s)=0 \text{ for every } s\in (t,q)\}.\] 
We also reiterate that, by Theorem \ref{thm.intro2}, the optimal trajectories in $\{m>0\} \cap (\R \times [0,T])$ are uniquely determined by the (smooth) Lagrangian flow $\gamma$. In particular,  such trajectories never touch the free boundary. For this reason, 
\be \label{beta zaxmsa} \beta(s)\leq \gamma_L(s),\,\,\, s\in [t,T]. \ee

We claim that if $t^*<T$, then $\beta\equiv \gamma_L$ on $[t^*,T]$. Indeed, otherwise there would be a maximal time $t^{**}\in [t^*,T)$ such that $\beta \equiv \gamma_L$ on $[t^{*},t^{**}]$. From \eqref{beta zaxmsa} and the $C^1$ regularity of $\beta$, we see that $\dot \beta(t^{**})=\dot \gamma_L(t^{**})$. By the definition of $t^{**}$ and the convexity of $\gamma_L$, $\beta$ must then be a straight line on $[t^{**},T],$ which yields
\begin{multline*}\int_{t^{**}}^{T} \left(\frac{1}{2}|\dot \beta|^2 +m(\beta(s),s)^{\theta} \right)ds + c_T m^{\theta}(\beta(T),T)=\int_{t^{**}}^{T} \frac{1}{2}|\dot \gamma_L(t^{**})|^2 ds >\int_{t^{**}}^{T} \frac{1}{2}|\dot \gamma_L(s)|^2 ds \\
= \int_{t^{**}}^{T} \left(\frac{1}{2}|\dot \gamma_L(s)|^2+ m(\gamma_L(s),s)^{\theta}\right) ds + c_T m^{\theta}(\gamma_L(T),T),  \end{multline*}
contradicting the optimality of $\beta$. 

Summing up, we have so far shown the following characterization: the minimizer $\beta\in C^{1}([t,T])$ satisfies $\beta\leq \gamma_L$, it is a straight line on the interval $[t,t^*]$, and coincides with $\gamma_L$ on the (possibly empty) interval $(t^*,T]$. We now proceed to the proof of the lemma, in each of the three cases.

First, assume that $x\leq \gamma_L(T)$. Then, by the monotonicity of $\gamma_L$, we have $m(x,s)=0$ for $s\in [t,T]$. Thus, the minimal cost $0$ in \eqref{urepform} must be $0$, which is achieved if and only if $\dot \beta \equiv 0$.

Next, assume that $\gamma_L(T) <x\leq l(t).$ Note that we must have $\beta(T)=\gamma_L(T)$. Otherwise, $\beta$ would intersect the (constant) optimal trajectory $s \mapsto \gamma_L(T)$ at some interior time $s<T$, which is impossible by the previously explained local uniqueness.  Thus, it is enough to show that $\beta$ is a straight line, that is, that $t^*=T$. But note that, by the strict convexity of $\gamma_L$, \[ \dot \beta(t)=\dot \beta(t^*)=\dot\gamma_L(t^*)< \dot \gamma_L(T)=\dot l(t),\] unless $t^*=T$. Thus, $\beta\leq l< \gamma_L$ on $[t,t^*]$,  unless $t^*=T$. However, by definition of $t^*$, $\beta(t^*)=\gamma_L(t^*)$, so $t^*=T$, as wanted.

Finally, assume that $l(t)<x\leq \gamma_L(t)$. We just need to show that $\beta$ is not a straight line. As in the previous case, we must have $\beta(T)=\gamma_L(T)$. But the linear function $l_1:[t,T] \to \R$ connecting $(x,t)$ and $(\gamma_L(T),T)$ has slope 
\[\dot l_1(t)=\frac{\gamma_L(T)-x}{T-t}<\frac{\gamma_L(T)-l(t)}{T-t}=\dot l(t)=\dot \gamma_L(T). \]
Thus, $l_1>\gamma_L$ near $s=T$, which means, by \eqref{beta zaxmsa}, that $\beta \neq l_1$. Hence, $\beta$ is not a straight line, as wanted.

\end{proof}
Having identified the optimal trajectories, we can now readily bound the space derivative of $u$ outside of $\{m>0\}$, in terms of the speed of the free boundary curves.
\begin{cor} 
\label{cor grad bd m=0} Under the assumptions of Theorem \ref{thm.intro1}, let $(u,m)$ be the solution to \eqref{mfg}--\eqref{tc}. Then, for every $(x,t)\in \R \times (0,T)$ such that $m(x,t)=0$, we have
\be \label{grad bd m=0}|u_x(x,t)|\leq \max(|\dot\gamma_L(t)|,|\dot \gamma_R(t)|).\ee
\end{cor}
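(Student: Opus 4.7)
The plan is to invoke Proposition \ref{prop trajec} to identify the optimal trajectory for the dynamic programming problem at $(x,t)$, and then read off $u_x(x,t)$ from the first-order optimality condition \eqref{optimc}, which says $\dot\beta(t)=-u_x(\beta(t),t)=-u_x(x,t)$. Since the bound in \eqref{grad bd m=0} is symmetric in the two free boundaries, it suffices by symmetry to treat the case $x \leq \gamma_L(t)$ and show that $|u_x(x,t)| \leq |\dot\gamma_L(t)|$; the case $x \geq \gamma_R(t)$ follows from the analogue of Proposition \ref{prop trajec} for the right boundary.

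The core of the argument is a case-by-case inspection following the three situations listed in Proposition \ref{prop trajec}. In Case 1, $x \leq \gamma_L(T)$, the optimal $\beta$ is constant, so $\dot\beta(t)=0$ and the bound is immediate. In Case 2, $\gamma_L(T)<x \leq l(t)$, the optimal trajectory is the straight line from $(x,t)$ to $(\gamma_L(T),T)$, whose slope is $(\gamma_L(T)-x)/(T-t)$. Using the definition of $l$ and the assumption $x \leq l(t)$, this slope satisfies $(x-\gamma_L(T))/(T-t) \leq -\dot\gamma_L(T) = |\dot\gamma_L(T)|$. In Case 3, $l(t)<x \leq \gamma_L(t)$, the trajectory is a line on $[t,t^*]$ which then glues to $\gamma_L$ on $[t^*,T]$; by the $C^1$ regularity of $\beta$ proved in Proposition \ref{prop trajec}, the constant slope on $[t,t^*]$ equals $\dot\gamma_L(t^*)$, so $|u_x(x,t)| = |\dot\gamma_L(t^*)|$.

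To finish, I would invoke the qualitative properties of $\gamma_L$ supplied by Theorem \ref{thm.intro1}: $\gamma_L$ is strictly decreasing and strictly convex on $(0,T)$ for solutions of \eqref{mfg}--\eqref{tc}. Hence $\dot\gamma_L$ is nonpositive and nondecreasing, so $s \mapsto |\dot\gamma_L(s)|$ is nonincreasing on $(0,T)$. This closes Case 2 via $|\dot\gamma_L(T)| \leq |\dot\gamma_L(t)|$ and Case 3 via $|\dot\gamma_L(t^*)| \leq |\dot\gamma_L(t)|$, since $t^* \geq t$ in both cases.

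I do not anticipate a real obstacle in this argument; the substantive work was carried out in Proposition \ref{prop trajec}, whose purpose was precisely to pin down the optimal trajectories in the vanishing set $\{m=0\}$. The present corollary is essentially the translation of that geometric description into a pointwise gradient estimate, combined with the elementary monotonicity of $|\dot\gamma_L|$ coming from convexity. The only mild care is to note that $\dot\beta(t)=-u_x(x,t)$ makes sense pointwise because $u\in C^1(\R \times (0,T))$ (by Theorem \ref{thm.intro1}), which is already part of the regularity statement we rely on throughout this subsection.
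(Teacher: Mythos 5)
Your proposal is correct and follows essentially the same route as the paper: reduce to $x\leq\gamma_L(t)$ by symmetry, read off $u_x(x,t)=-\dot\beta(t)$ from \eqref{optimc}, and run through the three cases of Proposition \ref{prop trajec} using the convexity and monotonicity of $\gamma_L$. The only cosmetic difference is in Case 2, where you bound the slope by $|\dot\gamma_L(T)|$ via the supporting-line inequality $x\leq l(t)$ and then use that $|\dot\gamma_L|$ is nonincreasing, while the paper compares the slope directly with the secant slope and $\dot\gamma_L(t)$; both rest on the same convexity fact.
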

\begin{proof}
By symmetry, we may assume that $x\leq \gamma_L(t)$. Let $\beta$ be the optimal trajectory for \eqref{urepform}. Note that, by \eqref{optimc}, $u_x(x,t)=-\dot \beta(t)$. The result now follows from the convexity of $\gamma_L$. Indeed, by Proposition \ref{prop trajec}, there are three possibilities:
\begin{enumerate}
    \item $\beta$ is constant, in which case $u_x(x,t)=0$.
    \item $\beta$ is a straight line, with the negative slope
    \[\dot \beta(t) =\frac{\gamma_L(T)-x}{T-t}\geq \frac{\gamma_L(T)-\gamma(t)}{T-t}>\dot \gamma_L(t).\]
    \item $\beta$ is a straight line on some interval $[t,t^*]$, with the negative slope
    \[\dot \beta(t)=\dot \beta(t^*)=\dot \gamma_L(t^*)\geq \dot \gamma_L(t).\]
\end{enumerate}
\end{proof}

To complete the estimates of this subsection, we employ a localized version of the standard gradient estimate (\cite[Thm. 3.9]{Munoz}, \cite[Prop. 3.10]{CMP}). This sort of time-localization was first done in \cite[Thm. 6.5]{Porretta}. However, our goal here is to leverage Proposition \ref{prop:osc u} and Corollary \ref{cor grad bd m=0} to obtain the sharp algebraic rate of decay as $t \to \infty$. 

\begin{prop}[Optimal rate for the gradient of $u$] \label{prop: grad rate} Under the assumptions of Theorem \ref{thm.intro1}, let $(u,m)$ be the solution to \eqref{mfg}--\eqref{tc}. For each $\delta>0$ and each $t_0 \in [\delta,T/2]$, there exists a positive constant $C=C(C_0,|a_0|,|b_0|,\theta, \theta^{-1},\delta^{-1})$ such that
\begin{equation}  \label{grad local bd}  \|u_t(\cdot,t_0)\|_{L^{\infty}(\R)}+\|u_x(\cdot,t_0)\|_{L^{\infty}(\R)}^2\leq Ct_0^{2(\al -1)}.
\end{equation}    
\end{prop}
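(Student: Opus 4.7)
The plan is to split $\R$ according to whether $(x,t_0)$ lies inside $\{m>0\}$ or in its complement, apply existing bounds in each region, and deduce the $u_t$ bound from the Hamilton--Jacobi equation itself.

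Outside the support of $m$, Corollary \ref{cor grad bd m=0} combined with \eqref{gamL'} immediately yields
\[
|u_x(x,t_0)|\leq \max(|\dot\gamma_L(t_0)|,|\dot\gamma_R(t_0)|)\leq K(1+t_0^{1-\alpha})^{-1}\leq C t_0^{\alpha-1},
\]
so $|u_x|^2\leq C t_0^{2(\alpha-1)}$. Since $m^\theta\equiv 0$ here, the HJ equation reduces to $u_t=\tfrac12 u_x^2$, which gives the matching bound on $|u_t|$.

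The substantive content is the bound in the interior $\{m>0\}$. The idea is to rescale in the natural self-similar variables so that every relevant quantity becomes of unit order. Defining $\tilde u(\xi,s)=t_0^{1-2\alpha}u(t_0^\alpha \xi,t_0 s)$ and $\tilde m(\xi,s)=t_0^\alpha m(t_0^\alpha \xi,t_0 s)$, the identity $\alpha\theta=2-2\alpha$ shows that $(\tilde u,\tilde m)$ still satisfies the pair of equations in \eqref{mfg} on $\R\times[1/2,2]$. Using $t_0\geq\delta$, the estimates of Propositions \ref{prop: smoothing effect}, \ref{prop: free bd rates}, and \ref{prop:osc u} translate into three uniform bounds on the rescaled pair: $\|\tilde m\|_{L^\infty(\R\times[1/2,2])}\leq C$, $\mathrm{supp}(\tilde m(\cdot,s))$ is contained in a fixed compact interval for $s\in[1/2,2]$, and $\mathrm{osc}_{S(\tilde m,1/2,2)}(\tilde u)\leq K$.

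With these $O(1)$ bounds in hand, the plan is to invoke on $\tilde u$ a time-localized version of the Bernstein-type gradient estimate of \cite[Thm.~3.9]{Munoz} and \cite[Prop.~3.10]{CMP}, in the spirit of \cite[Thm.~6.5]{Porretta}. This will give $\|\tilde u_\xi(\cdot,1)\|_{L^\infty(\R)}\leq C$, and undoing the rescaling yields $\|u_x(\cdot,t_0)\|_{L^\infty(\{m(\cdot,t_0)>0\})}\leq Ct_0^{\alpha-1}$. The HJ equation then provides $|u_t|\leq \tfrac12 u_x^2+\|m^\theta\|_\infty\leq C t_0^{2(\alpha-1)}$, using $\alpha\theta=2-2\alpha$ and Proposition \ref{prop: smoothing effect} to control the last term. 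The main technical obstacle will be the careful adaptation of the Bernstein argument to this localized setting: \cite{Munoz,CMP} work globally and rely on the terminal data, so one has to replace that dependence by the interior oscillation bound via cutoff functions that remain well behaved up to the free boundary, where the equation degenerates.
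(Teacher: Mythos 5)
Your overall structure is the same as the paper's: split $\R$ into $\{m>0\}$ and its complement, use Corollary \ref{cor grad bd m=0} together with \eqref{gamL'} outside the support, and recover the $u_t$ bound from the HJ equation plus Proposition \ref{prop: smoothing effect} and \eqref{alpha theta}. The rescaling to unit scale is a legitimate normalization (it is exactly the scaling invariance noted in the introduction), and it would indeed let you run the interior argument with $O(1)$ quantities instead of tracking powers of $t_0$ by hand, which is what the paper does directly.

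The gap is that the step carrying all the content of the proposition --- the interior gradient bound in $\{m>0\}$ depending only on the local oscillation --- is not proved but deferred to a ``time-localized version'' of \cite[Thm.~3.9]{Munoz} and \cite[Prop.~3.10]{CMP}. No such statement exists in those references in a form you can invoke: the estimates there are global in time and anchored at the terminal condition, and \cite[Thm.~6.5]{Porretta} performs time-localization in a setting without a free boundary. Producing the localized estimate in the present degenerate, free-boundary setting is precisely what the paper's proof does, and it requires several specific ingredients you have not supplied: (i) the quasilinear elliptic equation \eqref{quasilinear asxa} satisfied by $u$ in $\{m>0\}$ and its linearization \eqref{lineariz}; (ii) a Bernstein-type auxiliary function of the form $\frac12 u_x^2+\frac{\vep}{t_0^{2\al}}\frac12\util^2-\zeta_1-\zeta_2$, where $\util$ is $u$ minus its minimum on $S(m,t_0/2,2t_0)$ (so that Proposition \ref{prop:osc u} controls it) and $\zeta_1,\zeta_2$ are time barriers blowing up at $t=t_0/2$ and $t=2t_0$, with $\vep$ and $A$ chosen so that $L_u(-\zeta_i)\leq \frac{k}{4}\zeta_i^2$; and (iii) a treatment of the case where the maximum of this function sits on the free boundary, which is handled not by cutoffs but by Corollary \ref{cor grad bd m=0} and \eqref{gamL'} (the lateral boundary value of $|u_x|$ is already of the right size, so no boundary cutoff is needed or, in fact, available, since generic cutoffs do not interact well with the degenerate operator there). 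Your phrase about ``cutoff functions that remain well behaved up to the free boundary'' suggests a different mechanism that would be hard to implement; the maximum-principle dichotomy (boundary max handled by the trajectory bound, interior max handled by the linearized operator computation) is the step you would need to write out, whether at scale $t_0$ or, as in your plan, at unit scale after rescaling.
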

\begin{proof}
Since the estimate is allowed to degenerate as $\delta \to 0$, we may assume, for simplicity, that $\delta=1$, so that $t_0\in [1,T]$. First note that, in view of the first equation of \eqref{mfg}, Proposition \ref{prop: smoothing effect}, and \eqref{alpha theta}, it suffices to show the estimate for $u_x$. Furthermore, in view of \eqref{gamL'} and Corollary \ref{cor grad bd m=0}, it suffices to bound $|u_x|$ in the support of $m$. We will use the fact that, $u$ is $C^{\infty}$ on $\{m>0\} \cap (\R \times (0,T))$ and satisfies the quasilinear elliptic equation
\be \label{quasilinear asxa} Q_u(u)=0, \text{ where } Q_u(v)=-v_{tt} + 2u_x v_{xt} -(u_x^2+\theta m^{\theta})v_{xx}=0. \ee
This fact is known, and is justified exactly as in the proof of Lemma \ref{lem:regu Dw in mu>0}. Let $t_0 \in [1,T/2]$, and write \[S:=S(m,t_0/2,2t_0)=\{(x,t): t_0/2 \leq t \leq t_0, \gamma_L(t) \leq x \leq \gamma_R(t)\}, \quad \omega=\underset{S}{\text{osc}} (u), \quad 
\tilde{u}=u-\min_{S} u ,\]
so that 
\be \label{utilamxoa} |\util| \leq \omega \ee
on $S$. Set
\[v(x,t)=\frac12 u_x^2 + \frac{\vep}{ t_0^{2\al}}\frac12 \util^2- \zeta_1(t) -\zeta_2(t), \quad \zeta_1(t)= \frac{A t_0^{2\al}}{(t-t_0/2)^2}, \quad \zeta_2(t)=\frac{A t_0^{2\al}}{(2t_0-t)^2},\]
where $\vep>0$ and $A>0$ are constants to be chosen later, independently of $t_0$.
We define the linearization of $Q_u$ as
\be \label{lineariz} L_u(v)=Q_u(v)-2(m^{\theta})_xv_x -\theta u_{xx} (-v_t+ u_x v_x).\ee
Let $(x,t)\in S$ be a maximum point of $v$ in $S$. Since $v(\cdot,2t_0)\equiv v(\cdot,t_0/2) \equiv -\infty$, we must have $t\in (t_0/2,2t_0)$. If $x=\gamma_L(t)$ or $x=\gamma_R(t)$, we may conclude immediately by \eqref{gamL'} and Corollary \ref{cor grad bd m=0}. Hence, we may assume that $\gamma_L(t) < x < \gamma_R(t)$, so that $(x,t)$ is an interior maximum. Then, in particular, $v_x(x,t)=0,$ so that, writing \be \label{k defi sxaval} k=\vep/t_0^{2\al},\ee
we have
\be \label{grad uxx asd} |u_{xx}|=\frac{\vep}{t_0^{2\al}}|\util| \leq k\omega. \ee
Differentiating the first equation in \eqref{mfg}, we see that \be \label{mthetax asxnj} (m^{\theta})_x=-u_{xt}+u_xu_{xx}.\ee 
Thus, through direct computation, we obtain from \eqref{quasilinear asxa}, \eqref{lineariz}, and \eqref{mthetax asxnj} that
\be \label{grad 1 asdaf} L_u\left(\frac12 u_x^2 \right)=-\theta m^{\theta} u_{xx}^2-(-u_{xt}+u_xu_{xx})^2  =-\theta m^{\theta} u_{xx}^2-((m^{\theta})_x)^2 \leq -((m^{\theta})_x)^2.\ee
Similarly, by \eqref{quasilinear asxa}, \eqref{utilamxoa}, \eqref{lineariz}, \eqref{grad uxx asd}, and \eqref{mthetax asxnj}, we obtain 
\begin{multline} \label{grad 13e0ksmx} L_u\left(k \frac12 \util^2 \right)= -k\theta m^{\theta} u_{x}^2 -k(-u_t +u_x^2)^2 -2k (m^{\theta})_xu_x\util  -k\theta u_{xx}(-u_t+ u_x^2)\util  \\ \leq  - k(-u_t+u_x^2)^2 + ((m^{\theta})_x)^2 +  k^2 u_x^2\omega ^2 +\frac{k}{4}(-u_t+u_x^2)^2 +2\theta^2 k u_{xx}^2 \omega^2 \\ \leq - \frac{3k}{4} (-u_t+u_x^2)^2 + ((m^{\theta})_x)^2 +   k^2 u_x^2\omega ^2+2\theta^2 k^3 \omega^4 \\ 
\leq  - \frac{3k}{4} (-u_t+u_x^2)^2 + ((m^{\theta})_x)^2 + \frac{k}{16}u_x^4 + 4 k^3 \omega^4 +2\theta^2 k^3 \omega^4. \end{multline}
Noting that $-u_t+u_x^2=m^{\theta}+\frac12 u_x^2 \geq \frac12 u_x^2 ,$ we obtain from \eqref{grad 13e0ksmx} that
\be \label{grad 2 sadf}
    L_u\left( \frac12 k \util^2\right) \leq  - \frac{k}{8}u_x^4 +((m^{\theta})_x)^2 +K_1 k^3 \omega^4,
\ee
 where $K_1=4+2\theta^2$. Next, we compute from \eqref{quasilinear asxa} and \eqref{lineariz} that
\be \label{zet1sacxas} L_u(-\zeta_1) = \frac{At_0^{2\al}}{(t-t_0/2)^2}\left( \frac{6}{(t-t_0/2)^2}+  \frac{2\theta u_{xx}}{t-t_0/2}\right)\leq \frac{At_0^{2\al}}{(t-t_0/2)^2}\left( \frac{7}{(t-t_0/2)^2}+  \theta^2 u_{xx}^2\right).
\ee
We let $K$ be the constant of Proposition \ref{prop:osc u}, so that
 \be \label{osclocrecadxsa}\omega\leq K t_0^{2\al-1}, \ee
 and require that $\vep < \frac{1}{2\theta K}.$ Recalling that $t\in (t_0/2,2t_0),$ we then obtain from \eqref{k defi sxaval},\eqref{grad uxx asd}, and \eqref{osclocrecadxsa} that
\[\theta^2u_{xx}^2\leq \theta^2 k^2 \omega^2 \leq \theta^2 k^2 K^2t_0^{4\al-2}=\frac{ \theta^2 \vep^2 K^2}{t_0^2} \leq \frac{1}{(2t_0)^2}\leq \frac{1}{(t-t_0/2)^2}.\]
Thus, \eqref{zet1sacxas} implies
\be \label{zet12sacx} L_u\left(-\zeta_1 \right) \leq \frac{8A t_0^{2\al}}{(t-t_0/2)^4}.\ee
For a fixed choice of $\vep,$ we require $A\geq 32/\vep$, which gives
\be \label{zet13sacx}\frac{8A t_0^{2\al}}{(t-t_0/2)^4} \leq \frac{ \vep A^2 t_0^{2\al}}{4(t-t_0/2)^4} = \frac{ \vep}{4t_0^{2\al}} \left( \frac{A t_0^{2\al}}{(t-t_0/2)^2}\right)^2= \frac{k}{4} \zeta_1^2. \ee
Combining \eqref{zet12sacx} and \eqref{zet13sacx}, we get
\be \label{zet1} L_u\left(-\zeta_1 \right) \leq \frac{k}{4} \zeta_1^2.  \ee 
In a similar way, we obtain
\be \label{zet2} L_u(-\zeta_2) \leq  \frac{k}{4} \zeta_2^2. \ee
Putting together \eqref{grad 1 asdaf}, \eqref{grad 2 sadf}, \eqref{zet1}, and \eqref{zet2}, and recalling that $(x,t)$ is an interior maximum point of $v$, we obtain
\[ 0\leq L_u(v) \leq -\frac{k}{8}u_x^4 + K_1 k^3 \omega^4 + \frac{k}{4}(\zeta_1^2+\zeta_2^2) \leq -\frac{k}{8}u_x^4 + K_1 k^3 \omega^4 + \frac{k}{4}\left(\zeta_1+\zeta_2\right)^2,\]
which implies that
\[ \left(\frac12 u_x^2\right)^2 \leq 2\max\left(2K_1k^2\omega^4,\frac{1}{2}(\zeta_1+\zeta_2)^2\right) . \]
Recalling \eqref{osclocrecadxsa} and \eqref{k defi sxaval}, as well as the definition of $v$, this yields
\[ \max_{S}v  \leq \sqrt{4K_1k^2\omega^4} + \frac{\vep}{t_0^{2\al}} \max_S  \util ^2 \leq  2\sqrt{K_1} \vep K^2 t_0^{2\al-2} + \vep K^2 t_0^{2\al-2} = (2\sqrt{K_1}+1)\vep K^2 t_0^{2\al-2}. \]
In particular, setting $t=t_0$, we obtain
\[\|u_x(\cdot,t_0)\|_{L^{\infty}((\gamma_L(t_0),\gamma_R(t_0))}^2 \leq 2(2 \sqrt{K_1}+1) \vep K t_0^{2\al-2} + 2(\zeta_1(t_0)+\zeta_2(t_0))\leq C t_0^{2(\al-1)}. \]
\end{proof}
\begin{rem} \label{rem: ux bd [0,1]} The previous proposition does not provide a uniform estimate on the gradient near $t=0$. Such bounds are available, but they typically depend on $T$ through the  oscillation of $u$ (see \cite[Prop. 3.10]{CMP}). In particular, the time localization arguments of the above proof readily yield the bound
\be  \|Du\|_{L^{\infty}(\R \times (0,1))} \leq C=C(C_0,|a_0|,|b_0|,\theta, \theta^{-1},\text{osc}_{L^{\infty}(\R \times (0,1))}(u)),\ee
but the quantity $\text{osc}_{L^{\infty}(\R \times (0,1))}(u)$ may, in principle, blow up as $T \to \infty$. See, however, Theorem \ref{thm: fin theta>2}.
\end{rem}
As before, we may now translate the results of this subsection into estimates for the continuous rescaling. We note that, when $\theta<2,$ \eqref{gamma x bd}, \eqref{mass cons}, and \eqref{tc} imply that $u(\cdot,T) \approx T^{2\al-1}$ is unbounded as $T \to \infty$. As one can readily verify, this implies that $u(\cdot,t)$ is also unbounded as $T\to \infty$ for any fixed $t$. Thus, in order to obtain $L^{\infty}$ bounds in this supercritical range of $\theta$, we renormalize $u$ by a vertical translation. Note also from \eqref{v linfty th leq 2} that, for the critical value $\theta=2$, the bounds on $v$ are still merely local (in fact, linear) in $\tau$, and this is consistent with \eqref{Uself}, due to the presence of the logarithmic term $z(t)$.
\begin{prop}[Lipschitz estimates for the continuous rescaling $v$] \label{prop: v Dv bd}Under the assumptions of Theorem \ref{thm.intro1}, let $(\tilde{u},m)$ be the solution to \eqref{mfg}--\eqref{tc}, and let
\be \label{u defi cases} u(x,t)= \begin{cases} \tilde{u}(x,t) & \theta >2 \\
\tilde{u}(x,t)- u(0,1), & \theta \leq 2.
    
\end{cases}
\ee
  Let $(v,\mu)$ be the continuous rescaling \eqref{v mu defi} corresponding to $(u,m)$, and let $M>0$. There exist positive constants $K_1=K_1(C_0,|a_0|,|b_0|,\theta, \theta^{-1})$  and $K_2=K_2(C_0,|a_0|,|b_0|,\theta, \theta^{-1},M)$ such that, for every $\tau \in [-M,\infty)$,
\be \label{v grad bd}\|v_{\eta}(\cdot,\tau)\|_{L^{\infty}(\R)}\leq K_1, \quad \|v_{\tau}(\cdot,\tau)\|_{L^{\infty}(-M,M)}\leq K_2,\ee
\be \label{v linfty bd th > 2}   \|v(\cdot,\tau)\|_{L^{\infty}(\R)}\leq K_1\left(1+\frac{\theta+2}{\theta-2}\right), \, \text{ for } \theta>2,\ee
and 
\be \label{v linfty th leq 2} 
\|v(\cdot,\tau)\|_{L^{\infty}(-M,M)} \leq \begin{cases} 
K_2 (M+\tau) & \theta =2, \\
K_2 \left(M+ \frac{2+\theta}{2-\theta} \left(1-e^{-\left(\frac{2-\theta}{2+\theta}\right)(\tau+M)}\right)\right) & \theta < 2.    
\end{cases}
\ee
\end{prop}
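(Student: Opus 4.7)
My plan is to translate each of the sharp algebraic rate estimates for $(u,m)$ from Subsection~\ref{subsec: grad bds} into the corresponding bound for $(v,\mu)$ through the scaling \eqref{v mu defi}. First, since $v_\eta(\eta,\tau) = t^{1-\alpha} u_x(x,t)$, Proposition~\ref{prop: grad rate} applied with $\delta = 1$ immediately yields $|v_\eta| \leq \sqrt{C}$ for $\tau \geq 0$, with a constant depending only on the problem data. For $\tau \in [-M,0)$ we have $t^{1-\alpha} \leq 1$, so it suffices to bound $|u_x|$ uniformly on $\R \times (0,1)$; this is precisely the content of Remark~\ref{rem: ux bd [0,1]}, whose oscillation hypothesis is guaranteed either by boundedness of $\tilde u$ (when $\theta > 2$) or by the renormalization $u = \tilde u - \tilde u(0,1)$ (when $\theta \leq 2$).

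For the $L^\infty$ bound on $v$ when $\theta > 2$, I would apply the representation formula \eqref{urepform} with the constant trajectory $\beta \equiv x$:
\[0 \leq u(x,t) \leq \int_t^T m(x,s)^\theta\,ds + c_T\, m^\theta(x,T),\]
where the lower bound uses $u(\cdot,T) \geq 0$ and $m \geq 0$. By Proposition~\ref{prop: smoothing effect} and $\alpha\theta = 2 - 2\alpha$, the integral is bounded by $K\,t^{2\alpha-1}/(1-2\alpha)$ (finite since $2\alpha - 2 < -1$), and the boundary term by $C\,T^{2\alpha-1} \leq C\,t^{2\alpha-1}$ (since $2\alpha - 1 < 0$ and $T \geq t$). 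Multiplying by $t^{1-2\alpha}$ and using $1/(1-2\alpha) = (\theta+2)/(\theta-2)$ gives the announced bound. For $\theta \leq 2$, this estimate depends on $T$, so I would instead interpret the PDE for $v$, which by \eqref{v hj eq} can be written
\[v_\tau + (2\alpha-1)v = \tfrac12 v_\eta^2 + \alpha \eta v_\eta - \mu^\theta =: g(\eta,\tau),\]
as a linear first-order ODE in $\tau$ at each fixed $\eta$. By the first step and Proposition~\ref{prop: mu bd}, $g$ is bounded on $(-M,M) \times [-M,\infty)$. Taking $\tau_0 = 0$ as base time, the renormalization gives $v(0,0) = u(0,1) = 0$, and the gradient estimate yields $|v(\eta,0)| = |u(\eta,1) - u(0,1)| \leq K_1 |\eta| \leq K_1 M$ for $\eta \in (-M,M)$. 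Integrating the ODE on $[0,\tau]$ delivers the claimed bounds: for $\theta = 2$ the coefficient $2\alpha - 1$ vanishes and the bound is linear in $\tau$, while for $\theta < 2$ the coefficient $k := 2\alpha - 1 > 0$ and the decaying exponential $e^{-k\tau}$ stabilizes the estimate, naturally producing the factor $1/k = (2+\theta)/(2-\theta)$ that appears in the statement.

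The bound on $v_\tau$ is then immediate from the HJ equation $v_\tau = \tfrac12 v_\eta^2 + \alpha \eta v_\eta - \mu^\theta - (2\alpha-1)v$, once each term on the right is controlled via the previous steps and Proposition~\ref{prop: mu bd}. The main technical obstacle is the gradient estimate for $\tau \in [-M,0)$: Proposition~\ref{prop: grad rate} degenerates as $t \to 0$, so step one truly rests on Remark~\ref{rem: ux bd [0,1]} together with a $T$-independent oscillation bound for $u$ on $\R \times (0,1)$, which in turn must be secured via the renormalization when $\theta \leq 2$. The second-most delicate point is to keep precise track of the constant $1/(1-2\alpha)$ in the $\theta > 2$ case and the exponential factor in the $\theta < 2$ case, so that the final bound matches the form stated.
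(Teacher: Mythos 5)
Most of your proposal runs parallel to the paper's proof: the bound on $v_\eta$ for $\tau\ge 0$ via Proposition \ref{prop: grad rate}, the $\theta>2$ bound on $v$ via the sharp decay of $m^\theta$ and of the terminal cost (the paper phrases this as comparison with an explicit supersolution of the HJ inequality rather than plugging the constant curve into \eqref{urepform}, but the two are equivalent), the $\theta\le 2$ bound by integrating in time from $t=1$ after the renormalization (the paper integrates the $u_t$ estimate of \eqref{grad local bd} in the original variables instead of treating \eqref{v hj eq} as an ODE in $\tau$, again an equivalent variant), and the bound on $v_\tau$ read off from the equation. These parts are fine.

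The genuine gap is your treatment of the gradient bound on $\tau\in[-M,0)$, i.e.\ $t\in[e^{-M},1)$. You invoke Remark \ref{rem: ux bd [0,1]}, whose constant depends on $\operatorname{osc}_{\R\times(0,1)}(u)$, and claim this oscillation hypothesis is ``secured by the renormalization'' when $\theta\le 2$. It is not: the oscillation is invariant under adding a constant, so replacing $\tilde u$ by $\tilde u-\tilde u(0,1)$ changes nothing. Worse, for $\theta\le 2$ the quantity $\operatorname{osc}_{\R\times(0,1)}(u)$ genuinely blows up as $T\to\infty$: inside the support at any fixed $t$ the value function is of order $T^{2\al-1}$ (of order $\log T$ when $\theta=2$), as the paper observes just before the statement of the proposition, while $u\equiv 0$ far from the support; so no $T$-independent bound of the kind Remark \ref{rem: ux bd [0,1]} would require is available at this stage. (For $\theta>2$ your route can be patched, since there $0\le u\le C$ uniformly by your own constant-trajectory estimate, but for $\theta\le2$ the step fails as written.) The repair is simple and is what the paper does: apply Proposition \ref{prop: grad rate} with $\delta=e^{-M}$ rather than $\delta=1$, which covers all $t\in[e^{-M},T/2]$ and gives $|v_\eta|=t^{1-\al}|u_x|\le C$ directly, at the cost of the constant depending on $M$ through $\delta^{-1}$ — consistent with the paper's own argument — so that no gradient bound on $\R\times(0,1)$ uniform in $T$ is ever needed.
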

\begin{proof} Differentiating \eqref{v mu defi} with respect to $\eta$ and $\tau$, we have
\be \label{v eta comp} v_{\eta}(\eta,\tau)=t^{1-\al}u_x(x,t), \ee
\be \label{v tau comp} v_{\tau}(\eta,\tau)=(1-2\al)v(\eta,\tau) + \al v_{\eta}(\eta,\tau)\eta + t^{2-2\al}u_t(x,t)=\frac{\theta-2}{\theta+2}v(\eta,\tau) + \al v_{\eta}(\eta,\tau)\eta + t^{2-2\al}u_t(x,t).
\ee
The bound \eqref{v grad bd} on $v_{\eta}$ thus follows directly from \eqref{grad local bd}, taking 
\[\delta=e^{-M}.\] For the bound on $v_{\tau}$, at this stage we can only conclude for in the case that $\theta=2,$ in which case  \eqref{grad local bd} and \eqref{v tau comp} yield
\[ |v_{\tau}| \leq |v_{\eta}\eta| + t^{2-2\al}|u_t(x,t)| \leq \sqrt{C}M + C.  \]
Notice that, from \eqref{v tau comp}, in order to show \eqref{v grad bd} for the remaining values of $\theta$, it is in fact sufficient to prove \eqref{v linfty bd th > 2} and \eqref{v linfty th leq 2}, which is done next.

We first assume that $\theta>2$, so that $2\al-1 < 0$, and $u=\tilde{u}$. In view of \eqref{mfg}, \eqref{alpha theta}, \eqref{smoothing effect}, and \eqref{kap size}, $u$ satisfies, for some constant $K>0$,
\be \label{hj subsol acsamx} \begin{cases} -u_t + \frac12 u_x^2 \leq Kt^{2\al-2} & (x,t)\in \R \times (\delta,T), \\ 
u(x,T)\leq K T^{2\al-1} & x\in \R.    
\end{cases} 
\ee
The function \[ \zeta(t)=K T^{2\al-1}+\frac{K}{1-2\al}t^{2\al-1}\]
is a supersolution to \eqref{hj subsol acsamx}, so the comparison principle implies that
\be \label{u compar scoaca} u(x,t)\leq \zeta(t) \leq \left(1+\frac{1}{1-2\al}\right)Kt^{2\al-1}=\left(1+\frac{\theta+2}{\theta-2}\right)Kt^{2\al-1}.\ee
Recalling that $u=\tilde{u}\geq0$ due to \eqref{urepform} and \eqref{tc}, \eqref{u compar scoaca} yields \eqref{v linfty bd th > 2}.

Next, assume that $\theta \leq 2$. Then $2\al-1\geq 0$, and we see from \eqref{u defi cases} and \eqref{grad local bd} that, for $t\geq \delta$,
\be \label{u(0,t) kmasxoa}|u(0,t)| \leq \int_{\delta}^{t} Cs^{2\al-2}ds \leq \begin{cases} C \log(t/\delta) & \theta=2, \\ \frac{C}{2\al-1}(t^{2\al-1}-\delta^{2\al-1}) & \theta <2.
\end{cases}  \ee
On the other hand, the space derivative estimate of $\eqref{grad local bd}$ gives
\be \label{u space asoaxm} \|u(\cdot,t)-u(0,t)\|_{L^{\infty}(-Mt^{\al},Mt^{\al})} \leq 2M\sqrt{C}t^{2\al-1}.  
\ee
Thus, combining \eqref{u(0,t) kmasxoa} with \eqref{u space asoaxm}, and noting that $2\al-1=(2-\theta)/(2+\theta),$ we get
\be \label{expl u bd ascoac}\|u(\cdot,t)\|_{L^{\infty}(-Mt^{\al},Mt^{\al})}\leq \begin{cases} C\log(t)+MC+2M\sqrt{C} & \theta=2, \\
(2M \sqrt{C})t^{2\al-1}+C\frac{ 2+\theta}{2-\theta} (t^{2\al-1}-\delta^{2\al-1})
& \theta<2.    
\end{cases}
\ee
Finally, translating this inequality in terms of $v$ using \eqref{tau eta defi} and \eqref{v mu defi}, we obtain \eqref{v linfty th leq 2}.

\end{proof}

\subsection{Exact rates for energy estimates}
The next task will be to obtain the exact rates for certain integral bounds on the first derivatives of $m$. We use the displacement convexity inequality, Lemma \ref{lem:displ}.
\begin{prop}[Energy estimates with optimal rate]  \label{prop: energy rates} Under the assumptions of Theorem \ref{thm.intro1}, let $(u,m)$ be the solution to \eqref{mfg}--\eqref{tc}, and let $\vep \in (0,1)$. Then $ m^{\frac{\theta+\vep}{2}}\in H^1_{\emph{loc}}(\R \times (0,T))$. Furthermore, for every $\delta>0$, there exists a constant $C=C(C_0,|a_0|,|b_0|,\theta, \theta^{-1},\delta^{-1})$ such that, for each $t_0 \in (\delta,T/4)$,
\be \label{m energy bd} \int_{t_0/2}^{2t_0}\intr \left(\left((m^{\frac{\theta+\vep}{2}})_x\right)^2+\left(t_0^{1-\al}(m^{\frac{\theta+\vep}{2}})_t \right)^2\right) dxdt \leq \frac{C}{\vep (1-\vep)t_0^{1-\al(1-\vep)}}.\ee    
\end{prop}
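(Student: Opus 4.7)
The plan is to feed Lemma \ref{lem:displ} with $p=\vep\in(0,1)$ and a suitable time cutoff to obtain the spatial-derivative half of \eqref{m energy bd}, and then bootstrap the time derivative from the continuity equation. Fix $\zeta\in C_c^\infty((t_0/4,4t_0))$ with $\zeta\equiv 1$ on $[t_0/2,2t_0]$ and $|\ddot\zeta|\le C/t_0^2$. Since $\vep(\vep-1)<0$, the convexity statement at the end of Lemma \ref{lem:displ} says that $t\mapsto \intr m^\vep\,dx$ is concave, so $\int_0^T\!\intr m^\vep\ddot\zeta\,dx\,dt\le 0$; multiplying the first assertion of Lemma \ref{lem:displ} by $-1$ therefore turns it into
\be\label{plan:disp}
\int_0^T\!\!\intr\left(m^\vep u_{xx}^2+\frac{4\theta}{(\theta+\vep)^2}\bigl((m^{\frac{\theta+\vep}{2}})_x\bigr)^2\right)\zeta\,dx\,dt\;\le\;\frac{1}{\vep(1-\vep)}\int_0^T\!\!\intr m^\vep|\ddot\zeta|\,dx\,dt.
\ee

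To estimate the right-hand side, combine Propositions \ref{prop: smoothing effect} and \ref{prop: free bd rates} to get, for $t\in(t_0/4,4t_0)$,
\be
\intr m^\vep(\cdot,t)\,dx\;\le\;|\gamma_R(t)-\gamma_L(t)|\,\|m(\cdot,t)\|_\infty^\vep\;\le\;C\,t_0^\al\bigl(C\,t_0^{-\al}\bigr)^\vep\;\le\;C\,t_0^{\al(1-\vep)},
\ee
with $C$ depending on $\delta$. Inserting this bound and $|\ddot\zeta|\le C/t_0^2$ into \eqref{plan:disp} gives
\be\label{plan:xbd}
\int_{t_0/2}^{2t_0}\!\!\intr\left(m^\vep u_{xx}^2+\bigl((m^{\frac{\theta+\vep}{2}})_x\bigr)^2\right)dx\,dt\;\le\;\frac{C}{\vep(1-\vep)\,t_0^{1-\al(1-\vep)}},
\ee
which is the $x$-derivative part of \eqref{m energy bd} and also provides the auxiliary control of $m^\vep u_{xx}^2$ that is reused below.

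For the time derivative, the continuity equation $m_t=(mu_x)_x$ and the chain rule (classical on $\{m>0\}$ by Theorem \ref{thm.intro1}, and trivially extended to $\{m=0\}$) give
\be
\bigl(m^{\frac{\theta+\vep}{2}}\bigr)_t\;=\;u_x\,\bigl(m^{\frac{\theta+\vep}{2}}\bigr)_x\;+\;\frac{\theta+\vep}{2}\,m^{\frac{\theta+\vep}{2}}\,u_{xx},
\ee
so $\bigl(m^{\frac{\theta+\vep}{2}}\bigr)_t^2\le 2u_x^2\bigl(m^{\frac{\theta+\vep}{2}}\bigr)_x^2+\tfrac{(\theta+\vep)^2}{2}m^{\theta+\vep}u_{xx}^2$. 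I then multiply by $t_0^{2(1-\al)}$ and integrate over $(t_0/2,2t_0)\times\R$. For the first term, Proposition \ref{prop: grad rate} applied at each $t\sim t_0$ gives $t_0^{2(1-\al)}\|u_x(\cdot,t)\|_\infty^2\le C$, reducing it to \eqref{plan:xbd}. For the second term, I factor $m^{\theta+\vep}u_{xx}^2=m^\theta\,m^\vep u_{xx}^2$ and use Proposition \ref{prop: smoothing effect} together with \eqref{alpha theta} to get $m^\theta\le Ct_0^{-\al\theta}=Ct_0^{-2(1-\al)}$, which cancels the prefactor $t_0^{2(1-\al)}$ exactly and reduces the term to $C\int\!\int m^\vep u_{xx}^2\,dx\,dt$, again controlled by \eqref{plan:xbd}. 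The finiteness of both $L^2$ norms on every compact subset of $\R\times(0,T)$ immediately yields $m^{\frac{\theta+\vep}{2}}\in H^1_{\text{loc}}(\R\times(0,T))$.

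The proof has no single conceptual obstacle; it is rather careful bookkeeping of sharp rates. The one mildly subtle point is that, because $\vep(\vep-1)<0$, the right-hand side of Lemma \ref{lem:displ} must first be shown to have the correct sign before one can replace it by the manifestly positive quantity involving $|\ddot\zeta|$; this is precisely what the concavity of $t\mapsto\intr m^\vep\,dx$ provides. The sharp exponent $t_0^{1-\al(1-\vep)}$ arises from the balance between the $1/t_0^2$ cost of the cutoff and the sharp $L^1$-bound $\intr m^\vep\le C t_0^{\al(1-\vep)}$, and the factor $t_0^{1-\al}$ on the time derivative is exactly the intrinsic time-scaling that makes the $\|u_x\|_\infty$ estimate and the $\|m\|_\infty^\theta$ estimate align with the $x$-derivative bound.
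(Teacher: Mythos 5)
Your proof is correct and takes essentially the same route as the paper: Lemma \ref{lem:displ} with $p=\vep$ and a cutoff with $|\ddot\zeta|\le C/t_0^2$, the sharp bound $\intr m^{\vep}(\cdot,t)\,dx\le Ct_0^{\al(1-\vep)}$ from the smoothing effect and the support bound, and then the continuity equation together with the gradient rate of Proposition \ref{prop: grad rate} and the identity $\al\theta=2-2\al$ to convert the time derivative into the already-controlled quantities $m^{\vep}u_{xx}^2$ and $\bigl((m^{\frac{\theta+\vep}{2}})_x\bigr)^2$. Two minor remarks: the concavity step is not needed, since bounding the right-hand side of \eqref{displ subsol} by $\frac{1}{\vep(1-\vep)}\int\intr m^{\vep}|\ddot\zeta|$ is immediate; and your displayed estimates only reach times below $T/2$, so to conclude $m^{\frac{\theta+\vep}{2}}\in H^1_{\text{loc}}(\R\times(0,T))$ you should add, as the paper does, that varying the cutoff $\zeta$ (and using the global bounds on $m$ and $u_x$, without sharp rates) gives local finiteness on every compact subinterval of $(0,T)$.
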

\begin{proof}  In this proof, the value of the constant $C$ will be allowed to increase from line to line. We choose $\zeta \in C^{\infty}_c(0,T)$ such that $0\leq \zeta \leq 1$, $\zeta \equiv 1$ in $(t_0/2,2t_0)$, $\zeta \equiv 0$ outside of $(t_0/4,4t_0)$, and $|\zeta''| \leq C_2/t_0^2$, where $C_2$ is a universal constant. Then, taking $p=\vep$ in \eqref{displ subsol} yields

\be \label{energy e0dlq1}   \int_{t_0/2}^{2t_0}\intr (m^{\vep} u_{xx}^{2}+(m^{\frac{\theta+\vep}{2}})_{x}^{2}) dxdt \leq \frac{C}{\vep (1-\vep)t_0^2}\int_{t_0/4}^{4t_0} \intr m^{\vep} dxdt.  \ee
On the other hand, \eqref{smoothing effect}, \eqref{gamL}, and \eqref{grad local bd} yield
\[ \int_{t_0/4}^{4t_0} \intr m^{\vep} dxdt \leq Ct_0^{1 +\al - \al \vep }, \]
and, thus, \eqref{energy e0dlq1} becomes
\be \label{energy e0dlq2} \int_{t_0/2}^{2t_0}\intr (m^{\vep} u_{xx}^{2}+(m^{\frac{\theta+\vep}{2}})_{x}^{2}) dxdt \leq  \frac{C}{\vep (1-\vep)t_0^{1-\alpha(1-\vep)}}.\ee 
In order to prove \eqref{m energy bd}, it remains to estimate the time derivative of $m$. We proceed by bounding it in terms of the left hand side of \eqref{energy e0dlq2}. Crucially, we take advantage of Propositions \ref{prop: smoothing effect} and \ref{prop: grad rate} to ensure that this is done with the appropriate scaling. Furthermore, we recall that $(u,m)$ is smooth in the set $\{m>0\}$, which justifies the computations below.

By the second equation in \eqref{mfg}, we have, 
\[ m_t = mu_{xx}+m_x u_x. \]
Thus, we obtain from \eqref{smoothing effect}, \eqref{grad local bd}, and \eqref{alpha theta}, that, in $\R \times (t_0/2,2t_0)$,
\[ \left|(m^{\frac{\theta+\vep}{2}})_t\right| \leq \frac{\theta+\vep}{2} \left|m^{\frac{\theta+\vep}{2}} u_{xx} \right| + \left| (m^{\frac{\theta+\vep}{2}})_x u_x \right| \leq  Ct_0^{\al-1} (m^{\frac{\vep}{2}}|u_{xx}| +|(m^{\frac{\theta+\vep}{2}})_x|), \]
which means
\be \label{mt ener asodka}\left|(m^{\frac{\theta+\vep}{2}})_t\right|^2 \leq Ct_0^{2\al-2}\left(m^{\vep}u_{xx}^2+((m^{\frac{\theta+\vep}{2}})_x)^2 \right). \ee
The estimate \eqref{m energy bd} now follows from \eqref{energy e0dlq2} and \eqref{mt ener asodka}. As for the local integrability, while this qualitative property was so far only shown on $(0,T/2),$ it of course holds on any compact sub-interval of $(0,T)$, by varying the choice of test function $\zeta$ used in \eqref{displ subsol}.

\end{proof}
As usual, these scale-appropriate bounds allow us to obtain energy estimates for the continuous rescaling.
\begin{prop}[Energy estimates for the continuous rescaling] \label{prop: v mu energy}Under the assumptions of Theorem \ref{thm.intro1}, let $(u,m)$ be the solution to \eqref{mfg}--\eqref{tc}, let $(v,\mu)$ be the continuous rescaling given by \eqref{v mu defi}, let $\vep \in (0,1)$, and let $M>0$. Then $\mu^{\frac{\theta+\vep}{2}} \in H^1_{\emph{loc}}(\R \times (-\infty,\log(T)))$. Furthermore, there exists a constant $C=C(C_0,|a_0|,|b_0|,\theta, \theta^{-1},M)$ such that, for every $\tau_0 \in (-M,\log(T)-\log(4))$,
 \be  \label{v mu energy} \int_{\tau_0 -\log(2)}^{\tau_0+\log(2)} \int_{\R} |D(\mu^{\frac{\theta+\vep}{2}})|^2d\eta d\tau  \leq \frac{C}{\vep (1-\vep)}.\ee
 \end{prop}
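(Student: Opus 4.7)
The plan is to reduce the claimed bound on $(v,\mu)$ to the already available estimate \eqref{m energy bd} on $(u,m)$ via a direct change of variables. Writing $\Phi(\eta,\tau)=\mu^{(\theta+\vep)/2}(\eta,\tau)=t^{\al(\theta+\vep)/2}\phi(x,t)$ with $\phi=m^{(\theta+\vep)/2}$, and using $t=e^{\tau}$, $x=t^{\al}\et$, one computes
\[\partial_{\et}\Phi = t^{\al(\theta+\vep)/2+\al}\,\phi_x,\qquad \partial_{\ta}\Phi = t^{\al(\theta+\vep)/2}\left(\tfrac{\al(\theta+\vep)}{2}\phi + \al x\phi_x + t\phi_t\right),\]
together with the Jacobian identity $d\et d\ta = t^{-\al-1}dxdt$. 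Using the key relation $\al(\theta+\vep)=2-2\al+\al\vep$ from \eqref{alpha theta}, the exponent on the $\et$--derivative integrand collapses to $t^{1-\al(1-\vep)}$, so for $\tau_0 \in (-M,\log(T)-\log(4))$ and $t_0 = e^{\tau_0}$ one obtains
\[\int_{\ta_0-\log 2}^{\ta_0+\log 2}\intr (\partial_{\et}\Phi)^2\,d\et d\ta = \int_{t_0/2}^{2t_0}\intr t^{1-\al(1-\vep)}\phi_x^2\,dxdt \leq C\,t_0^{1-\al(1-\vep)}\int_{t_0/2}^{2t_0}\intr \phi_x^2\,dxdt,\]
which by \eqref{m energy bd} is bounded by $C/(\vep(1-\vep))$, as required.

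The harder step is controlling $\partial_{\ta}\Phi$, because three distinct terms arise when we square the bracketed expression. For the $\phi_t$-term, the exponent works out to $t^{3-3\al+\al\vep}$, which is precisely compensated by the factor $t_0^{1-\al}$ appearing squared on the left of \eqref{m energy bd}; applying that estimate directly yields a bound $C/(\vep(1-\vep))$. For the $x\phi_x$-term, I will use the support bound \eqref{mu suppt bd} (equivalently Proposition \ref{prop: free bd rates}) to replace $x^2$ by $Ct^{2\al}$ on $\text{supp}(m)$, which reduces this term to the $\et$--derivative estimate already handled. The remaining $\phi$-term is the $L^2$ norm of $m^{\theta+\vep}$: bounding $m$ by Proposition \ref{prop: smoothing effect} and using that $\text{supp}(m(\cdot,t))$ has length $O(t^{\al})$, one gets $\int m^{\theta+\vep}dx \leq Ct^{3\al-2-\al\vep}$, and the temporal weight $t^{1-3\al+\al\vep}$ combines with this to produce an integrable $t^{-1}$, whose integral over $(t_0/2,2t_0)$ is just $\log 4$. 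Summing the three contributions gives the bound \eqref{v mu energy}.

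For the local integrability claim, Proposition \ref{prop: energy rates} ensures $\phi=m^{(\theta+\vep)/2}\in H^1_{\text{loc}}(\R\times(0,T))$, and since the change of variables $(x,t)\mapsto(\eta,\tau)$ is a smooth diffeomorphism between $\R\times(0,T)$ and $\R\times(-\infty,\log T)$, the corresponding regularity transfers to $\Phi=\mu^{(\theta+\vep)/2}$. The only nontrivial part of the whole argument is the bookkeeping of the temporal weights in the $\partial_{\ta}\Phi$ calculation; once the identity $\al(\theta+\vep)=2-2\al+\al\vep$ is used systematically, the sharp rates from Propositions \ref{prop: smoothing effect}, \ref{prop: free bd rates}, and \ref{prop: energy rates} fit together exactly.
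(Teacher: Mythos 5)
Your argument is correct and is essentially the paper's own proof: a change of variables reducing \eqref{v mu energy} to the sharp-rate estimate \eqref{m energy bd}, with the $\tau$-derivative split into the same three contributions (the $m_t$ term handled by the weighted time-derivative part of \eqref{m energy bd}, the $x\phi_x$ term reduced to the spatial term via the support bound, and the zeroth-order term controlled by Proposition \ref{prop: smoothing effect} together with the $O(t^{\al})$ support size). The only difference is cosmetic bookkeeping in $(x,t)$ rather than $(\eta,\tau)$ variables.
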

 \begin{proof}
In view of \eqref{v mu defi}, we have
\be \mu_{\et}(\et,\ta)=t^{2\al}m_x(x,t)\ee
Hence, changing variables according to \eqref{tau eta defi}--\eqref{v mu defi}, recalling \eqref{alpha theta}, and using Proposition \ref{prop: energy rates}, one obtains
\begin{multline} \label{mu ener asocd} \int_{\tau_0-\log(2)}^{\tau_0-\log(2)} \intr ((\mu^{\frac{\theta+\vep}{2}})_{\et})^2 d\et d\ta = \int_{t_0/2}^{2t_0}\intr t^{1-\al(1-\vep)} ((m^{\frac{\theta+\vep}{2}})_x)^2dxdt\\
\leq C t_0^{1-\al(1-\vep)}\int_{t_0/2}^{2t_0}\intr  ((m^{\frac{\theta+\vep}{2}})_x)^2dxdt\leq \frac{C}{\vep(1-\vep)}.  \end{multline}
On the other hand, differentiating the equation $\mu(\eta,\ta)=t^{\al}m(x,t)$ with respect to $\ta$, we have
\be \mu_{\ta}=t^{\al+1}m_t(x,t)+\al \mu_{\et} \et + \al \mu. \ee
Hence,
\be  |(\mu^{\frac{\theta+\vep}{2}})_{\ta}|^2 \leq C(\mu^{\theta+\vep-2}t^{2\al+2}m_t(x,t)^2 +  \et^2 (\mu^{\frac{\theta+\vep}{2}})_{\et}^2+ \mu^{\theta+\vep}). \ee
The integral of the first term can be estimated by changing variables as in \eqref{mu ener asocd}, whereas the last two terms are treated directly by appealing to \eqref{mu ener asocd}, \eqref{mu suppt bd}, and \eqref{mu linfty}. 
\end{proof}
\begin{rem} \label{rem energy} The bound for $\mu$ in \eqref{v mu energy} is strictly stronger than the one required in \eqref{mu cont assumption}, since $\mu$ is bounded.    
\end{rem}
\subsection{Exact rates for the modulus of continuity of the density}
In this subsection, we explain how the intrinsic scaling arguments of \cite[Sec. 4.3.1]{CMP} can be adapted to our purposes. The goal here is to obtain a H\"older continuity estimate for $m$ which decays sufficiently fast with time, so as to guarantee uniform continuity of the continuous rescaling $\mu$.

We begin by discussing the underlying ideas behind these H\"older estimates. The starting point is to switch to Lagrangian coordinates and consider the function 
\be \label{p defi} p(x,t)=m(\gamma(x,t),t)^{\theta},\quad (x,t)\in [a_0,b_0]\times [0,T].\ee It was shown in \cite[Lem. 3.5]{CMP} that $p$ satisfies the divergence form, degenerate elliptic equation,
\begin{equation} \label{eq:DG p eq} -\left((\gamma_x)^{-1}p_x\right)_x  - \left(\gamma_x(\theta p)^{-1}p_t\right)_t= 0,\end{equation}
where $\gamma_x>0$ is the space derivative of the Lagrangian flow \eqref{flow defi}.
The main point now, which was exploited in \cite{CMP}, is that, thanks to the fact that the $\gamma_x$ is bounded above and below (for bounded times), we can quantify in a precise way how the equation degenerates near the free boundary: as $x$ approaches $\{a_0,b_0\}$, the time diffusion blows up like $p(x,t)^{-1}$ in the time direction, while the space diffusion remains bounded above and below. This quantification is, of course, intrinsic, since it depends on the value of the solution itself. However, if one considers appropriate intrinsic subdomains, H\"older continuity estimates may be obtained with the classical arguments of E. De Giorgi (\cite{degiorgi}), which are valid for uniformly elliptic equations in divergence form having bounded and measurable coefficients. The method of intrinsic scaling is due to E. DiBenedetto (\cite{DiB,DiB2}, see also \cite{Urbano}).

The key additional observation that we will utilize here is that the long time behavior of the degeneracy can also be quantified. Indeed, in view of \eqref{gamma x bd}, the diffusion in \eqref{eq:DG p eq} is of the order of $t^{-\alpha}$ in the space direction, and $t^{\alpha} p(x,t)^{-1}$ in the time direction. This time-dependence must be carefully tracked for the resulting continuity estimates to remain valid for large times, for the height--normalized density $\mu=t^{\alpha}m$, in the zoomed-out variables $(\eta,\tau)=(xt^{\al},\log(t))$.

The first result is the following intrinsic Harnack inequality (cf. \cite[Prop. 4.16]{CMP}).
\begin{prop}[Harnack inequality]
\label{prop:Harnack} 
Under the assumptions of Theorem \ref{thm.intro1}, let $(u,m)$ be the solution to \eqref{mfg}--\eqref{tc}, and let $p$ be defined by \eqref{p defi}. There exists a constant $C(C_0,|a_0|,|b_0|)$ such that the following holds. Let $(x_{0},t_0)\in(a_{0},b_{0})\times (0,T]$,
and let $\rho_1,\rho_2>0$ be such that $\rho_1 < \frac{1}{2}\emph{dist}(x_0,\{a_0,b_0\})$ and $\rho_2<\frac{1}{2}t_0$.
Then
\begin{equation} \label{eq:DG Harnack}
\sup_{(x_{0}-\rho,x_{0}+\rho)\times(t_0-\rho_2,t_0]}p\leq C\inf_{(x_{0}-\rho_1,x_{0}+\rho_2)\times(t_0-\rho_2,t_0]}p.
\end{equation}
\end{prop}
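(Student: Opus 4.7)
The plan is to adapt the intrinsic-scaling proof of the Harnack inequality from \cite[Prop.~4.16]{CMP}, with a linear rescaling in time that absorbs the long-time dependence of the diffusion coefficients in \eqref{eq:DG p eq} so that the constant $C$ depends only on $C_0$, $|a_0|$, $|b_0|$ and not on $t_0$. Writing \eqref{eq:DG p eq} as $-(A_{11} p_x)_x - (A_{22} p_t)_t = 0$ with $A_{11} = \gamma_x^{-1}$ and $A_{22} = \gamma_x(\theta p)^{-1}$, the constraint $\rho_2 < t_0/2$ places $t \in [t_0/2, t_0]$, and \eqref{gamma x bd} then gives the two-sided bound $c\lambda \le \gamma_x \le C\lambda$ on $Q := (x_0-\rho_1, x_0+\rho_1)\times(t_0-\rho_2, t_0]$, where $\lambda := 1 + t_0^{\alpha}$. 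Thus $A_{11}$ and $A_{22}$ have respective magnitudes $\lambda^{-1}$ and $\lambda/p$; the $1/p$ factor is the usual intrinsic degeneracy of \eqref{eq:DG p eq}, while the $\lambda^{\pm 1}$ factors are the long-time terms that must be normalized.

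First I would introduce the rescaling $\tau := (t - t_0)/\lambda$ and set $q(x,\tau) := p(x, t_0 + \lambda \tau)$, $\tilde\gamma_x(x,\tau) := \gamma_x(x, t_0 + \lambda \tau)$, on the rescaled cylinder $Q' := (x_0 - \rho_1, x_0 + \rho_1) \times (-\rho_2/\lambda, 0]$. A direct chain-rule computation (using $p_t = q_\tau/\lambda$ and $(A_{22} p_t)_t = \lambda^{-2}(\tilde A_{22} q_\tau)_\tau$, followed by multiplication by $\lambda$) shows that $q$ satisfies on $Q'$ the divergence-form equation
\[
-\left(\tfrac{\lambda}{\tilde\gamma_x}\, q_x\right)_x \;-\; \left(\tfrac{\tilde\gamma_x}{\lambda\,\theta\, q}\, q_\tau\right)_\tau \;=\; 0.
\]
The spatial coefficient $\lambda/\tilde\gamma_x$ is now bounded between two positive constants depending only on $C_0, |a_0|, |b_0|$, and the time coefficient $\tilde\gamma_x/(\lambda \theta q)$ is of order $1/q$ with $t_0$-independent constants. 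That is, $q$ satisfies an equation of exactly the same intrinsic form as \eqref{eq:DG p eq}, but with ellipticity bounds uniform in $t_0$.

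The final step is to apply (the proof of) \cite[Prop.~4.16]{CMP} to $q$ on $Q'$. That argument proceeds by the intrinsic-scaling method of DiBenedetto, and inspection shows that its Harnack constant depends only on the ellipticity bounds of the coefficients — which, by the previous step, are uniform in $t_0$. One concludes $\sup_{Q'} q \le C \inf_{Q'} q$ with $C = C(C_0, |a_0|, |b_0|)$, and undoing the change of variables (which preserves sup and inf over the corresponding cylinders) gives \eqref{eq:DG Harnack}.

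The main obstacle will be verifying that the intrinsic-scaling proof of \cite[Prop.~4.16]{CMP} genuinely yields a Harnack constant depending only on the ellipticity bounds of the transformed equation, and not implicitly on the absolute size of the rescaled time interval $\rho_2/\lambda$, which may be very small when $t_0$ is large. This rests on the parabolic scale-invariance of the De Giorgi–DiBenedetto iteration that underlies the intrinsic Harnack inequality; however, the bookkeeping must be carried out carefully — in particular, one must confirm that the construction of the intrinsic subcylinders, the energy estimates, and the $L^\infty$-to-$L^{p}$ reduction all go through with constants invariant under simultaneous dilations of $(x,\tau)$, so that the arbitrary aspect ratio of $Q'$ is not an issue.
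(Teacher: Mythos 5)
There is a genuine gap, and it sits exactly at the step you flagged and then waved off. Your time rescaling $\tau=(t-t_0)/\lambda$ with $\lambda=1+t_0^{\alpha}$ is computed correctly and does normalize the coefficients of \eqref{eq:DG p eq} to an equation of the same intrinsic form with $t_0$-independent ellipticity data. But the conclusion you then need — a Harnack constant for $q$ on $Q'$ depending \emph{only} on those ellipticity bounds — is not something an interior De Giorgi–DiBenedetto argument can deliver here. The cylinders in the statement are not intrinsic: $\rho_1$ and $\rho_2$ are arbitrary up to $\rho_1<\tfrac12\mathrm{dist}(x_0,\{a_0,b_0\})$ and $\rho_2<\tfrac12 t_0$, with no coupling to $p(x_0,t_0)$ as in Definition \ref{defi intrinsic}, so their aspect ratio relative to the intrinsic geometry is uncontrolled; any chaining of intrinsic Harnack inequalities produces a constant growing with the number of chained cylinders, and near the free boundary (where $p(x_0,t_0)\sim x_0 t_0^{-\alpha\theta}$ after normalizing $a_0=0$) that number blows up as $x_0\to 0$, so no uniform $C(C_0,|a_0|,|b_0|)$ comes out. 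Moreover the sup and inf in \eqref{eq:DG Harnack} run up to the time level $t=t_0$, which may equal $T$, i.e.\ the boundary of the region where the equation holds; an interior Harnack cannot reach that level at all, and nothing in your argument brings in the terminal condition. So "parabolic scale-invariance" handles simultaneous dilations but not these issues, and the property you would need from \cite[Prop.~4.16]{CMP} (a constant blind to the cylinder's geometry and valid up to the terminal time) is not a property interior iterations have.

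The paper's proof avoids PDE machinery entirely: by \eqref{mass cons} one has $p(x,t)=m_0(x)^{\theta}/\gamma_x(x,t)^{\theta}$, and combining the non-degeneracy \eqref{m0 bump} of $m_0$ with the two-sided flow bound \eqref{gamma x bd} gives the pointwise two-sided estimate $C^{-1}\,\mathrm{dist}(x,\{a_0,b_0\})(1+t^{\alpha})^{-\theta}\leq p(x,t)\leq C\,\mathrm{dist}(x,\{a_0,b_0\})(1+t^{\alpha})^{-\theta}$. The hypotheses $\rho_1<\tfrac12\mathrm{dist}(x_0,\{a_0,b_0\})$ and $\rho_2<\tfrac12 t_0$ force $\mathrm{dist}(x,\{a_0,b_0\})$ and $t$ to be comparable to $\mathrm{dist}(x_0,\{a_0,b_0\})$ and $t_0$ up to fixed factors, so every value of $p$ on the cylinder is comparable to $p(x_0,t_0)$ with a constant depending only on $C_0,|a_0|,|b_0|$, which is \eqref{eq:DG Harnack}. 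In other words, the uniform constant in this "Harnack inequality" encodes the explicit Lagrangian structure of the solution; it is an input to the intrinsic-scaling analysis (used for the reduction of oscillation, Proposition \ref{prop:reduction of osc}), not an output of it, and trying to derive it from the equation alone is the wrong direction here.
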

\begin{proof}
For simplicity, we normalize $a_{0}=0$, and by symmetry we may assume
that $x_{0}<\frac{b_{0}}{2}.$ As usual, the constant $C$ may increase at each step. Recalling \eqref{gamma x bd}, we have, for every $(x,t)\in  (x_0-\rho_1,x_0+\rho_1) \times  (t_0-\rho_2,t_0)$, 
\[ \label{gamx bd axapo} \frac{1}{C}(1+t^{\al}) \leq \gamma_x(x,t) \leq C (1+t^{\al}).\]
Recall also that, by \eqref{mass cons}, \be p(x,t)=m_0(x)^{\theta}/\gamma_x^{\theta}.\ee Thus, \eqref{mass cons} and \eqref{m0 bump} imply that
\[ \frac{1}{C} \frac{x}{(1+t^{\al})^{\theta }} \leq p(x,t) \leq C \frac{x}{(1+t^{\al})^{\theta } }. \]
This yields
\be \label{harnack upper asxas} p(x,t) \leq C \frac{x}{(1+t^{\al})^{ \theta}}\leq C \frac{(3x_0/2)}{(1+(t_0/2)^{\al})^{ \theta}}\leq C \frac{(3x_0/2)}{(1+(t_0/2)^{\al})^{ \theta}} \leq C 3\cdot 2^{\alpha \theta -1} \frac{x_0}{(1+t_0^{\al})^{ \theta}} \leq C^2 3 \cdot 2^{\al \theta -1} p(x_0,t_0). \ee
Similarly,
\be \label{harnack lower asxas} p(x,t) \geq \frac{1}{C} p(x_0,t_0).\ee
Since $(x,t)$ is arbitrary, we conclude from \eqref{harnack upper asxas} and \eqref{harnack lower asxas} that
\[ \sup_{(x_{0}-\rho,x_{0}+\rho)\times(t_0-\rho_2,t_0]} p \leq 3\cdot 2^{\al \theta}  C_2^4\inf_{(x_{0}-\rho,x_{0}+\rho)\times(t_0-\rho_2,t_0]} p. \]
\end{proof}

\begin{defn} \label{defi intrinsic} Given $(x_{0},t_{0})\in(a_{0},b_{0})\times(0,T)$ and $\rho>0$, we
define the intrinsic rectangle $R_{\rho}(x_{0},t_{0})$ of radius
$\rho$ centered at $(x_{0},t_{0})$ by
\[
R_{\rho}(x_{0},t_{0}):=(x_{0}- t_0^{-\al/2}\rho,x_{0}+ t_0^{-\al/2}\rho )\times(t_{0}-t_0^{\al/2}(\theta p(x_{0},t_{0}))^{-\frac{1}{2}}\rho,t_{0}+t_0^{\al/2}(\theta p(x_{0},t_{0}))^{-\frac{1}{2}}\rho).
\]    
\end{defn}
Following the proof of \cite[Cor. 4.20]{CMP}, one then obtains the following proposition through classical arguments of De Giorgi. 
\begin{prop}[Reduction of oscillation] 
\label{prop:reduction of osc}Under the assumptions of Theorem \ref{thm.intro1}, let $(u,m)$ be the solution to \eqref{mfg}--\eqref{tc}, and let $p$ be defined by \eqref{p defi}. Assume that $(x_0,t_0)\in (a_0,b_0)\times (\delta,T/2)$, and assume that the intrinsic rectangle $R_{\rho}(x_0,t_0)$ is such that
\be \label{eq:distance assumption} R_{8\rho}(x_0,t_0) \subset (a_0,b_0)\times (0,T).\ee
Then there exists a constant $0<\sigma<1$, depending only on $C_0,|a_0|,|b_0|, \delta^{-1}$, such that
\begin{equation} \label{eq:osc decr}
\text{\emph{osc}}_{R_{\rho}(x_{0},t_{0})}(p)\leq\sigma\, \text{\emph{osc}}_{R_{4\rho}(x_{0},t_{0})} (p).
\end{equation}
\end{prop}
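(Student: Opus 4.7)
The strategy is to rescale the space and time variables on the intrinsic rectangle so that the degenerate divergence-form equation \eqref{eq:DG p eq} becomes uniformly elliptic in divergence form with bounded measurable coefficients, and then to invoke the classical De~Giorgi reduction-of-oscillation estimate for such equations. As a preliminary step, I would use the Harnack inequality (Proposition \ref{prop:Harnack}) to show that $p(x,t)$ is comparable to $p(x_0,t_0)$ throughout $R_{8\rho}(x_0,t_0)$, with constants depending only on $C_0,|a_0|,|b_0|$. I would then combine \eqref{gamma x bd}, \eqref{m0 bump}, and the formula $p = m_0/\gamma_x^{\theta}$ with the hypothesis $t_0\in(\delta,T/2)$ and the containment \eqref{eq:distance assumption} to verify that the time half-width $t_0^{\al/2}(\theta p(x_0,t_0))^{-1/2}\rho$ of $R_{8\rho}(x_0,t_0)$ is comparable to a fixed fraction of $t_0$, so that $1+t^{\al}$ is comparable to $1+t_0^{\al}$ throughout $R_{8\rho}(x_0,t_0)$.

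With these uniform bounds in place, I would perform the linear change of variables
\[
y = t_0^{\al/2}(x-x_0),\quad s = t_0^{-\al/2}(\theta p(x_0,t_0))^{1/2}(t-t_0),\quad q(y,s) = p(x,t)/p(x_0,t_0),
\]
which sends $R_{\rho'}(x_0,t_0)$ to the square $Q_{\rho'}=(-\rho',\rho')^2$ for every $\rho'>0$. A direct computation transforms \eqref{eq:DG p eq} into the rescaled equation
\[
\partial_y\bigl(A\, q_y\bigr) + \partial_s\bigl(B\, q_s\bigr) = 0 \quad \text{on } Q_{8\rho},
\]
with coefficients $A = t_0^{\al}(\gamma_x)^{-1}$ and $B = t_0^{-\al}\gamma_x\,(p(x_0,t_0)/p)$. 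By the preliminary step, both $A$ and $B$ are bounded above and below on $Q_{8\rho}$ by positive constants depending only on $C_0,|a_0|,|b_0|,\delta^{-1}$. The rescaled equation is therefore linear, uniformly elliptic, and in divergence form with bounded measurable coefficients.

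At this point the classical De~Giorgi oscillation-reduction estimate (see \cite{degiorgi,DiB,DiB2,Urbano}) furnishes a constant $\sigma\in(0,1)$ depending only on the ellipticity constants such that $\text{osc}_{Q_{\rho}} q \leq \sigma\,\text{osc}_{Q_{4\rho}} q$. Since $p = p(x_0,t_0)\,q$ and the change of variables is a diffeomorphism from $R_{\rho'}$ to $Q_{\rho'}$, this estimate translates directly into \eqref{eq:osc decr}, with the same $\sigma$. The main obstacle, and the place where the present argument goes beyond \cite[Sec.~4.3.1]{CMP}, is the preliminary step: one must check that the $t_0$-dependence in the original coefficients $(\gamma_x)^{-1}$ and $\gamma_x(\theta p)^{-1}$ is exactly balanced by the intrinsic space-time scaling, with constants that are independent of $t_0$. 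Once this balance is established, the remainder of the proof is a straightforward translation of \cite[Cor.~4.20]{CMP}.
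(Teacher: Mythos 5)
Your overall route (rescale the intrinsic rectangle to unit scale, check that \eqref{eq:DG p eq} becomes a uniformly elliptic divergence-form equation with measurable coefficients, then quote the classical De~Giorgi oscillation estimate) is exactly the intended one, and your rescaled coefficients $A=t_0^{\al}(\gamma_x)^{-1}$, $B=t_0^{-\al}\gamma_x\,p(x_0,t_0)/p$ are computed correctly. The gap is in the preliminary comparability step, which you claim on all of $R_{8\rho}(x_0,t_0)$. Writing $d_0=\text{dist}(x_0,\{a_0,b_0\})$, the hypothesis \eqref{eq:distance assumption} only tells you that the space and time half-widths of $R_{8\rho}$ are smaller than $d_0$ and $t_0$ respectively; it does not make them smaller than $d_0/2$ and $t_0/2$. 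Consequently Proposition \ref{prop:Harnack} cannot be invoked on $R_{8\rho}$ (its hypotheses $\rho_1<\frac12 d_0$, $\rho_2<\frac12 t_0$ may fail), and the comparabilities you need are in general false there: inside $R_{8\rho}$ the time variable may be an arbitrarily small fraction of $t_0$ (so $1+t^{\al}$ is not comparable to $1+t_0^{\al}$ when $t_0$ is large, and $A\sim t_0^{\al}/(1+t^{\al})$ blows up), and $x$ may approach $a_0$ or $b_0$, where $m_0$ vanishes, so by \eqref{mass cons}, \eqref{m0 bump}, \eqref{gamma x bd} one has $p(x,t)\ll p(x_0,t_0)$ and $B$ blows up. Also, your phrase that the time half-width is ``comparable to a fixed fraction of $t_0$'' is not what is needed (no lower bound is required, and the upper bound available at the $8\rho$ scale is only $<t_0$, not $<(1-\epsilon)t_0$).

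The fix is to downgrade the comparability claim to $R_{4\rho}(x_0,t_0)$: the containment of $R_{8\rho}$ in $(a_0,b_0)\times(0,T)$ gives exactly the factor-two slack, i.e. the half-widths of $R_{4\rho}$ are bounded by $d_0/2$ and $t_0/2$, so on $R_{4\rho}$ one has $t\in(t_0/2,3t_0/2)$ and $\text{dist}(x,\{a_0,b_0\})\in[d_0/2,3d_0/2]$, whence (by \eqref{mass cons}, \eqref{m0 bump}, \eqref{gamma x bd}, and $t_0\geq\delta$, or equivalently by Proposition \ref{prop:Harnack}, whose hypotheses now hold) $p\sim p(x_0,t_0)$ and $\gamma_x\sim 1+t_0^{\al}\sim t_0^{\al}$ with constants depending only on $C_0,|a_0|,|b_0|,\delta^{-1}$. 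Your coefficients $A,B$ are then uniformly elliptic on $Q_{4\rho}$, and the interior De~Giorgi reduction of oscillation from $Q_{4\rho}$ to $Q_{\rho}$ only requires the equation on $Q_{4\rho}$, so the conclusion \eqref{eq:osc decr} follows as you say. With this correction your argument is essentially the proof the paper has in mind (it defers to \cite[Cor.~4.20]{CMP} and the classical De~Giorgi scheme, the new point being precisely that the $t_0$-dependence of the degeneracy is absorbed by the time-dependent intrinsic scaling).
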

Up to this point, the only substantial difference in our treatment of these continuity estimates with respect to \cite[Sec. 4.3.1]{CMP} lies in our definition of the intrinsic rectangles, which takes into account the long time scaling. In the last step, however, the difference is somewhat delicate, so we provide the proof in full detail.
\begin{prop}[Exact rates for the modulus of continuity] \label{prop:holder m}Let $\delta>0$. There exists $s\in (0,1)$, depending only on $C_0,|a_0|,|b_0|,\delta^{-1}$, such that the following holds. For every $\overline{t} \in [2\delta,T/4]$, and every $(x_0,t_0)$, $(x_1,t_1) \in (a_0,b_0) \times [\overline{t}/2,2\overline{t}],$
 \be \label{p conti lag} |p(x_1,t_1)-p(x_0,t_0)|\leq C t_0^{-\al \theta} (|x_1-x_0|^{s}+ |(t_1-t_0)t_0^{-1}|^s),
 \ee
 where $C=C(C_0,|a_0|,|b_0|,\delta^{-1}).$ Furthermore, for every $(x_0,t_0),(x_1,t_1) \in S(m,\overline{t}/2,2\overline{t}),$
\be \label{p conti} |m^{\theta}(x_1,t_1)-m^{\theta}(x_0,t_0)| \leq Ct_0^{-\alpha \theta} (|(x_1-x_0)t_0^{-\al}|^{s}+|(t_1-t_0)t_0^{-1}|^{s}).
\ee    
\end{prop}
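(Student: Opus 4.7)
The strategy is to apply the De Giorgi iteration of Proposition \ref{prop:reduction of osc} on a sequence of shrinking intrinsic rectangles centered at $(x_0,t_0)$, and then pass to Eulerian coordinates via the Lagrangian flow $\gamma$. The main feature that must be handled carefully is the anisotropic geometry of the intrinsic rectangles, which now carries time-dependent factors $t_0^{\pm\al/2}$ together with a density-dependent factor involving $p(x_0,t_0)$; these factors must conspire to produce the precise exponents appearing in \eqref{p conti lag} and \eqref{p conti}.

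First I would combine the identity $p = m_0^{\theta}/\gamma_x^{\theta}$ (from \eqref{mass cons}) with \eqref{m0 bump} and \eqref{gamma x bd} to obtain the two-sided pointwise bound $p(x,t)\sim t^{-\al\theta}\,\mathrm{dist}(x,\{a_0,b_0\})$ uniformly on $(a_0,b_0)\times[\overline{t}/2,2\overline{t}]$. With this at hand, I would choose the base radius $\rho_0 = c_1 t_0^{\al/2}\,\mathrm{dist}(x_0,\{a_0,b_0\})$ for a small constant $c_1$ depending on the data. Using the identity $\al\theta = 2-2\al$ (equivalently $\al + \al\theta/2 = 1$), a direct computation verifies that $R_{8\rho_0}(x_0,t_0)\subset (a_0,b_0)\times(0,T)$, so that \eqref{eq:distance assumption} holds for $\rho_0$ and, by nesting, for every dyadic sub-radius $\rho_0/4^n$. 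Iterating Proposition \ref{prop:reduction of osc} then yields $\mathrm{osc}_{R_{\rho_0/4^n}}(p) \leq C\sigma^n t_0^{-\al\theta}\,\mathrm{dist}(x_0,\{a_0,b_0\})$.

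Next, for $(x_1,t_1) \in R_{\rho_0}(x_0,t_0)$, I would select the minimal $n$ with $(x_1,t_1) \notin R_{\rho_0/4^{n+1}}$ and translate this exclusion into a lower bound for $4^n$ in terms of $|x_1-x_0|/\mathrm{dist}(x_0)$ and $|t_1-t_0|/(t_0\,\mathrm{dist}(x_0)^{1/2})$; setting $s = -\log\sigma/\log 4$, this produces a H\"older-type bound with prefactor $t_0^{-\al\theta}\,\mathrm{dist}(x_0)$ and variables $\bigl(|x_1-x_0|/\mathrm{dist}(x_0)\bigr)^s$ and $\bigl(|t_1-t_0|/(t_0\,\mathrm{dist}(x_0)^{1/2})\bigr)^s$. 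Absorbing the bounded factors $\mathrm{dist}(x_0)^{1-s}$ and $\mathrm{dist}(x_0)^{1-s/2}$ into the constant then gives \eqref{p conti lag} for nearby points. For pairs outside $R_{\rho_0}(x_0,t_0)$ but still in $(a_0,b_0)\times[\overline{t}/2,2\overline{t}]$, at least one of the quantities $|x_1-x_0|^s$ or $|(t_1-t_0)/t_0|^s$ is bounded below by a constant depending only on the data, so the trivial bound $|p|\leq Ct_0^{-\al\theta}$ already closes the estimate.

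Finally, to obtain \eqref{p conti} I would write $x_i = \gamma(y_i,t_i)$ with $y_i \in [a_0,b_0]$, so that $m^{\theta}(x_i,t_i)=p(y_i,t_i)$, and then estimate $|y_1-y_0|$ using $\gamma_x \gtrsim t^{\al}$ from \eqref{gamma x bd}, which makes $\gamma^{-1}(\cdot,t)$ Lipschitz in $x$ with constant $\lesssim t^{-\al}$, together with $|\partial_t\gamma^{-1}| = |u_x|/\gamma_x \lesssim t^{-1}$, the latter following from Proposition \ref{prop: grad rate}. This yields $|y_1-y_0|\leq Ct_0^{-\al}|x_1-x_0| + C|t_1-t_0|/t_0$, and substitution into \eqref{p conti lag} produces \eqref{p conti}. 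The principal obstacle throughout is the bookkeeping: one must verify that the anisotropic scaling of the intrinsic rectangles, which mixes powers of $t_0$ and $\mathrm{dist}(x_0,\{a_0,b_0\})$ in both the space and time directions, collapses via $\al\theta=2-2\al$ precisely to the exponents $t_0^{-\al\theta}$, $|x_1-x_0|^s$ (resp.\ $|(x_1-x_0)t_0^{-\al}|^s$), and $|(t_1-t_0)/t_0|^s$ stated in the proposition.
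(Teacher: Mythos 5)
Your scheme is essentially the paper's proof: the two-sided bound $p(x,t)\sim t^{-\al\theta}\,\mathrm{dist}(x,\{a_0,b_0\})$ obtained from \eqref{mass cons}, \eqref{m0 bump} and \eqref{gamma x bd}, the intrinsic base radius $\rho_0\sim t_0^{\al/2}\,\mathrm{dist}(x_0,\{a_0,b_0\})$ chosen so that \eqref{eq:distance assumption} holds (via $\al\theta=2-2\al$), the dyadic iteration of Proposition \ref{prop:reduction of osc}, and the near-field bookkeeping leading to \eqref{p conti lag} are all carried out the same way in the paper, and your exponent arithmetic there is correct. Two small caveats: to absorb $\mathrm{dist}(x_0)^{1-s}$ and $\mathrm{dist}(x_0)^{1-s/2}$ into the constant you need $s\leq 1$, i.e.\ $\sigma\geq 1/4$, which one gets by increasing $\sigma$ if necessary (the paper does this explicitly); and in the Lagrangian-to-Eulerian step your derivative bounds $|\partial_x\gamma^{-1}|\lesssim t^{-\al}$, $|\partial_t\gamma^{-1}|\lesssim t^{-1}$ must be integrated along a path joining $(x_0,t_0)$ to $(x_1,t_1)$ that stays inside the support; this is exactly where the paper invokes the monotone expansion of the support for \eqref{mfg}--\eqref{tc}, and it should be stated.

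The genuine gap is in your far-field case. If $(x_1,t_1)\notin R_{\rho_0}(x_0,t_0)$, the exclusion only yields $|x_1-x_0|\geq c_1\,\mathrm{dist}(x_0,\{a_0,b_0\})$ or $|t_1-t_0|/t_0\geq c_1\,\mathrm{dist}(x_0,\{a_0,b_0\})^{1/2}$ (the time half-width of $R_{\rho_0}$ is $t_0^{\al/2}(\theta p(x_0,t_0))^{-1/2}\rho_0\sim t_0\,\mathrm{dist}(x_0)^{1/2}$), and these lower bounds degenerate as $x_0$ approaches $a_0$ or $b_0$; they are \emph{not} bounded below by a constant depending only on the data. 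Consequently the crude bound $|p|\leq Ct_0^{-\al\theta}$ cannot close the estimate: when $\mathrm{dist}(x_0)$ is tiny and, say, $|x_1-x_0|\leq\mathrm{dist}(x_0)$, $|t_1-t_0|/t_0\sim\mathrm{dist}(x_0)^{1/2}$, the right-hand side of \eqref{p conti lag} is of order $t_0^{-\al\theta}\mathrm{dist}(x_0)^{s/2}\ll t_0^{-\al\theta}$. The repair is what the paper does in its second case: use the degenerate bound you already derived, $p(x_i,t_i)\leq Ct_0^{-\al\theta}\mathrm{dist}(x_i,\{a_0,b_0\})$, together with $\mathrm{dist}(x_1)\leq\mathrm{dist}(x_0)+|x_1-x_0|$, and then feed the exclusion condition back in to get $\mathrm{dist}(x_0)\leq C\big(|x_1-x_0|+|(t_1-t_0)t_0^{-1}|^{2}\big)\leq C\big(|x_1-x_0|^{s}+|(t_1-t_0)t_0^{-1}|^{s}\big)$, the last step because both increments are bounded on $(a_0,b_0)\times[\overline{t}/2,2\overline{t}]$ and $s<1<2$. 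With that correction (or with the paper's equivalent normalization $x_1<x_0$, $a_0=0$), your argument is complete.
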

\begin{proof}
 Let $(x_{0},t_{0}),(x_1,t_1)\in(a_{0},b_{0})\times[\overline{t}/2,2\overline{t}]$, where $x_1<x_0$. As usual, throughout this proof, the value of the constant $C>0$ may increase from line to line, and with no loss of generality we take $\delta=1$. We will consider intrinsic rectangles centered at $(x_0,t_0)$, so we abbreviate $R_{\rho}:=R_{\rho}(x_0,t_0)$. As in
the proof of Proposition \ref{prop:Harnack}, we may assume that
$a_{0}=0$ and $x_{0}<\frac{1}{2}b_0$, and write, for each $(x,t)\in R_{4 \rho},$
\begin{equation} \label{eq:DG A}
\frac{1}{C}\frac{x}{t^{\al \theta}}\leq p(x,t)\leq C \frac{x}{t^{\al \theta}}.
\end{equation}
We let $\vep \in (0,1)$ be a small constant to be chosen below, and set
\be \label{rho0 defi}
\rho_{0}=\vep t_0^{\al/2}x_0.
\ee 
In order to apply Proposition \ref{prop:reduction of osc}, we will determine a choice of $\vep$ so that \eqref{eq:distance assumption} holds for $R_{8\rho_0}$. Assume then that $(x,t)\in R_{8\rho_0}$. Recalling Definition \ref{defi intrinsic}, if we require $\vep \leq 1/8$, then
\be \label{DG contain x cascd}|x-x_0|<t_0^{-\al/2}8\rho_0=8\vep x_0< x_0,\ee
 On the other hand, recalling that $0<\al<1$ and $t_0 \in [1,T/2]$, \eqref{eq:DG A} implies, for some $C_2>0$
\be \label{DG contain t cascd}|t-t_0| < t_0^{\al/2} (\theta p(x_0,t_0))^{-1/2}8\rho_0 \leq C_2 t_0^{3\al/2-1}x_0^{-1/2}\rho_0= C_2t_0^{2(\al-1)}\vep x_0^{1/2} t_0 \leq C_2|b_0-a_0|^{\frac12} \vep  t_0:=K \vep t_0. \ee
Thus, choosing $\vep = \min(1/8,1/K)$, we obtain \eqref{eq:distance assumption} from \eqref{DG contain x cascd} and \eqref{DG contain t cascd}. Furthermore, we see from the same computations that
\be \label{DG 4rho cont} R_{4\rho} (x_0,t_0) \subset (x_0/2,3x_0/2) \times (t_0/2,3t_0/2).\ee
Setting now
\be \label{DG a defi asc}
a=\max(t_0^{\al/2}|x_1-x_{0}|,t_0^{-\al/2}\sqrt{\theta p(x_{0},t_{0})}|t_1-t_{0}|),
\ee 
we distinguish two alternative cases.

\textbf{Case 1.} $(x_1,t_1)\in R_{4\rho_{0}}$. Equivalently,
we have $a<4\rho_{0}$. Let $n\geq0$ be the unique integer such that
\[
\frac{1}{4^{n}}\rho_{0}\leq a<\frac{1}{4^{n-1}}\rho_{0}.
\]
Iterating Proposition \ref{prop:reduction of osc}, we see that, in view of \eqref{eq:DG A}, \eqref{rho0 defi}, and \eqref{DG 4rho cont}, 
\begin{multline*}
\text{osc}_{R_{4^{-(n-1)}\rho_{0}}}(p)\leq\sigma^{n}\text{osc}_{R_{4\rho_{0}}}(p)\leq \sigma^{n}Ct_0^{-\al \theta} (x_{0}+4\rho_0 t_0^{-\al/2})\\=\sigma^{n}Ct_0^{-\al \theta} (\rho_0 \vep^{-1} t_0^{-\al/2}+4\rho_0 t_0^{-\al/2})\leq Ct_0^{-\al \theta}.
\end{multline*}
Moreover, by increasing the value of $\sigma$ if necessary, we may assume that $\sigma>\frac{1}{4},$  so that $s=-\log\sigma(\log4)^{-1}$ satisfies $0<s<1$. Thus, observing that $(x_1,t_1)\in R_{4^{-(n-1)}\rho_0}$ and $n\geq-\log\left(\rho_{0}^{-1}a\right)(\log4)^{-1}$,
 we have
\be \label{dg penu asxsd}
|p(x_1,t_1)-p(x_{0},t_{0})|\leq  C t_0^{-\al \theta} t_0^{-\al/2}(\rho_{0}^{-1}a)^{s}\rho_{0}=C t_0^{-\al \theta}  t_0^{-\al/2}\rho_{0}^{1-s}a^s.
\ee
We estimate $a$ as follows. Note that, since $\al \theta = 2-2\al,$ \eqref{eq:DG A} yields
\[t_0^{-\al/2}\sqrt{\theta p(x_0,t_0)}|t_1-t_0|\leq (k_1 \theta x_0)^{1/2}t_0^{-\al/2-\al \theta/2}|t_1-t_0|\leq Ct_0^{\al/2}|(t_1-t_0)t_0^{-1}|.\]
Therefore, \eqref{DG a defi asc} gives
\be  a\leq Ct_0^{\al/2}\max(|x_1-x_{0}|,|(t_1-t_0)t_0^{-1}|),  
\ee
so that
\be \label{a bd zmcak} a^s \leq C t_0^{\al s/2} (|x_1-x_0|^s+|(t_1-t_0)t_0^{-1}|^s).\ee
Thus, we obtain from \eqref{rho0 defi}, \eqref{dg penu asxsd} and \eqref{a bd zmcak} that \eqref{p conti lag} holds.

\textbf{Case 2.} $(x_1,t_1)\notin R_{4\rho_{0}}.$ Then $a\geq4\rho_{0}$. Since $x_1<x_{0}$, appealing to the lower bound on $\gamma_x=\left(f(m_0(x))p(x,t)^{-1}\right)^{\frac{1}{\theta}}$ given by \eqref{gamma x bd}, recalling \eqref{m0 bump}, 
\begin{multline} \label{mucont dqwxojd}
|p(x_1,t_1)-p(x_{0},t_{0})|  \leq t_0^{-\al \theta}C(f(m_{0})(x_1)+f(m_{0})(x_{0}))\leq  Ct_0^{-\al \theta} x_0 = C t_0^{-\al \theta} \vep^{-1} t_0^{-\al/2}\rho_{0}\\
  \leq  C t_0^{-\al \theta}t_0^{-\al/2}(a/4) \leq  Ct_0^{-\al \theta}(|x_{1}-x_{0}|+|(t_{1}-t_{0})t_0^{-1}|).
\end{multline}
We finally point out that, since $0<x_1<x_0<b_0$ and $t_0,t_1\in (\overline{t}/2,2\overline{t})$,
\be \label{DG plt smpcs} |x_1-x_0|\leq C|x_1-x_0|^s , \,\,\,\, \text{ and } \,\,\,\,| (t_1-t_0)t_0^{-1}|\leq C| (t_1-t_0)t_0^{-1}|^s. \ee Therefore, \eqref{p conti lag} follows from \eqref{mucont dqwxojd} and \eqref{DG plt smpcs}.

We now show \eqref{p conti}. Let $(x_0,t_0),(x_1,t_1) \in  S(m,\overline{t}/2,2\overline{t})$, with $t_0 \leq t_1$. Recalling that the support of $m$ is expanding with time, we may write
$x_0=\gamma(a_0,t_1),\, x_1=\gamma(a_1,t_1)$. Then  \eqref{p conti lag} and \eqref{gamma x bd} yields
\begin{multline}    
\label{mucont111casd} |m(x_1,t_1)-m(x_0,t_1)|\leq C t_0^{-\al \theta} |a_1-a_0|^{\beta} \\ 
\leq C t_0^{-\al \theta} |\|(\gamma_x)^{-1}\|_{L^{\infty}((a_0,b_0)\times (\overline{t}/2,2\overline{t}))}(x_1-x_0)|^{\beta} \leq C t_0^{-\al \theta}  |t_0^{-\al}(x_1-x_0)|^{\beta}. \end{multline} 
Similarly, writing $x_0=\gamma(b_0,t_0)$, we get
\be \label{mucont222casd} |m(x_0,t_1)-m(x_0,t_0)| \leq Ct_0^{-\al \theta} (|b_0-a_0|^s+|t_0^{-1}(t_1-t_0)|^s).\ee
We conclude from \eqref{mucont111casd} and \eqref{mucont222casd} by observing that, in view of \eqref{flow defi}, \eqref{gamma x bd}, and \eqref{grad local bd},
\[|b_0-a_0| \leq  \|\gamma_t\|_{L^{\infty}((a_0,b_0)\times (\overline{t}/2,2\overline{t}))} \|(\gamma_x)^{-1}\|_{L^{\infty}((a_0,b_0)\times (\overline{t}/2,2\overline{t}))}|t_1-t_0| \leq C t_0^{\al-1} t_0^{-\al}|t_1-t_0| =C|(t_1-t_0)t_0^{-1}|.
\]
\end{proof}   
We are now in a position to prove the last remaining estimate for the continuous rescaling required by Proposition \ref{prop: convergence result}, namely the uniform continuity of $\mu$.
\begin{prop} \label{prop: mu holder cont}Under the assumptions of Theorem \ref{thm.intro1}, let $(u,m)$ be the solution to \eqref{mfg}--\eqref{tc}, let $\mu$ be the continuous rescaling \eqref{v mu defi}, and let $M>0$. Let $s\in(0,1)$ be the constant from Proposition \ref{prop:holder m}, for $\delta=e^{-M}/2$. There exists a constant $C=C(C_0,|a_0|,|b_0|,M)$ such that, for every  $(\eta_1,\tau_1),(\eta_0,\tau_0)\in \R \times [-M,\log(T)-\log(4)]$,
\be |\mu^{\theta}(\et_1,\ta_1)-\mu^{\theta}(\et_0,\ta_0)| \leq C \left( |\et_1-\et_0|^{\beta}+\left|\ta_1-\ta_0\right|^{\beta}\right). \ee
\end{prop}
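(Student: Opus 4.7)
The plan is to reduce the statement to a direct application of Proposition~\ref{prop:holder m} via the change of variables \eqref{tau eta defi}--\eqref{v mu defi}. Writing $t_i = e^{\tau_i}$ and $x_i = t_i^{\alpha}\eta_i$, one has $\mu^{\theta}(\eta_i,\tau_i) = t_i^{\alpha\theta} m^{\theta}(x_i,t_i)$, and the crucial observation is that the prefactor $t_0^{-\alpha\theta}$ appearing on the right-hand side of \eqref{p conti} is exactly the one that will cancel with the $t_0^{\alpha\theta}$ arising from this conversion, provided $t_1 \approx t_0$. The target Hölder estimate is with respect to $(\eta,\tau)$ rather than $(x,t)$, so I will need to check that the two intrinsic scales $|(x_1-x_0)t_0^{-\alpha}|$ and $|(t_1-t_0)t_0^{-1}|$ transform cleanly into $(\eta,\tau)$--distances.

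Assuming without loss of generality $\tau_0 \leq \tau_1$, I would first dispose of the regime $|\tau_1-\tau_0| \geq \log 2$ using the uniform bound \eqref{mu linfty}, which trivially yields the desired estimate with any $\beta \in (0,1)$. In the complementary case $|\tau_1-\tau_0| < \log 2$, one has $t_1 \in (t_0/2, 2t_0)$, and since $\tau_0 \geq -M$ and $\tau_1 \leq \log(T) - \log 4$, the choice $\overline{t} = t_0$ satisfies the hypothesis $\overline{t} \in [2\delta, T/4]$ of Proposition~\ref{prop:holder m} with $\delta = e^{-M}/2$.

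Next I would handle the sub-case where both $(x_0,t_0)$ and $(x_1,t_1)$ lie in $S(m,t_0/2, 2t_0)$. Decomposing
\[
\mu^{\theta}(\eta_1,\tau_1) - \mu^{\theta}(\eta_0,\tau_0) = t_1^{\alpha\theta}\bigl(m^{\theta}(x_1,t_1) - m^{\theta}(x_0,t_0)\bigr) + (t_1^{\alpha\theta}-t_0^{\alpha\theta}) m^{\theta}(x_0,t_0),
\]
the first term is controlled by Proposition~\ref{prop:holder m} once $|(x_1-x_0)t_0^{-\alpha}|$ and $|(t_1-t_0)t_0^{-1}|$ are translated into $(\eta,\tau)$--quantities. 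Writing $x_1 - x_0 = t_0^{\alpha}(\eta_1-\eta_0) + (t_1^{\alpha}-t_0^{\alpha})\eta_1$, using the support bound $|\eta_1| \leq K_1$ from \eqref{mu suppt bd}, and combining with the elementary inequalities $|t_1^{\alpha}-t_0^{\alpha}| \leq C t_0^{\alpha-1}|t_1-t_0|$ and $|t_1-t_0|/t_0 \leq C|\tau_1-\tau_0|$, I obtain $|(x_1-x_0)t_0^{-\alpha}| \leq |\eta_1-\eta_0| + C|\tau_1-\tau_0|$. The second term is bounded by combining $|t_1^{\alpha\theta}-t_0^{\alpha\theta}| \leq Ct_0^{\alpha\theta-1}|t_1-t_0|$ (with constant depending on $M$ through the lower bound on $t_0$) with the smoothing estimate $m^{\theta}(x_0,t_0) \leq Ct_0^{-\alpha\theta}$ from Proposition~\ref{prop: smoothing effect}, producing a contribution of order $|\tau_1-\tau_0|$. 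Altogether this yields the Hölder bound with exponent $\beta = s$.

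The main remaining obstacle is the sub-case in which one of the points, say $(\eta_1,\tau_1)$, lies outside the support of $\mu$, so that $\mu^{\theta}(\eta_1,\tau_1) = 0$. I would consider the straight line segment between the two points and let $\lambda^{\star} \in (0,1]$ be the infimum of $\lambda$ for which $\mu^{\theta}$ vanishes at the corresponding intermediate point; by the continuity of $\mu^{\theta}$ (Theorem~\ref{thm.intro1}), the point $(\eta^{\star},\tau^{\star})$ so obtained satisfies $\mu^{\theta}(\eta^{\star},\tau^{\star}) = 0$ and lies in the closure of $\{\mu > 0\}$, so its image $(x^{\star},t^{\star})$ sits on one of the free boundary curves $\gamma_L,\gamma_R$ and in particular belongs to $S(m,t_0/2,2t_0)$. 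Applying the previous sub-case to the pair $(\eta_0,\tau_0)$ and $(\eta^{\star},\tau^{\star})$, and observing that $|\eta^{\star}-\eta_0| \leq |\eta_1-\eta_0|$ and $|\tau^{\star}-\tau_0| \leq |\tau_1-\tau_0|$, delivers the desired bound. The case where both endpoints lie outside the support is trivial.
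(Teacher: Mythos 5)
Your proof is correct and follows essentially the same route as the paper: after disposing of the regime $|\tau_1-\tau_0|\geq\log 2$ via the uniform bound on $\mu$, you change variables, take $\overline{t}=t_0$ in Proposition \ref{prop:holder m}, and split the difference into the term controlled by \eqref{p conti} plus the term controlled by the smoothing estimate \eqref{smoothing effect}, which is exactly the paper's decomposition and yields the exponent $s$ in the same way. The only difference is that you additionally spell out, through the first-touching-point argument along the segment, the case where one of the points lies outside the support of $\mu$ (where \eqref{p conti} does not literally apply); the paper's own proof leaves this routine extension implicit, so your treatment is, if anything, slightly more complete.
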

\begin{proof}
Since $\mu$ is bounded, and its support remains bounded as $\tau \to \infty$ (see Proposition \ref{prop: mu bd}), we may  assume that 
\be \label{dg ettaubd foeqkc}  |\eta_1| + |\eta_0| \leq C, \quad|\tau_1 - \tau_0| \leq \log(2). \ee Writing, for $i\in \{0,1\},$ $t_i=e^{\tau_i}$ and $x_i=\eta_i t_i^{\al}$, we then have \be t_0\in [2,T/4],  \quad t_0/2 \leq t_1 \leq 2t_0. \ee
Thus, allowing the constant $C>0$ to increase at each step,  \eqref{p conti} and \eqref{dg ettaubd foeqkc} yield
\begin{multline} \label{eq: mucont ask1} |t_0^{\al \theta} m^{\theta}(x_1,t_1)-t_0^{\al \theta} m^{\theta}(x_0,t_0)|\leq C(|(x_1-x_0)t_0^{-\al}|^{s}+|(t_1-t_0)t_0^{-1}|^{s})\\
=C(|\eta_1 e^{\tau_1-\tau_0}-\eta_0|^s+|e^{\tau_1-\tau_0}-1|^s)\leq C(|\eta_1-\eta_0|^s+|\tau_1-\tau_0|^s).  
\end{multline}
On the other hand, by \eqref{smoothing effect} and \eqref{dg ettaubd foeqkc},
\be \label{eq: mucont ask2}  |(t_1^{\al \theta}-t_0^{\al \theta})m^{\theta}(x_1,t_1)| \leq C |(t_1^{\al \theta}-t_0^{\al \theta})|t_1^{-\al \theta} = C|e^{\al(\tau_1-\tau_0)}-1| \leq C|\tau_1 - \tau_0|\leq C |\tau_1 - \tau_0|^{s}. \ee
Hence, we conclude from \eqref{eq: mucont ask1} and \eqref{eq: mucont ask2} that
\be |\mu^{\theta}(\eta_1,\tau_1)-\mu^{\theta}(\eta_0,\tau_0)|= |t_1^{\al \theta}m^{\theta}(x_1,t_1)-t_0^{\al \theta}m^{\theta}(x_0,t_0)|\leq C(|\eta_1-\eta_0|^s+|\tau_1-\tau_0|^s).\ee
\end{proof}
\section{Convergence results for the finite horizon problem} \label{sec: main}
We prove in this section, for solutions to \eqref{mfg}--\eqref{tc}, the main results of the paper. In particular, the main goal of this section is the following claim, which corresponds to Theorem \ref{thm: fin hor}.
\begin{thm} \label{thm: main} Assume that $m_0$ satisfies \eqref{m0 assum} and \eqref{m0 C1,1}, and that \eqref{kap size} holds. Let $(u^T,m^T)$ be the solution to \eqref{mfg}--\eqref{tc}, and let $\int_{\R}m_0=a>0$. Then, for every $p\in [1,\infty],$
\be \label{Sec5 conv result 1} \lim_{t \to \infty} \limsup_{T\to \infty} t^{\al(1-\frac1p)}\| m^T(\cdot,t)- \mathcal{M}_a(\cdot,t) \|_{L^{p}(\R)} =0,
\ee
\be \label{Sec5 conv result 2} \lim_{t \to \infty} \limsup_{T\to \infty} t^{2-\al(1+\frac1p)}\left\| m^T(\cdot,t)\left|u^T_x(\cdot,t)- \mathcal{U}_x(\cdot,t)\right|^2 \right\|_{L^{p}(\R)} =0,
\ee
\be \label{Sec5 conv result 3} \lim_{t \to \infty}  t^{2-\al(1+\frac1p)}\left\| m(\cdot,t)\left|u_t(\cdot,t)- (\mathcal{U}_a)_t(\cdot,t)\right|\right\|_{L^{p}(\R)} =0. \ee
\end{thm}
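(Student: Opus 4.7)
The plan is to reduce to the continuous rescaling $(v^T,\mu^T)$ of $(u^T,m^T)$ and combine the uniform-in-$T$ a priori estimates of Section \ref{sec: estimates} with the asymptotic analysis of Section \ref{sec: convergence IH}. By Lemma \ref{lem:reduc to cont resc} applied in the finite horizon setting, statements \eqref{Sec5 conv result 1} and \eqref{Sec5 conv result 2} are equivalent, respectively, to the $L^p$ convergences $\mu^T(\cdot,\tau) \to M_a$ and $\mu^T|v^T_\eta - U_a'|^2 \to 0$ as $\tau \to \infty$, uniformly as $T \to \infty$. Statement \eqref{Sec5 conv result 3} then follows algebraically by subtracting the HJ equations satisfied by $u^T$ and $\mathcal{U}_a$, multiplying by $m^T$, and applying Cauchy--Schwarz, exactly as in the last step of the proof of Proposition \ref{prop: convergence result}. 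The argument splits according to whether $\theta \geq 2$ or $\theta < 2$.

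For the subcritical/critical range $\theta \geq 2$, the plan is to invoke the (partial) well-posedness of the infinite horizon system. The uniform bounds of Propositions \ref{prop: mu bd}, \ref{prop: v Dv bd}, \ref{prop: v mu energy}, and \ref{prop: mu holder cont} imply that $\{(v^T,\mu^T)\}$ is uniformly bounded and equicontinuous on compacts, so Arzel\`a--Ascoli yields, along any sequence $T_n \to \infty$, a locally uniformly convergent subsequence whose limit $(u^\infty,m^\infty)$ solves \eqref{mfgi} and satisfies the hypotheses of Proposition \ref{prop: convergence result}. The well-posedness theorems for $\theta \geq 2$ (Theorem \ref{thm: INTRO theta >2} and its critical analogue) then identify $(u^\infty,m^\infty)$ uniquely, upgrading the subsequential convergence to convergence of the whole family, and Proposition \ref{prop: convergence result} applied to $(u^\infty,m^\infty)$ produces the desired \eqref{Sec5 conv result 1}--\eqref{Sec5 conv result 3}.

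For the supercritical range $\theta < 2$, where well-posedness of \eqref{mfginfhor} is unavailable, the strategy is instead to exploit the uniform exponential rate provided by Proposition \ref{prop: exponential theta<2}. Any subsequential limit $(v^\infty,\mu^\infty)$ obtained as above satisfies the hypotheses of Proposition \ref{prop: exponential theta<2}; the crucial point is that the constants in \eqref{exp conv E}--\eqref{exp conv mu} depend only on quantities ($\|\mu\|_\infty$, $\|Dv\|_\infty$, the support size, the energy bound) that are uniform in $T$ by Section \ref{sec: estimates}. Choosing a starting time $\tau_0$ independent of $T$ renders the initial Lyapunov value $\mathcal{E}_{w^\infty,\mu^\infty}(\tau_0)$ uniformly bounded, so every subsequential limit satisfies $\mathcal{E}_{w^\infty,\mu^\infty}(\tau) \leq Ce^{-2k\tau}$ for $\tau \geq \tau_0$. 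Combining this with \eqref{exp conv w}--\eqref{exp conv mu}, one obtains $\limsup_{T \to \infty}\bigl(\|\mu^T(\cdot,\tau)-M_a\|_{L^1} + \|\mu^T|v^T_\eta|^2\|_{L^1}\bigr) \leq Ce^{-k\tau}$, which vanishes as $\tau \to \infty$. Interpolating with the uniform $L^\infty$ bound on $\mu^T$ and the uniform H\"older estimate of Proposition \ref{prop: mu holder cont} then yields convergence for the full range $p \in [1,\infty]$.

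The main obstacle is the supercritical case, where the absence of a well-posedness theorem for \eqref{mfginfhor} precludes identifying a unique limit and forces reliance on quantitative rates valid uniformly across all subsequential limits. The essential observation that makes this viable is that the a priori estimates of Section \ref{sec: estimates} have the \emph{optimal} algebraic rate---matching that attained by the self-similar profile---so the constants appearing in Proposition \ref{prop: exponential theta<2} remain bounded as $T \to \infty$. A secondary technical point is the upgrade from the natural $L^1$ control produced by the Lyapunov analysis to the full $p=\infty$ case of \eqref{Sec5 conv result 1}, which relies essentially on the uniform H\"older continuity from Proposition \ref{prop: mu holder cont}.
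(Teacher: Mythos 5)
Your outline follows the paper's own route: reduction to the continuous rescaling, subsequential compactness via the uniform estimates of Section \ref{sec: estimates} (this is Proposition \ref{prop: existence subseq}), identification of the limit through the (partial) well-posedness results for $\theta>2$ and $\theta=2$ (Theorems \ref{thm: fin theta>2} and \ref{thm: fin theta=2}), and, for $\theta<2$, the uniform exponential rate of Proposition \ref{prop: exponential theta<2} applied to all subsequential limits, exactly as in Proposition \ref{prop: main th<2}.

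The one step that does not work as literally written is the final upgrade to $p=\infty$ in the supercritical case for \eqref{Sec5 conv result 2}: interpolating the integral smallness of $\mu^T|w^T_\eta|^2$ against the H\"older bound of Proposition \ref{prop: mu holder cont} is not enough, since that proposition controls only $\mu^\theta$, and the modulus of continuity of $w_\eta$ from Lemma \ref{lem:regu Dw in mu>0} degenerates as $\mu\to 0$, so the product need not be uniformly equicontinuous up to the free boundary. The paper instead splits according to whether $\mu^T<\vep$ or $\mu^T\geq\vep$: on the first set the product is $O(\vep)$ by the uniform gradient bound \eqref{v grad bd}, and on the second set one compares the pointwise value with a local average (controlled by the exponentially small integral) using the uniform interior modulus of continuity of $w_\eta$ on $\{\mu\geq \vep/2\}$ furnished by Lemma \ref{lem:regu Dw in mu>0} and Remark \ref{rem: Dv regu finite hor}. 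Since you already invoke all of these ingredients elsewhere, this is a repair of the write-up rather than a change of strategy, but as stated the interpolation claim for \eqref{Sec5 conv result 2} is a gap; the analogous upgrade for \eqref{Sec5 conv result 1}, via the Bregman-type quantity and the convexity of $F$, is fine.
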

This theorem will be proved separately for each of the three relevant ranges of the parameter $\theta$. Namely, the cases $\theta \in (0,2)$, $\theta\in (2,\infty)$, and $\theta=2$ are, respectively, proved in Proposition \ref{prop: main th<2}, Corollary \ref{cor:main th>2}, and Corollary \ref{cor:main th=2}. As will be explained below, the three proofs are fundamentally different, but they all rely on the following result.
\begin{prop} [Existence of solutions as subsequential limits] \label{prop: existence subseq} Let $T_n\in [1,\infty)$ be a sequence of terminal times such that $T_n \to \infty$. Under the assumptions of Theorem \ref{thm.intro1},  let $(\util^{T_n},m^{T_n})$ be the solution to \eqref{mfg}--\eqref{tc}, and let $u^{T_n}$ be defined according to \eqref{u defi cases}. Up to extracting a subsequence, there exists a solution $(u,m)$ to \eqref{mfginfhor} on $\R \times (0,\infty)$ such that, for every $K>0$, $(u^{T_n},m^{T_n})$ converges to $(u,m)$ in $C^1([-K,K] \times [1/K,K]) \times C(\R \times [1/K,K])$. Any such limit $(u,m)$ satisfies, for any $t_0>0$, the assumptions of Proposition \ref{prop: convergence result}, and \eqref{IHH conv result 1}--\eqref{IHH conv result 3} hold. 
\end{prop}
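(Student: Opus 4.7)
The plan is to combine the uniform-in-$T_n$ estimates of Section \ref{sec: estimates} with an Arzelà--Ascoli compactness argument, identify the subsequential limit as a solution to \eqref{mfginfhor}, and then pass to the limit in the estimates to verify the hypotheses of Proposition \ref{prop: convergence result}. The argument naturally splits into a compactness/convergence step and a hypothesis-verification step.

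For the compactness step, I would first collect the uniform bounds on $\R \times [1/K, K]$ for arbitrary $K > 0$: an $L^\infty$ bound on $m^{T_n}$ from Proposition \ref{prop: smoothing effect}; a uniform Hölder modulus for $m^{T_n}$ from Proposition \ref{prop:holder m}; uniform $L^\infty$ and Lipschitz bounds for $u^{T_n}$ from Propositions \ref{prop: grad rate} and \ref{prop: v Dv bd} (the latter relying on the normalization \eqref{u defi cases} when $\theta \leq 2$); and uniform $W^{2,\infty}_{\text{loc}}$ bounds for the free boundary curves $\gamma_L^{T_n}, \gamma_R^{T_n}$ from Proposition \ref{prop: free bd rates}. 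Since each $m^{T_n}(\cdot,t)$ is compactly supported in an interval of length $\mathcal{O}(K^{\al})$, the $C(\R \times [1/K,K])$ convergence reduces to uniform convergence on a compact set. A diagonal extraction yields a subsequence along which $u^{T_n} \to u$ locally uniformly on $\R \times (0,\infty)$, $m^{T_n} \to m$ locally uniformly, and $\gamma_L^{T_n}, \gamma_R^{T_n} \to \gamma_L, \gamma_R$ in $C^1_{\text{loc}}((0,\infty))$. Stability of viscosity solutions under uniform convergence, together with standard distributional convergence, ensures that $(u,m)$ solves both equations of \eqref{mfginfhor}; the initial condition $m(\cdot,0) = m_0$ follows by extending the Hölder estimate of Proposition \ref{prop:holder m} down to $t = 0$, using the structural non-degeneracy \eqref{m0 assum}--\eqref{m0 C1,1} of $m_0$. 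To upgrade $u^{T_n} \to u$ to $C^1_{\text{loc}}(\R \times (0,\infty))$, I would combine the interior $C^{1,1/2}$ estimate of Lemma \ref{lem:regu Dw in mu>0} (available for the finite horizon problem by Remark \ref{rem: Dv regu finite hor}) inside $\{m>0\}$ with the explicit characterization of optimal trajectories in the vanishing set from Proposition \ref{prop trajec}, using the $C^1$ convergence of the free boundary curves to pass to the limit of $u_x^{T_n}$ on $\{m = 0\}$.

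For the verification step, I fix $t_0 > 0$ arbitrarily and check the hypotheses of Proposition \ref{prop: convergence result} one at a time. The characterization $\{m(\cdot,t) > 0\} = (\gamma_L(t), \gamma_R(t))$ is inherited from the locally uniform convergence of $m^{T_n}$ and of the free boundary curves; mass conservation in the distributional continuity equation, combined with \eqref{m0 assum}, gives $a = \int_\R m(\cdot,t_0) = \int_\R m_0 > 0$. Passing to the limit in the continuous-rescaling estimates, Proposition \ref{prop: mu bd} delivers the $L^\infty$ bound and compact support of $\mu$ (the lower bound $R \geq R_a$ is automatic because $\mu$ must carry mass $a$); Proposition \ref{prop: mu holder cont} upgrades $\mu$ to $\mathrm{BUC}(\R \times (\tau_0,\infty))$; Proposition \ref{prop: v mu energy} yields the rolling $H^1$ bound on $\mu^\theta$; and Proposition \ref{prop: v Dv bd} together with Lemma \ref{lem:regu Dw in mu>0} provides $Dv \in C^1(\{\mu > 0\}) \cap L^\infty((-R,R) \times (\tau_0,\infty))$. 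With all hypotheses verified, Proposition \ref{prop: convergence result} directly yields \eqref{IHH conv result 1}--\eqref{IHH conv result 3}. The main obstacle I anticipate is the $C^1$ convergence of $u^{T_n}$ near and outside the free boundary, where the ellipticity of the reformulation of \eqref{mfg} degenerates and Schauder-type arguments no longer apply; there one must reason instead through the characteristic structure provided by Proposition \ref{prop trajec}, passing to the limit in the individual optimal trajectories and invoking the $C^1$ convergence of the free boundary curves.
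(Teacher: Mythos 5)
Your overall architecture matches the paper's: collect the uniform-in-$T$ estimates of Section \ref{sec: estimates} (equivalently, for the continuous rescalings), extract a subsequence by Arzel\`a--Ascoli, identify the limit as a solution of \eqref{mfginfhor} by stability, and then verify the hypotheses of Proposition \ref{prop: convergence result} so that \eqref{IHH conv result 1}--\eqref{IHH conv result 3} follow. The verification step and the treatment of the free boundary curves (uniform $C^{1,1}_{\mathrm{loc}}$ bounds from Proposition \ref{prop: free bd rates}, then passage to the limit in \eqref{freebd ch}) are as in the paper.

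However, there is a genuine gap at the step you yourself flag as the main obstacle: the upgrade to $C^1([-K,K]\times[1/K,K])$ convergence of $u^{T_n}$. Interior Schauder-type estimates (Lemma \ref{lem:regu Dw in mu>0}, Remark \ref{rem: Dv regu finite hor}) degenerate as one approaches $\partial\{m>0\}$, and Proposition \ref{prop trajec} together with Corollary \ref{cor grad bd m=0} only gives pointwise information on $u^{T_n}_x$ at points of $\{m^{T_n}=0\}$; gluing these two one-sided pieces does not produce what is actually needed, namely a modulus of continuity for $Du^{T_n}$, uniform in $n$, on compact sets that straddle the free boundary. ``Passing to the limit in the individual optimal trajectories'' gives at best pointwise convergence of $u^{T_n}_x$ outside the support (and the trajectories themselves depend on $T_n$ through the terminal condition), not equicontinuity across the interface, so Arzel\`a--Ascoli cannot be invoked for $Du^{T_n}$ there. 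The paper avoids this entirely: since $m_n^\theta$ has a uniform H\"older modulus on $\R\times[t_0,t_1]$ (Proposition \ref{prop:holder m}) and $u_n$ is a uniformly Lipschitz $C^1$ solution of the Hamilton--Jacobi equation, the interior regularity theory for $C^1$ solutions of HJ equations with H\"older right-hand side (\cite[Thm 4.23]{CMP}, going back to Cannarsa--Soner \cite{CaSo}) yields a uniform H\"older modulus for $Du_n$ on all of $\R\times[t_0,t_1]$, including across the free boundary, and then Arzel\`a--Ascoli gives the $C^1_{\mathrm{loc}}$ convergence. A secondary, minor point: your argument for attaining $m(\cdot,0)=m_0$ by ``extending the H\"older estimate of Proposition \ref{prop:holder m} down to $t=0$'' is unsupported, since the constants there blow up like $\delta^{-1}$; but this is not needed for the statement at hand (convergence is only asserted for $t\ge 1/K$, and the hypotheses of Proposition \ref{prop: convergence result} concern $t\ge t_0>0$), and uniform continuity up to $t=0$ is handled in the paper only later, and only for $\theta>2$, by different estimates.
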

\begin{proof} We write $(u_n,m_n):=(u^{T_n},m^{T_n})$, and let $(v_n,\mu_n)$ be the corresponding continuous rescaling \eqref{v mu defi} of $(u_n,m_n)$. As a result of Propositions \ref{prop: mu bd}, \ref{prop: v Dv bd}, \ref{prop: v mu energy}, and \ref{prop: mu holder cont}, and recalling Remark \ref{rem energy}, we have, for any constant $M>0$,
\be \label{mun bd} \| \mu_n \|_{L^{\infty}\left(\R \times (-\infty, \log(T))\right)}\leq K_1, \ee
\be \label{mun spt bd}\text{supp}(\mu_n(\cdot,\ta)) \subset [-R_1,R_1], \quad  \ta \in [-M,\infty),\ee
\be \label{Dvn + mun cont bd}\|Dv_n\|_{L^{\infty}([-R_1,R_1]\times [0,\infty))}+\|\mu_n^{\theta} \|_{C^{\beta}\left(\R \times \left[-M, \log(T)-\log(4)\right]\right)} \leq K_2, \ee
\be \label{mu_n energy bd} \|\mu_n^{\theta}\|_{H^1(\R \times (\tau_1-\log(2),\tau_1+\log(2))} \leq K_2, \quad \tau_1 \in (-M,\infty), \ee
where $K_1=K_1(C_0,|a_0|,|b_0|)$, $K_2=K_2(C_0,|a_0|,|b_0|,M)$,  $R_1=R_1(C_0,|a_0|,|b_0|,R_a)$, $R_1 \geq R_a$. Furthermore, by \eqref{v linfty bd th > 2} and \eqref{v linfty th leq 2}, $v_n$ is bounded on compact subsets of $\R \times \R.$ It follows from the Arzel\`a-Ascoli theorem that, up to extracting a subsequence, $(v_n,\mu_n)$ converges locally uniformly to certain continuous functions $v:\R \times \R \to \R $, $\mu: \R \times \R \to [0,\infty].$ Defining the function $(u,m)$ in terms of $(v,\mu)$ according to \eqref{v mu defi IH}, the lodqll6 uniform convergence of $(u_n,m_n)$ to $(u,m)$ follows, and $(u,m)\in W^{1,\infty}_{\text{loc}}(\R \times (0,\infty)) \times C(\R \times (0,\infty))$. 

We now explain why $u\in C^{1}(\R \times (0,\infty))$, and  $u_n  \to u$ in $C^1_{\text{loc}}$. Note that, by \eqref{p conti}, the function $m^{\theta}_n$ has a uniform modulus of H\"older continuity on every set $\R \times [t_0,t_1]$, where $0<t_0<t_1<T_n/4$. Recall also that $u_n$ is a $C^1$ solution of the first equation in \eqref{mfg}, and is uniformly Lipschitz on $\R \times [t_0,t_1]$. By the interior regularity theory for $C^1$ solutions of HJ equations, it follows that $Du_n$ has a uniform modulus of H\"older continuity on $\R \times [t_0,t_1]$ (see \cite[Thm 4.23]{CMP}. The original argument dates back to \cite{CaSo}). The $C^1_{\text{loc}}$ convergence of $u_n$ hence follows once more by the Arzel\`a-Ascoli theorem. 

The fact that $v \in C^2(\{\mu>0\})$ follows in the same way from Remark \ref{rem: Dv regu finite hor}. Furthermore, by the uniform estimates \eqref{mun bd}, \eqref{mun spt bd}, \eqref{Dvn + mun cont bd}, and \eqref{mu_n energy bd}, we see that $(v,\mu)$ satisfies \eqref{mu cont assumption} and \eqref{Dv bd assumption}. Passing to the limit in \eqref{mfg}, it directly follows that $(u,m)$ is a solution of the first equation in the classical sense, and to the second equation in the distributional sense. Finally, let $\gamma_L^n, \gamma_R^n$ be the free boundary curves of $(u_n,m_n)$. By Proposition \ref{prop: free bd rates}, the functions $(\gamma_L^n,\gamma_R^n)$ are uniformly bounded on $C^{1,1}((0,M))$ for every $M>0$, for $n$ sufficiently large. Thus, they converge uniformly to a pair of $C^{1,1}_{\text{loc}}(\left[0,\infty\right))$ curves $\gamma_L, \gamma_R$, which are uniquely characterized by passing to the limit as $n\to \infty$ in \eqref{freebd ch}.

In summary, $(v,\mu)$ satisfies, for any $t_0>0$, all the assumptions of Proposition \ref{prop: convergence result}, and, thus, its conclusion, \eqref{IHH conv result 1}--\eqref{IHH conv result 3}.
\end{proof}

\subsection{The supercritical range $\theta \in (0,2)$}
The main difficulty in showing the convergence result for the range $\theta<2$ will be the lack of a uniqueness result for \eqref{mfginfhor}, even among solutions that satisfy the natural growth estimates from Section \ref{sec: estimates}. For this reason, we are unable to characterize the subsequential limits obtained earlier in Proposition \ref{prop: existence subseq}. However, we resolve this issue by exploiting an advantageous feature of the range $\theta \in (0,2)$, namely the exponential rates of convergence given by Proposition \ref{prop: exponential theta<2}. Indeed, we prove below that, even if the subsequential limit $(u,m)$ for the family $(u^T,m^T)$ were not unique, the convergence result of Proposition \ref{prop: convergence result} would still hold uniformly across all such limit solutions $(u,m)$. 
\begin{prop} \label{prop: main th<2} The statement of Theorem \ref{thm: main} holds if $\theta<2$.    
\end{prop}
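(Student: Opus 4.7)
The plan is to reduce the finite-horizon convergence to the infinite-horizon result, Proposition \ref{prop: convergence result}, via the subsequence-extraction of Proposition \ref{prop: existence subseq}, and then to use the quantitative exponential decay of Proposition \ref{prop: exponential theta<2} to bypass the lack of a uniqueness result for the limits. Working with the continuous rescaling $(v^T,\mu^T)$ of $(u^T,m^T)$, I would first translate \eqref{Sec5 conv result 1}--\eqref{Sec5 conv result 3} into equivalent statements in the $(\eta,\tau)$ variables, exactly as in Lemma \ref{lem:reduc to cont resc}. Noting that $v_\eta - U_a' = w_\eta$, where $w$ is defined by \eqref{w defi}, the gradient conclusion \eqref{Sec5 conv result 2} reduces to a bound on $\|\mu^T |w^T_\eta|^2\|_{L^p}$, and \eqref{Sec5 conv result 3} will follow from \eqref{Sec5 conv result 1}--\eqref{Sec5 conv result 2} together with the HJ equation.

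The core of the argument is a subsequence-extraction scheme. Fix $\tau_0\in\R$ and take an arbitrary sequence $T_n\to\infty$. By Proposition \ref{prop: existence subseq}, up to a subsequence, $(u^{T_n},m^{T_n})$ converges to a solution $(u,m)$ of \eqref{mfginfhor} meeting the hypotheses of Proposition \ref{prop: convergence result}. For this limit, Proposition \ref{prop: exponential theta<2} provides $\cEw(\tau)\le C_0 e^{-2k(\tau-\tau_0)}$ and $\intr \mu|w_\eta|^2\,d\eta \le C_1 e^{-k\tau}$ for $\tau \geq \tau_0 + \rho$, where $C_0,C_1$ depend only on the universal constants of Section \ref{sec: estimates}, which in turn bound the data at $\tau_0$ uniformly in $n$. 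The key observation is that these constants are independent of the extracted subsequence.

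The next step, where the main technical care is needed, is to transport these exponential bounds back to the prelimit. Using the locally uniform convergence of $\mu^{T_n}$ to $\mu$ (Proposition \ref{prop: mu holder cont}), the uniformly compact support (Proposition \ref{prop: mu bd}), and the local $C^1$ convergence of $w^{T_n}_\eta$ to $w_\eta$ on $\{\mu>0\}$ (Lemma \ref{lem:regu Dw in mu>0} combined with Remark \ref{rem: Dv regu finite hor}), one deduces $\cE_{w^{T_n},\mu^{T_n}}(\tau)\to \cEw(\tau)$ and the analogous statement for $\intr \mu^{T_n}|w^{T_n}_\eta|^2\,d\eta$. The delicate issue is the degeneration at the free boundary, where Lemma \ref{lem:regu Dw in mu>0} provides no control; however, Proposition \ref{prop: v Dv bd} gives a uniform $L^\infty$ bound on $w^{T_n}_\eta$ in $\{\mu^{T_n}>0\}$, and together with the continuous vanishing of $\mu$ at the interface this allows dominated convergence to apply. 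Since the sequence $T_n$ was arbitrary, this gives $\limsup_{T\to\infty}\cE_{w^T,\mu^T}(\tau)\le C_0 e^{-2k\tau}$ and $\limsup_{T\to\infty}\intr \mu^T|w^T_\eta|^2\,d\eta \le C_1 e^{-k\tau}$, both vanishing as $\tau\to\infty$.

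Finally, to convert the exponential bounds into the $L^p$ statements, I would argue that the non-negative relative-entropy integrand in $\cEw$ dominates a multiple of $(\mu^T-M_a)^2$ on $\{M_a>0\}$ by the convexity of $F$ on $(0,\infty)$, and a multiple of $(\mu^T)^{\theta+1}$ on $\{M_a=0\}$; together with the mass constraint $\intr \mu^T=\intr M_a$, this yields $L^1$ convergence of $\mu^T-M_a$. Interpolation against the uniform $L^\infty$ bound of Proposition \ref{prop: mu bd} extends the convergence to $p\in[1,\infty)$, while the uniform Hölder regularity of Proposition \ref{prop: mu holder cont} upgrades it to $p=\infty$. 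An analogous interpolation handles $\mu^T|w^T_\eta|^2$. The hardest part of the argument is the limit passage in the Lyapunov functional described above: once this survives, the exponential rate from Proposition \ref{prop: exponential theta<2} carries the double limit to zero essentially automatically.
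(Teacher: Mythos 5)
Your proposal follows essentially the paper's own route: reduce to the rescaled variables as in Lemma \ref{lem:reduc to cont resc}, extract subsequential limits via Proposition \ref{prop: existence subseq}, invoke the exponential decay of Proposition \ref{prop: exponential theta<2} for each limit (whose constants depend only on the uniform bounds of Section \ref{sec: estimates}, hence not on the extracted subsequence), and transfer the resulting bounds on $\intr \mu |w_{\et}|^2$ and on the relative entropy back to the prelimit family by an arbitrary-sequence/contradiction argument; this is exactly how the paper obtains $\limsup_{T\to\infty}\intr \mu^T|w^T_{\et}|^2(\cdot,\ta)=O(1)e^{-k\ta}$.

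Two points in your final upgrade step need repair. First, for $\theta\in[1,2)$ the Bregman divergence of $F(s)=s^{\theta+1}/(\theta+1)$ does \emph{not} dominate $c\,(\mu-M_a)^2$ with a uniform constant, since $F''$ vanishes at $0$; it does dominate $c_\theta|\mu-M_a|^{\theta+1}$, which on the uniformly bounded supports still gives $L^1$ convergence and then, by interpolation with the uniform $L^\infty$ bound and the uniform H\"older bound of Proposition \ref{prop: mu holder cont}, all $p\in[1,\infty]$; so this is only a quantitative slip. Second, and more substantively, the ``analogous interpolation'' for $\mu^T|w^T_{\et}|^2$ at $p=\infty$ is not available as stated: an $L^1$--H\"older interpolation would require a $T$-uniform modulus of continuity for the product on all of $\R$, while Lemma \ref{lem:regu Dw in mu>0} controls $w^T_{\et}$ only on sets where $\mu^T$ is bounded below, with constants degenerating at the free boundary. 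The correct conclusion is the paper's two-case argument, for which you already list all the ingredients: at a point where $\mu^T<\vep$ the product is $O(\vep)$ by the uniform gradient bound of Proposition \ref{prop: v Dv bd}; where $\mu^T\ge\vep$, compare the pointwise value of $\mu^T|w^T_{\et}|^2$ with its average over an interval of length $2\delta(\vep)$, using the equicontinuity of $w^T_{\et}$ on $\{\mu^T\ge\vep/2\}$ and the uniform modulus of $\mu^T$, and bound the average by $(2\delta)^{-1}\intr\mu^T|w^T_{\et}|^2$, which is exponentially small by the limit argument; then send $\ta\to\infty$ and $\vep\to0$. With these repairs your argument coincides with the paper's proof.
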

\begin{proof}Since the conclusion is invariant under vertical translations of $u^T,$ we may redefine $u^T$ according to \eqref{u defi cases}. By the same argument as in the proof of Lemma \ref{lem:reduc to cont resc}, the result is equivalent to showing that, for every $p\in [1,\infty]$,
\be \label{capomcqdc} \lim_{\ta \to \infty}\limsup_{T\to \infty} \| \mu^T(\cdot,\ta)- M_a(\cdot) \|_{L^{p}(\R)} =0, \,\,\,\,\,\text{ and } \,\,\,\,\,\lim_{\ta \to \infty}\limsup_{T\to \infty} \| \mu^T |w_{\eta}^T|^2(\cdot,\ta)\|_{L^{p}(\R)} = 0. \ee 
Furthermore, in view of \eqref{mu suppt bd} and \eqref{v grad bd}, it is enough to prove \eqref{capomcqdc} for $p=\infty$.

Let $\vep>0$ be arbitrary. By Proposition \ref{prop: mu holder cont}, if $T \geq 4t$, there exists a uniform modulus of continuity $\omega_{1} : \left[0,\infty\right) \to \left[0,\infty\right)$ for the functions $\mu^T(\cdot,\ta)$, for $\ta \in \left[0,\infty\right)$. Since the function $M_a$ is uniformly continuous in $\R$, we may also assume that $\omega_1$ is a modulus for $M_a$. In addition, from Lemma \ref{lem:regu Dw in mu>0} and Remark \ref{rem: Dv regu finite hor}, we infer that there exists a uniform modulus of continuity $\omega_{2,\vep}:\left[0,\infty\right) \to \left[0,\infty\right)$ for the functions $w_{\eta}(\cdot,\ta)$ on the set $\{\mu(\cdot,\ta)\geq \vep /2 \}$. We choose $\delta>0$ such that \be \omega_{1}(\delta)+\omega_{2,\vep}(\delta)<\vep/2. \ee 
If $\mu(\et_1,\ta)<\vep,$ then
\be \label{th<2 sxpaok}\mu^T |w^T_{\et}|^2(\et_1,\ta) \leq \vep\|w^T_{\et}\|^2_{L^{\infty}(\{\mu>0\})} =O(\vep).  \ee
Otherwise, $\mu(\et_1,\ta)\geq \vep$, and we have, for any $T\geq 4t,$ by \eqref{mu suppt bd}, and \eqref{v grad bd},
\begin{align} \mu^T |w^T_{\et}|^2(\et_1,\ta)\leq &  2\|\mu^T\|_{\infty}\|w^T_{\et}\|_{L^{\infty}(\{\mu^T>0\})} \text{osc}_{|\eta-\eta_1|<\delta}(w_{\et}(\cdot,\ta))+(2\delta)^{-1}\int_{|\eta-\eta_1|<\delta} \mu^T |w^T_{\et}|^2(\cdot,\ta)\\ \label{th<2 sxpaok2}
\leq & O(1) \omega_{2,\vep}(\delta)+ (2\delta)^{-1}\intr \mu^T |w^T_{\et}|^2(\cdot,\ta)\leq O(\vep)+(2\delta)^{-1}\intr \mu^T |w^T_{\et}|^2(\cdot,\ta).\end{align}
On the other hand, by Proposition \ref{prop: existence subseq} and the exponential bounds \eqref{exp conv w} for the infinite horizon system, we have
\be \label{th<2 sxpaok3}\limsup_{T \to \infty}\intr \mu^T |w^T_{\et}|^2(\cdot,\ta)=O(1)e^{-k \ta} .\ee
We infer from \eqref{th<2 sxpaok}, \eqref{th<2 sxpaok2}, and \eqref{th<2 sxpaok3} that
\be \label{th<2 sxpaok4} \limsup_{T\to \infty}  \|\mu^T |w_{\et}^T|^2(\cdot,\ta) \|_{L^{\infty}(\R)} \leq O(\vep) + O(\delta^{-1})e^{-k \ta}.\ee
Through similar arguments, and appealing to \eqref{exp conv mu}, we also obtain
\be \label{th<2 sxpaok5} \limsup_{T\to \infty}  \|h^T(\cdot,\ta)\|_{L^{\infty}(\R)} \leq O(\vep) + O(\delta^{-1})e^{-k \ta}, \ee
where
\be h^T(\et,\ta)= F(\mu^T(\et,\ta))-F(M_a(\et))- \left(R_a -\frac{\al(1-\al)}{2}\et^2 \right)(\mu^T(\et,\ta)-M_a(\et)).\ee
Letting $\ta \to \infty$ and then $\vep \to 0$ in \eqref{th<2 sxpaok4}, we obtain the second equality in \eqref{capomcqdc}. For the first equality, we argue as follows. We recall, from \eqref{quant pos dcoaia}, that
\be h^T(\et,\ta) = \begin{cases} F(\mu^T(\et,\ta))-F(M_a(\et))- F'(M_a)(\mu^T(\et,\ta)-M_a(\et)) & M_a(\et)>0, \\
F(\mu^T(\et,\ta))+\left(\frac{\al(1-\al)}{2}\et^2-R_a \right)\mu^T(\et,\ta) & M_a(\et)=0.
\end{cases}\ee
In particular, $h^T(\et,\ta)\geq g(\mu^T(\et,\ta),M_a(\et))\geq 0,$ where
\be g(x,y)=F(x)-F(y)-F'(y)(x-y), \quad (x,y)\in \left[0,\infty\right)\times \left[0,\infty \right).\ee
Note that $g$ satisfies $g(x,y)=0$ if and only if $x=y$ (by the convexity of $F$).  Letting $\ta \to \infty$ and then $\vep \to 0$ in \eqref{th<2 sxpaok5}, we get
\be \lim_{\ta \to \infty} \limsup_{T\to \infty}  \|g(\mu^T(\cdot,\ta),M_a(\cdot))\|_{L^{\infty}(\R)}=0.\ee
It therefore follows from the boundedness of $\mu^T$ and $M_a$, and the continuity of $g$, that
\be \lim_{\ta \to \infty}\limsup_{T\to \infty}   \| \mu^T(\cdot,\ta)-M_a(\cdot)\|_{L^{\infty}(\R)}=0.\ee
\end{proof}

\subsection{The subcritical range $\theta \in (2,\infty) $}

In the previous subsection, we treated the case $\theta \in (0,2)$ by exploiting quantitative results about the rate of convergence for the infinite horizon problem. When $\theta \in \left(2,\infty\right)$, no such rates are available, so we will rely instead on well-posedness of \eqref{mfginfhor}. Indeed, we will show that, for this range, there exists exactly one solution to \eqref{mfginfhor} that vanishes as $t \to \infty$. In particular, this will imply that the  subsequential limits of Proposition \ref{prop: existence subseq} are all equal, which is sufficient to conclude.
\begin{thm} \label{thm: fin theta>2} Assume that $m_0$ satisfies \eqref{m0 assum} and \eqref{m0 C1,1}, that  \eqref{kap size} holds, and that $\theta > 2$. There exists a unique solution $(u,m)\in W^{1,\infty}(\R \times (0,\infty))\times C(\R \times [0,\infty))$ to \eqref{mfginfhor}, in the sense of Definition \ref{def: sol}, that satisfies 
\be \label{u(infty)=0 th>2} \lim_{t \to \infty} \|u(\cdot,t)\|_{L^{\infty}(\R)}=0. \ee Furthermore, $(u,m)$ is also characterized uniquely as the limit
\be \label{limit char th>2} (u,m)=\lim_{T \to \infty} (u^T,m^T) \,\,\,\,\text{ in } \,\,\, C^1_{\emph{loc}}(\R \times (0,\infty))\times C_{\emph{loc}}(\R \times \left[0,\infty\right)),\ee
where $(u^T,m^T)$ is the solution to \eqref{mfg}--\eqref{tc},
and $(u,m)$ satisfies, for every $p\in [1,\infty]$,
\be \label{conv result 1 theta>2} \lim_{t \to \infty}  t^{\al(1-\frac1p)}\left\| m(\cdot,t)- \mathcal{M}_a(\cdot,t) \right\|_{L^{p}(\R)} =0,
\ee
\be \label{conv result 2 theta>2} \lim_{t \to \infty}  t^{2-\al(1+\frac1p)}\left\| m(\cdot,t)\left|u_x(\cdot,t)- (\mathcal{U}_a)_x(\cdot,t)\right|^2 \right\|_{L^{p}(\R)} =0,  \ee
\be \label{conv result 3 theta>2} \lim_{t \to \infty}  t^{2-\al(1+\frac1p)}\left\| m(\cdot,t)\left|u_t(\cdot,t)- (\mathcal{U}_a)_t(\cdot,t)\right|\right\|_{L^{p}(\R)} =0. \ee
where $a=\int_{\R}m_0>0$.\end{thm}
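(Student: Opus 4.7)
\textbf{Existence and the decay \eqref{u(infty)=0 th>2}.} The plan is to combine the subsequential convergence from Proposition \ref{prop: existence subseq} with a Lasry-Lions monotonicity argument to upgrade compactness to full convergence. Given any sequence $T_n\to\infty$, Proposition \ref{prop: existence subseq} produces a subsequential limit $(u,m)$ that solves \eqref{mfginfhor} and automatically satisfies \eqref{conv result 1 theta>2}--\eqref{conv result 3 theta>2}. The decay \eqref{u(infty)=0 th>2} is not part of Proposition \ref{prop: existence subseq}, but it follows from the sharp uniform bound \eqref{v linfty bd th > 2}: converting it through \eqref{v mu defi} gives $\|u^T(\cdot,t)\|_{L^\infty(\R)}\leq Ct^{2\al-1}$ uniformly in $T$, which vanishes precisely because $\theta>2$ forces $2\al-1<0$. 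Passing to the limit yields \eqref{u(infty)=0 th>2}.

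\textbf{Uniqueness via Lasry-Lions monotonicity.} Suppose $(u_1,m_1)$ and $(u_2,m_2)$ are two solutions in the stated class. I would consider the quantity
\[ E(t)=\int_\R (u_1-u_2)(m_1-m_2)\,dx. \]
Using the smoothness of each $(u_i,m_i)$ on $\{m_i>0\}$ from Theorem \ref{thm.intro1}, together with the HJ and continuity equations and integration by parts (justified via cutoffs supported away from the free boundary, exactly as in the proof of Lemma \ref{lem: E basic}), the standard MFG computation gives
\[ E'(t)=-\int_\R (m_1^\theta-m_2^\theta)(m_1-m_2)\,dx-\int_\R \tfrac{m_1+m_2}{2}\bigl((u_1)_x-(u_2)_x\bigr)^2 dx\leq 0. \]
Since $m_1(\cdot,0)=m_2(\cdot,0)=m_0$, we have $E(0)=0$. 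Conservation of mass and \eqref{u(infty)=0 th>2} give $|E(t)|\leq 2a(\|u_1(\cdot,t)\|_\infty+\|u_2(\cdot,t)\|_\infty)\to 0$. Combined with $E'\leq 0$, this forces $E\equiv 0$, so both non-positive integrands on the right vanish in $L^1(dt\,dx)$. The first gives $m_1\equiv m_2=:m$ by strict monotonicity of $s\mapsto s^\theta$, and the second then gives $(u_1)_x=(u_2)_x$ on $\{m>0\}$. To upgrade this to $u_1=u_2$ on all of $\R\times(0,\infty)$, I would invoke the optimal control representation
\[ u_i(x,t)=\inf_{\beta(t)=x}\int_t^\infty\Bigl(\tfrac{1}{2}|\dot\beta|^2+m^\theta(\beta,s)\Bigr)ds, \]
whose right-hand side is finite and well-posed because Proposition \ref{prop: smoothing effect} yields $m^\theta\lesssim s^{-\al\theta}=s^{-(2-2\al)}$ with $2-2\al>1$ when $\theta>2$; since both $u_i$ satisfy the same HJ equation with the same decay at infinity, the infimum is the same for $i=1,2$.

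\textbf{Full convergence and main obstacle.} Since every subsequential limit coincides with the same $(u,m)$, a standard compactness argument upgrades subsequential convergence to full convergence, establishing \eqref{limit char th>2}. The estimates \eqref{conv result 1 theta>2}--\eqref{conv result 3 theta>2} are then inherited from Proposition \ref{prop: existence subseq}. The main technical hurdle I foresee is the rigorous derivation of the identity for $E'(t)$: the PDE hold classically only on $\{m_i>0\}$, and $m_i$ vanishes only continuously at its free boundary. I would handle this by testing the equations against cutoffs supported in $\{m_1+m_2\geq\vep\}$ and passing to the limit $\vep\to 0$, using the continuity of $m_i$ and the Lipschitz regularity of the free boundary curves from Theorem \ref{thm.intro1} to kill the error terms, in direct analogy with the argument in the proof of Lemma \ref{lem: E basic}; alternatively, one may approximate $(u_i,m_i)$ by smooth solutions of the Neumann problem \eqref{mfgneum} exactly as in Lemma \ref{lem:displ}. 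The subcriticality $\theta>2$ is used in an essential way both to force $E(t)\to 0$ and to render the optimal-control representation of $u$ summable at infinity.
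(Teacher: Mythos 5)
Your proposal follows essentially the same route as the paper: subsequential compactness via Proposition \ref{prop: existence subseq}, the decay \eqref{u(infty)=0 th>2} from the sharp bound \eqref{v linfty bd th > 2} (equivalently $\|u^T(\cdot,t)\|_{L^\infty(\R)}=O(t^{2\al-1})$ with $2\al-1<0$), uniqueness by the Lasry--Lions identity (yielding $m_1=m_2$ and $(u_1)_x=(u_2)_x$ on $\{m>0\}$) upgraded to global uniqueness of $u$ through the dynamic programming principle and the infinite-horizon representation of $u$ in terms of $m$ (finite because $m^{\theta}=O(s^{2\al-2})$ is integrable at infinity precisely when $\theta>2$), and finally full convergence from uniqueness of subsequential limits. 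Your proposed justification of the Lasry--Lions identity by cutoffs near the free boundary, or by the Neumann approximation of Lemma \ref{lem:displ}, is consistent with what the paper does.

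There is, however, a genuine omission concerning the behavior up to $t=0$. The theorem asserts that $(u,m)\in W^{1,\infty}(\R\times(0,\infty))\times C(\R\times[0,\infty))$ and that $m^T\to m$ locally uniformly on $\R\times[0,\infty)$, i.e.\ up to the initial time, whereas Proposition \ref{prop: existence subseq} only provides convergence on sets of the form $[-K,K]\times[1/K,K]$ and Lipschitz bounds that degenerate as $t\to 0^+$ (the gradient bound \eqref{grad local bd} blows up like $t_0^{\al-1}$). Your proposal never addresses this, so as written the limit is only known to lie in $W^{1,\infty}_{\text{loc}}(\R\times(0,\infty))\times C(\R\times(0,\infty))$, and \eqref{limit char th>2} at $t=0$ is not established. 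The paper closes this gap in three steps that are specific to $\theta>2$: (i) from \eqref{UTBDSDCM} at $t=1$ and a comparison argument as in \eqref{u compar scoaca} it deduces $\|u^T\|_{L^{\infty}(\R\times[0,T])}=O(1)$; (ii) it then invokes Remark \ref{rem: ux bd [0,1]}, whose constant depends on the oscillation of $u^T$ on $\R\times(0,1)$, now uniformly controlled, to get $\|u^T\|_{W^{1,\infty}(\R\times[0,T])}=O(1)$; (iii) it cites \cite[Prop. 3.13, Prop. 3.15]{CMP} to obtain equicontinuity of $m^T$ up to $t=0$, depending only on $\|u^T_x\|_{L^{\infty}}$ and $\|m_0^{\theta}\|_{W^{1,\infty}}$. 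With these uniform estimates the subsequential limit does belong to the stated class and the convergence holds up to $t=0$; the remainder of your argument then goes through as in the paper.
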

\begin{proof} We first show existence, and begin by explaining why, for this subcritical range of $\theta$, $(u^T,m^T)$ satisfy certain uniform estimates up to $t=0$ (in addition to the uniform estimates away from $t=0$ that were used in Proposition \ref{prop: existence subseq}, for arbitrary $\theta$). We recall that, since $\theta>2$, $2\al-1=(2-\theta)/(2+\theta)$ satisfies
\be \label{2al-1 neg askpax} 2\al-1<0. \ee
Let $(v^T,\mu^T)$ be the continuous rescaling \eqref{v mu defi} for $(u^T,\mu^T)$. By Proposition \ref{prop: v Dv bd}, we have
$\|v^T(\cdot,\ta) \|_{L^{\infty}(\R))}=O(1)$ for $\ta \geq 0$, which is to say
\be \label{UTBDSDCM} \|u^T(\cdot,t) \|_{L^{\infty}(\R)} = O(t^{2\al-1}), \quad t \geq 1. \ee
We infer from \eqref{2al-1 neg askpax} that, in particular, $\|u^T(\cdot,t) \|_{L^{\infty}(\R)} \to 0$ as $t \to \infty$, uniformly in $T$. The functions $m^T$ are uniformly bounded, by Proposition \ref{prop: smoothing effect}. Moreover, \eqref{UTBDSDCM} implies, in particular, that $u(\cdot,1)$ is bounded. Thus, as in \eqref{u compar scoaca}, we conclude by the comparison principle that $u$ is bounded on $\R \times [0,1]$. In conjunction with \eqref{UTBDSDCM}, this yields
\be \label{u^T bd [0,1] th>2} \|u^T\|_{L^{\infty}(\R \times [0,T])}=O(1).\,\, \ee
In turn, by Remark \ref{rem: ux bd [0,1]}, as well as \eqref{grad local bd}, we conclude that
\be \|u^T\|_{W^{1,\infty}(\R \times [0,T])}=O(1).\,\,\ee
On the other hand, it follows from \cite[Prop. 3.13, Prop. 3.15]{CMP} that the modulus of continuity for $m^T$ near $t=0$ depends only on $\|u^T_x\|_{L^{\infty}(\R \times (0,T) )}$ and $\|m_0^{\theta}\|_{W^{1,\infty}(\R)}$. Hence, $m^T$ is equicontinuous up to $t=0$ (the equicontinuity away from $t=0$ was shown already in Proposition \ref{prop: existence subseq}).

Let $(u,m)$ be any subsequential limit of $(u^T,m^T)$, given by Proposition \ref{prop: existence subseq}. Note that in particular, said Proposition implies that $(u,m)$ satisfies \eqref{Sec5 conv result 1}--\eqref{Sec5 conv result 2}. By the above discussion, uniform convergence holds up to $t=0$, and  $(u,m)\in W^{1,\infty}(\R \times (0,\infty))\times C(\R \times [0,\infty)).$ By \eqref{UTBDSDCM}, $\lim_{t \to \infty} \|u(\cdot,T)\|_{L^{\infty}(\R)}=0$. 

We now show the uniqueness. Note that this will automatically imply that there is only one possible subsequential limit, and that convergence holds for the full family $(u^T,m^T)$. Let $(\util,\mtil)\in W^{1,\infty}(\R \times (0,\infty))\times C(\R \times [0,\infty))$ be another solution to \eqref{mfg} satisfying $\lim_{t \to \infty} \|\util(\cdot,t)\|_{L^{\infty}(\R)}=0$. The first step is the standard Lasry-Lions proof of uniqueness for MFG, which yields, for $t\in (0,\infty),$
\be \intr(m(\cdot,t)-\mtil(\cdot,t))(u(\cdot,t)-\util(\cdot,t)) + \int_{0}^t \intr \left(\frac{1}{2}(m+\mtil)|u_x-\util_x|^2 + (m^{\theta}-\mtil^{\theta})(m-\mtil)\right)=0. \ee
Letting $t \to \infty$, and using the decay of $u$ and $\util$, we conclude that $\mtil=m$. Since we want to show global uniqueness of $u$, even in the set $\{m=0\}$, we finish the proof by an optimal control argument.  Since we showed that $m_1=m$, $\util$ is a Lipschitz viscosity solution of 
\be -\util_t + \frac12 \util_x^2=m^{\theta}, \,\,\, \text{ in } \,\,\, \R \times (0,\infty),\ee
so it must satisfy the dynamic programming principle \eqref{dpp}. Namely, for every $(x,t)\in \R \times [0,\infty),$ and every $t_1>t$,
\be \label{dpp sacag} \util(x,t)=\inf_{\beta \in H^1((t,t_1)), \,\beta(t)=x} \quad\int_t^{t_1} \left( \frac{1}{2} |\Dot \beta(s)|^2+m(\beta(s),s)^{\theta} \right) ds + \util(\beta(t_1),t_1).
 \ee
The idea is now to use \eqref{u(infty)=0 th>2} and \eqref{2al-1 neg askpax} to let $t_1 \to \infty$ in this formula.
On one hand, \eqref{dpp sacag} implies that, for any $\beta \in H^1((t,\infty))$,
\be \util(x,t) \leq \int_t^{t_1} \left( \frac{1}{2} |\Dot \beta(s)|^2+m(\beta(s),s)^{\theta} \right)ds + \util(\beta(t_1),t_1), \ee
so that, letting $t_1 \to \infty$,
\be \label{dpp <= inf akpo} \util(x,t) \leq \inf_{\beta \in H^1((t,t_1)), \,\beta(t)=x}  \int_t^{\infty} \left( \frac{1}{2} |\Dot \beta(s)|^2+m(\beta(s),s)^{\theta} \right) ds. \ee
On the other hand, in view of \eqref{alpha theta} and \eqref{smoothing effect}, we have $m^{\theta}(x,t)=O(t^{-\al \theta})=O(t^{2\al-2}).$ Thus, choosing a minimizer $\beta \in H^1((t,t_1))$ for \eqref{dpp sacag}, and extending it continuously to be constant in $\left[t_1,\infty\right)$ to get a competitor for the right hand side of \eqref{dpp <= inf akpo}, we obtain 
\begin{multline} \label{dpp >= inf akpo} \util(x,t)=\int_t^{t_1} \left( \frac{1}{2} |\Dot \beta(s)|^2+m(\beta(s),s)^{\theta} \right) ds + \util(\beta(t_1),t_1) \\ 
= \int_{t}^{\infty}\left( \frac{1}{2} |\Dot \beta(s)|^2+m(\beta(s),s)^{\theta} \right) ds  - O(1) \int_{t_1}^{\infty}s^{2\al-2}ds + \util(\beta(t_1),t_1)\\ \geq \inf_{\beta \in H^1((t,t_1)), \,\beta(t)=x}  \int_t^{\infty} \left( \frac{1}{2} |\Dot \beta(s)|^2+m(\beta(s),s)^{\theta} \right) ds -O(t_1^{2\al-1}) + o(1) \end{multline}
as $t_1\to \infty$. 
We conclude from \eqref{dpp <= inf akpo} and \eqref{dpp >= inf akpo} that
\be \util(x,t) = \inf_{\beta \in H^1((t,t_1)), \,\beta(t)=x}  \int_t^{\infty} \left( \frac{1}{2} |\Dot \beta(s)|^2+m(\beta(s),s)^{\theta} \right) ds,\ee
which uniquely characterizes the value function in terms of $m$, and, in particular, implies that $\util=u$.
\end{proof}
\begin{cor}\label{cor:main th>2} The statement of Theorem \ref{thm: main} holds when $\theta<2$.    
\end{cor}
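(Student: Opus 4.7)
The plan is to deduce the corollary directly from Theorem \ref{thm: fin theta>2}, which already provides the two essential ingredients: the convergence $(u^T,m^T) \to (u,m)$ in $C^1_{\emph{loc}}(\R\times(0,\infty)) \times C_{\emph{loc}}(\R\times[0,\infty))$, and the asymptotic statements \eqref{conv result 1 theta>2}--\eqref{conv result 3 theta>2} for the limit $(u,m)$. The only remaining task is to interchange $\limsup_{T\to\infty}$ with the $L^p(\R)$ norm at each fixed $t>0$, thereby replacing the $\limsup_{T\to\infty}\|\cdot^T(\cdot,t)\|_{L^p}$ in \eqref{Sec5 conv result 1}--\eqref{Sec5 conv result 3} by the corresponding norm for $(u,m)$, and then applying \eqref{conv result 1 theta>2}--\eqref{conv result 3 theta>2}.

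For a fixed $t>0$, I would first invoke Proposition \ref{prop: free bd rates} to produce a compact set $K_t\subset\R$, independent of $T$ for all $T$ sufficiently large, that contains the supports of $m^T(\cdot,t)$, $m(\cdot,t)$, and $\mathcal{M}_a(\cdot,t)$. On $K_t$, the $C^1_{\emph{loc}}\times C_{\emph{loc}}$ convergence of Theorem \ref{thm: fin theta>2} upgrades to uniform convergence of $m^T(\cdot,t)\to m(\cdot,t)$ and of $u^T_x(\cdot,t), u^T_t(\cdot,t)$ to $u_x(\cdot,t), u_t(\cdot,t)$; outside $K_t$, the three densities vanish identically. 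Consequently, the expressions
\[
m^T(\cdot,t)-\mathcal{M}_a(\cdot,t),\quad m^T\bigl|u^T_x-(\mathcal{U}_a)_x\bigr|^2(\cdot,t),\quad m^T\bigl|u^T_t-(\mathcal{U}_a)_t\bigr|(\cdot,t),
\]
which are all supported in $K_t$, converge uniformly on $\R$ to their counterparts for $(u,m)$, and hence in $L^p(\R)$ for every $p\in[1,\infty]$.

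Combining these uniform convergences yields, for each $p\in[1,\infty]$ and each fixed $t>0$,
\[
\limsup_{T\to\infty} t^{\al(1-\frac1p)}\bigl\|m^T(\cdot,t)-\mathcal{M}_a(\cdot,t)\bigr\|_{L^p(\R)} = t^{\al(1-\frac1p)}\bigl\|m(\cdot,t)-\mathcal{M}_a(\cdot,t)\bigr\|_{L^p(\R)},
\]
and analogous equalities with the appropriate weights $t^{2-\al(1+\frac1p)}$ for the two expressions in \eqref{Sec5 conv result 2}--\eqref{Sec5 conv result 3}. Sending $t\to\infty$ and invoking \eqref{conv result 1 theta>2}--\eqref{conv result 3 theta>2} then gives \eqref{Sec5 conv result 1}--\eqref{Sec5 conv result 3} and concludes the proof. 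I do not foresee a genuine obstacle, as the substantive analytic work has already been carried out in Theorem \ref{thm: fin theta>2}; the only item deserving explicit verification is the uniform-in-$T$ support containment, which follows directly from Proposition \ref{prop: free bd rates}.
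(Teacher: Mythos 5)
Your proposal is correct and follows essentially the same route as the paper: the paper's own (two-line) proof likewise combines the uniform-in-$T$ boundedness of $\mathrm{supp}\,m^T(\cdot,t)$ at fixed $t$ (stated there via \eqref{mu suppt bd}, which is equivalent to your appeal to Proposition \ref{prop: free bd rates}) with the full-family convergence \eqref{limit char th>2} and the asymptotics \eqref{conv result 1 theta>2}--\eqref{conv result 3 theta>2} from Theorem \ref{thm: fin theta>2}. Your write-up merely spells out the interchange of $\limsup_{T\to\infty}$ with the $L^p(\R)$ norms, which is the same argument in more detail (note the ``$\theta<2$'' in the corollary's statement is a typo for $\theta>2$, as both you and the paper implicitly treat it).
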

\begin{proof}By \eqref{mu suppt bd}, for any fixed $t$, the support of $m^T(\cdot,t)$ is compact and remains bounded as $T\to \infty$. Thus, the result follows from \eqref{limit char th>2}, \eqref{conv result 1 theta>2}, and \eqref{conv result 2 theta>2}.
\end{proof}
\subsection{The critical case $\theta=2$}
When $\theta=2$, the quantitative results of Proposition \ref{prop: exponential theta<2} break down. The global uniqueness results from the previous section fail as well, because the limiting solution $u$ may, in principle, be unbounded both as $t\to 0^+$ and as $t \to \infty$ (see \eqref{Uself} and \eqref{expl u bd ascoac}). 

Our strategy here will be to insist on uniquely characterizing the subsequential limits of $(u^T,m^T)$, at least in the region $\{m>0\}$, through a partial well-posedness result. The unbounded behavior of $u$ as $t\to \infty$ will be addressed by exploiting the unique scaling of the case $\theta=2$, which is precisely the correct one to leverage the convergence results of Section \ref{sec: convergence IH} in the uniqueness argument. 
The lack of regularity up to $t=0$, on the other hand, will be addressed by weakening the definition of solution, as follows (cf. \cite{Carda1,CMS,OPS}). We denote by $\cP_1(\R)$ the space of probability measures on $\R$ which have a finite first order moment, equipped with the Monge-Kantorovich distance $d_1$. 
\begin{defn} \label{def:weak sol} We say that $(u,m) \in W^{1,\infty}_{\text{loc}}(\R \times (0,\infty))\times C(\R \times (0,\infty))$ is a weak solution to \eqref{mfginfhor} if $m\in C([0,T],\cP_1(\R))$, the first equation holds in the viscosity sense, the second equation holds in the distributional sense, and $m(\cdot,0)=m_0$ holds in the $d_1$ sense.    
\end{defn}
We can now show the main convergence result for the critical case.

\begin{thm} \label{thm: fin theta=2} Assume that $m_0$ satisfies \eqref{m0 assum} and \eqref{m0 C1,1}, that \eqref{kap size} holds, and that $\theta = 2$. There exists a weak solution $(u,m)\in W^{1,\infty}_{\emph{loc}}(\R \times (0,\infty))\times C(\R \times (0,\infty))$ to \eqref{mfginfhor} such that, as $T\to \infty$,
\be \label{mT lim th=2}m^T \to  m \,\, \text{ uniformly on compact subsets of } \,\, \R \times \left[0,\infty \right), \ee 
\be \label{DuT lim th=2} Du^T \to Du \,\, \text{ uniformly on compact subsets of }\,\,\,\{(x,t)\in \R \times (0,T) : x\in \emph{supp}(m(\cdot,t))\},\ee
where $(u^T,m^T)$ is the solution to \eqref{mfg}--\eqref{tc}. The function $(u,m)$ satisfies, for every $p\in [1,\infty]$, and $a=\int_{\R}m_0>0$, \eqref{conv result 1 theta>2}--\eqref{conv result 3 theta>2}.

Furthermore, $(u,m)$ is the unique weak solution to \eqref{mfginfhor} satisfying the natural growth assumptions of Proposition \ref{prop: convergence result}, in the following sense: $m$ is unique, and $u$ is unique up to a constant in the set $\{m>0\}$.
 \end{thm}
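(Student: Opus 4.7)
The plan is to prove existence, convergence, and uniqueness in that order, with uniqueness being the crux of the argument.

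\textbf{Existence and convergence of $(u^T,m^T)$.} I would start by invoking Proposition \ref{prop: existence subseq} to produce, along some sequence $T_n\to\infty$, a subsequential limit $(u,m)$ which is a solution of \eqref{mfginfhor} on $\R\times(0,\infty)$ in the classical/distributional sense, with $(u,m)\in W^{1,\infty}_{\text{loc}}(\R\times(0,\infty))\times C(\R\times(0,\infty))$. Because the supports of $m^T(\cdot,t)$ are uniformly bounded on compact time intervals (Proposition \ref{prop: mu bd}, translated back to original variables), and because the speed of the free boundary curves is integrable at $t=0$ (Proposition \ref{prop: free bd rates}, since $1-\alpha=1/2$), the flow of $m^T$ induces a displacement bounded by $O(\sqrt{t})$, giving a uniform $d_1$-modulus of continuity at $t=0$ and the weak-solution condition $m\in C([0,\infty),\mathcal{P}_1(\R))$ with $m(\cdot,0)=m_0$. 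Once uniqueness of $m$ is established, the full family $m^T$ converges to $m$, giving \eqref{mT lim th=2}. Inside $\{m>0\}$, interior $C^{1,\beta}$ estimates for $u$ (as in Proposition \ref{prop: existence subseq}) upgrade this to \eqref{DuT lim th=2}. The intermediate-asymptotic statements \eqref{conv result 1 theta>2}--\eqref{conv result 3 theta>2} are then immediate from Proposition \ref{prop: convergence result} applied to $(u,m)$, since the uniform estimates of Section \ref{sec: estimates} pass to the limit.

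\textbf{The Lasry--Lions identity.} For uniqueness, assume $(u_i,m_i)$ for $i=1,2$ are two weak solutions satisfying the natural growth estimates, so that both continuous rescalings $(v_i,\mu_i)$ satisfy the hypotheses of Proposition \ref{prop: convergence result}. A direct computation (testing the HJ equation for $u_1-u_2$ against $m_1-m_2$ and the continuity equation for $m_1-m_2$ against $u_1-u_2$, on $\{m_1+m_2>0\}$ where everything is smooth, together with a cut-off argument as in Lemma \ref{lem: E basic} to handle the free boundary) yields
\be \label{LL sketch}
\Bigl[\!\int_\R (m_1-m_2)(u_1-u_2)\,dx\Bigr]_{t_0}^{t_1} = -\!\int_{t_0}^{t_1}\!\!\int_\R\!\Bigl(\tfrac{m_1+m_2}{2}|u_{1,x}-u_{2,x}|^2 + (m_1^\theta-m_2^\theta)(m_1-m_2)\Bigr)\,dx\,dt.
\ee
Both integrands on the right are non-negative. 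Sending $t_0\to 0^+$, the boundary term vanishes because $m_1-m_2\to 0$ in $\mathcal{P}_1$ while $u_1-u_2$, normalized by a time-dependent constant, stays bounded on the common support (using Proposition \ref{prop: v Dv bd} and the linear-in-$\tau$ growth allowed when $\theta=2$, which is integrable against the $d_1$-decay near $t=0$).

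\textbf{Vanishing of the boundary term at $t=\infty$.} This is the step where the critical case $\theta=2$ requires genuinely new input and is the main obstacle. Rewriting \eqref{LL sketch} at time $t=e^\tau$ in rescaled variables, the scaling factor is $t^{2\alpha-1}=1$ for $\theta=2$, so
\be
\int_\R (m_1-m_2)(u_1-u_2)\,dx\Big|_{t=e^\tau}=\int_\R(\mu_1-\mu_2)(v_1-v_2)\,d\eta.
\ee
By Proposition \ref{prop: convergence result} applied to each $(u_i,m_i)$, we have $\mu_i(\cdot,\tau)\to M_a$ in $L^1\cap L^\infty$, and $w_{i,\eta}(\cdot,\tau)\to 0$ locally uniformly on $\{M_a>0\}$ (via Lemma \ref{lem:regu Dw in mu>0}). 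Consequently, setting $\bar v_i(\tau)$ to be the average of $v_i(\cdot,\tau)$ over a fixed interval containing $\emph{supp}(M_a)$, we get $v_i(\cdot,\tau)-\bar v_i(\tau)\to -\alpha\eta^2/2 + \text{const}$ uniformly on this interval. Therefore $v_1-v_2 = C(\tau)+o(1)$ uniformly on the support of $\mu_1+\mu_2$, where $C(\tau)=\bar v_1(\tau)-\bar v_2(\tau)$ is purely $\tau$-dependent. Crucially, $\mu_1$ and $\mu_2$ share the mass $a$, so $\int(\mu_1-\mu_2)\,d\eta=0$ and the constant $C(\tau)$ drops out:
\be
\Bigl|\!\int_\R(\mu_1-\mu_2)(v_1-v_2)\,d\eta\Bigr|=\Bigl|\!\int_\R(\mu_1-\mu_2)\,o(1)\,d\eta\Bigr|\leq \|\mu_1-\mu_2\|_{L^1}\cdot o(1)\xrightarrow{\tau\to\infty}0.
\ee

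\textbf{Conclusion.} With both boundary terms in \eqref{LL sketch} vanishing and both right-hand integrands non-negative, we conclude $(m_1-m_2)(m_1^2-m_2^2)\equiv 0$ almost everywhere, i.e.\ $m_1\equiv m_2=:m$, and $(m_1+m_2)|u_{1,x}-u_{2,x}|^2\equiv 0$, so $u_{1,x}=u_{2,x}$ on $\{m>0\}$. Since $\{m>0\}=\{(x,t):\gamma_L(t)<x<\gamma_R(t)\}$ is connected (Theorem \ref{thm.intro1}), and the HJ equation then forces $u_{1,t}=u_{2,t}$ there, we deduce $u_1-u_2$ is a constant on $\{m>0\}$. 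This gives the claimed uniqueness, and by the standard subsequence argument, also promotes the convergence \eqref{mT lim th=2}--\eqref{DuT lim th=2} from subsequences to the full family.
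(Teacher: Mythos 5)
Your overall architecture matches the paper's: subsequential limits via Proposition \ref{prop: existence subseq}, a Lasry--Lions identity, and the key observation that for $\theta=2$ the scaling factor $t^{2\al-1}=1$ lets the asymptotics of Section \ref{sec: convergence IH} kill the boundary term at $t=\infty$ (your argument there, subtracting a $\tau$-dependent constant and using equality of masses together with $\mu_i\to M_a$ and the locally uniform convergence of $w_{i,\eta}$, is essentially the paper's use of Lemma \ref{lem:lasrylionstheta=2} and \eqref{csmap th2}, and is sound up to routine care near $\partial\,\mathrm{supp}(M_a)$). However, there are two genuine gaps, both at $t=0$, and they are exactly the points the paper flags as the difficulties of the critical case. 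First, for the existence part you need a $d_1$-modulus of continuity for $m^T$ up to $t=0$, uniform in $T$, so that subsequential limits satisfy $m(\cdot,0)=m_0$ in the sense of Definition \ref{def:weak sol}. Your justification -- that the free boundary speed is integrable at $t=0$, so ``the flow induces a displacement $O(\sqrt t)$'' -- does not follow from what you cite: the bound \eqref{gamL'} controls only the two extreme trajectories, while the displacement of the bulk of the mass is governed by $|u^T_x|$ inside $\{m^T>0\}$, for which no bound uniform in $T$ near $t=0$ is available when $\theta=2$ (cf. Remark \ref{rem: ux bd [0,1]}; the estimate \eqref{grad local bd} degenerates as $t_0\to0$). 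The paper instead uses the conserved quantity $\frac{d}{dt}\int(\tfrac12 m^T(u^T_x)^2-\tfrac{(m^T)^{\theta+1}}{\theta+1})dx=0$, evaluated at $t=1$ where \eqref{smoothing effect} and \eqref{grad local bd} give bounds, to conclude $\int m^T(u^T_x)^2\,dx=O(1)$ down to $t=0$ and hence a Lipschitz $d_1$ modulus. Some such energy input is needed; the free-boundary estimate alone cannot replace it.

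Second, in the uniqueness argument your treatment of the boundary term as $t_0\to0^+$ is not justified. The two competitors are arbitrary weak solutions of \eqref{mfginfhor} satisfying the growth assumptions of Proposition \ref{prop: convergence result} only on some $(t_0,\infty)$; they are not known to be limits of finite-horizon solutions, so Proposition \ref{prop: v Dv bd} does not apply to them, and Definition \ref{def:weak sol} gives only $W^{1,\infty}_{\mathrm{loc}}$ regularity on $\R\times(0,\infty)$, so the Lipschitz norm of $u_1-u_2$ may blow up as $t\to0^+$. Pairing a merely bounded (or logarithmically growing) $u_1-u_2$ against $m_1-m_2$, which converges to $0$ only in $d_1$ with no rate, does not force $\int(m_1-m_2)(u_1-u_2)\to0$; ``integrable against the $d_1$-decay'' is not a valid estimate for a boundary term at a single time. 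The paper closes this gap by the argument of [OPS]: the one-sided bound $u_t=\tfrac12u_x^2-m^\theta\ge -C$ yields a limit $u(\cdot,0^+)\in\R\cup\{-\infty\}$, one shows $u(\cdot,0^+)\in L^1(dm_0)$, and this is what allows the Lasry--Lions identity \eqref{kapsda th2} to be run from $t=0$. Without an argument of this type (or some substitute assumption near $t=0$), your uniqueness proof does not go through.
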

\begin{proof} As in the proof of Proposition \ref{prop: main th<2}, since the conclusion is invariant under vertical translations of $u^T$, we may redefine $u^T$ according to \eqref{u defi cases}.  Let $(u,m)$ be a subsequential limit for $(u^T,m^T)$ as given by Proposition \ref{prop: existence subseq}. The fact that $(u,m)$ satisfies \eqref{conv result 1 theta>2}--\eqref{conv result 3 theta>2} follows from Proposition \ref{prop: existence subseq}. Note that, for each fixed $t>0$, the family of measures $\{m^T(\cdot,t)\}$ is tight, since, by \eqref{mu suppt bd}, their supports are all contained in a compact set (which depends on $t$). Therefore, to show that $(u,m)$ is a weak solution to \eqref{mfginfhor}, it is then sufficient to show that $\{m^T\}$ is equicontinuous as $T\to \infty$ in the Monge-Kantorovich distance $d_1$. For this purpose, we use the standard fact (see \cite[Lem. 2.3]{CaLaLiPo}) that 
\be  \label{H structure} \frac{d}{dt} \intr \left(\frac12 m^T(u^T_x)^2(x,t)- \frac{(m^T)^{\theta+1}(x,t)}{\theta+1}\right)dx=0.\ee
 Formally, \eqref{H structure} is obtained by multiplying the two equations of \eqref{mfg}--\eqref{tc} by $u_t$ and $m_t$, respectively, adding the two results, and then integrating in space. In the present setting, these formal computations can be readily justified exactly as those of Lemma \ref{lem: E basic}, with the necessary integrability being given by Proposition \ref{prop: energy rates}, so we omit the details. 

Since $m^T$ and $(u^T)_x(\cdot,1)$ are uniformly bounded (see \eqref{smoothing effect} and \eqref{grad local bd}), it follows from \eqref{H structure} that \be \intr \frac12 m u_x(\cdot,t)^2dx=O(1). \quad t\in [0,T].\ee 
The point of this estimate is that it does not degenerate near $t=0$ like, for example, \eqref{grad local bd} does. Thus, from the second equation of \eqref{mfg}, for every $0\leq t_1<t_2 \leq T$, and every $\vfi \in \text{Lip}_1(\R),$ 
\begin{multline}    
\intr \vfi(x) (m^T(x,t_2)-m^T(x,t_1))dx = \intr m^T u^T_x \vfi_x \leq \int_{t_1}^{t_2}\intr \frac12 (m^T (u^T_x)^2 + m^T)dxdt
\\ = \int_{t_1}^{t_2}(O(1)+a/2)dt=O(1)(t_2-t_1), \end{multline} 
so that \be d_1(m^T(\cdot,t_2),m^T(\cdot,t_1))= O(1)(t_2-t_1), \ee
as wanted. This shows the existence of a weak solution with the required properties, and that every subsequential limit of $(u^T,m^T)$ gives such a solution.

It remains to show uniqueness, as well as the limits \eqref{mT lim th=2} and \eqref{DuT lim th=2}. 
Let $(u,m)$ and $(\util,\mtil) \in  W^{1,\infty}_{\text{loc}}(\R \times (0,\infty))\times C(\R \times (0,\infty))$ be two weak solutions to \eqref{mfgi} that satisfy, for some $t_0>0$, the assumptions of Proposition \ref{prop: convergence result}. As in the proof of Theorem \ref{thm: fin theta>2}, we intend to adapt the Lasry-Lions uniqueness proof. The two main difficulties are: first, $u$ is not assumed to have any regularity up to $t=0$, and second, $u$ does not decay as $t\to \infty$.

To treat the lack of regularity at $t=0$, we may directly follow an argument developed originally in \cite{OPS}, that applies for a very general class of weak solutions. The main point is the following: the one-sided bound $u_t=\frac12{u_x^2}-m^{\theta} \geq -m^{\theta} \geq -C$ guarantees the existence of a (possibly infinite) measurable function $u(\cdot,0^+)$ such that $\lim_{t \to 0^+}u(x,t)=u(x,0^+)\in \R \cup \{-\infty\}$. It is then not difficult to show that $u(\cdot,0^+)\in L^1(dm_0)$, and this modest regularity can be leveraged to make the Lasry-Lions computation work up to $t=0$  (we refer to \cite[Thm. 5.2]{CMP} for the details). Namely, one obtains, for $t>0$,
\begin{multline} \label{kapsda th2} \intr(m(\cdot,t)-\mtil(\cdot,t))(u(\cdot,t)-\util(\cdot,t)) + \int_{0}^t \intr \left(\frac{1}{2}(m+\mtil)|u_x-\util_x|^2 + (m^{\theta}-\mtil^{\theta})(m-\mtil)\right)=0 . \end{multline}
Finally, we address the matter of letting $t\to \infty$ in \eqref{kapsda th2}. For this purpose, we take advantage of the asymptotic results of Section \ref{sec: convergence IH}, which (only in the case $\theta=2$) have the correct scaling to be useful in the uniqueness proof.
Let $(w,\mu)$ and $(\tilde{w},\tilde{\mu})$ be the corresponding continuous rescalings for $(u,m)$ and $(\util, \mtil)$, defined according to \eqref{v mu defi IH} and \eqref{w defi}. We claim that the function 
\be f(\ta)= \intr w(\eta,\ta) (\mu(\eta,\ta)-M_a(\et))d\et \ee
satisfies $ \lim_{\ta \to \infty} f(\ta)=0. $
Indeed, by Lemma \ref{lem:lasrylionstheta=2}, $f$ is bounded and non-increasing, so it has a limit $f(\infty)$. We argue that $f(\infty)=0$. Indeed, if $R$ is as in Proposition \ref{prop: convergence result}, then
\be |f(\ta)|=\left|\intr \left(w(\et,\ta)- (2R)^{-1}\int_{-R}^R w(y,\ta)dy\right)(\mu-M_a)d\et\right|\leq \|w_{\et}\|_{L^{\infty}(-R,R)}\|\mu(\cdot,\ta)-M_a(\cdot)\|_{L^{1}(\R)},\ee
and the right hand side tends to $0$ as $\ta \to \infty$ by \eqref{IHH conv result 1} and Lemma \ref{lem:reduc to cont resc}. The same arguments apply to $(\tilde{w},\tilde{\mu}),$ so we conclude that 
\be \label{odpkqw th2} \lim_{\ta \to \infty} \intr w(\eta,\ta) (\mu(\eta,\ta)-M_a(\et))d\et =\lim_{\ta \to \infty} \intr \tilde{w}(\eta,\ta) (\tilde{\mu}(\eta,\ta)-M_a(\et))d\et=0. \ee
The key point now is that, in the present critical case $\theta=2$, the time scaling factor of \eqref{odpkqw th2}, when rewritten as a statement in $(u,m)$, is $t^{2\al-1}=t^0=1$.  Namely, \eqref{v mu defi} yields
\be \intr w(\et,\ta) (\mu(\eta,\ta)-M_a(\et))d\et=t^{2\al-1}\intr u(x,t)(m(x,t)-\mathcal{M}_a(x,t)))dx, \ee
so that, since, $2\al-1=(2-\theta)/(2+\theta)=0$, \eqref{odpkqw th2} becomes
\be \label{csmap th2} \lim_{t \to \infty} \intr u(x,t) (m(x,t)-\mathcal{M}_a(x,t)))dx =\lim_{\vep \to \infty} \intr \util(x,t) (\mtil (x,t)-\mathcal{M}_a(x,t)))dx=0. \ee
Hence, writing
\be \intr(m(\cdot,t)-\mtil(\cdot,t))(u(\cdot,t)-\util(\cdot,t))\\
= \intr(m(\cdot,t)-\mathcal{M}_a(\cdot,t))u(\cdot,t)- \intr(\mtil(\cdot,t)-\mathcal{M}_a(\cdot,t))\util(\cdot,t)),\ee
we may use \eqref{csmap th2} to let $t \to \infty$ in \eqref{kapsda th2}, obtaining
\be \label{LL askdx th2}  \int_{0}^{\infty} \intr \left(\frac{1}{2}(m+\mtil)|u_x-\util_x|^2 + (m^{\theta}-\mtil^{\theta})(m-\mtil)\right)=0, \ee 
so that $\mtil=m$, and $u_x=\util_x$ in $\{m>0\}$. The first equation of \eqref{mfginfhor} then implies that $u_t=\util_t$ in $\{m>0\}$. We showed earlier that the subsequential limits of $(u^T,m^T)$ are weak solutions, so, since $m$ is unique and $Du$ is unique on $\{m>0\}$, \eqref{mT lim th=2} and \eqref{DuT lim th=2} hold. In particular, by \eqref{freebd ch}, the support of $m$ is connected, so $u$ and $\util$ differ by a constant on $\{m>0\}$.
\end{proof}
\begin{cor}\label{cor:main th=2} The statement of Theorem \ref{thm: main} holds when $\theta=2$.    
\end{cor}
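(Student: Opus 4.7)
The plan is to follow the strategy of the proof of Corollary~\ref{cor:main th>2}, combining the finite-time convergence of Theorem~\ref{thm: fin theta=2} with the asymptotic analysis \eqref{conv result 1 theta>2}--\eqref{conv result 3 theta>2} of the limit solution $(u,m)$. The starting observation is that, by \eqref{mu suppt bd}, for each fixed $t>0$ the supports of $m^T(\cdot,t)$, $m(\cdot,t)$, and $\mathcal{M}_a(\cdot,t)$ are all contained in a common compact set $K_t\subset\R$, for $T$ sufficiently large. This lets us reduce $L^p$ estimates on $\R$ to uniform estimates on $K_t$.

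For \eqref{Sec5 conv result 1}, the uniform convergence \eqref{mT lim th=2} on compact subsets of $\R\times[0,\infty)$ implies that $\|m^T(\cdot,t)-m(\cdot,t)\|_{L^\infty(\R)}\to 0$ as $T\to\infty$. Since both $m^T(\cdot,t)-m(\cdot,t)$ and $m(\cdot,t)-\mathcal{M}_a(\cdot,t)$ vanish outside $K_t$, it follows that, for every $p\in[1,\infty]$, $\limsup_{T\to\infty}\|m^T(\cdot,t)-\mathcal{M}_a(\cdot,t)\|_{L^p(\R)}=\|m(\cdot,t)-\mathcal{M}_a(\cdot,t)\|_{L^p(\R)}$, and \eqref{conv result 1 theta>2} then yields \eqref{Sec5 conv result 1}.

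For \eqref{Sec5 conv result 2}, the natural decomposition is
\begin{equation*}
m^T|u^T_x-(\mathcal{U}_a)_x|^2 \;\leq\; 2\,m^T|u^T_x-u_x|^2 + 2\,m^T|u_x-(\mathcal{U}_a)_x|^2.
\end{equation*}
The second term converges in $L^p$ to $2m|u_x-(\mathcal{U}_a)_x|^2$ because $m^T\to m$ uniformly on $K_t$, and is then controlled by \eqref{conv result 2 theta>2}. The first term is the main obstacle: \eqref{DuT lim th=2} only provides uniform convergence of $u^T_x$ to $u_x$ on compact subsets of the open set $\{m>0\}$, not up to the free boundary. To handle this, I would fix $\vep>0$ and split the integral according to whether $m(\cdot,t)\geq\vep$ or $m(\cdot,t)<\vep$. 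On the first region (a compact subset of $\{m>0\}$) $u^T_x\to u_x$ uniformly and the contribution tends to $0$; on the second, the uniform bounds of Propositions~\ref{prop: smoothing effect} and \ref{prop: grad rate}, combined with $m^T\to m$, show that the integrand is pointwise $O(\vep)$ uniformly in $T$, so its $L^p$ norm on $K_t$ is $O(\vep)$. Letting $\vep\to 0$ finishes this step.

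Finally, \eqref{Sec5 conv result 3} is deduced from the other two via the HJ equation, which gives
\begin{equation*}
u^T_t-(\mathcal{U}_a)_t = \tfrac12\bigl((u^T_x)^2-(\mathcal{U}_a)_x^2\bigr) - \bigl((m^T)^\theta-\mathcal{M}_a^\theta\bigr),
\end{equation*}
so that $m^T|u^T_t-(\mathcal{U}_a)_t|$ is controlled by a linear combination of $m^T(|u^T_x|+|(\mathcal{U}_a)_x|)\,|u^T_x-(\mathcal{U}_a)_x|$ and $m^T|(m^T)^\theta-\mathcal{M}_a^\theta|$. H\"older's inequality, together with the uniform gradient bounds on $K_t$, then reduces \eqref{Sec5 conv result 3} to \eqref{Sec5 conv result 1} and \eqref{Sec5 conv result 2} (with the exponents suitably adjusted). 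The main technical point throughout is the localization argument used in \eqref{Sec5 conv result 2}, which is precisely what compensates for the fact that, in the critical case, global uniqueness is only partial and $u^T$ converges to $u$ only in the interior of the support.
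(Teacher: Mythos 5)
Your proposal is correct and is essentially the paper's own argument: the paper proves this corollary in two lines, observing that by \eqref{mu suppt bd} the supports of $m^T(\cdot,t)$ remain in a fixed compact set as $T\to\infty$, and then citing \eqref{mT lim th=2}, \eqref{DuT lim th=2} together with \eqref{conv result 1 theta>2}--\eqref{conv result 3 theta>2} for the limit solution from Theorem \ref{thm: fin theta=2}; your compact-support reduction and the $\vep$-splitting near the free boundary are exactly the details being compressed there. The one point where you take a different route is \eqref{Sec5 conv result 3}: the paper would simply use that $Du^T\to Du$ in \eqref{DuT lim th=2} already includes the time derivative and then invoke \eqref{conv result 3 theta>2}, whereas you re-derive the $u_t$-statement from the Hamilton--Jacobi equation and \eqref{Sec5 conv result 1}--\eqref{Sec5 conv result 2}. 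That reduction does close (the exponent bookkeeping works for all $p$, using $m^T=O(t^{-\al})$, $|u^T_x|,|(\mathcal{U}_a)_x|=O(t^{\al-1})$ and the $O(t^{\al})$ measure of the supports, and the zeroth-order term is handled since $\al\theta=2-2\al$), but one small caveat should be added: the identity $-(\mathcal{U}_a)_t+\frac12(\mathcal{U}_a)_x^2=\mathcal{M}_a^{\theta}$ holds only on the support of $\mathcal{M}_a$, so on $\{m^T>0\}\setminus\text{supp}\,\mathcal{M}_a(\cdot,t)$ your identity acquires an extra term of size $t^{2\al-2}\left(\frac{\al(1-\al)}{2}(x/t^{\al})^2-R_a\right)_+$; its weighted contribution is harmless because $t^{\al}m^T\to 0$ uniformly on that region (this is \eqref{Sec5 conv result 1} with $p=\infty$, proved in your first step), together with the uniform bound on the supports. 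With that one-line remark the argument is complete and matches the paper's proof in substance.
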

\begin{proof}By \eqref{mu suppt bd}, for any fixed $t$, the support of $m^T(\cdot,t)$ is compact and remains bounded as $T\to \infty$. Thus, the result follows from \eqref{mT lim th=2}, \eqref{DuT lim th=2}, and \eqref{conv result 1 theta>2}--\eqref{conv result 3 theta>2}.
\end{proof}

\section{The planning problem} \label{sec:plan}
In this final section, we explain the necessary modifications to the preceding arguments in order to prove the main results for the case of the planning problem \eqref{mfg}--\eqref{pp}. The changes needed are relatively small. A minor point is that the estimates of Theorem \ref{thm.intro2} now involve the function $\mathscr{d}$ defined by \eqref{scrd defi}, which means that some of the estimates that held up to the terminal time now degenerate near $t=T$. This is not an issue, however, since our results concern the intermediate asymptotic behavior in which $t << T$. Besides this, there are two main difficulties.

The first difficulty is the fact that the free boundary curves are no longer known to be monotone. This issue is readily addressed in Lemma \ref{lem: plan early monotone} below, by showing that monotonicity still holds in the interval $[0,\sigma T],$ where $\sigma>0$ is a small constant.

The second difficulty is that, while the function $u$ in \eqref{mfg}--\eqref{tc} is globally unique, $u$ in \eqref{mfg}--\eqref{pp} is only unique (up to a constant) in the set $\{m>0\}$. For this reason, the optimal trajectories outside the support of $m$ cannot be characterized in this case as they were in Proposition \ref{prop trajec}. Instead, we must argue that, given any solution $(\tilde{u},m)$ there exists a sufficiently well-behaved solution $(u,m)$ that agrees with $\tilde{u}$ in the support of $m$. This will be explained in Proposition \ref{prop: plan estimates} below. 
\begin{lem} \label{lem: plan early monotone}  Assume that $m_0$ satisfies \eqref{m0 assum} and \eqref{m0 C1,1}, and $m_T$ satisfies \eqref{mT assum} and $\eqref{mT bump}$. Let $(u,m)$ be a solution to \eqref{mfg}--\eqref{pp}, and let $\gamma_L$, $\gamma_R$ be its free boundary curves. Then there exists \be K=K(C_0,C_1,|a_0|,|b_0|,|a_1|,|b_1|)\ee  such that \eqref{smoothing effect}, \eqref{gamL}, \eqref{gamL'} and \eqref{gamL''} hold for $t\in [0,T/2]$. Furthermore, there exists $\sigma\in (0,1/4)$, with $\sigma^{-1}=\sigma^{-1}(C_0,C_1,|a_0|,|b_0|,|a_1|,|b_1|)$, such that, for $T>\max(|a_0|^{1/\al},|b_0|^{1/\al},|a_1|^{1/\al},|b_1|^{1/\al},\sigma^{-1}),$ we have

 \be \label{plan monot near 0} \gamma_L'(t)<0 \text{ and } \gamma_R'(t)>0, \quad t\in [0,\sigma T],\ee
 \be  \label{plan monot near T}\gamma_L'(t)>0 \text{ and } \gamma_R'(t)<0, \quad t\in [(1-\sigma) T,T]. \ee
\end{lem}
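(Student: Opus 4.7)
The plan is to split the proof into two parts: the estimates on $[0, T/2]$, and the monotonicity of the free boundary curves near $t=0$ and $t=T$.

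For the first part, the key observation is that $\mathscr{d}(t) = t$ on $[0, T/2]$ by \eqref{scrd defi}, so the bounds \eqref{gamma bd} and \eqref{gamma x bd} from Theorem \ref{thm.intro2} take exactly the same form as in the terminal cost case. The proofs of Proposition \ref{prop: smoothing effect} (yielding \eqref{smoothing effect}) and Proposition \ref{prop: free bd rates} (yielding \eqref{gamL} and \eqref{gamL''}) therefore carry over essentially verbatim. For \eqref{gamL'}, the interpolation step must be carried out with a window that stays in $[0, T/2]$: I would use the forward interpolation on $[t, 2t+1]$ for $t \leq T/4$, exactly as in the proof of Proposition \ref{prop: free bd rates}, and a backward interpolation on $[t/2, t]$ for $t \in [T/4, T/2]$.

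For the monotonicity, Theorem \ref{thm.intro1} gives strict convexity of $\gamma_L$ and strict concavity of $\gamma_R$, so $\dot\gamma_L$ is strictly increasing and $\dot\gamma_R$ strictly decreasing, and each has at most one zero $t_L^*, t_R^* \in (0, T]$ (set equal to $T$ if no sign change occurs). The claim \eqref{plan monot near 0} reduces to $\min(t_L^*, t_R^*) \geq \sigma T$, and \eqref{plan monot near T} will follow by the time reversal $t \mapsto T - t$. The key quantitative input is the width bound
\[
\gamma_R(t) - \gamma_L(t) = \int_{a_0}^{b_0} \gamma_x(x,t)\,dx \geq \frac{b_0-a_0}{C}(1+t^\alpha), \qquad t \in [0, T/2],
\]
from \eqref{gamma x bd}, which forces the support width at $t = T/2$ to be of order $T^\alpha$, much larger than the $O(1)$ widths at the time endpoints. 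To show $t_L^* \geq \sigma T$, I argue by contradiction: if $t_L^* < \sigma T$, then $\gamma_L$ is nondecreasing on $[t_L^*, T]$, and integrating the first-part speed bound over $[0, t_L^*]$ gives $\gamma_L(T/2) \geq \gamma_L(t_L^*) \geq a_0 - K(\sigma T)^\alpha/\alpha$, which by the width estimate forces $\gamma_R(T/2) \geq c T^\alpha$ for $\sigma$ small. Since $\gamma_R(T) = b_1$ is bounded, $\gamma_R$ must drop by at least $c T^\alpha/2$ on $[T/2, T]$; but the symmetric version of \eqref{gamL'} on $[T/2, T]$ (where $\mathscr{d}(t) = T - t$) bounds this drop by $K(T/2)^\alpha/\alpha$, yielding a contradiction for $\sigma$ chosen small and $T$ large. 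The same reasoning, with $\gamma_L$ and $\gamma_R$ interchanged, gives $t_R^* \geq \sigma T$.

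The main obstacle is the careful bookkeeping needed to make the above contradiction genuine, rather than a vacuous constraint on the constants. One must track precisely the dependence of the width constant from \eqref{gamma x bd} and the speed constant from \eqref{gamL'} on $C_0, C_1, |a_0|, |b_0|, |a_1|, |b_1|$, and then choose $\sigma$ sufficiently small as an explicit function of those quantities, together with $\alpha$.
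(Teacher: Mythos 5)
Your treatment of the estimates on $[0,T/2]$ is exactly the paper's: since $\mathscr{d}(t)=t$ there, the proofs of Propositions \ref{prop: smoothing effect} and \ref{prop: free bd rates} carry over verbatim (monotonicity of the free boundary was never used in them), and your adjustment of the interpolation window for \eqref{gamL'} is routine. Reducing \eqref{plan monot near T} to \eqref{plan monot near 0} by time reversal is also fine.

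The monotonicity argument, however, has a genuine gap, and it is precisely the one you flag at the end but cannot fix by "tracking constants." In your contradiction you force $\gamma_R(T/2)\geq c\,T^{\alpha}$, where $c$ is at best of the order of the width constant $(b_0-a_0)/C$ coming from \eqref{gamma x bd} (and is only decreased by taking $\sigma$ small), and you then need the drop of $\gamma_R$ on $[T/2,T]$, which the time-reversed \eqref{gamL'} bounds by $\frac{K}{\alpha}(T/2)^{\alpha}$, to be smaller than roughly $\frac{c}{2}T^{\alpha}$. Both sides are of size $\mathrm{const}\cdot T^{\alpha}$, and the two constants are fixed, data-dependent quantities with no reason to be ordered (heuristically $c\lesssim 1/C$ while $K\gtrsim C$); crucially, $\sigma$ does not appear in this decisive inequality at all, so no choice of $\sigma$ small or $T$ large produces the contradiction. (A secondary point: a lower bound on the width requires a lower bound on $b_0-a_0$, which is not a function of $C_0,|a_0|,|b_0|$, so the stated constant dependence would also be violated.) What is missing is an input that \emph{blows up} as $\sigma\to 0$, and the paper obtains it from the curvature nondegeneracy \eqref{gamL''} rather than from the width: if $\dot\gamma_L(\sigma T)\geq 0$, then $\dot\gamma_L(T/2)\geq\int_{\sigma T}^{T/2}\Ddot\gamma_L\geq \frac{1}{C(1-\alpha)}T^{\alpha-1}\big(\sigma^{-(1-\alpha)}-2^{1-\alpha}\big)$, and convexity together with \eqref{gamL} gives $\gamma_L(T)\geq\gamma_L(\sigma T)+(T/2)\dot\gamma_L(T/2)\geq T^{\alpha}\big(\tfrac{1}{2C(1-\alpha)}(\sigma^{-(1-\alpha)}-2^{1-\alpha})-C\sigma^{\alpha}\big)$, which exceeds $T^{\alpha}$ once $\sigma$ is small, contradicting $\gamma_L(T)=a_1$ as soon as $T^{\alpha}>|a_1|$. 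Replacing your width/speed comparison by this second-derivative argument (and its time-reversed analogue near $t=T$, using the symmetric bounds $|\gamma_L(t)|\leq C(1+(T-t)^{\alpha})$ and $\Ddot\gamma_L(t)\geq \frac{1}{K(1+(T-t)^{2-\alpha})}$ on $[T/2,T]$) is what closes the proof.
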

\begin{proof} 
As usual, it is enough to show the statements for $\gamma_L$. The proof of \eqref{smoothing effect}, \eqref{gamL}, \eqref{gamL'} and \eqref{gamL''} follows exactly as in Propositions \ref{prop: smoothing effect} and \ref{prop: free bd rates}, noting that $\mathscr{d}(t)=t$ for $t \in [0,T/2]$, and that the monotonicity of $\gamma_L$ and $\gamma_R$ was not used in these proofs. In fact, by a symmetric argument, one sees that the following holds as well:
\be \label{gam'' plan aoisjc} |\gamma_L(t)|\leq C(1+(T-t)^{\al}), \quad \Ddot \gamma_L(t) \geq \frac{1}{K(1+(T-t)^{2-\al})}, \quad t\in [T/2,T], \ee
Assume that $\sigma T\geq 1,$ and, by contradiction, assume that $\dot \gamma_L(\sigma T) \geq 0$. Since $\gamma_L$ is convex, we must then have $\dot \gamma_L(t) \geq 0$ for $t \in [\sigma T,T]$. Thus, by \eqref{gamL}, allowing the constant $C$ to increase at each step, we have

\be \label{plan early asodao} \dot \gamma_L(T/2) = \int_{\sigma T}^{T/2}\Ddot \gamma_L(t)\geq \frac{1}{C}\int_{\sigma T}^{T/2} t^{\al-2}dt  = \frac{1}{C(1-\al)}T^{\al-1}((\sigma^{-1})^{1-\al}-2^{1-\al})\ee
Since $\gamma_L$ is convex and non-decreasing on $[\sigma T,T]$, and recalling that $\al \in (0,1)$, it then follows from \eqref{gamL} and \eqref{plan early asodao} that
\begin{multline}    
\gamma_L(T) \geq \gamma_L(\sigma T)+ (T/2)\dot \gamma_L(T/2) \geq -C(\sigma T)^{\al}+ \frac{1}{2C(1-\al)}T^{\al}\left(\left(\sigma^{-1}\right)^{1-\al}-2^{1-\al}\right)\\
=T^{\al}\left(\frac{1}{2C(1-\al)}\left(\left(\sigma^{-1}\right)^{1-\al}-2^{1-\al}\right)-C\sigma^{\al}\right)\geq T^{\al}\end{multline} 
if $\sigma$ is chosen sufficiently close to $0$, independently of $T$. But $\gamma_L(T)=a_1$ is a fixed constant, so this is impossible as soon as $T^{\al}> a_1$. Hence, for such $T$, $\dot \gamma_L(\sigma T)> 0$, as wanted. This proves \eqref{plan monot near 0}. Similarly,
\eqref{plan monot near T} then follows by a symmetric argument, using \eqref{gam'' plan aoisjc} instead of \eqref{gamL} and \eqref{gamL''}, and the fact that $\gamma_L(0)=a_0$ is also a fixed constant.   
\end{proof}
In the next proposition, we state the a priori estimates for the planning problem which correspond to those of Section \ref{sec: estimates}.
\begin{prop}\label{prop: plan estimates} Assume that $m_0$ satisfies \eqref{m0 assum} and \eqref{m0 C1,1}, and $m_T$ satisfies \eqref{mT assum} and $\eqref{mT bump}$. For any solution $(\tilde{u},m)$ to \eqref{mfg}--\eqref{pp}, there exists a solution $(u,m)$ such that $u$ agrees with $\util$ on $\{m>0\}$, and the following holds. For any $\delta>0$ and any $\vep \in (0,1)$, there exist constants $K>0$, $K_1>0$, $\sigma \in (0,1/4)$,  with $ \sigma^{-1}$ $ =\sigma^{-1}(C_0,C_1,|a_0|,|b_0|,|a_1|,|b_1|),$
$K$$=K(C_0,C_1,|a_0|,|b_0|,|a_1|,|b_1|)$, $K_1$$=(C_0,C_1,|a_0|,|b_0|,|a_1|,|b_1|,\theta,\theta^{-1},\delta^{-1})$, such that, for $T\geq K,$ the following holds:
\be \label{plan m gamL bds}\text{\eqref{smoothing effect}, \eqref{gamL}, \eqref{gamL'}, and \eqref{gamL''} hold for $t\in (1,T/2),$} \ee 
\be \label{oscu local bd plan}\underset{S\left(m,\frac{t_0}{2},2t_0\right)}{\emph{osc}} (u)\leq  
  K_1 t_0^{2\al-1}, \quad t_0\in [\delta,T/4],\ee
  \be \label{plan Du comp spt} Du \equiv 0 \text{ outside of $[-K(1+T^{\al}),K(1+T^{\al})].$}\ee
\be \label{plan grad energy bds}\text{\eqref{grad local bd}, and \eqref{m energy bd} hold for $t_0 \in [0, \sigma T]$, with constant $K_1.$} \ee 
Furthermore, there exists $s\in (0,1)$, depending only on $C_0,C_1,|a_0|,|b_0|,|a_1|, |b_1|,\delta^{-1}$, such that, for every $\overline{t} \in [2\delta,\sigma T]$, and every $(x_0,t_0),(x_1,t_1) \in S(m,\overline{t}/2,2\overline{t}),$
\be \label{p conti plan} |m^{\theta}(x_1,t_1)-m^{\theta}(x_0,t_0)| \leq Ct_0^{-\alpha \theta} (|(x_1-x_0)t_0^{-\al}|^{s}+|(t_1-t_0)t_0^{-1}|^{s}).
\ee    
\end{prop}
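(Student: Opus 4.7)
The strategy is to adapt each result of Section \ref{sec: estimates} to the planning problem, with two main differences: (i) the free-boundary monotonicity that held on all of $[0,T]$ for the terminal-cost problem now only holds on $[0,\sigma T]\cup [(1-\sigma)T,T]$ by Lemma \ref{lem: plan early monotone}, which is why most estimates are restricted to $t\in [0,\sigma T]$; (ii) $\tilde u$ is not globally unique, so we must first construct a good representative $u$ that has controlled behavior outside the support of $m$.

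For the construction of $u$, the plan is as follows. Given $(\tilde u, m)$, extract the terminal datum $\tilde u(\cdot,T)$ on $[a_1,b_1]=\{m_T>0\}$ and construct a bounded extension $g:\R\to\R$ which agrees with $\tilde u(\cdot,T)$ on $[a_1,b_1]$ and equals $\tilde u(a_1,T)$ (respectively $\tilde u(b_1,T)$) outside a slightly larger bounded interval. Then define
\begin{equation*}
u(x,t)=\inf_{\beta\in H^1((t,T)),\,\beta(t)=x}\int_t^T\left(\tfrac12|\dot\beta(s)|^2+m(\beta(s),s)^\theta\right)ds+g(\beta(T)).
\end{equation*}
By the smoothing effect \eqref{plan m gamL bds} and boundedness of $g$, this $u$ is Lipschitz and is a viscosity solution of the first equation of \eqref{mfg}; by the uniqueness part of Theorem \ref{thm: prelim wellposedness}, $u$ agrees with $\tilde u$ on $\{m>0\}$ up to an additive constant (which we absorb). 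For $|x|$ large, the constant competitor $\beta\equiv x$ is optimal, giving $u(x,t)=g(x)\equiv\text{const}$ there, which yields \eqref{plan Du comp spt}. One can then establish an analogue of Proposition \ref{prop trajec}: optimal trajectories starting in $\{m=0\}$ are either constant, straight segments ending at $(\gamma_L(T),T)$ or $(\gamma_R(T),T)$, or straight segments followed by a portion of a free-boundary curve.

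With this choice of $u$ in hand, the remaining estimates transfer from Section \ref{sec: estimates} with minimal change. Estimate \eqref{plan m gamL bds} is already provided by Lemma \ref{lem: plan early monotone}. The oscillation bound \eqref{oscu local bd plan} follows by the Lagrangian-coordinates argument of Proposition \ref{prop:osc u}: monotonicity of the support on $[0,T/2]$ is guaranteed by Lemma \ref{lem: plan early monotone}, $\bar u(x,t)=u(\gamma(x,t),t)$ is non-increasing in $t$, and the dynamic programming principle applied to linear competitors gives the sharp $t_0^{2\alpha-1}$ scale. The gradient estimate in \eqref{plan grad energy bds} follows from the localized Bernstein computation of Proposition \ref{prop: grad rate} on $[0,\sigma T]$; the bound outside $\{m>0\}$ is then supplied by the trajectory analysis of the previous paragraph, exactly as in Corollary \ref{cor grad bd m=0}. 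The energy bound in \eqref{plan grad energy bds} follows from the displacement-convexity formula of Lemma \ref{lem:displ}, which is valid for both problems, combined with the free-boundary and gradient bounds already obtained. Finally, \eqref{p conti plan} follows by repeating the intrinsic Harnack and De Giorgi reduction-of-oscillation arguments of Propositions \ref{prop:Harnack}--\ref{prop:holder m} on $[\delta,\sigma T]$, where Lemma \ref{lem: plan early monotone} supplies the upper and lower bounds on $\gamma_x$ needed to quantify the degeneracy of \eqref{eq:DG p eq}.

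The main obstacle is the first step: constructing the representative $u$ of the equivalence class of $\tilde u$ in such a way that its behavior outside $\{m>0\}$ can be pinned down, and verifying the planning-problem analogue of Proposition \ref{prop trajec} for this specific $u$. Once this is in place, the rest of the estimates are essentially mechanical adaptations of the terminal-cost arguments, valid on the interval $[0,\sigma T]$ on which the free-boundary curves are monotone.
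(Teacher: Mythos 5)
Your overall outline (Lemma \ref{lem: plan early monotone} for the free boundary, then transferring the Section \ref{sec: estimates} estimates to $[0,\sigma T]$) matches the paper, and you correctly identify the construction of a good representative $u$ off $\{m>0\}$ as the crux. But your construction of that representative has genuine gaps. First, the claim that your value function $u$ (defined with the extended terminal datum $g$) agrees with $\util$ on $\{m>0\}$ ``by the uniqueness part of Theorem \ref{thm: prelim wellposedness}'' is circular: uniqueness applies to solution pairs of \eqref{mfg}--\eqref{pp}, and for $(u,m)$ to be such a pair you already need $u_x=\util_x$ $m$-a.e.\ (the continuity equation involves $u_x$), which is exactly what you are trying to prove. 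A direct argument is needed, and its nontrivial half --- that a trajectory starting in $\{m>0\}$ cannot lower its cost by exiting the support and landing where $g$ equals the constant extension value --- is not supplied; the analogous statement for $\util$ cannot be invoked because $\util(\cdot,T)$ off $[a_1,b_1]$ is not controlled, and Theorem \ref{thm.intro2} on optimal trajectories applies to the solution pair $(\util,m)$, not to your $u$.

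Second, the trajectory analysis does not transfer ``exactly as in Corollary \ref{cor grad bd m=0}'': the proofs of Proposition \ref{prop trajec} and Corollary \ref{cor grad bd m=0} use in an essential way that $\gamma_L$ is monotone on all of $[t,T]$ and the specific terminal condition \eqref{tc}, whereas for the planning problem $\gamma_L$ decreases up to $t^*\in[\sigma T,(1-\sigma)T]$ and then increases back to $a_1$, so the structure of vacuum trajectories (dodging the advancing support, riding the boundary past $t^*$, re-entering the vacated region) is genuinely different and the bound $|u_x|\leq\max(|\dot\gamma_L|,|\dot\gamma_R|)$ requires a new argument. Third, \eqref{plan Du comp spt} at the scale $K(1+T^{\al})$ is doubtful for your $u$ when $\theta>2$: since $g$ is generally non-constant on $[a_1,b_1]$ with oscillation of order one, a point at distance $d\gg T^{\al}$ from the support can reach the minimum of $g$ at kinetic cost $\sim d^2/T$, which is $o(1)$ for $T^{\al}\ll d\ll T^{1/2}$ (recall $\al<1/2$ when $\theta>2$); staying put is then not optimal and $Du\not\equiv0$ there, so your $u$ is only constant beyond scale $\sim T^{1/2}$, which is too weak for the use of \eqref{plan Du comp spt} in Theorem \ref{thm: fin theta>2 plan}. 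The paper avoids all three issues by building $u$ off the support directly by the method of characteristics emanating from the free boundary: the region $\{x\leq\gamma_L(t)\}$ is split into four pieces around the minimum time $t^*$ of $\gamma_L$, on $\Omega_1$ one sets $u$ along the segments $l_t(s)=(\gamma_L(t)+(s-t)\dot\gamma_L(t),s)$ using the boundary data, and $u$ is extended as a constant across the vertical line $\{x=\gamma_L(t^*)\}$ (where $u_x=u_t=0$); this gives by construction $|u_x(x,t)|\leq|\dot\gamma_L(t)|$, a $C^1$ solution agreeing with $\util$ on $\{m>0\}$, and $Du\equiv0$ outside an interval of size $O(1+T^{\al})$.
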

\begin{proof} We begin by noting that \eqref{plan m gamL bds} was already shown in Lemma \ref{lem: plan early monotone}. The oscillation bound \eqref{oscu local bd plan} follows exactly as in Proposition \ref{prop:osc u}, with the only difference being the shorter range $t_0\in [0,T/4]$, which corresponds to the shorter range $[0,T/2]$ in \eqref{plan m gamL bds}.

The proof of the gradient rates \eqref{grad local bd} given in Section \ref{subsec: grad bds} had two independent components: the first was estimating $|u_x|$ outside the support of $m$ by characterizing the optimal trajectories (Proposition \ref{prop trajec} and Corollary \ref{cor grad bd m=0}), and the second was estimating $|Du|$ inside $\{m>0\}$ by using the elliptic equation satisfied there by $u$ (Proposition \ref{prop: grad rate}). The latter component of the proof may be repeated with no change, since the terminal condition was not used. Again, the only difference is in the shorter range $t\in [0,\sigma T]$, to ensure that \eqref{plan m gamL bds} and \eqref{oscu local bd plan} are valid (in fact, for this part of the proof inside the support, taking $\sigma<1/4$ would suffice). Thus, to show \eqref{grad local bd} for $t\in [\delta,\sigma T]$, we only need to prove that, for $(x,t)\in (0,\sigma T)$ such that $m(x,t)=0$, we have
\be \label{grad out local dsaoij} |u_x(x,t)|\leq \max(\dot \gamma_L(t),\dot \gamma_R(t)).\ee
This is the step where the planning problem differs the most from \eqref{mfg}--\eqref{tc}. Indeed, we may no longer follow the approach of characterizing the trajectories uniquely, since $u$ may not be uniquely defined outside of $\{m>0\}$. Instead, we explain how to construct a $C^1$ solution that satisfies \eqref{grad out local dsaoij}, by redefining $u$, if necessary, outside of the support of $m$ (while preserving the fact that $(u,m)$ is a global solution to \eqref{mfg}--\eqref{pp}). By symmetry, it is sufficient to explain how to construct the values of $u(x,t)$ in the region  $\Omega=\{(x,t)\in \R \times [0,T]:x\leq \gamma_L(t)\}$.
We require $\sigma$ to be less than the corresponding constant $\sigma$ of Lemma \ref{lem: plan early monotone}. By the strict convexity of $\gamma_L$, there exists a unique time $t^*\in [\sigma T, (1-\sigma)T]$ such that $\gamma_L'(t^*)=0$. Indeed, $t^*$ is the global minimum point of $\gamma_L$. The region $\Omega$ is then subdivided into four subregions with non-overlapping interiors:
\be \Omega_1=\{0\leq t\leq t^*, \, \gamma_L(t^*)\leq x \leq \gamma_L(t)\}, \quad \Omega_2=\{0 \leq t\leq t^*, \,\, x\leq \gamma_L(t^*)\}, \ee
\be \Omega_3= \{t^*\leq t \leq T, \, \gamma_L(t^*)\leq x \leq \gamma_L(t)\},\quad  \Omega_4=\{t^*\leq t \leq T, \,\, x\leq \gamma_L(t^*)\}. \ee
We extend $u$ piece by piece from $\text{supp}(m)$ to $\text{supp}(m)\cup \Omega$, starting with $\Omega_1$. Consider the family $\{l_t\}$ of line segments \be l_t(s)=(\gamma_L(t)+(s-t)\gamma_L'(t),s), \quad s\in [0,t].\ee
Then, by the convexity and monotonicity of $\gamma_L$, it follows that $\Omega_1$ is the disjoint union of these line segments. Recalling that $\gamma_L'(t)=-u_x(\gamma_L(t),t),$ it follows that, through the method of characteristics, one may define the values of $u$ on $\Omega_1$ along the line segments $l_t(\cdot)$ in terms of the values of $u(\gamma_L(t),t)$ and $u_x(\gamma_L(t))$. Note that the regions $\Omega_1$ and $\Omega_2$ overlap on the vertical segment $\{(\gamma_L(t^*),t):t\in [0,t^*]\}$, where $u_x\equiv u_t \equiv \gamma_L'(t^*)=0$. Hence, extending $u$ continuously to be constant on $\Omega_2$ produces a $C^1$ solution to the HJ equation on $\Omega_1 \cup \Omega_2$. Performing the same construction on $\Omega_3$, and then extending $u$ to be constant on $\Omega_4$, we get a $C^1$ extension of $u$ to all of $\text{supp}(m) \cup \Omega$. The key feature of this solution is that, by construction, at any $(x,t)\in \Omega_1$, we have $u_x(x,t)=u_x(\gamma_L(t_1),t_1)=-\gamma_L'(t_1)$ for some $t_1\in [t,t^*]$ and, therefore, using the convexity and monotonicity of $\gamma_L$ on $[0,t^*]$,
\be |u_x(x,t)|\leq |\gamma_L'(t_1)|\leq |\gamma_L'(t)|.\ee
This shows that $u$ satisfies \eqref{grad out local dsaoij}, and hence \eqref{grad local bd} holds globally. Furthermore, \eqref{plan Du comp spt} holds by construction, due to \eqref{gamL}. For the remaining estimates, the proofs proceed with no change, other than shortening the ranges to $[0,\sigma T]$, and halving the value of $\sigma$ whenever necessary. Namely, \eqref{m energy bd} follows with the same proof as Proposition \ref{prop: energy rates}, and \eqref{p conti} follows as in the proof of Proposition \ref{prop:holder m}.
\end{proof}
We now explain how to obtain all of the convergence results for the planning problem, starting with the subcritical case.
\begin{thm} \label{thm: fin theta>2 plan} Assume that $m_0$ satisfies \eqref{m0 assum} and \eqref{m0 C1,1},  $m_T$ satisfies \eqref{mT assum} and $\eqref{mT bump}$, and that $\theta > 2$. Let $(u,m)\in W^{1,\infty}(\R \times (0,\infty))\times C(\R \times [0,\infty))$ be the unique solution to \eqref{mfginfhor}, in the sense of Definition \ref{def: sol}, that satisfies 
\be \label{plan u(infty)=0 th>2} \lim_{t \to \infty} \|u(\cdot,t)\|_{L^{\infty}(\R)}=0. \ee
Let $(u^T,m^T)$ be the solution to \eqref{mfg}--\eqref{pp} that satisfies $\intr u^T(\cdot,T/2)m^T(\cdot,T/2)=0$.
Then, up to an adequate choice of $u^T$ on its region of non-uniqueness $\{m^T=0\}$, we have
\be \label{limit char th>2 plan} \lim_{T \to \infty} (u^T,m^T)=(u,m) \,\,\,\,\text{ in } \,\,\, C^1_{\emph{loc}}(\R \times (0,\infty))\times C_{\emph{loc}}(\R \times \left[0,\infty\right)).\ee

\end{thm}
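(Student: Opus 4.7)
The plan is to mirror the strategy of Theorem \ref{thm: fin theta>2} almost line by line, using Proposition \ref{prop: plan estimates} as the planning-problem replacement for the estimates of Section \ref{sec: estimates}. Note that existence and uniqueness of $(u,m)\in W^{1,\infty}(\R\times(0,\infty))\times C(\R\times[0,\infty))$ solving \eqref{mfginfhor} with the decay \eqref{plan u(infty)=0 th>2} is already supplied by Theorem \ref{thm: fin theta>2}, so what remains is the convergence statement \eqref{limit char th>2 plan}.

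For each $T$ large, I first fix the representative $u^T$ constructed in Proposition \ref{prop: plan estimates}, so that $(u^T,m^T)$ satisfies the full list of a priori estimates, in particular the oscillation bound \eqref{oscu local bd plan}, the gradient bound \eqref{grad local bd}, and the fact that $Du^T$ vanishes outside $[-K(1+T^\al),K(1+T^\al)]$. In $\{m^T>0\}$ the function $u^T$ is only determined up to an additive constant, and this constant is now pinned down by the condition $\int_{\R}u^T(\cdot,T/2)m^T(\cdot,T/2)=0$. Since $\int m^T=a>0$ and, by \eqref{oscu local bd plan}, the oscillation of $u^T(\cdot,T/2)$ over $\mathrm{supp}(m^T(\cdot,T/2))$ is $O(T^{2\al-1})$, this integral normalization yields
\be \|u^T(\cdot,T/2)\|_{L^\infty(\mathrm{supp}(m^T(\cdot,T/2)))}=O(T^{2\al-1}).\ee
The piecewise extension of Proposition \ref{prop: plan estimates}, together with \eqref{plan Du comp spt}, then propagates this bound to all of $\R$. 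Crucially, for $\theta>2$ we have $2\al-1<0$, so $\|u^T(\cdot,T/2)\|_{L^\infty(\R)}\to 0$ as $T\to\infty$.

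Next, I would propagate this smallness backwards in time. Combining the bound at $t=T/2$ with the differential inequality \eqref{hj subsol acsamx} (whose right-hand side uses only \eqref{smoothing effect}, valid on $[0,T/2]$ by \eqref{plan m gamL bds}), a standard comparison against the supersolution $\zeta(t)=\|u^T(\cdot,T/2)\|_\infty+\frac{K}{1-2\al}t^{2\al-1}$ gives the uniform-in-$T$ estimate
\be \|u^T(\cdot,t)\|_{L^\infty(\R)}=O(t^{2\al-1}),\quad t\in[1,T/2],\ee
which, together with Remark \ref{rem: ux bd [0,1]}, yields $\|u^T\|_{W^{1,\infty}(\R\times[0,T/2])}=O(1)$. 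I can then run the planning-problem analogue of Proposition \ref{prop: existence subseq}, using the estimates of Proposition \ref{prop: plan estimates} on $[0,\sigma T]$: along any subsequence $T_n\to\infty$, the Arzel\`a--Ascoli argument produces a limit $(u^\star,m^\star)\in W^{1,\infty}(\R\times(0,\infty))\times C(\R\times[0,\infty))$ solving \eqref{mfginfhor} in the sense of Definition \ref{def: sol}, with $(u^{T_n},m^{T_n})\to(u^\star,m^\star)$ in $C^1_{\mathrm{loc}}(\R\times(0,\infty))\times C_{\mathrm{loc}}(\R\times[0,\infty))$; the uniform rate $\|u^T(\cdot,t)\|_{L^\infty}=O(t^{2\al-1})$ then passes to the limit to give \eqref{plan u(infty)=0 th>2} for $(u^\star,m^\star)$.

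Uniqueness from Theorem \ref{thm: fin theta>2} identifies $(u^\star,m^\star)=(u,m)$; since this is independent of the subsequence, the full family converges, proving \eqref{limit char th>2 plan}. The main delicate point, and the one that really distinguishes this argument from Theorem \ref{thm: fin theta>2}, is the first step: converting the integral-type normalization at $t=T/2$ into a genuine $L^\infty$ bound that decays in $T$. This is precisely where the specific extension of $u^T$ to $\{m^T=0\}$ from Proposition \ref{prop: plan estimates} is indispensable, since without it, arbitrary pathological behavior in the non-uniqueness region could destroy uniform bounds and, consequently, the possibility of extracting a limit satisfying the decay condition.
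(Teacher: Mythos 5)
Your overall strategy is the same as the paper's (fix the representative of $u^T$ from Proposition \ref{prop: plan estimates}, convert the normalization at $t=T/2$ into a decaying $L^\infty$ bound, propagate backwards by comparison, extract subsequential limits, and identify them via the uniqueness statement of Theorem \ref{thm: fin theta>2}). However, there is a genuine gap at the one step you yourself single out as the delicate one. From the normalization and the oscillation bound \eqref{oscu local bd plan} (applied with $t_0=T/4$) you correctly get $\|u^T(\cdot,T/2)\|_{L^\infty(\mathrm{supp}\,m^T(\cdot,T/2))}=O(T^{2\al-1})$, but your next sentence — that "the piecewise extension of Proposition \ref{prop: plan estimates}, together with \eqref{plan Du comp spt}, then propagates this bound to all of $\R$" — uses gradient control at time $T/2$ that the proposition does not provide. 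The spatial derivative estimate \eqref{grad local bd} is only asserted for $t_0\in[0,\sigma T]$ (see \eqref{plan grad energy bds}), with $\sigma<1/4$, so at $t=T/2$ you have no stated bound of size $O(T^{\al-1})$ on $u^T_x$, neither inside the support nor in the region of width $O(T^{\al})$ between the support and the set where $Du^T\equiv 0$; the qualitative statement \eqref{plan Du comp spt} alone cannot convert a bound on the support into a bound on all of $\R$ while preserving the $O(T^{2\al-1})$ size. (One could try to extract such a bound from the characteristics construction inside the proof of Proposition \ref{prop: plan estimates} via $|u_x|\le|\dot\gamma_L|,|\dot\gamma_R|$ and \eqref{gamL'}, but this requires arguing about the location of the turning time $t^*$ relative to $T/2$ and is not contained in the statement you invoke.) The same issue makes your claim $\|u^T\|_{W^{1,\infty}(\R\times[0,T/2])}=O(1)$ unsupported beyond $t=\sigma T$, though that stronger claim is not actually needed for the local convergence \eqref{limit char th>2 plan}.

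The paper's proof is arranged precisely to avoid this: it first transports the information from the normalization at $T/2$ down to $t=\sigma T$ by iterating the oscillation bound \eqref{oscu local bd plan} a finite number of times inside the support (no gradient bound needed there), obtaining $u^T(\cdot,\sigma T)=O(T^{2\al-1})$ on $\mathrm{supp}(m^T(\cdot,\sigma T))$; only at $t=\sigma T$, where \eqref{plan grad energy bds} and \eqref{plan Du comp spt} are available, does it extend the bound to all of $\R$, and it then runs the comparison argument on $\R\times[1,\sigma T]$ (and on $\R\times[0,1]$), not on $[1,T/2]$. If you replace your step at $T/2$ by this descent to $\sigma T$, the rest of your argument (comparison, compactness, identification of the limit by the uniqueness in Theorem \ref{thm: fin theta>2}) goes through as in the paper.
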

\begin{proof} If needed, we redefine $u^T$ to be the solution given by Proposition \ref{prop: plan estimates}. The estimates for the continuous rescaling $(v^T,\mu^T)$ follow from Proposition \ref{prop: plan estimates} in the same way as in Propositions \ref{prop: mu bd}, \ref{prop: v Dv bd}, \ref{prop: v mu energy}, and \ref{prop: mu holder cont}, with only the following minor changes in the $L^{\infty}$ bound for $v^T$ of Proposition \ref{prop: v Dv bd}. Since $\intr u^T(\cdot,T/2)m^T(\cdot,T/2)=0$, there exists $x_0 \in \text{supp}(m^T(\cdot,T/2))$ such that $u(x_0,T/2)=0$. Then by iterating \eqref{oscu local bd plan} a finite number of times, $u^T(x,\sigma T)=O(T^{2\al-1})$ for all $x\in \text{supp}(m^T(\cdot,\sigma T))$. By the space derivative estimate given by \eqref{plan grad energy bds}, we in turn get \be \|u(\cdot,\sigma T)\|_{L^{\infty}(-K(1+T^{\al}),K(1+T^{\al}))}=O(T^{2\al-1}). \ee 
But then \eqref{plan Du comp spt} implies that \be \|u(\cdot,\sigma T)\|_{L^{\infty}(\R)}=O(T^{2\al-1}). \ee
We can now use the comparison principle on $\R \times [1,\sigma T]$ as in \eqref{u compar scoaca}, and then on $\R \times [0,1]$ as in \eqref{u^T bd [0,1] th>2}, to conclude that
\be \|u^T(\cdot,t)\|_{L^{\infty}(\R)}=O(t^{2\al-1}) \,\, \text{ for } t\in [1, \sigma T], \,\,\,\text{ and } \,\,\, \|u^T\|_{L^{\infty}(\R \times (0,\sigma T))}=O(1).  \ee
Having the estimates for the continuous rescaling, the proof of Theorem \ref{thm: fin theta>2} now goes through with no changes.    
\end{proof}
Next, we discuss the critical case.
\begin{thm} \label{thm: th=2 plan}  Assume that $m_0$ satisfies \eqref{m0 assum} and \eqref{m0 C1,1}, $m_T$ satisfies \eqref{mT assum} and $\eqref{mT bump}$, and that $\theta = 2$. Let $(u,m)$ is the unique weak solution to \eqref{mfginfhor} satisfying the natural growth assumptions of Proposition \ref{prop: convergence result}, which is guaranteed by Theorem \ref{thm: fin theta=2}. Let $(u^T,m^T)$ be a solution to \eqref{mfg}--\eqref{pp}. Then, as $T \to \infty$, \eqref{mT lim th=2}--\eqref{DuT lim th=2} hold. \end{thm}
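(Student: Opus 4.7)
The plan is to parallel the proof of Theorem \ref{thm: fin theta=2}, using Proposition \ref{prop: plan estimates} to supply the missing uniform estimates, just as Theorem \ref{thm: fin theta>2 plan} adapted Theorem \ref{thm: fin theta>2}. First, since the conclusions \eqref{mT lim th=2}--\eqref{DuT lim th=2} involve only $m^T$ (which is unique) and $Du^T$ (which is independent of the additive constant on $\{m^T>0\}$), I am free to replace the given $u^T$ by the version furnished by Proposition \ref{prop: plan estimates}, and to fix the remaining additive constant by, say, setting $u^T(0,1)=0$ for $T$ large enough that $0\in\mathrm{supp}(m^T(\cdot,1))$, which is guaranteed by Lemma \ref{lem: plan early monotone}. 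With this choice, Proposition \ref{prop: plan estimates} yields bounds on the continuous rescaling $(v^T,\mu^T)$ on $\R\times[-M,\log(\sigma T)]$ that are entirely analogous to \eqref{mun bd}--\eqref{mu_n energy bd}, but with the terminal estimates of Section \ref{sec: estimates} replaced by their planning counterparts.

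Second, I would run the compactness argument of Proposition \ref{prop: existence subseq} verbatim (the proof there is local in $t$ away from $t=0$ and $t=T$, so it only uses the estimates that Proposition \ref{prop: plan estimates} now provides). Along a subsequence this produces a limit $(u,m)\in W^{1,\infty}_{\mathrm{loc}}(\R\times(0,\infty))\times C(\R\times(0,\infty))$ solving the first equation of \eqref{mfginfhor} classically and the second distributionally, with $m^T\to m$ locally uniformly on $\R\times[0,\infty)$ (continuity up to $t=0$ coming from the equicontinuity of $m^T$ near $t=0$ as in the proof of Theorem \ref{thm: fin theta>2}, which is itself a consequence of \cite[Prop.~3.13, Prop.~3.15]{CMP}), and with $Du^T\to Du$ locally uniformly on $\{(x,t):x\in\mathrm{supp}(m(\cdot,t))\}$. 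The natural growth assumptions of Proposition \ref{prop: convergence result} are built into the construction.

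Third, I would upgrade $(u,m)$ to a weak solution in the sense of Definition \ref{def:weak sol}. The only nontrivial point is $m\in C([0,\infty),\mathcal{P}_1(\R))$ and $m(\cdot,0)=m_0$ in $d_1$. For this I would re-derive the Hamiltonian conservation
\be
\frac{d}{dt}\int_{\R}\Big(\tfrac12 m^T(u^T_x)^2-\tfrac{(m^T)^{\theta+1}}{\theta+1}\Big)dx=0
\ee
exactly as in the proof of Theorem \ref{thm: fin theta=2}: the formal computation only multiplies the two interior equations by $u_t$ and $m_t$ and uses the interior regularity together with the displacement-convexity integrability from Proposition \ref{prop: energy rates}, all of which are available here via Proposition \ref{prop: plan estimates}. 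Combined with the uniform boundedness of $(u^T)_x(\cdot,1)$, this yields the equi-Lipschitz $d_1$-bound $d_1(m^T(\cdot,t_1),m^T(\cdot,t_2))=O(|t_1-t_2|)$ on compact time intervals and up to $t=0$, so the limit $m$ lies in $C([0,\infty),\mathcal{P}_1(\R))$ and inherits the initial trace $m_0$.

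Finally, by the uniqueness statement of Theorem \ref{thm: fin theta=2}, any such weak solution $(u,m)$ is uniquely determined in the sense that $m$ is unique and $Du$ is unique on $\{m>0\}$. Consequently all subsequential limits agree on the relevant objects, and the full limits \eqref{mT lim th=2}--\eqref{DuT lim th=2} follow. The main obstacle is the justification of the Hamiltonian identity and the $d_1$-continuity at $t=0$ for the planning data: the formal computation is the same as in the terminal-cost case, but since $u^T$ is only defined up to a constant on $\{m^T>0\}$ (and $Du^T$ may be extended arbitrarily outside), care must be taken to work with the canonical extension provided by Proposition \ref{prop: plan estimates} so that all integrations by parts are legitimate. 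Once this is set up, the Lasry--Lions computation already carried out in the proof of Theorem \ref{thm: fin theta=2}, which crucially exploits the $\theta=2$ scaling $2\alpha-1=0$ to take $t\to\infty$ in \eqref{kapsda th2}, applies unchanged.
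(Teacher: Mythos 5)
Your proposal is correct and follows essentially the same route as the paper: redefine $u^T$ outside $\{m^T>0\}$ via Proposition \ref{prop: plan estimates}, normalize by an additive constant as in \eqref{u defi cases}, transfer the estimates to the continuous rescaling, and then repeat the proof of Theorem \ref{thm: fin theta=2} (compactness as in Proposition \ref{prop: existence subseq}, the Hamiltonian conservation/$d_1$-continuity giving a weak solution, and the Lasry--Lions uniqueness argument exploiting $2\alpha-1=0$). The only quibble is that your normalization does not require $0\in\mathrm{supp}(m^T(\cdot,1))$ (nor does Lemma \ref{lem: plan early monotone} guarantee this); subtracting the constant $u^T(0,1)$ is legitimate regardless, exactly as in \eqref{u defi cases}.
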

\begin{proof}
We may assume that $T$ is large enough that the conclusion of Lemma \ref{lem: plan early monotone} holds. Furthermore, the statement is invariant under vertical translations of $u^T$, and under replacement of $u^T$ by any solution that agrees with $u^T$ on the set $\{m^T>0\}$. Hence, we may first redefine $u^T$ outside the support of $m^T$, if necessary, so that the estimates of Proposition \ref{prop: plan estimates} hold for the pair $(u^T,m^T)$. Afterward, we may replace $u^T$ according to \eqref{u defi cases}. Note that this is just translation by a constant, so that $(u^T,m^T)$ is still a solution to \eqref{mfg}--\eqref{pp} satisfying the conclusion of Proposition \ref{prop: plan estimates}. The estimates for the continuous rescaling $(v^T,\mu^T)$, including the $L^{\infty}$ bounds, now follow with no change from \ref{prop: plan estimates}, and we may conclude by following the proof of Theorem \ref{thm: fin theta=2}.
\end{proof}
Finally, the following statement is the general convergence result, valid for any $\theta>0$, and corresponds to Theorem \ref{thm: fin hor}.
\begin{thm} \label{thm: main plan}Assume that $m_0$ satisfies \eqref{m0 assum} and \eqref{m0 C1,1}, and that  $m_T$ satisfies \eqref{mT assum} and $\eqref{mT bump}$. Let $(u^T,m^T)$ be the solution to \eqref{mfg}--\eqref{tc}, and let $\int_{\R}m_0=a>0$. Then, for every $p\in [1,\infty],$ \eqref{conv result 1}--\eqref{conv result 3} hold.
\end{thm}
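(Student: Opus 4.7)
The plan is to mirror the three-case decomposition used for the terminal cost problem (Propositions \ref{prop: main th<2}, \ref{cor:main th>2}, and \ref{cor:main th=2}), substituting each ingredient for its planning-problem counterpart. The central observation is that all the a priori estimates for the continuous rescaling $(v^T,\mu^T)$ needed to feed into Proposition \ref{prop: convergence result}, namely the $L^\infty$ and support bounds for $\mu^T$, the Lipschitz bound for $v^T$, the energy bound on $(\mu^T)^\theta$, and the uniform H\"older continuity of $\mu^T$, follow from Proposition \ref{prop: plan estimates} (after replacing $u^T$, if necessary, by the extension constructed there outside $\{m^T>0\}$). These bounds are stated on $\R\times(-\infty,\log(\sigma T)]$, but since $\log(\sigma T)\to\infty$, for any fixed $M>0$ they yield uniform-in-$T$ control on $\R\times(-\infty,M]$ once $T$ is large enough. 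Consequently, a direct analogue of Proposition \ref{prop: existence subseq} holds: every subsequence of $(u^{T_n},m^{T_n})$ has a further subsequence converging in $C^1_{\text{loc}}\times C_{\text{loc}}$ to a solution $(u,m)$ of \eqref{mfginfhor} meeting the hypotheses of Proposition \ref{prop: convergence result}, and therefore satisfying the bulk convergence statements for the continuous rescaling.

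I would then handle the three parameter ranges as follows. For $\theta>2$, Theorem \ref{thm: fin theta>2 plan} uniquely characterizes the limit, so the full family $(u^T,m^T)$ converges to the same $(u,m)$; combining this with Proposition \ref{prop: convergence result} and the uniform support bound \eqref{mu suppt bd} upgrades the convergence to all $L^p$, $p\in[1,\infty]$, exactly as in Corollary \ref{cor:main th>2}. For $\theta=2$, I would invoke Theorem \ref{thm: th=2 plan}, which provides uniqueness of the weak subsequential limit, uniform-on-compacts convergence of $m^T\to m$, and locally uniform convergence of $Du^T\to Du$ on $\{m>0\}$; the remaining argument proceeds exactly as in Corollary \ref{cor:main th=2}. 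For $\theta<2$, where uniqueness of the infinite-horizon limit is unavailable, I would reproduce the proof of Proposition \ref{prop: main th<2} verbatim, whose only inputs are the uniform regularity estimates for $(v^T,\mu^T)$ (now from Proposition \ref{prop: plan estimates}) and the exponential decay rates of Proposition \ref{prop: exponential theta<2}, neither of which depends on the terminal condition.

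The delicate point, and the reason why the adaptation is not entirely automatic, is the choice of $u^T$ on the set $\{m^T=0\}$, where it is non-unique for the planning problem. However, \eqref{conv result 2} and \eqref{conv result 3} feature $u^T_x$ and $u^T_t$ weighted by $m^T$, so they are insensitive to this choice, while \eqref{conv result 1} involves only $m^T$, which is unique. Hence one may freely replace $u^T$ by the characteristic-method extension constructed in Proposition \ref{prop: plan estimates} to secure the uniform gradient, oscillation, and free-boundary-speed bounds; once this is fixed, the structure of the terminal cost proof is preserved.

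I expect the main technical obstacle to be the supercritical case $\theta<2$, specifically confirming that the uniform modulus of continuity of $w^T_\eta$ on $\{\mu^T\geq \vep/2\}$, obtained through Lemma \ref{lem:regu Dw in mu>0} and Remark \ref{rem: Dv regu finite hor}, really is uniform in $T$ for $T$ large. This reduces to verifying that the interior Schauder constants depend only on quantities controlled by Proposition \ref{prop: plan estimates}, which is the case since Lemma \ref{lem:regu Dw in mu>0} is purely local in $(\eta,\tau)$ and the local $W^{1,\infty}$ bound for $v^T$ inside $\{\mu^T>0\}$ is uniform. Once this uniformity is in hand, the limsup argument \eqref{th<2 sxpaok4}--\eqref{th<2 sxpaok5} transfers without change, completing the proof.
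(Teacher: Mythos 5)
Your proposal is correct and follows essentially the same route as the paper: the paper also splits into the three ranges, deducing the cases $\theta>2$ and $\theta=2$ from Theorems \ref{thm: fin theta>2}--\ref{thm: fin theta>2 plan} and \ref{thm: fin theta=2}--\ref{thm: th=2 plan} respectively, and handling $\theta<2$ by redefining $u^T$ via the extension of Proposition \ref{prop: plan estimates} so that the proof of Proposition \ref{prop: main th<2} carries over unchanged. Your additional remarks on the insensitivity of \eqref{conv result 1}--\eqref{conv result 3} to the choice of $u^T$ on $\{m^T=0\}$ and on the uniformity of the interior estimates are consistent with the paper's reasoning.
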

 \begin{proof} The claim for $\theta>2$ follows from Theorems \ref{thm: fin theta>2} and \ref{thm: fin theta>2 plan}. For $\theta=2$, it follows from Theorems \ref{thm: fin theta=2} and \ref{thm: th=2 plan}. Finally, for $\theta \in (0,2)$, we redefine $u^T$ exactly as in the proof of Theorem \ref{thm: th=2 plan}, so that the proof of Proposition \ref{prop: main th<2} goes through with no change.    \end{proof}
 \subsection*{Acknowledgments} The author would like to thank A. Porretta for valuable comments and suggestions which helped improve the manuscript. The author was partially supported by P. E. Souganidis' NSF grant DMS-1900599, ONR grant N000141712095, and AFOSR grant FA9550-18-1-0494.


\begin{thebibliography}{1}
\bibitem{Bar} G.I. Barenblatt, {\it On some unsteady motions of a liquid or a gas in a porous medium}, Prikl. Mat. Mekh. 16 (1952), 67-78.

\bibitem{CaSo} P. Cannarsa and H. M. Soner, {\it Generalized one-sided estimates for solutions of Hamilton-Jacobi equations and applications}. Nonlinear Analysis: Theory, Methods \& Applications, 13(3) (1989), 305-323.
\bibitem{Carda1}P. Cardaliaguet,  {\it Weak solutions for
first order mean field games with local coupling}, in Analysis and geometry in control theory and its applications, Springer 2015, pp.~111--158.
\bibitem{CaLaLiPo} P. Cardaliaguet, J.-M. Lasry, and P.-L. Lions, A. Porretta. {\it Long time average of mean field games}. Networks
\& Heterogeneous Media, 7(2):279, 2012.
\bibitem{CMS}P. Cardaliaguet, A. R. M\'esz\'aros, F. Santambrogio, {\it First order mean field games with density constraints: pressure equals price}, SIAM J. Control Optim., 54 (2016),  2672-2709.
\bibitem{CMP} P. Cardaliaguet, S. Munoz, A. Porretta, Free boundary regularity and support propagation in mean field games and optimal transport, arXiv:2308.00314, 2023.
\bibitem{CaPo2019} P. Cardaliaguet, A. Porretta. {\it Long time behavior of the master equation in mean field game theory.} Analysis \& PDE, 12(6):1397–1453, 2019.
\bibitem{CiPo} M. Cirant and A. Porretta. {\it Long time behaviour and turnpike solutions in mildly non-monotone mean field games.} ESAIM: Control Optim. Calc. Var., 27, 2021.
\bibitem{degiorgi}E. De Giorgi, {\it Sulla differenziabilit\`a e l’analiticit\`a delle estremali degli integrali multipli regolari.} Mem. Accad. Sci. Torino. Cl. Sci. Fis. Mat. Nat. (3), 3:25--43, 1957.
\bibitem{DiB}E. DiBenedetto, {\it Degenerate parabolic equations}, Springer-Verlag, 1993.

\bibitem{DiB2}E. DiBenedetto, U. Gianazza, Vincenzo Vespri, {\it   Harnack estimates for quasi-linear degenerate parabolic differential equations}, Acta Math. 200 (2008), 181--209.
\bibitem{GaVaBook}V.A. Galatniokov, J.L V{\'a}zquez, {\it A Stability Technique for Evolution Partial Differential Equations. A Dynamical Systems Approach}, PNLDE 56, Birkh\"auser Verlag. 
\bibitem{GilbargTrudinger}D. Gilbarg, N.S. Trudinger, {\it Elliptic partial
differential equations of second order, Springer}, Berlin, 2001, pp.
120-130.
\bibitem{GoSe} D. Gomes, T. Seneci, {\it Displacement convexity for first-order mean-field games}, Minimax Theory Appl. 3 (2018), 261--284.
\bibitem{Hu} G. Huisken, {\it Asymptotic behavior for singularities of the mean curvature flow,} J. Differential Geom. 31 (1990), no. 1, 285–299.
\bibitem{LaSa}H. Lavenant, F. Santambrogio, {\it Optimal density evolution with congestion: $L^\infty$  bounds via flow interchange techniques and applications to variational Mean Field Games}, Commun. Partial. Differ. 43 (2018), 1761-1802.
\bibitem{OPS} C. Orrieri, A. Porretta, G. Savar\'e, {\it A variational approach to the mean field planning problem}, J. Funct. Anal. 277 (2019),  1868-1957.
\bibitem{L-college} P.-L. Lions, {\it Cours at Coll\` ege de France (2009-2010)}, 
https://www.college-de-france.fr/site/pierre-louis-lions/course-2009-2010.html.
\bibitem{L-HJ book} P.-L. Lions, {\it Generalized solutions of Hamilton--Jacobi equations,} Research Notes in Mathematics 69, Pitman, London, 1982. 
\bibitem{McCann} R. J. McCann, {\it A convexity principle for interacting gases},  Adv. Math.  128 (1997),  153-179.
\bibitem{MimikosMunoz} N. Mimikos-Stamatopoulos, S. Munoz, {\it Regularity
and long time behavior of one-dimensional first-order mean field games
and the planning problem}, SIAM J. Math. Anal. 56 (2024), no. 1, 43--78, 2024.
\bibitem{Munoz}S. Munoz, {\it Classical and weak solutions to local first-order
mean field games through elliptic regularity}, Ann. Inst. H. Poincaré
Anal. Non Linéaire 39 (2022), no. 1, 1--39.
\bibitem{Porretta} A. Porretta, {\it Regularizing effects of the entropy functional in optimal transport and planning problems}, J. Funct. Anal. 284 (2023), no. 3.
 \bibitem{Urbano} J. M. Urbano: {\it The method of intrinsic scaling}, Lecture Notes in Mathematics 1930, Springer-Verlag, 2008.
\bibitem{vazquez2007porous}
J. L. V{\'a}zquez, 
\textit{The Porous Medium Equation. Mathematical Theory}, 
Oxford Mathematical Monographs. 
The Clarendon Press, Oxford University Press, Oxford, 2007.
\end{thebibliography}
\end{document}